\numberwithin{equation}{section}
\providecommand{\abs}[1]{\left\vert#1\right\vert}
\providecommand{\norm}[1]{\left\Vert#1\right\Vert}
\providecommand{\pnorm}[2]{\left\Vert#1\right\Vert_{L^{#2}}}
\providecommand{\Rn}[1]{\mathbb{R}^{#1}}
\providecommand{\sd}[1]{\mathcal{D}_{#1}}
\providecommand{\se}[1]{\mathcal{E}_{#1}}
\providecommand{\sdb}[1]{\bar{\mathfrak{D}}_{#1}}
\providecommand{\seb}[1]{\bar{\mathfrak{E}}_{#1}}
\providecommand{\sdm}[1]{\mathbb{D}_{#1}}
\providecommand{\sem}[1]{\mathbb{E}_{#1}}
\providecommand{\fd}[1]{\mathfrak{D}_{#1}}
\providecommand{\fe}[1]{\mathfrak{E}_{#1}}
\providecommand{\fdb}[1]{\bar{\mathfrak{D}}_{#1}}
\providecommand{\feb}[1]{\bar{\mathfrak{E}}_{#1}}
\providecommand{\ns}[1]{\norm{#1}^2}
\providecommand{\as}[1]{\abs{#1}^2}
\providecommand{\pns}[2]{\norm{#1}^2_{L^{#2}}}
\def\senn{\fe{N+2,2}}
\def\sdnn{\fd{N+2,2}}
\def\nab{\nabla}
\def\al{\alpha}
\def\dt{\partial_t}
\def\hal{\frac{1}{2}}
\def\ep{\varepsilon}
\def\ls{\lesssim}
\def\p{\partial}
\def\pa{\partial^\alpha}
\def\sg{\mathbb{D}}
\def\da{\Delta_{\mathcal{A}}}
\def\naba{\nab_{\mathcal{A}}}
\def\diva{\diverge_{\mathcal{A}}}
\def\Sa{S_{\mathcal{A}}}
\def\H1{{_0}H^1(\Omega)}
\def\cdott{\!\cdot\!}
\def\a{\mathcal{A}}
\def\m{\mathcal{M}}
\def\f{\mathcal{F}_{2N}}
\def\g{\mathcal{G}_{2N}}
\def\i{\mathcal{I}}
\def\fj1{\mathcal{J}^{-1}}
\def\k{\mathfrak{K}}
\def\n{\mathcal{N}}
\newcommand{\beq}{\begin{equation}}
\newcommand{\eeq}{\end{equation}}
\renewcommand{\(}{\left(}
 \renewcommand{\)}{\right)}
\renewcommand{\[}{\left[}
 \renewcommand{\]}{\right]}
\newcommand{\D}{\partial^{\a}}
\newcommand{\tQ}{\widetilde Q}
\def\rest{\hskip 1pt{\hbox to 10.8pt{\hfill\vrule height 7pt width 0.4pt depth 0pt\hbox{\vrule height 0.4pt
width 7.6pt depth 0pt}\hfill}}}
\def\evalu{\hskip 1pt{\hbox to 2pt{\hfill \vrule height -6pt width 0.4pt depth0pt}}}
\DeclareMathOperator{\diverge}{div}
\newtheorem{lem}{Lemma}[section]
\newtheorem{prop}[lem]{Proposition}
\newtheorem{thm}[lem]{Theorem}
\newtheorem{remark}[lem]{Remark}
\title[Viscous surface waves without surface tension]{Anisotropic decay and global well-posedness of viscous surface waves without surface tension}
\author{Yanjin Wang}
\address{
School of Mathematical Sciences\\
Xiamen University\\
Xiamen, Fujian 361005, China}
\email[Y. J. Wang]{yanjin$\_$wang@xmu.edu.cn}
\thanks{This research was supported by the National Natural Science Foundation of China (No. 11771360, 11531010) and the Natural Science Foundation of Fujian Province of China (No. 2019J02003).}
\date{\today}
\keywords{Viscous surface waves; Free boundary problems; Navier-Stokes equations; Global well-posedness; Decay.}
\subjclass[2000]{Primary 35Q30, 35R35, 76D03; Secondary 35B40, 76E17}
\begin{document}
\begin{abstract}
We consider a viscous incompressible fluid below the air and above a fixed bottom. The fluid dynamics is governed by the gravity-driven incompressible Navier-Stokes equations, and the effect of surface tension is neglected on the free surface. The global well-posedness and long-time behavior of solutions near equilibrium have been intriguing questions since Beale (\emph{Comm. Pure Appl. Math.} 34  (1981), no. 3, 359--392). It had been thought that certain low frequency assumption of the initial data is needed to derive  an integrable decay rate of the velocity so that the global solutions in $3D$ can be constructed, while the global well-posedness in $2D$ was left open. In this paper, by exploiting the anisotropic decay rates of the velocity, which are even not integrable, we prove the global well-posedness in both $2D$ and $3D$, without any low frequency assumption of the initial data. One of key observations here is a  cancelation in nonlinear estimates of the viscous stress tensor term in the bulk by using Alinhac good unknowns, when estimating the energy evolution of the highest order horizontal spatial derivatives of the solution.
\end{abstract}

\maketitle
\setcounter{tocdepth}{2}



\section{Introduction}

\subsection{Formulation in Eulerian coordinates}
We consider a viscous, incompressible fluid evolving in a $d$-dimensional moving domain
\begin{equation}
\Omega(t) = \{ y \in   \Rn{d} \;\vert\; -b< y_d  < \eta(y_h ,t)\}.
\end{equation}
Here the dimension $d=2,3$, and $y=(y_h,y_d)$ for $y_h=(y_1,y_{d-1})\in \Rn{d-1}$ the horizontal coordinate and $y_d$ the vertical one. The lower boundary of $\Omega(t)$, denoted by $\Sigma_b$,  is assumed to be rigid and given, but the upper boundary, denoted by $\Sigma(t)$, is a free surface that is the graph of the unknown function $\eta: \Rn{d-1}\times\Rn{+} \to \Rn{}$. We assume that $b>0$ is a fixed constant, so that the lower boundary is flat. The fluid is described by its velocity and pressure functions, which are given for each $t\ge0$ by $ u(\cdot,t):\Omega(t) \to \Rn{d} $ and $ p(\cdot,t):\Omega(t) \to \Rn{}$, respectively.  For each $t>0$ we require that $(u, p, \eta)$ satisfy the gravity-driven free-surface incompressible Navier-Stokes equations:
\begin{equation}\label{ns_euler}
\begin{cases}
\partial_t u + u \cdot \nabla u + \nabla p-\mu  \Delta u  = 0& \text{in }
\Omega(t) \\
\diverge{u}=0 & \text{in }\Omega(t) \\
(p I_d - \mu \sg u )\nu = g \eta \nu & \text{on } \Sigma(t) \\
\partial_t \eta = u_d - u_h\!\cdot\! D\eta &
\text{on }\Sigma(t) \\
u = 0 & \text{on }\Sigma_b.
\end{cases}%
\end{equation}
Here $\mu>0$ is the viscosity and $g>0$ is the strength of gravity. The tensor $p I_d - \mu \sg u $ is known as the viscous stress tensor for $I_d$ the $d \times d$ identity matrix and $ \mathbb{D} u = \nabla u + (\nabla u)^t$ the symmetric gradient of $u$, and $\nu=(-D\eta,1)/\sqrt{1+ \abs{D\eta}^2}$ is the outward-pointing unit normal on $\Sigma(t)$ for $ D  \eta$ for the horizontal gradient of $\eta$.
The fourth equation in \eqref{ns_euler} is called the kinematic boundary condition which implies that the free surface is advected with the fluid, where $u_h$ and $u_d$ are the horizontal and vertical components of the velocity, respectively.  Note that in \eqref{ns_euler} we have shifted the gravitational forcing from the bulk to the boundary and eliminated the constant atmospheric pressure, $p_{atm}$, in the usual way by adjusting the actual pressure $\bar{p}$ according to $p = \bar{p} + g y_d  - p_{atm}$. Without loss of generality, we may assume that $\mu = g = 1$.

To complete the formulation of the problem, we must specify the initial conditions. We suppose that the initial surface $\Sigma(0)$ is given by the graph of the function $\eta(0)=\eta_0: \Rn{d-1}\rightarrow \mathbb{R}$, which yields the initial domain $\Omega(0)$ on which we specify the initial data for the velocity, $u(0)=u_0: \Omega(0) \rightarrow \Rn{d}$. We will assume that $\eta_0 > -b$ on $\Rn{d-1}$ and that $(u_0, \eta_0)$ satisfy certain compatibility conditions, which we will describe later.

\subsection{Reformulation in flattening coordinates}
In order to work in a fixed domain, we want to flatten the free surface via a coordinate transformation. We will use a flattening transformation introduced by Beale in \cite{beale_2}.  To this end, we consider the fixed equilibrium domain
\begin{equation}
\Omega:= \{x \in \Rn{d} \; \vert\;  -b < x_d  < 0  \}
\end{equation}
for which we will write the coordinates as $x\in \Omega$.  We write $\Sigma:=\{x_d =0\}$ for the upper boundary of $\Omega$, and we view $\eta$ as a function on $\Sigma\times\mathbb{R}^+$.  We define \begin{equation}
 \bar{\eta}:= \mathcal{P} \eta = \text{harmonic extension of }\eta \text{ into the lower half space},
\end{equation}
where $\mathcal{P} \eta$ is defined by \eqref{poisson_def_inf}.  The harmonic extension $\bar{\eta}$ allows us to flatten the coordinate domain via the mapping
\begin{equation}\label{mapping_def}
 \Omega \ni x \mapsto   (x_h, \phi(x,t)) := \Phi(x,t) = (y_h ,y_d ) \in \Omega(t),
\end{equation}
where $\phi(x,t)=x_d+\varphi(x,t)$ for $\varphi(x,t)=  \tilde{b}  \bar{\eta}(x,t)$ with $\tilde{b}  = (1+x_d /b).$
Note that $\Phi(\Sigma,t) = \Sigma(t)$ and $\Phi(\cdot,t)\vert_{\Sigma_b} = Id_{\Sigma_b}$, i.e. $\Phi$ maps $\Sigma$ to the free surface and keeps the lower surface fixed.   We have
\begin{equation}\label{A_def}
 \nab \Phi =
\begin{pmatrix}
 I_{d-1}   & 0 \\
D\varphi & J
\end{pmatrix}
\text{ and }
 \mathcal{A} := (\nab \Phi^{-1})^T =
\begin{pmatrix}
 I_{d-1}   & -D\varphi K \\ 
 0   & K
\end{pmatrix}
\end{equation}
for
\begin{equation}\label{ABJ_def}
J =1+\p_d\varphi=  1+ \bar{\eta}/b + \p_d  \bar{\eta} \tilde{b} \text{ and } K = J^{-1}.
\end{equation}
Here $J = \det{\nab \Phi}$ is the Jacobian of the coordinate transformation.

If $\eta$ is sufficiently small (in an appropriate Sobolev space), then the mapping $\Phi$ is a diffeomorphism.  This allows us to transform the problem to one on the fixed spatial domain $\Omega$ for $t \ge 0$.  In the new coordinates, the system \eqref{ns_euler} becomes
\begin{equation}\label{geometric}
 \begin{cases}
  \dt^\a u   + u \cdot \naba u+ \naba p  -\da u     =0 & \text{in } \Omega \\
 \diva u = 0 & \text{in }\Omega \\
\Sa(p,u) \n = \eta \n  & \text{on } \Sigma \\
 \dt \eta = u \cdot \n & \text{on } \Sigma \\
 u = 0 & \text{on } \Sigma_b \\
 (u , \eta)\mid_{t=0} = (u_0,\eta_0).
 \end{cases}
\end{equation}
Here we have written the differential operators $\dt^\a, \naba$, $\diva$, and $\da$  by $ \dt^\a:=\dt   -  K\dt \varphi \p_d    $, $(\naba )_i := \a_{ij} \p_j $, $\diva  := \naba\cdot$, and $\da  := \diva \naba $.  We have also written  $\n := (-D\eta,  1)$ for the non-unit normal to $\Sigma(t)$ and
$\Sa(p,u)  = (p I_d  - \sg_{\a} u)$ for $ \sg_{\a} u : = \nabla_\a u+ (\nabla_\a u)^t$ the symmetric $\a-$gradient of $u$.  Note that if we extend $\diva$ to act on symmetric tensors in the natural way, then $\diva \Sa(p,u) = \naba p - \da u$ for  $\diva u=0$.
Recall that $\a$ is determined by $\eta$ through the relation \eqref{A_def}.  This means that all of the differential operators in \eqref{geometric} are connected to $\eta$, and hence to the geometry of the free surface.

\subsection{Previous results}
Free boundary problems in fluid mechanics have been studied by many authors in many different contexts. Here we will mention only the work most relevant to our present setting, that is, the viscous surface wave problem, which has attracted the attention of many mathematicians since the pioneering work of Beale \cite{beale_1}. 

In \cite{beale_1}, Beale  proved the local well-posedness of the viscous surface wave problem without surface tension, \eqref{ns_euler}, in Lagrangian coordinates: given $\Omega(0)=\Omega_0$ and $u_0\in H^{r-1}(\Omega_0)$ for $r\in(3,7/2)$, there exist  a time $T>0$ and a unique solution  on $[0,T]$ so that $v\in L^2(0,T; H^{r}( \Omega_0)) \cap H^{r/2}(0,T; L^2( \Omega_0))$, where $v=u\circ \zeta$ for $\zeta$ the Lagrangian flow map satisfying $\dt \zeta  =v$ in $\Omega_0$ and $\zeta(0)=Id$ in $\Omega_0$.  Beale \cite{beale_1}  also showed that for certain $\Theta \in H^r(\Omega_0)$ with $\Theta=0$ on $\Sigma_b$, there cannot exist a curve of solutions $ v^\ep $, defined for $\ep$ near $0$, with
$
 \zeta^\ep(0)  = Id + \ep \Theta$ and $v^\ep(0)  = 0 
$,
and $v^\ep $ is of the form $v^\ep  = \ep v^{(1)} + \ep^2 v^{(2)} + O(\ep^3) $, such that
\beq \label{vdecay}
v^\ep  \in L^1(0,\infty; H^r(\Omega_0))
\eeq
and
\begin{equation}\label{no_go_2}
 \lim_{t\to \infty} \zeta_3^\ep(t) \vert_{\Sigma} =0.
\end{equation}
This would suggest a nondecay theorem that a ``reasonable'' small-data global well-posedness with decay of the free surface is false and that any existence theorem for all time would necessarily have a more special hypothesis or a weaker conclusion than the assertion which was shown to be untrue in \cite{beale_1}. Thereafter, the global well-posedness and long-time behavior of solutions to \eqref{ns_euler} near equilibrium have been intriguing questions since \cite{beale_1}. Sylvester \cite{sylvester} and Tani and Tanaka \cite{tani_tanaka} studied the existence of small-data global-in-time solutions via the parabolic regularity method as \cite{beale_1}, and they make no claims about the decay of the solutions.  Sylvester \cite{sylvester2} discussed the decay of the solution for the linearized problem around equilibrium in $2D$.   As pointed out by Guo and Tice \cite{GT_per,GT_inf}, due to the growth in the highest order spatial derivatives of $\eta$ as will be seen later,  it seems impossible to construct global-in-time solutions to  \eqref{ns_euler}  without also deriving a decay result, at least by energy methods.

For the problem with surface tension, that is, the fourth equation in \eqref{ns_euler} is modified to be
\begin{equation}
 (p I_d - \mu \sg u ) \nu = g \eta \nu - \sigma H \nu\quad \text{ on }\Sigma(t),
\end{equation}
where $H=D\cdot\(D\eta/\sqrt{1+ \abs{D\eta}^2}\)$ is the mean curvature of the free surface  and $\sigma >0$ is the surface tension coefficient, it is conceivable that the regularizing effect of surface tension might lead to a global well-posedness valid for large time. Beale \cite{beale_2}  proved the global well-posedness for the problem with surface tension and with a curved bottom  in   flattening coordinates provided that  $u_0 \in H^{r-1/2}(\Omega)$ and $\eta_0 \in H^{r}(\Sigma)$ for $r\in(3,7/2)$ are sufficiently small.  Moreover, Beale and Nishida \cite{beale_nishida}  showed that for the flat bottom  in $3D$  if $\eta_0 \in L^1(\Sigma)$ is small, then the solution  constructed in \cite{beale_2} obeys
\begin{equation}\label{secc1}
 (1+t)^2 \ns{u(t)}_{H^2(\Omega)}  +   \sum_{j=0}^2 (1+t)^{j+1} \ns{D^j \eta (t)}_{L^2(\Sigma)}  < \infty 
\end{equation}
and that this decay rate is optimal. Note that if ignoring the different coordinates,  the decay of $\eta$ in \eqref{secc1} implies \eqref{no_go_2},  but the decay rate of $u$ is not sufficiently rapid to guarantee \eqref{vdecay}, even with surface tension.

If  the domain is horizontally periodic and assuming that $\eta_0$ has the zero average, then the situation is significantly different. Nishida, Teramoto, and Yoshihara  \cite{nishida_1} showed that for the problem with surface tension and with a flat bottom, there exists $\gamma>0$ so that
\begin{equation}
  e^{\gamma t} \left( \ns{u (t)}_{H^2(\Omega)} +   \ns{\eta (t)}_{H^3(\Sigma)} \right) < \infty.
\end{equation}
Hataya \cite{hataya} proved that for the problem without surface tension and with a flat bottom,  if   $u_0 \in H^{r-1}(\Omega)$ and $\eta_0 \in H^{r-1/2}(\Sigma)$ for $r\in (5,11/2)$ are sufficiently small, then there exists a unique global solution satisfying
\begin{equation}\label{secc2}
\int_{0}^{\infty }(1+t)^{2}\ns{u(t)}_{H^{r-1}(\Omega)} dt + (1+t)^{2} \ns{\eta(t)}_{H^{r-2}(\Sigma)} < \infty.
\end{equation}
Guo and Tice \cite{GT_per} showed that for  the problem without surface tension and with a curved bottom  if  $u_0 \in H^{4N}(\Omega)$ and $\eta_0 \in H^{4N+1/2}(\Sigma)$ for $N\ge 3$ are sufficiently small, then there exists a  unique global solution such that
\begin{equation}\label{secc3}
(1+t)^{4N-8}\(\ns{u(t)}_{H^{2N+4}(\Omega)}  +\ns{\eta (t)}_{H^{2N+4}(\Sigma)} \)< \infty.
\end{equation} 
Tan and Wang \cite{TW14} established the global-in-time zero surface tension limit of the problem with surface tension and with a curved bottom for the sufficiently small initial data.
We remark that the argument in Beale's nondecay theorem of \cite{beale_1}  works in horizontally periodic domains as well, 
and  Guo and Tice \cite{GT_per}  showed that the zero average of $\eta_0$ prevents the choice of $\Theta$ in \cite{beale_1}.  

In light of the decay of $u$ in \eqref{secc1} of \cite{beale_2}, we may not expect a global well-posedness of the problem in horizontally infinite domains without surface tension, \eqref{ns_euler}, with the solution satisfying \eqref{vdecay}. Hataya and Kawashima \cite{HK}  announced that  for the problem \eqref{geometric}  with a flat bottom  in $3D$,  if $u_0 \in H^{r-1}(\Omega)$ and $\eta_0 \in H^{r-1/2}(\Sigma)  $ for $r\in (5,11/2)$ and $\eta_0\in  L^1(\Sigma)$ are sufficiently small, then there exists a  unique global solution satisfying
\begin{equation}\label{secc4}
 (1+t)^2 \ns{u(t)}_{H^1(\Omega)}  +   \sum_{j=0}^1 (1+t)^{j+1} \ns{D^j \eta (t)}_{L^2(\Sigma)}  < \infty ,
\end{equation} 
but they provides only a terse sketch of their proposed proof and the full details are not available  in the literature to date.  Guo and Tice \cite{GT_inf} proved that for the problem \eqref{geometric} with a flat bottom  in $3D$, if $u_0 \in H^{20}(\Omega)$, $\eta_0 \in H^{20+1/2}(\Sigma)$, $\i_\lambda u_0\in L^2(\Omega)$ and  $\i_\lambda\eta_0\in L^2(\Sigma)$ for $0<\lambda<1$ ($\i_\lambda$ is the Riesz potential in the horizontal space) are sufficiently small, then there exists a  unique global solution such that 
\begin{equation}\label{intro_inf_gwp_03} 
 (1+t)^{1+\lambda} \ns{u(t)}_{H^2(\Omega)}  +   \sum_{j=0}^2 (1+t)^{j+\lambda}  \ns{D^j \eta (t)}_{L^2(\Sigma)}  < \infty.
\end{equation}
We remark that it was pointed out in \cite{GT_inf} that the requirement of $\lambda>0$ is necessary for their argument.

Note that both \cite{GT_inf} and  \cite{HK} proved the global well-posedness of \eqref{geometric}  in 3D  by requiring the certain low frequency assumption of the initial data, while the global well-poseness in $2D$ was left open. The main purpose of this paper is to show the global well-posedness of \eqref{geometric}  in both $2D$ and $3D$, without any low frequency assumption of the initial data. This gives a closer answer to the question in the nondecay theorem of Beale \cite{beale_1}. It should be pointed out  that the global well-posedness of the problem with a curved bottom is still open, and the key point will be how to deduce the decay of the solution.

\section{Main results}
\subsection{Statement of the results}\label{sec2.1}
 We will work in a high-regularity context, essentially with regularity up to $2N$ temporal derivatives for an integer $N\ge 5$.  This requires us to use $u_0$ and $\eta_0$, by using the equations \eqref{geometric}, to construct the initial data $\dt^j u(0)$ and $\dt^j \eta(0)$ for $j=1,\dotsc,2N$ and $\dt^j p(0)$ for $j = 0,\dotsc, 2N-1$.  These data must then satisfy various conditions, which in turn require $u_0$ and $\eta_0$ to satisfy $2N$ compatibility conditions.  We refer the reader to \cite{GT_lwp} for the construction of those initial data and the precise description of the $2N$ compatibility conditions.

We write $H^k(\Omega)$ with $k\ge 0$ and $H^s(\Sigma)$ with $s \in \Rn{}$ for the usual Sobolev spaces, with norms denoted by $\norm{\cdot}_{k}$ and $\abs{\cdot}_{s}$, respectively.
For a vector $v\in \mathbb{R}^d$ for  $d=2,3$, we write $v=(v_h,v_d)$ for $v_h$ the horizontal component of $v$ and $v_d$ the vertical component.
 We   write $ D  f$ for the horizontal gradient of $f$, while $\nab f$  denotes the usual full gradient. Let $d=2,3$ and $N\ge 5$. We define the high-order energy as
\begin{align}\label{p_energy_def1}
 \fe{2N} : =  \sum_{j=0}^{2N}  \ns{\dt^j u}_{4N-2j}    + \sum_{j=0}^{2N-1} \ns{\dt^j p}_{4N-2j-1} +\as{\eta}_{4N-1 } +\sum_{j=1}^{2N}  \as{\dt^j \eta}_{4N-2j}
\end{align}
and the high-order dissipation rate as
\begin{align}\label{p_dissipation_def1}
 \fd{2N}  := & \ns{ u}_{4N}+\sum_{j=1}^{2N} \ns{\dt^j u}_{4N-2j+1} +  \ns{\nabla p}_{4N-2}+ \sum_{j=1}^{2N-1} \ns{\dt^j p}_{4N-2j} \nonumber\\
&+  \as{ D\eta}_{4N-5/2}+ \as{\dt \eta}_{4N-3/2} + \sum_{j=2}^{2N+1} \as{\dt^j \eta}_{4N-2j+5/2}.
\end{align}
We define
 \begin{align}\label{pdd_dissipation_def1}
 \sem{2N}  :=  \as{\eta}_{4N}
 \end{align}
 and
\begin{align}\label{pdd_dissipation_def2}
 \sdm{2N}  :=  \ns{ u}_{4N+1} +  \ns{\nabla p}_{4N-1}+   \as{ D\eta}_{4N-3/2}+   \as{ \dt  \eta}_{4N-1/2}.
 \end{align}
We also define
\begin{align}\label{pdd_dissipation_def3}
 \f :=  \as{\eta}_{4N+1/2}  .
\end{align}
We define the low-order energy as
\begin{align}\label{i_energy_min_2}
 \senn   = & \ns{D u_h}_{2(N+2)-1} + \ns{  u_d}_{2(N+2)} + \ns{\nabla^3 u }_{2(N+2)-3} +\sum_{j=1}^{N+2}  \ns{\dt^j u}_{2(N+2)-2j}\nonumber
 \\& + \ns{\nabla^2 p}_{2(N+2)-3}  + \ns{\p_d p}_{2(N+2)-2}  + \sum_{j=1}^{N+1} \ns{\dt^j p}_{2(N+2)-2j-1}\nonumber
   \\& +\as{D^2 \eta}_{2(N+2)-2} +\sum_{j=1}^{N+2}  \as{\dt^j \eta}_{2(N+2)-2j}.
\end{align}
Here the subscript ``$2$" basically refers to the ``minimal derivative" count $2$ of $\eta$.

Finally, we define
\begin{align}\label{ggdef}
\mathcal{G}_{2N}  (t) := &\sup_{0 \le r \le t} \fe{2N} (r) + \int_0^t \fd{2N} (r) dr+  \sup_{0 \le r \le t} \frac{\sem{2N}(r)}{(1+r)^{\vartheta}}+\int_0^t\frac{ \sem{2N} (r) }{(1+r)^{1+\vartheta }}dr  \nonumber
\\&+\int_0^t\frac{ \sdm{2N} (r) }{(1+r)^{\vartheta+\kappa_d}}dr +   \sup_{0 \le r \le t} \frac{\f (r)}{(1+r)^{1+\vartheta}}+\int_0^t\frac{ \f(r) }{(1+r)^{2+\vartheta }}dr\nonumber\\&+ \sup_{0 \le r \le t} (1+r)^{2} \senn (r),
\end{align}
where  $\vartheta>0$ is any sufficiently small constant, and when $d=3$, $\kappa_3>0$ is any sufficiently small constant  and when $d=2$, $\kappa_2=1/2$. Then the main result of this paper is stated as follows.

 \begin{thm}\label{intro_inf_gwp}
 Let $d=2,3$ and $N\ge 5$.
Suppose the initial data $(u_0,\eta_0)$ satisfy the necessary compatibility conditions of the local well-posedness of \eqref{geometric}. There exists an $\varepsilon_0 >0$ so that if $\fe{2N}(0) + \f(0) \le \varepsilon_0$, then there exists a unique solution $(u,p,\eta)$ to \eqref{geometric} on the interval $[0,\infty)$ that achieves the initial data.  The solution obeys the estimate
\begin{equation}\label{intro_inf_gwp_01}
 \g(\infty) \ls  \fe{2N}(0) + \f(0).
\end{equation}
In particular,  we have
\begin{align}\label{intro_inf_gwp_02}
& (1+t)^{2-\kappa_d} \ns{u_h(t)}_{C^2(\bar\Omega)}     +(1+t)^{2} \(\ns{Du_h(t)}_{C^1(\bar\Omega)} +\as{\p_d u_h(t)}_{C(\Sigma)} +\ns{u_d(t)}_{C^2(\bar\Omega)} \)\nonumber
\\&\quad\ls  \fe{2N}(0) + \f(0).
\end{align}
\end{thm}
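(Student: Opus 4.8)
The plan is to close a continuity argument for the functional $\g(t)$. By the local well-posedness theory of \cite{GT_lwp}, under the stated compatibility conditions there is a unique solution of \eqref{geometric} on a maximal interval $[0,T_{\max})$ attaining the data, and uniqueness on $[0,\infty)$ is inherited from it; so the whole matter reduces to the a priori estimate
\begin{equation*}
\g(t)\ls \fe{2N}(0)+\f(0)+\big(\g(t)\big)^{3/2}\qquad\text{for }0\le t<T_{\max},
\end{equation*}
with the implied constant independent of $t$. Granting this, a standard bootstrap (after shrinking $\ep_0$) yields $\g(t)\ls\fe{2N}(0)+\f(0)$ uniformly, hence $T_{\max}=\infty$ and \eqref{intro_inf_gwp_01}; the pointwise bound \eqref{intro_inf_gwp_02} then follows by Sobolev embedding, as indicated at the end. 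I would split the a priori estimate into four interlocking blocks, to be closed simultaneously within the single bootstrap: (i) the top-order nonlinear energy--dissipation estimate; (ii) the estimates for the growing highest-order surface quantities $\sem{2N}$, $\sdm{2N}$, $\f$; (iii) Stokes-type elliptic regularity, upgrading temporal control to the full spatial norms in $\fd{2N}$ and $\sdm{2N}$; (iv) the low-order decay estimates, producing the $(1+t)^2$ weight on $\senn$ and the anisotropy.

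Block (i) is the crux and the step I expect to be the main obstacle. One applies $\dt^j$ and horizontal derivatives $\pa$ of total order $\le 4N$ to \eqref{geometric} and performs the natural energy identity; the dangerous contribution is the viscous term $\pa\da u=\pa\diva(\sg_\a u)$, since commuting $\pa$ through the geometry produces a term in which the full derivative $\pa$ falls on $\a$, hence on the harmonic extension $\bar\eta$, at a regularity and with a time-growth that $\fe{2N}$ cannot absorb --- its natural home is $\f$ (or $\sem{2N}$), which is only allowed to grow. Following Alinhac, one works instead with the good unknowns
\begin{equation*}
\mathbf{v}:=\pa u-\pa\varphi\,K\p_d u,\qquad \mathbf{q}:=\pa p-\pa\varphi\,K\p_d p,
\end{equation*}
which up to genuinely lower-order remainders equal the $\pa$-differentiated unknowns pulled back through $\Phi$, so that $\naba\mathbf v=\pa(\naba u)$ modulo terms in which the top derivative on $\bar\eta$ is paired only with $\nab u$ and thus lands in the dissipation. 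In this formulation the worst commutator from the viscous term and the worst term generated by the change of variables cancel, leaving only harmless nonlinear remainders --- this is the cancellation highlighted in the introduction. Then $\mathbf v$ solves a perturbed linear system with genuinely nonlinear forcing, and testing against $\mathbf v$, integrating the viscous term by parts, and using the third and fourth equations of \eqref{geometric} to convert the boundary contribution $\int_\Sigma(\Sa(\mathbf q,\mathbf v)\n)\cdot\mathbf v$ into $-\hal\dtt\int_\Sigma|\pa\eta|^2$ plus commutators, yields $\dtt\fe{2N}+\fd{2N}\ls\sqrt{\fe{2N}}\,\fd{2N}+(\g)^{3/2}$ (the first term being absorbed since $\fe{2N}$ is small). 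Verifying that every remaining term is superlinear in $\g$ and compatible with all the weights in \eqref{ggdef} is the delicate bookkeeping.

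For block (ii), differentiating the kinematic equation $\dt\eta=u\cdot\n$ at top order gives, schematically, $\dtt\sem{2N}\ls\sqrt{\senn}\,\sem{2N}+\sqrt{\sem{2N}\,\sdm{2N}}+\sqrt{\fe{2N}}\,\sdm{2N}$; the first term, through the (non-integrable) decay $\senn\ls(1+t)^{-2}$ from block (iv), forces by Gr\"onwall the polynomial growth $\sem{2N}\ls(1+t)^{\vartheta}$, and dividing by the weights and integrating, the bound $\int_0^t\sdm{2N}(1+r)^{-\vartheta-\kappa_d}\,dr\ls\g$ and the low-order decay close the $\sem{2N}$ pieces of $\g$ and, with one more derivative --- which is what makes room for the larger weight $(1+t)^{1+\vartheta}$ --- the $\f$ pieces. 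For block (iii), one reads \eqref{geometric} as the stationary Stokes system $-\da u+\naba p=-\dt^\a u-u\cdot\naba u$, $\diva u=0$ in $\Om$, with $\Sa(p,u)\n=\eta\n$ on $\Sigma$ and $u=0$ on $\Sigma_b$, and applies $\a$-elliptic regularity perturbatively from the constant-coefficient Stokes estimates, iterating in the number of temporal derivatives to trade temporal-derivative control for full spatial regularity; this recovers the spatial norms in $\fd{2N}$ and $\sdm{2N}$ needed to feed back into (i) and (ii).

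Block (iv) produces the decay and the anisotropy. The low-order energy--dissipation inequality reads $\dtt\senn+\fd{N+2,2}\ls\sqrt{\fe{2N}}\,\fd{N+2,2}$, where $\fd{N+2,2}$ is the dissipation paired with $\senn$; since $\fd{N+2,2}$ misses the surface term and the zero horizontal frequency of $u_h$, it does not dominate $\senn$, but interpolating $\senn\ls(\sem{2N})^{\theta}(\fd{N+2,2})^{1-\theta}$ and using the slow growth of $\sem{2N}$ turns this into a differential inequality $\dtt\senn+c(1+t)^{-\nu}\,\senn^{1+1/s}\le0$, which integrates to $\senn\ls(1+t)^{-2}$ after optimizing the exponents. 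The dimension-dependent loss enters here: $\kappa_3>0$ is an arbitrarily small margin in $3D$, while $\kappa_2=\hal$ is the genuine $2D$ cost of having only one horizontal variable, hence fewer low horizontal frequencies for the dissipation to damp. The anisotropy --- that a horizontal velocity constant in $y_h$ is undamped by the dissipation, whereas $Du_h$, $\p_d u_h\vert_\Sigma$ and $u_d$ are --- must be carried consistently through all the nonlinear estimates, which the good-unknown structure of (i) makes possible; it leaves $\ns{u_h}_{C^2}$ decaying only at the slower rate $(1+t)^{-(2-\kappa_d)}$, extracted from $\int_0^t\sdm{2N}(1+r)^{-\vartheta-\kappa_d}\,dr\ls\g$ together with the vertical Poincar\'e inequality (using $u=0$ on $\Sigma_b$) and interpolation with the highest-order norms, while $Du_h$, $\p_d u_h\vert_\Sigma$ and $u_d$ all decay at rate $(1+t)^{-2}$ directly from $\senn\ls(1+t)^{-2}$. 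Finally, feeding $\sup_{0\le r\le t}(1+r)^2\senn(r)\ls\g$ and the weighted $u_h$-bound into the Sobolev embeddings $H^{2(N+2)-1}(\Om)\hookrightarrow C^1(\bar\Om)$, $H^{2(N+2)-3}(\Om)\hookrightarrow C(\bar\Om)$ and the trace embedding on $\Sigma$ (and using the boundary conditions for the $\p_d u_h\vert_\Sigma$ term), together with the definition \eqref{i_energy_min_2} of $\senn$, delivers \eqref{intro_inf_gwp_02}.
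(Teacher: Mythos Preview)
Your overall architecture (continuity argument closing a superlinear a priori bound on $\g$, with good unknowns as the key new input) matches the paper, but you have misplaced the role of the good unknowns, and this misplacement breaks the $2D$ closure.

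In the paper the splitting is not by ``top-order versus elliptic versus decay'' but by \emph{which} top-order derivatives are taken. For derivatives $\pa$ with $1\le|\alpha_h|\le 4N-1$ one works in the perturbed linear form; for the highest temporal derivative $\dt^{2N}$ one uses the geometric form; these two together already give the bounded estimate $\fe{2N}(t)+\int_0^t\fd{2N}\ls\fe{2N}(0)+(\g)^2$. The Alinhac good unknowns are used \emph{only} for the pure horizontal derivatives $|\alpha|=4N$, and they serve to control $\sem{2N}=\as{\eta}_{4N}$, not $\fe{2N}$ (note $\fe{2N}$ carries only $\as{\eta}_{4N-1}$). The essential output of the good-unknown energy identity at level $4N$ is not merely $\sem{2N}(t)\ls(1+t)^{\vartheta}$ but, simultaneously, the weighted dissipation integral
\[
\int_0^t\frac{\ns{U^{4N}(r)}_{1}}{(1+r)^{\vartheta}}\,dr\ls\g(0)+(\g)^{3/2}.
\]
This integral is what makes the $\f$ estimate close: from $\as{u}_{4N+1/2}\ls\ns{U^{4N}}_{1}+\bar\k\,\f$ and the transport equation one gets $\dt\f\ls\norm{U^{4N}}_{1}\sqrt{\f}+\sqrt{\bar\k}\,\f$, and the time-weighted integration of this inequality feeds directly on the $U^{4N}$ dissipation above. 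Your Block~(ii), which treats $\sem{2N}$ by Gr\"onwall on the kinematic equation alone, does not produce any such dissipation; if instead you try to bound $\as{u}_{4N+1/2}$ by $\sdm{2N}$ in the $\f$ estimate, the only available weight on $\int\sdm{2N}$ is $(1+r)^{-\vartheta-\kappa_d}$, and for $d=2$ (where $\kappa_2=\tfrac12$) the resulting weight on $\f$ after Cauchy--Schwarz is $(1+r)^{-(3/2+\vartheta)}$, which is strictly smaller than the $(1+r)^{-(2+\vartheta)}$ needed to close. This is precisely the obstruction the good-unknown dissipation removes.

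Two smaller points. For the decay in Block~(iv), the paper interpolates $\senn\ls\fe{2N}^{1/3}\sdnn^{2/3}$ against the \emph{bounded} quantity $\fe{2N}$, not the growing $\sem{2N}$; this gives the autonomous inequality $\dtt\senn+C\mathcal{M}^{-1/2}\senn^{3/2}\le 0$ which integrates directly to $(1+t)^{-2}$. And the slower $(1+t)^{-(2-\kappa_d)}$ rate for $\ns{u_h}_{C^2(\bar\Omega)}$ in \eqref{intro_inf_gwp_02} comes from the pointwise interpolation $\k\ls\fe{2N}^{\kappa_d/2}\senn^{1-\kappa_d/2}$ (Lemma~\ref{klem}), not from the $\sdm{2N}$ dissipation integral.
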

\begin{remark}
The boundedness of $ \g(\infty) $ in Theorem \ref{intro_inf_gwp}, \eqref{intro_inf_gwp_01}, yields the decay estimate of $\senn(t)$.
Note that the decay  of the velocity  is anisotropic,  in terms of the horizontal or vertical components, and the horizontal or vertical spatial derivatives (on $\Omega$ or $\Sigma$) of the horizontal component. Such anisotropic decay of the velocity is essential in the derivation of the estimate \eqref{intro_inf_gwp_01}. We remark that without the low frequency assumption of the initial data the decay rate $(1+t)^{-2}$ of $\senn(t)$, mainly referring to the term $\abs{D^2\eta}_0^2$, is optimal, see \cite{beale_nishida,GT_inf}.
\end{remark}

\subsection{Strategy of the proof}\label{strategy sec}
The proof of Theorem \ref{intro_inf_gwp} is a revised one of Theorem 1.3 in \cite{GT_inf}, and the main part is to prove {\it a priori} \eqref{intro_inf_gwp_01}. However, a crucial new observation and several new ideas, which will  be explained in details below, yield our improvement of Theorem \ref{intro_inf_gwp} compared to Theorem 1.3 in \cite{GT_inf}; especially, we do not need the low frequency assumption of the initial data as required in \cite{GT_inf}, and our results cover both the $3D$ and $2D$ cases.  

We first briefly sketch the proof of Theorem 1.3 in \cite{GT_inf} for the $3D$ case. Note that $\se{2N}:=\fe{2N}+\sem{2N}$ and $\sd{2N}:=\fd{2N}+\sdm{2N}$ are the high-order energy and dissipation rate used in \cite{GT_inf} when $\lambda=0$ therein.  Basing on the natural energy structure of \eqref{geometric},
\begin{align} \label{intro0}
\hal  \frac{d}{dt} \left(\int_\Omega  J \abs{u}^2+\int_\Sigma    \abs{\eta}^2 \right)+\hal\int_\Omega J \abs{\mathbb{D}_\a u}^2= 0,
\end{align}
and making full use of the structure of the problem, \cite{GT_inf} proved that for $\mathcal{E}_{2N}$ is small,
 \begin{equation}\label{pp}
\mathcal{E}_{2N}(t)+\int_0^t \mathcal{D}_{2N}(r)\,dr \lesssim
 \mathcal{E}_{2N}(0)  
+ \int_0^t \mathcal{K}(r)\mathcal{F}_{2N}(r)\,dr,
\end{equation}
where $\mathcal{K}=\ns{ \nabla u}_{C(\bar\Omega)}+\as{D  \nabla u}_{C(\Sigma)}$. The difficulty is  then to  control the right hand side of \eqref{pp}, and the only way to estimate $\f$  is through the fourth transport equation for $\eta$ in \eqref{geometric}; \cite{GT_inf} derived
\begin{equation} \label{pp11}
   \f(t) \ls \exp\(C\int_0^t \sqrt{\mathcal{K}(r)}\,dr\)\[
\f(0) +   t \int_0^t \sd{2N}(r)\,dr\].
\end{equation}
Hence to close \eqref{pp} and \eqref{pp11}, we would see twice the necessity of showing $\mathcal{K}(t)\ls (1+t)^{-2-\gamma}$ for some $\gamma>0$. By assuming that $\i_\lambda u_0\in L^2(\Omega)$ and  $\i_\lambda\eta_0\in L^2(\Sigma)$ for $0<\lambda<1$ are sufficiently small,  \cite{GT_inf} proved the decay estimate of the low-order energy $ \mathcal{E}_{N+2,2}$ (see \eqref{defe2} for the definition),
\begin{equation}   \label{intro01}
 \mathcal{E}_{N+2,2}(t)  \ls  (\mathcal{E}_{2N}(0) +\f(0))(1+t)^{-2-\lambda } .
\end{equation}
It follows by the interpolation estimates (similarly as in Lemma \ref{klem}) and \eqref{intro01} that
\beq\label{intro02}
\mathcal{K} \ls \se{2N}^{\kappa_3/2} \mathcal{E}_{N+2,2}^{1-\kappa_3/2}\ls  (1+t)^{-(2+\lambda) (1-\kappa_3/2)}.
\eeq   
Consequently, \cite{GT_inf} can close the estimates \eqref{pp}--\eqref{pp11} by requiring $\lambda>0$ in $3D$. We remark that the energy estimates for the  $2D$ case can not be closed along the same way as above since $(2+\lambda)(1-\kappa_2/2)<15/8$ for $\kappa_2=1/2$ and $0<\lambda<1/2$ ($\lambda<1/2$ is restricted for the nonlinear estimates involving $\i_\lambda$ in dimension $1$).  Hence, the global well-poseness in $2D$ was left open.

Our main goals of this paper are to remove the assumption of $\lambda>0$ by \cite{GT_inf}  in $3D$ and to cover the $2D$ case.  This requires us to revise all the estimates \eqref{pp}--\eqref{intro01}. Our starting point is that we can improve the decay estimate \eqref{intro01} by replacing $ \mathcal{E}_{N+2,2}$ with the stronger low-order energy $\senn$ so that for $\lambda=0$,
\begin{equation}  \label{intro1}
(1+t)^{2}  \senn(t)  \ls  \fe{2N}(0) +\f(0).
\end{equation}
Such improved decay estimate is essential for our improvement compared to \cite{GT_inf}.
Secondly, we refine from the derivation of the estimate \eqref{pp11} that
\begin{equation} \label{intro2}
\dt   \f \ls \abs{u}_{4N+1/2}\sqrt{\f}+\abs{Du_h}_{C(\Sigma)}\f.
\end{equation}
The subtle points here are that we need only the boundary control of  $\as{u}_{4N+1/2}$ rather than $\sd{2N}$ and that $ \abs{Du_h}_{C(\Sigma)}^2\ls { \senn}$. So if we can prove that  
\begin{equation}  \label{intro001}
\int_0^t\frac{\as{u(r)}_{4N+1/2}}{(1+r)^{\vartheta}}dr\ls \fe{2N}(0) +\f(0),\  \vartheta\ge0,
\end{equation}
then a time weighted argument on \eqref{intro2} yields 
\begin{equation}\label{intro3}
\frac{\f(t)}{(1+t)^{1+\vartheta}}+\int_0^t\frac{\f(r)}{(1+r)^{2+\vartheta}}dr\ls \fe{2N}(0) +\f(0),\  \vartheta\ge0.
\end{equation}
Apparently,  the estimates \eqref{intro1} and \eqref{intro3} are not sufficient for closing the estimate \eqref{pp},  and we need to revise  \eqref{pp}. We recall  from \cite{GT_inf} that the term $\mathcal{K}\f$ in \eqref{pp} results from the nonlinear estimates of the viscous stress tensor term when estimating the highest order spatial derivatives of the solution. So we will split the estimates \eqref{pp} by singling out $\sem{2N}$ from $\se{2N}$ and $\sdm{2N}$ from $\sd{2N}$. In this splitting, we can first prove 
 \begin{equation}\label{intro4}
\fe{2N}(t)+\int_0^t \fd{2N}(r)\,dr \lesssim
 \fe{2N}(0) +\f(0).
\end{equation}
The  crucial part is then to control $\sem{2N}$ and $\sdm{2N}$. Applying the highest order horizontal spatial derivatives $\p^\al$ for $\al\in \mathbb{N}^{d-1}$ with $\abs{\al}=4N$ to \eqref{geometric}, we find
\begin{align} \label{intro5}
&\hal  \frac{d}{dt} \left(\int_\Omega  J \abs{\p^{\al}u}^2 \right)+ \int_\Omega J \(\hal \p^{\al}\(\mathbb{D}_\a u\) : \mathbb{D}_\a\p^{\al}u- \p^\al p  \diverge_\a  \p^\al u\)
\nonumber
\\&\quad=\int_\Sigma \p^\al (pI-\mathbb{D}_\a u)\n   \cdot \p^{\al}u+\cdots.
\end{align}
Note that when $\p^\al$ hints $\a$ leaded to the appearance of the term $\mathcal{K}\f$ in \eqref{pp}, and ``$+\cdots$" denotes the terms that can be controlled other than $\mathcal{K}\f$. Then our idea is to use a crucial cancelation observed by Alinhac \cite{Alinhac}. More precisely, the direct computation yields
\beq\label{intro6}
\p^\alpha \D_{i} u =\D_{i}(\p^\alpha u-\D_{d}u  \p^\alpha \varphi)+\D_{d}\D_{i}u \p^\alpha \varphi+\cdots,
\eeq
which implies that the highest order term of $\eta$ will be canceled when we use the good unknown  $U^\al=\p^\al u-\p_d ^\a u\p^\al \varphi$. Thus considering the equations of $U^\al$ in $\Omega$, instead of \eqref{intro5}, we get
\begin{align} \label{intro7}
\hal  \frac{d}{dt} \left(\int_\Omega  J \abs{U^\al}^2 \right)+\hal\int_\Omega J \abs{\mathbb{D}_\a U^\al}^2=  \int_\Sigma (\p^\al pI-\mathbb{D}_\a U^\al)\n   \cdot U^\al+\cdots.
\end{align}
This means that we have canceled the term $\mathcal{K}\f$ in the bulk.
By estimating the right hand side of \eqref{intro7} by using the boundary conditions in \eqref{geometric},  we obtain
\begin{equation}\label{intro8}
\hal  \frac{d}{dt} \left(\int_\Omega  J \abs{U^\al}^2+\int_\Sigma    \abs{\p^\al\eta}^2 \right)+\hal\int_\Omega J \abs{\mathbb{D}_\a U^\al}^2  \ls  \abs{\nabla u}_{C^1(\Sigma)}^2\f+\abs{\nabla u}_{C^1(\Sigma)}\sem{2N}+\cdots
.
\end{equation}
Note that the term $\abs{\nabla u}_{C^1(\Sigma)}^2\f$ in \eqref{intro8} stems from  using the third equation in \eqref{geometric}, which can not be canceled by using Alinhac good unknowns, and $\abs{\nabla u}_{C^1(\Sigma)}\sem{2N}$ results from using the fourth equation in \eqref{geometric}.
However, the crucial point here is that by using the horizontal component of the third equation in \eqref{geometric}, we can show that
$\abs{\nabla u}_{C^1(\Sigma)}^2\ls { \senn}$.
Then a time weighted argument on \eqref{intro8}, together with \eqref{intro0}, yields
\begin{equation} \label{intro9}
 \frac{ \sem{2N}(t)}{(1+t)^{\vartheta}} +\int_0^t  \frac{ \sem{2N}(r)}{(1+r)^{1+\vartheta}} dr +\int_0^t\frac{\ns{ U^{4N}(r)}_{1}}{(1+r)^{\vartheta}}dr\ls
 \fe{2N}(0) +\f(0) , \ \vartheta>0,
\end{equation}
where we denote $\ns{ U^{4N}}_{1}:=\ns{u}_0+\sum_{\al\in \mathbb{N}^{d-1},\abs{\al}=4N}\ns{ U^\al}_{1}$. Another advantage of the good unknown is that, by the definition of $U^\al$,
 \beq
 \abs{u}_{4N+1/2}^2\ls\ns{ U^{4N}}_{1}+\abs{\p_du}_{C^1(\Sigma)}^2\f\ls\ns{ U^{4N}}_{1}+\senn\f.
\eeq
Thus, \eqref{intro9} implies the validity of \eqref{intro001} for $\vartheta>0$ and hence \eqref{intro3} for $\vartheta>0$. Then we see reasonably that not assuming $\lambda>0$ allows for the faster growth of $\f$ in time. 
Finally, to control $\sdm{2N}$ it still involves $\mathcal{K}\f$, and by the interpolation estimate \eqref{kes} of $\mathcal{K}$ in Lemma \ref{klem}, $
\mathcal{K} \ls  (1+t)^{-2 (1-\kappa_d/2)}$, we can show
\begin{equation}\label{intro10}
\int_0^t\frac{\sdm{2N}(r)}{(1+r)^{\vartheta+\kappa_d}}dr \lesssim
 \fe{2N}(0) +\f(0).
\end{equation}
We remark that in the derivation of the estimates \eqref{intro1},  \eqref{intro3}, \eqref{intro4}, \eqref{intro9} and \eqref{intro10}, certain powers of greater than 1 of $\g(t)$ need to be added on the right hand sides of these estimates. Consequently, summing over these estimates, the a priori estimate \eqref{intro_inf_gwp_01} is then closed by assuming that $\fe{2N}(0) +\f(0)$ is sufficiently small. The proof of Theorem \ref{intro_inf_gwp} can be thus completed by a continuity argument by combining the local existence theory in Guo and Tice \cite{GT_lwp} and our a priori estimates. We remark that although the strategy  is carried out unifiedly for both $3D$ and $2D$ cases,  the analysis in $2D$ is much more involved.

\subsection{Notation}\label{nota}

We write $\mathbb{N} = \{ 0,1,2,\dotsc\}$ for the collection of non-negative integers.  When using space-time differential multi-indices, we write $\mathbb{N}^{1+m} = \{ \alpha = (\alpha_0,\alpha_1,\dotsc,\alpha_m) \}$ to emphasize that the $0-$index term is related to temporal derivatives.  For just spatial derivatives we write $\mathbb{N}^m$.  For $\alpha \in \mathbb{N}^{1+m}$ we write $\pa = \dt^{\alpha_0} \p_1^{\alpha_1}\cdots \p_m^{\alpha_m}.$ We define the parabolic counting of such multi-indices by writing $\abs{\alpha} = 2 \alpha_0 + \alpha_1 + \cdots + \alpha_m.$ For $\al\in \mathbb{N}^d$, we write $\al=(\al_h,\al_d).$

For a given norm $\norm{\cdot}$ and  integers $k\ge m\ge 0$, we
introduce the following notation for sums of spatial derivatives:
\begin{equation}\label{deco1}
 \norm{{D}_m^k f}^2 := \sum_{\substack{\al \in \mathbb{N}^{d-1} \\ m\le \abs{ \alpha}\le k} } \norm{\pa  f}^2 \text{ and }
\norm{\nab_m^k f}^2 := \sum_{\substack{\alpha \in \mathbb{N}^{1+(d-1)} \\  m\le \abs{\alpha}\le k} } \norm{\pa  f}^2.
\end{equation}
For space-time derivatives we add bars to our notation:
\begin{equation}\label{deco2}
 \norm{\bar{D}_{m}^{k}  f}^2 := \sum_{\substack{\alpha \in \mathbb{N}^{1+(d-1)} \\  m\le \abs{\alpha}\le k} } \norm{\pa  f}^2 \text{ and }
\norm{\bar{\nab}_m^k f}^2 := \sum_{\substack{\alpha \in \mathbb{N}^{1+d} \\ m\le  \abs{\alpha}\le k} } \norm{\pa  f}^2.
\end{equation}
When $k=m \ge 0$ we will write
\begin{equation}\label{deco3}
 \norm{D^k f}^2 = \norm{D_k^k f}^2, \norm{\nab^k f}^2 =\norm{\nab_k^k f}^2, \norm{\bar{D}^k f}^2 = \norm{\bar{D}_k^k f}^2, \norm{\bar{\nab}^k f}^2 =\norm{\bar{\nab}_k^k f}^2.
\end{equation}

 We
employ the Einstein convention of summing over repeated indices for vector and tensor operations. Throughout the paper we assume that $N\ge 5$ is an integer.
$C>0$   denotes a generic constant that can depend on the parameters of the problem, $d=2,3$, $ N,\ \vartheta$ and $\kappa_d$, but does
not depend on the data, etc. We refer to such constants as ``universal''.   They are allowed to change from line to line.  We
employ the notation $A_1 \lesssim A_2$ to mean that $A_1 \le C A_2$ for a
universal constant $C>0$. To avoid the constants in various time differential inequalities, we employ the following two conventions:
\beq 
\dt A_1+A_2\ls A_3 \text{ means }\dt \widetilde A_1+A_2\ls A_3 \text{ for any }A_1\ls \widetilde A_1\ls A_1
\eeq
and
\beq 
\dt \(A_1+A_2\)+A_3\ls A_4 \text{ means }\dt \(C_1A_1+C_2A_2\)+A_3\ls A_4 \text{ for constants }C_1,C_2>0.
\eeq
We omit the differential elements of the integrals over $\Omega$ and $\Sigma$, and also sometimes the time differential elements.

\section{Preliminaries}
We will assume throughout the rest of the paper that the solutions are given on the interval $[0,T]$ and obey the a priori assumption
\beq\label{apriori_1}
\g(t)\le \delta,\quad\forall t\in[0,T]
\eeq
for an integer $N\ge 3$ and a sufficiently small constant $\delta>0.$ This implies in particular that
\beq\label{apriori_2}
\hal\le J\le \frac{3}{2},\quad\forall (t,x)\in[0,T]\times \bar\Omega.
\eeq
\eqref{apriori_1} and \eqref{apriori_2} will be used frequently, without mentioning explicitly.

\subsection{Energy functionals}
Below we define the energy functionals used in our analysis.
We recall the definitions of  $\fe{2N},\  \fd{2N},\  \sem{2N},\  \sdm{2N},\ \f$, $\senn$ and $\g$ from \eqref{p_energy_def1}--\eqref{ggdef} in Section \ref{sec2.1}, respectively. We define the low-order dissipation rate by
\begin{align}\label{p_ldissipation_def}
 \sdnn   = & \ns{D^2 u_h}_{2(N+2)-1} + \ns{ D u_d}_{2(N+2)}+\ns{ \nabla^4 u }_{2(N+2)-3}
 +\sum_{j=1}^{N+2}  \ns{\dt^j u}_{2(N+2)-2j+1} \nonumber\\& + \ns{\nabla^3 p}_{2(N+2)-3}+ \ns{D\p_d p}_{2(N+2)-2}
+ \ns{\nabla \dt p}_{2(N+2)-3} +\sum_{j=2}^{N+1} \ns{\dt^j p}_{2(N+2)-2j} \nonumber\\& +\as{D^3 \eta}_{2(N+2)-7/2}
+ \as{D\dt \eta}_{2(N+2)-3/2} + \sum_{j=2}^{N+3} \as{\dt^j \eta}_{2(N+2)-2j+5/2}.
\end{align}
Recall that we employ the derivative conventions  \eqref{deco1}--\eqref{deco3} from Section \ref{nota}.
We define the high-order tangential energy by
\begin{equation}\label{i_horizontal_energy}
 \feb{2N} := \ns{ {D}_{0}^{4N-1}   u}_{0} + \ns{\bar{D}_{0}^{4N-2} \dt u}_{0}+ \as{ {D}_{0}^{4N-1}   \eta}_{0} + \as{\bar{D}_{0}^{4N-2}\dt \eta}_{0} 
\end{equation}
and the corresponding tangential dissipation rate by
\begin{equation}\label{i_horizontal_dissipation}
 \fdb{2N} :=  \ns{ {D}_{0}^{4N-1}   u}_{1} + \ns{\bar{D}_{0}^{4N-2} \dt u}_{1}.
\end{equation}
The low-order tangential energy is  
\begin{equation}\label{i_horizontal_energy1}
 \feb{N+2,2} :=  \ns{\bar{D}_{2}^{2(N+2)}  u}_{0}+ \as{\bar{D}_{2}^{2(N+2)}   \eta}_{0} ,
\end{equation}
and the corresponding tangential dissipation  rate is
\begin{equation}\label{i_horizontal_dissipation1}
 \fdb{N+2,2} :=  \ns{\bar{D}_{2}^{2(N+2)}  u}_{1}.
\end{equation}
We also define two special quantities
\beq\label{kdef}
\k:=\ns{  u}_{C^1(\bar\Omega)}+\as{ \nabla^2 u}_{C^1(\Sigma)}
\eeq
and 
\beq\label{bkdef}
\bar\k:=\as{ \nabla u}_{C^1(\Sigma)}.
\eeq
Note that $\bar\k\ls\k$.

We have the following lemma that constrains $N$.

\begin{lem}\label{i_N_constraint}
If $N \ge 3$, then we have that  $\fe{N+2,2} \ls \fe{2N}$  and $\fd{N+2,2} \ls \fe{2N}.$
\end{lem}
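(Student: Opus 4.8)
The plan is to prove both bounds by a direct, term-by-term comparison of the definitions \eqref{i_energy_min_2}, \eqref{p_ldissipation_def} with \eqref{p_energy_def1}. The only tools are the monotonicity of the norms $\norm{\cdot}_k$ on $\Omega$ and $\abs{\cdot}_s$ on $\Sigma$ in the indices $k,s$, the trivial bounds $\norm{D^a v}_k+\norm{\nab^a v}_k\ls\norm{v}_{k+a}$ and $\abs{D^a f}_s\ls\abs{f}_{s+a}$, and the elementary numerical inequalities $2(N+2)\le 4N$, $2(N+2)\le 4N-1$ and $2(N+2)+1\le 4N$, all of which hold once $N\ge 3$ (indeed they leave a slack of $2N-4\ge 2$ derivatives).

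For $\fe{N+2,2}\ls\fe{2N}$: among the velocity terms of \eqref{i_energy_min_2}, each of $\ns{Du_h}_{2(N+2)-1}$, $\ns{u_d}_{2(N+2)}$, $\ns{\nab^3 u}_{2(N+2)-3}$ is $\le\ns{u}_{2(N+2)}\le\ns{u}_{4N}$, while $\ns{\dt^j u}_{2(N+2)-2j}\le\ns{\dt^j u}_{4N-2j}$ for $1\le j\le N+2\le 2N$; the pressure terms are treated identically against $\ns{p}_{4N-1}$ and $\ns{\dt^j p}_{4N-2j-1}$; for the free-surface terms $\abs{D^2\eta}_{2(N+2)-2}\le\abs{\eta}_{2(N+2)}\le\abs{\eta}_{4N-1}$ (this step uses $2(N+2)\le 4N-1$, i.e. $N\ge 3$) and $\abs{\dt^j\eta}_{2(N+2)-2j}\le\abs{\dt^j\eta}_{4N-2j}$ for $1\le j\le N+2$. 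Each right-hand side is one of the finitely many summands of $\fe{2N}$, so summing yields the first claim.

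For $\fd{N+2,2}\ls\fe{2N}$ the scheme is the same: every summand of \eqref{p_ldissipation_def} carries one more derivative than the corresponding summand of \eqref{i_energy_min_2}, hence total order at most $2(N+2)+1\le 4N$, and is therefore still controlled by the top-order $\fe{2N}$-norm of the same field. Concretely $\ns{D^2 u_h}_{2(N+2)-1},\ \ns{Du_d}_{2(N+2)},\ \ns{\nab^4 u}_{2(N+2)-3}\le\ns{u}_{2(N+2)+1}\le\ns{u}_{4N}$, $\ns{\dt^j u}_{2(N+2)-2j+1}\le\ns{\dt^j u}_{4N-2j}$, the pressure terms go against $\ns{p}_{4N-1}$ and $\ns{\dt^j p}_{4N-2j-1}$ as before, and $\abs{D^3\eta}_{2(N+2)-7/2}\le\abs{\eta}_{4N-1}$, $\abs{D\dt\eta}_{2(N+2)-3/2}\le\abs{\dt\eta}_{4N-2}$. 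The only summands that do not close by a bare inclusion are the top-order time-derivative surface terms $\abs{\dt^j\eta}_{2(N+2)-2j+5/2}$, $2\le j\le N+3$, which at the borderline $N=3$ overshoot the matching norm $\abs{\dt^j\eta}_{4N-2j}$ of $\fe{2N}$ by exactly half a derivative; for these I would invoke the kinematic boundary condition $\dt\eta=u\cdot\n$ in \eqref{geometric}, differentiated $j-1$ times in $t$, keep the leading contribution $\dt^{j-1}u\cdot\n$, and estimate $\abs{\dt^j\eta}_{2(N+2)-2j+5/2}\ls\abs{\dt^{j-1}u}_{2(N+2)-2j+5/2}\ls\ns{\dt^{j-1}u}_{2(N+2)-2j+3}\le\ns{\dt^{j-1}u}_{4N-2(j-1)}$, using the trace inequality $\abs{f}_s\ls\norm{f}_{s+1/2}$ (the index inequality $2(N+2)-2j+3\le 4N-2(j-1)$ is once more just $N\ge 3$), the product and lower-order remainders from the Leibniz rule being absorbed by the a priori smallness \eqref{apriori_1}.

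The proof carries no real difficulty beyond bookkeeping: the one thing that must actually be verified is that every one of the finitely many summands fits, with the correct shift, inside $\fe{2N}$, and in particular that the borderline value $N=3$ survives — this is exactly why the hypothesis reads $N\ge 3$ rather than $N\ge 2$, and why the half-derivative surface terms in the dissipation cannot be handled by a plain Sobolev inclusion but require the kinematic equation.
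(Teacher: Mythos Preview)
Your approach---term-by-term comparison of the definitions---is exactly the paper's: the paper's own proof is the single sentence ``The proof follows by simply comparing the definitions of these terms.'' You are, however, more careful than the paper. You correctly observe that the surface-dissipation terms $\as{\dt^j\eta}_{2(N+2)-2j+5/2}$ overshoot the matching $\fe{2N}$-term $\as{\dt^j\eta}_{4N-2j}$ by half a derivative precisely at $N=3$ (the needed inequality $2(N+2)+5/2\le 4N$ reads $N\ge 13/4$, i.e.\ $N\ge 4$ for integers). The paper does not address this; it is either a small oversight in the stated threshold or simply moot, since the paper's standing assumption elsewhere is $N\ge 5$ (see the Notation section and Theorem~\ref{intro_inf_gwp}).

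Your proposed fix via the kinematic condition $\dt\eta=u\cdot\n$ is sound and does recover the borderline $N=3$ case, at the price of converting a purely definitional comparison into an estimate that uses the equation and the smallness \eqref{apriori_1} to absorb the Leibniz remainders; that is harmless in context, since the lemma is stated after \eqref{apriori_1} is already in force, but it means the bound is no longer a bare Sobolev inclusion. For $N\ge 4$ your argument and the paper's are identical and need nothing beyond monotonicity of the norms.
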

\begin{proof}
The proof follows by simply comparing the definitions of these terms.
\end{proof}

For the convenience of comparing our estimates with those of \cite{GT_inf}, we also recall the energy functionals used in \cite{GT_inf}. First,  $\se{2N}:=\fe{2N}+\sem{2N}$ and $\sd{2N}:=\fd{2N}+\sdm{2N}$ are the high-order energy and dissipation rate used in \cite{GT_inf} when $\lambda=0$ therein. Next, \cite{GT_inf} used the following low-order energy
\begin{align} \label{defe2}
\se{N+2,2}  = & \ns{D^{2(N+2)}_2 u}_{0} +\ns{\nabla^3 u }_{2(N+2)-3} +\sum_{j=1}^{N+2}  \ns{\dt^j u}_{2(N+2)-2j}+ \ns{\nabla^2 p}_{2(N+2)-3}   \nonumber
 \\& + \sum_{j=1}^{N+1} \ns{\dt^j p}_{2(N+2)-2j-1}+\as{D^2 \eta}_{2(N+2)-2} +\sum_{j=1}^{N+2}  \as{\dt^j \eta}_{2(N+2)-2j}
\end{align}
and the corresponding dissipation rate
\begin{align} 
\sd{N+2,2}  = & \ns{D^{2(N+2)}_2 u}_{1}  +\ns{ \nabla^4 u }_{2(N+2)-3}
 +\sum_{j=1}^{N+2}  \ns{\dt^j u}_{2(N+2)-2j+1} \nonumber\\& + \ns{\nabla^3 p}_{2(N+2)-3}+  \ns{\nabla \dt p}_{2(N+2)-3} +\sum_{j=2}^{N+1} \ns{\dt^j p}_{2(N+2)-2j} \nonumber\\& +\as{D^3 \eta}_{2(N+2)-7/2}
+ \as{D\dt \eta}_{2(N+2)-3/2} + \sum_{j=2}^{N+3} \as{\dt^j \eta}_{2(N+2)-2j+5/2}.
\end{align}
\cite{GT_inf} also used $\f$, but they used
\beq\label{kkkdef}
\mathcal{K}:=\ns{ u}_{C^1(\bar\Omega)}+\as{D  \nabla u}_{C(\Sigma)}.
\eeq
Note that $\bar\k\ls\mathcal{K}\ls\k$.

\subsection{Perturbed linear form}
In order to use the linear structure of the equations \eqref{geometric}, we will write it as the a perturbation of the linearized equations:
\begin{equation}\label{linear_perturbed}
 \begin{cases}
  \dt u + \nab p - \Delta u = G^1 & \text{in }\Omega \\
  \diverge{u} = G^2 & \text{in }\Omega \\
  (p I_d - \sg u  )e_d  =   \eta  e_d  +G^3 & \text{on }\Sigma \\
  \dt \eta  = u_d +G^4 & \text{on } \Sigma \\
  u =0 & \text{on }\Sigma_b.
 \end{cases}
\end{equation}
Here we have written the vector  $G^1 = G^{1,1} + G^{1,2} + G^{1,3} + G^{1,4} + G^{1,5}$ for
\begin{align}\label{Gi_def_start}
 &G^{1,1}_i = (\delta_{ij} - \mathcal{A}_{ij} )\p_j p,
 \\
 &G^{1,2}_i = u_j\mathcal{A}_{jk} \p_k u_i,
\\
& G^{1,3}_i = [ K^2(1+|D\varphi|^2) - 1]\p_{d}^2u_i - 2KD\varphi\! \cdot\! D \p_{d}u_i ,
\\
& G^{1,4}_i
= [  -K^3(1+|D\varphi|^2) \p_d  J + K^2D\varphi\! \cdot\!  (D J  + D\p_d \varphi)   - K D \! \cdot\! D\varphi\p_d  u_i,
\\
 &G^{1,5}_i =K \dt \varphi  \p_d  u_i.
\end{align}
$G^2$ is the function
\begin{equation}
 G^2=  KD\varphi\! \cdot\! \p_{d}u_h + (1-K)\p_d  u_d ,
\end{equation}
and $G^3$ is the vector defined by that for $d=3$,
\begin{align}\label{G3_def}
G^3 :=&  \p_1 \eta
\begin{pmatrix}
 p-\eta -2(\p_1 u_1 -AK \p_3  u_1  ) \\
 -\p_2 u_1 - \p_1 u_2  + BK \p_3  u_1 + AK \p_3  u_2 \\
 -\p_1 u_3  - K \p_3  u_1 + AK \p_3  u_3
\end{pmatrix}\nonumber
\\ &+
\p_2 \eta
\begin{pmatrix}
  -\p_2 u_1 - \p_1 u_2  + BK \p_d  u_1 + AK \p_3  u_2  \\
  p-\eta -2(\p_2 u_2 -BK \p_3  u_2  )  \\
 -\p_2 u_3  - K \p_3  u_2 + BK \p_3 u_3 
\end{pmatrix}
+
\begin{pmatrix}
  (K-1) \p_3  u_1  +AK \p_3  u_3  \\
  (K-1) \p_3  u_2  +BK \p_3  u_3   \\
  2(K-1)\p_3  u_3 
\end{pmatrix} 
\end{align}
and that for  $d=2$,
\beq\label{G32_def}
G^3 :=  \p_1 \eta
\begin{pmatrix}
 p-\eta -2(\p_1 u_1 -AK \p_2  u_1  ) \\
  -\p_1 u_2  - K \p_d  u_1 + AK \p_2  u_2
\end{pmatrix} 
+
\begin{pmatrix}
  (K-1) \p_2  u_1  +AK \p_2  u_2  \\
  2(K-1)\p_2  u_2 
\end{pmatrix},
\eeq
where $A=\p_1\eta$ and $B=\p_2\eta$.
Note that, according to \eqref{linear_perturbed},
\begin{align}\label{G3_alternate}
 p-\eta =\p_d  u_d  + G^3_d   =- D \eta\!\cdot\! ( D u_d+K \p_d  u_h - K D\eta\p_d  u_d ).
\end{align}
Finally,
\begin{equation}\label{Gi_def_end}
 G^4 = -D \eta\!\cdot\! u_h.
\end{equation}

\subsection{Interpolation estimates}
The fact that $\senn $ and $\sdnn $ have a minimal count of derivatives creates numerous problems when we try to estimate terms with fewer derivatives in terms of $\senn $ and $\sdnn $.  Our way around this is to interpolate between $\senn $ (or $\sdnn $) and $\fe{2N}$. We will prove various interpolation inequalities of the form
\begin{equation}\label{interp_form}
 \ns{X} \ls (\senn )^\theta (\fe{2N})^{1-\theta} \text{ and } \ns{X} \ls (\sdnn )^\theta (\fe{2N})^{1-\theta},
\end{equation}
where $\theta \in [0,1]$, $X$ is some quantity, and $\norm{\cdot}$ is some norm (usually either $L^2$ or $L^\infty$).
In the interest of brevity, we will record these estimates in tables that only list the value of $\theta$ in the estimate. For example,
\beq\begin{array}[t]{| c | c  | c |}
\hline
 L^2   \mid  \senn   &    2D  \\ \hline
D\eta & 1/2 \\ \hline
\end{array}
\quad
\begin{array}[t]{| c | c  | c |}
\hline
 L^\infty   \mid \sdnn      &     3D \\ \hline
\nabla\bar\eta & 2/3\\ \hline
\end{array}\nonumber
\eeq
We understand this to mean that
$$\as{D\eta}_0\ls (\senn )^{1/2} (\fe{2N})^{1/2} \text{ in } 2D, \text{ and }\ns{\nabla\bar\eta}_{L^\infty}\ls (\sdnn )^{1/2} (\fe{2N})^{1/2}\text{ in } 3D.$$
 
We record the interpolation estimates in the following lemma, where the norms for $\bar\eta,u,p, G^1$ and $G^2$ are on $\Omega$ and the norms for $\eta,G^3$ and $G^4$ are on $\Sigma$. In the below $r>0$ will denote for any small constant. 
\begin{lem}\label{le inter}

Let $u,p,\eta$ be the solution of \eqref{linear_perturbed} and $G^i$ be defined in \eqref{Gi_def_start}--\eqref{Gi_def_end}.

\begin{itemize}

\item[(1)] The following tables encode the powers in the $L^2$ and $L^\infty$ interpolation estimates for the solution and their derivatives in terms of $\senn$:
\beq \label{t1}
\begin{split}
\begin{array}[t]{| c | c  | c |}
\hline
 L^2   \mid  \senn   &    2D & 3D \\ \hline
\eta, \bar\eta & 0 & 0\\ \hline
D\eta, \nabla\bar\eta & 1/2 & 1/2\\ \hline
Dp & 1/2 & 1/2\\ \hline
 u_h,\p_d u_h, \p_d^2 u_h   & 1/2& 1/2   \\ \hline
\end{array}
\quad
\begin{array}[t]{| c | c  | c |}
\hline
 L^\infty   \mid \senn      &     2D &3D \\ \hline
\eta, \bar\eta & 1/4 & 1/2\\ \hline
D\eta, \nabla\bar\eta & 3/4 & 1/(1+r)\\ \hline
Dp & 3/4 & 1/(1+r)\\ \hline
 u_h,\p_d u_h, \p_d^2 u_h & 3/4 & 1/(1+r) \\ \hline
\end{array}
\end{split}
\eeq

 The following tables encode the powers in the $L^2$ and $L^\infty$ interpolation estimates for the solution and their derivatives in terms of $\sdnn$:
\beq\label{t2}
\begin{split}
\begin{array}[t]{| c | c  | c |}
\hline
 L^2   \mid \sdnn  & 2D &3D      \\ \hline
\eta, \bar\eta & 0 & 0\\ \hline
D\eta, \nabla\bar\eta & 1/3 & 1/3\\ \hline
D^2\eta, \nabla^2\bar\eta, 
\dt\eta, \dt\bar\eta &2/3&2/3\\ \hline
Dp & 1/3 & 1/3\\ \hline
D^2p  ,
 \p_dp, \p_d^2 p ,
  \dt p  &2/3&2/3\\  \hline
 u_h,\p_d u_h, \p_d^2 u_h   & 1/3 & 1/3  \\ \hline
 Du_h,D\p_d u_h, D\p_d^2 u_h   & 2/3 & 2/3  \\ \hline
\p_d^3 u_h   & 5/6 & 1     \\ \hline
 u_d,\p_d u_d, \p_d^2 u_d, \p_d^3 u_d    & 2/3  & 2/3  \\ \hline
\end{array}
 \quad
\begin{array}[t]{| c | c  | c |}
\hline
 L^\infty   \mid\sdnn  & 2D&3D      \\ \hline
\eta, \bar\eta & 1/6 & 1/3\\ \hline
D\eta, \nabla\bar\eta & 1/2 & 2/3\\ \hline
D^2\eta, \nabla^2\bar\eta ,
\dt\eta, \dt\bar\eta&5/6 & 1/(1+r)\\ \hline
Dp & 1/2 & 2/3\\ \hline
D^2p  ,
 \p_dp, \p_d^2 p,
\dt p  &5/6& 1/(1+r)\\  \hline
 u_h,\p_d u_h, \p_d^2 u_h   & 1/2& 2/3   \\ \hline
 Du_h,D\p_d u_h, D\p_d^2 u_h   & 5/6 & 1/(1+r)   \\ \hline
\p_d^3 u_h   & 1   &1 \\ \hline
 u_d,\p_d u_d, \p_d^2 u_d, \p_d^3 u_d    & 5/6 & 1/(1+r)  \\ \hline
\end{array}
 \end{split}
\eeq

\item[(2)] The following tables encode the powers in the $L^2$ and $L^\infty$ interpolation estimates for the nonlinear terms $G^i$ and their derivatives in terms of $\senn$:
\beq\label{t3}
\begin{split}
\quad\begin{array}[t]{| c | c  | c |}
\hline
 L^2   \mid  \senn   &    2D & 3D \\ \hline
 G^{1}   & 3/4 & 1   \\ \hline
\nabla G^{1}  & 1 & 1\\ \hline
 G_d^{1}   & 1  & 1  \\ \hline
  G^{2}  &
1 & 1   \\ \hline
  G^{3}  & 1 & 1\\  \hline
 G^{4}  & 1 & 1\\  \hline
\end{array}
 \quad
 \begin{array}[t]{| c | c  | c |}
\hline
 L^\infty   \mid  \senn   &    2D & 3D  \\ \hline
 G^{1}   & 1  & 1   \\ \hline
  G^{2}  &
1  & 1   \\ \hline
  G^{3}  & 1& 1  \\  \hline
 G^{4}  & 1 & 1 \\  \hline
\end{array}
\end{split}
\eeq

  The following tables encode the powers in the $L^2$ and $L^\infty$ interpolation estimates for the nonlinear terms $G^i$ and their derivatives in terms of $\sdnn$:
\beq\label{t4}
\begin{split}
\begin{array}[t]{| c | c  | c |}
\hline
 L^2   \mid  \sdnn   &    2D & 3D \\ \hline
 G^{1}   & 1/2   & 2/3  \\ \hline
 \nabla G^{1}   & 5/6   & 1  \\ \hline
\nabla^2 G^{1}   & 1   & 1  \\ \hline
 G_d^{1}   & 5/6   & 1  \\ \hline
 \nabla G_d^{1}   & 1  & 1   \\ \hline
  G^{2}  &
5/6   & 1  \\ \hline
\nabla  G^{2}  &
1    & 1 \\ \hline
  G^{3}  & 5/6 & 1\\  \hline
  D G^{3}  & 1  & 1\\  \hline
 G^{4}  & 5/6 & 1 \\  \hline
 D G^{4}  & 1 & 1 \\  \hline
\end{array}
 \quad
\begin{array}[t]{| c | c  | c |}
\hline
 L^\infty   \mid  \sdnn   &    2D & 3D   \\ \hline
 G^{1}   & 2/3  & 1 \\ \hline
 \nabla G^{1}   & 1 & 1  \\ \hline
 G^{1}_d   & 1  & 1 \\ \hline
  G^{2}  &
1   & 1 \\ \hline
  G^{3}  & 1& 1\\  \hline
 G^{4}    & 1 & 1\\  \hline
\end{array}
\end{split}
\eeq
\end{itemize}
\end{lem}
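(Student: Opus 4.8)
The plan is to reduce every entry of the four tables to four standard ingredients on the slab $\Omega = \{-b < x_d < 0\}$ and on the boundary $\Sigma = \{x_d = 0\}$. First, the interpolation of Sobolev spaces, $\ns{f}_{s} \ls (\ns{f}_{s_0})^{\theta}(\ns{f}_{s_1})^{1-\theta}$ whenever $s = \theta s_0 + (1-\theta) s_1$ with $0 \le \theta \le 1$ — equivalently Hölder's inequality in the horizontal Fourier variable, matching the ``weights'' (Sobolev indices and horizontal-derivative counts) carried by the two endpoints — together with its Sobolev-embedding refinement $\norm{f}^2_{L^\infty} \ls (\ns{f}_{s_0})^{\theta}(\ns{f}_{s_1})^{1-\theta}$ valid once $\theta s_0 + (1-\theta) s_1 > d/2$. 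Second, one-dimensional interpolation and Poincaré in $x_d$, using the bottom condition $u\vert_{\Sigma_b}=0$, to recover the purely-vertical modes of $u$ that the $D$-weighted terms of $\senn$, $\sdnn$ do not see directly (these come from the $\p_d^3 u_h$- and $u_d$-terms). Third, the harmonic (Poisson) extension bounds $\ns{\bar\eta}_{s+1/2} \ls \as{\eta}_{s}$ on $\Omega$ and their $L^\infty$ analogues, which convert the $\eta$-norms in $\senn$, $\sdnn$, $\fe{2N}$ into norms of $\bar\eta$ and of $D\varphi = \tilde b\,D\bar\eta$, $\p_d\varphi$, $\dt\varphi$. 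Fourth, the Moser/Kato--Ponce product estimates $\ns{fg}_s \ls \ns{f}_s\norm{g}^2_{L^\infty} + \norm{f}^2_{L^\infty}\ns{g}_s$ and their higher-order and low-derivative-count variants, for the nonlinear terms $G^i$. Throughout I will use the a priori bounds \eqref{apriori_1}--\eqref{apriori_2}, so that $J$, $K$ and every coefficient built from them are pointwise $1$ plus a perturbation governed by $\bar\eta$, and Lemma \ref{i_N_constraint}, which gives $\senn, \sdnn \ls \fe{2N}$ and so keeps the interpolated right-hand sides meaningful.

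For part (1) I will treat each cell by identifying the lowest-derivative norm that $\senn$ (resp.\ $\sdnn$) controls — the ``low'' endpoint — and the matching norm controlled by $\fe{2N}$ — the ``high'' endpoint — and choosing $\theta$ so that the index equation balances. For $u_h$ and its vertical derivatives the low endpoint comes from $\ns{Du_h}_{2(N+2)-1}$, $\ns{\nabla^3 u}_{2(N+2)-3}$, $\ns{u_d}_{2(N+2)}$ of \eqref{i_energy_min_2} (resp.\ from $\ns{D^2 u_h}$, $\ns{Du_d}$, $\ns{\nabla^4 u}$ and the remaining leading terms of \eqref{p_ldissipation_def}), the high endpoint being $\ns{u}_{4N}$ of \eqref{p_energy_def1}; for $p$ one pairs $\ns{\nabla^2 p}_{2(N+2)-3}$, $\ns{\p_d p}_{2(N+2)-2}$ (resp.\ their dissipation analogues) against $\ns{p}_{4N-1}$; and for $\eta$ and its horizontal derivatives one pairs $\as{D^2\eta}_{2(N+2)-2}$, $\as{\dt\eta}_{2(N+2)-2}$ (resp.\ $\as{D^3\eta}$, $\as{D\dt\eta}$, $\ldots$) against $\as{\eta}_{4N-1}$. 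The ``minimal count $2$'' structure of $\senn$ is precisely why $\eta$ and $\bar\eta$ themselves get $\theta = 0$ in the $L^2$ table while $D\eta$, $\nabla\bar\eta$ already improve to $1/2$: one must spend two derivatives' worth of weight before $\senn$ can be invoked. The $\bar\eta$-rows follow from the $\eta$-rows via the Poisson estimate, its $1/2$-derivative gain matching the $\Sigma$- and $\Omega$-indices; and each $L^\infty$ row is the corresponding $L^2$ interpolation composed with Sobolev embedding, i.e.\ the target index raised just above $d/2$ — comfortably in $2D$ (powers $3/4$, $5/6$) but only up to an arbitrarily small $r > 0$ in $3D$ (powers $1/(1+r)$); in a few $L^2$ dissipation cells such as $\p_d^3 u_h$ the extra horizontal direction available in $3D$ also raises the power.

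For part (2) I will use that each $G^i$, as written in \eqref{Gi_def_start}--\eqref{Gi_def_end}, \eqref{G3_def}, \eqref{G32_def}, is a finite sum of products of two kinds of factor: ``coefficient'' factors built from $\bar\eta$, $D\varphi$, $1-K$, $\p_d\varphi$, $\dt\varphi$ in the bulk and from $D\eta$, $K\vert_\Sigma$ on the boundary — each of which, after \eqref{apriori_1}--\eqref{apriori_2} is used to subtract the $O(1)$ part, is estimated exactly as $\bar\eta$, $\nabla\bar\eta$ or $D\eta$ in part (1) — and ``solution'' factors among $u$, $\nabla u$, $p$, $\nabla p$. Distributing derivatives by the product estimate (in its higher-order and low-count forms), applying the part-(1) interpolation to every factor, multiplying the resulting powers of $\senn$ (resp.\ $\sdnn$), and keeping the worst case produces each cell. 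The reason the tables assign $\theta$ equal to or close to $1$ for $G^1_d$, $\nabla G^1$, $G^2$, $G^3$, $G^4$ but only $3/4$ for $G^1$ in $2D$ is structural: every product term feeding the former list carries at least one ``fast'' factor ($u_d$, $\nabla u$, $D\eta$, $D\bar\eta$, $\ldots$), for which part (1) already gives $\theta \ge 1/2$ or more — together with the further structural fact, from \eqref{G3_alternate}, that $p-\eta$ is itself quadratically small — whereas the term $G^{1,2}_i = u_j \a_{jk} \p_k u_i$ feeding $G^1$ may put all of its $\senn$-weight on the slow component $u_h$, which caps the gain. Checking that no factor is ever charged more derivatives than the governing energy provides is exactly where $N \ge 5$ enters.

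The main obstacle is not any single inequality but bookkeeping at scale: for the roughly thirty cells, in two space dimensions, one must (a) pin down the correct low and high endpoints, match Sobolev and trace indices — including the half-integer shifts from restriction to $\Sigma$ and from the harmonic extension — and solve $s = \theta s_0 + (1-\theta)s_1$ for $\theta$ while honouring the minimal-derivative-count structure of $\senn$ and $\sdnn$; and (b) for each product term in each $G^i$, select among the many Moser-type splittings the one placing every factor in a space the low-order energy or dissipation actually controls, then verify that the exponents sum to at least the tabulated value. I expect the genuinely delicate points to be the just-failing Sobolev embeddings in $3D$ behind the $1/(1+r)$ entries, the accurate accounting of the $1/2$-derivative gain of the Poisson extension when passing between $\eta$ on $\Sigma$ and $\bar\eta$, $D\varphi$ on $\Omega$, and keeping the horizontal/vertical and horizontal-derivative distinctions straight so that the sharper rows — e.g.\ $G^1_d$ against $G^1$, or $\p_d^3 u_h$ against the remaining $u_h$-derivatives — emerge with exactly the stated powers.
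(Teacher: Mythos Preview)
Your proposal has a genuine gap. You plan to derive every entry of tables \eqref{t1}--\eqref{t2} by interpolating between a ``low'' endpoint in $\senn$ or $\sdnn$ and a ``high'' endpoint in $\fe{2N}$, supplemented only by Poincar\'e from the bottom boundary $\Sigma_b$. This is not enough for several key rows --- most transparently $u_h$ and $\p_d u_h$ in the $L^2\mid\senn$ column. The only terms in $\senn$ touching $u_h$ are $\ns{Du_h}_{2(N+2)-1}$ (a horizontal derivative) and $\ns{\nabla^3 u}_{2(N+2)-3}$ (third order). Neither sees $\ns{u_h}_0$ or $\ns{\p_d u_h}_0$: horizontal interpolation cannot recover the zero-frequency mode on $\Rn{d-1}$, and vertical Gagliardo--Nirenberg between $u_h$ (power $0$, from $\fe{2N}$) and $\p_d^3 u_h$ (power $1$, from $\senn$) carries an unavoidable lower-order $\norm{u_h}_0$ term on a finite slab that Poincar\'e from $\Sigma_b$ alone cannot absorb --- $\p_d u_h$ vanishes on neither boundary, so the bootstrap stalls well short of $\theta=1/2$. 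The same obstruction recurs for $u_d$, $\p_d p$, $\dt\eta$, $\dt p$ in the $\sdnn$ tables.

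What the paper actually does, and what your plan omits, is an \emph{iterative use of the linear structure of \eqref{linear_perturbed}} to convert vertical and time derivatives into horizontal ones. The horizontal momentum equation gives $\p_d^2 u_h \sim Dp + D^2 u_h + \dt u_h + G^1_h$, so $\p_d^2 u_h$ inherits the power of $Dp$; the horizontal stress condition on $\Sigma$ gives $\p_d u_h\vert_\Sigma \sim Du_d + G^3_h$, and then Poincar\'e from the \emph{top} (Lemma~\ref{poincare_b}) yields $\norm{\p_d u_h}_0 \ls \abs{Du_d}_0 + \norm{\p_d^2 u_h}_0$; the divergence condition gives $\p_d u_d \sim Du_h + G^2$; the vertical momentum gives $\p_d p$ from $u_d$; the kinematic condition gives $\dt\eta \sim u_d + G^4$; and the vertical stress condition gives $\dt p\vert_\Sigma$. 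These conversions feed one another, and the preliminary $G^i$ bounds they consume are themselves refined as the scheme is rerun. This equation-driven bootstrap is the substance of the lemma, not ``bookkeeping at scale''; without it the claimed powers for $u_h,\p_d u_h$ (and the analogous $\sdnn$ rows) are unreachable by your four ingredients.
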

\begin{proof}
The interpolation powers $\theta$ recorded in the above tables of \eqref{t1}--\eqref{t4} have been determined by using the full structure of the linear parts and the nonlinear terms $G^i$ in the equations \eqref{linear_perturbed}. We must record estimates for too many choices of $X$ to allow us to write the full details of each estimate. However, most of the estimates are straightforward, so we will present only a sketch of how to obtain them, providing details only for the most delicate estimates.

The procedure is basically as follows. First, the definition  \eqref{i_energy_min_2} of $\senn$ and the definition \eqref{p_ldissipation_def} of $\sdnn$, Sobolev embeddings and Lemmas \ref{p_poisson}--\ref{i_slice_interp} give some preliminary estimates of $u,p,\eta,\bar\eta$ (and some of their derivatives), which may have smaller powers $\theta$ than those recorded in the tables of \eqref{t1}--\eqref{t2}.  With these estimates of $u,p,\eta,\bar\eta$, we then estimate the  $G^i$ terms. The definitions \eqref{Gi_def_start}--\eqref{Gi_def_end} of $G^{i}$ show that these terms are linear combinations of products of one or more terms that can be estimated in either $L^2$ or $L^\infty$. For the $L^\infty$ tables  of \eqref{t3}--\eqref{t4} we estimate products with $ \ns{ XY}_{L^\infty} \le \ns{X}_{L^\infty} \pnorm{Y}{\infty}$; for the $L^2$ tables, we estimate products with both
$
 \ns{ XY}_{0} \le \ns{X}_{0} \pnorm{Y}{\infty} \text{ and } \ns{ XY}_{0} \le \ns{Y}_{0} \pnorm{X}{\infty}$,
and then take the larger value of $\theta$ produced by these two bounds, where Lemma \ref{i_N_constraint} will be used implicitly. These gives some preliminary estimates of $G^i$ (and some of their derivatives), which may have smaller powers $\theta$ than those recorded in the tables of \eqref{t3}--\eqref{t4}. With these estimates of $G^i$, we then use the linear structure of  \eqref{linear_perturbed} to improve the estimates of $u$, $\nabla p$, etc, which in turn improve the estimates of $G^i$. Such iterative scheme can be carried out repeatly until that we have the desired powers $\theta$ as recorded in these tables of \eqref{t1}--\eqref{t4}. The procedure is mostly straightforward, and below we explain only how to determine the powers $\theta$ in a bit more details.

Before proceeding further, we may explain the different powers in the  tables  \eqref{t1}--\eqref{t2}, in terms of $L^2$ or $L^\infty$, $\senn$ or $\sdnn$, and $2D$ or $3D$. First, in the  $L^2$ tables, basically a $1$ spatial derivative count contributes a $1/2$ power of $\senn $ and a $1/3$ power of $\sdnn $; this results mainly the differences between the first tables of \eqref{t1} and \eqref{t2}, and also the differences between the first tables of \eqref{t3} and \eqref{t4}.
Second, by Sobolev embeddings, an $L^\infty$ norm contributes an $L^2$ norm  of a $1/2$ horizontal derivative count in $2D$ and of a $1$ horizontal derivative count in $3D$ (or $1/2^-$ and $1^-$ due to the $L^\infty$ limiting embedding cases); this results mainly the differences between the two tables of \eqref{t1} and \eqref{t2}, respectively. Note that the different powers in the  tables  of \eqref{t1}--\eqref{t2} reflects the ones in the  tables in  \eqref{t3}--\eqref{t4}.

Now we start our estimates. First, the powers of $\eta,\ D\eta$, $D^2\eta$, $\bar\eta,\ D\bar\eta$, $D^2\bar\eta$ and $Dp$, $D^2 p$ as recorded in these tables of \eqref{t1}--\eqref{t2} follows by Sobolev embeddings and Lemmas  \ref{p_poisson}--\ref{i_slice_interp}, and they can not be improved.  

Next, we estimate $u_h$. We use the horizontal component of the first equation in \eqref{linear_perturbed} to find
\beq
\p_d^2 u_h\sim  Dp+D^2u_h +\dt u_h+G^{1,1}_h+G^{1,2}_h+D\bar\eta D\p_d u_h+G^{1,4}_h+G^{1,5}_h.
\eeq
It is then straightforward to check that the the powers of $\p_d ^2 u_h$ in these tables of \eqref{t1}--\eqref{t2} are determined by those of  $Dp$. On the other hand, we use the horizontal component of the third equation in \eqref{linear_perturbed} to find
\beq
\p_d u_h \sim  Du_d+ D\eta \nabla u \quad \text{on }\Sigma.
\eeq
Since $u_h=0$ on $\Sigma_b$, this together with Poincar\'e's inequality of Lemmas \ref{poincare_b}--\ref{poincare_trace} and the trace theory allows us to check that the the powers of $u_h, \p_d  u_h$ in these tables of \eqref{t1}--\eqref{t2} are determined by those of  $\p_d^2 u_h$ and hence $Dp$. Note that in particular, the proof of \eqref{t1} is completed. Similarly, the powers of $Du_h, D\p_d  u_h, D\p_d ^2 u_h$ in the two tables of \eqref{t2} are determined by those of  $D^2p$.  

We then estimate $u_d$. We use the second equation in \eqref{linear_perturbed} to find
\beq\p_d  u_d \sim  D u_h + D\bar\eta \p_d  u_h.
\eeq
Since $u_d=0$ on $\Sigma_b$, this together with Poincar\'e's inequality of Lemma \ref{poincare_trace}  implies that the powers of $u_d , \p_d u_d , \p_d ^2u_d , \p_d ^3u_d $ in the two tables of \eqref{t2} are determined by those of  $Du_h, D\p_d  u_h$, $D\p_d ^2 u_h$. 

We now estimate $\p_d  p$. We use the vertical component of the first equation in \eqref{linear_perturbed} to find
\beq
\p_d p \sim    \nabla^2u_d  + \dt u_d  +G^{1,2}_d +G^{1,3}_d  +G^{1,4}_d +G^{1,5}_d .
\eeq
We can check that the the powers of $\p_d p,\p_d ^2p$  in the two tables of \eqref{t2} are determined by those of $\p_d ^2u_d , \p_d ^3u_d $. 

Now we estimate $\p_d ^3 u_h$. We use the horizontal component of the first equation in \eqref{linear_perturbed} to find
\beq \p_d ^3 u_h=\p_d  D^2 u_d +\p_d  Dp+\p_d \dt u_d +G^{1,1}_d +G^{1,2}_d +G^{1,3}_d  +G^{1,4}_d +G^{1,5}_d. 
\eeq
We can check that the the powers of $\p_d ^3 u_h$  in the two tables of \eqref{t2} are determined by those of $G^{1,1}_d \sim \bar\eta \p_d p$, which are the sum of power of $\p_d p$ in the same table and power of $\bar\eta$  in the second table of \eqref{t2}. 

Finally, we estimate $\dt\eta,\dt\bar\eta$ and $\dt p$. We use the fourth equation in \eqref{linear_perturbed} to find
\beq\dt\eta \sim  u_d+ G^4 \quad \text{on }\Sigma.
\eeq
 Then the trace theory Lemma   \ref{p_poisson} imply that the the powers of $\dt\eta,\dt\bar\eta$ in the two tables of \eqref{t2}  are determined by those  of $u_d$. On the other hand, we use the vertical component of the third equation in \eqref{linear_perturbed} to find
\beq \dt p \sim  \dt \eta+2\dt \p_du_d+ \dt G^3_d \quad \text{on }\Sigma.
\eeq
This together with Poincar\'e's inequality of Lemma \ref{poincare_b} yields that the the powers of $\dt p$ in the two tables of \eqref{t2} are   determined by those of  $\dt\eta$. 
Note that the proof of   \eqref{t2} is thus completed.

With the estimates \eqref{t1}--\eqref{t2}  in hand, it is then fairly routine to prove these tables of \eqref{t3}--\eqref{t4}.
\end{proof}
\begin{lem}
Note that most of the interpolation powers $\theta$ in $3D$ of Lemma \ref{le inter}  improve those of Lemma 3.1--Proposition 3.16 in \cite{GT_inf}. This is due to that our low-order energy $\senn$ is stronger than $\se{N+2,2}$, which results also that our analysis is much more simple and direct than those in \cite{GT_inf}. 
\end{lem}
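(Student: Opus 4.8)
Since the assertion is a comparison between the interpolation exponents of Lemma~\ref{le inter} and those of Lemmas~3.1--Proposition~3.16 in \cite{GT_inf}, the plan is to verify it row by row in the tables, and the one structural fact I would establish first is that the low-order energy $\senn$ of \eqref{i_energy_min_2} retains and strengthens every constituent of the functional $\se{N+2,2}$ of \eqref{defe2}. Concretely, $\ns{D^{2(N+2)}_2 u}_0\ls\ns{Du_h}_{2(N+2)-1}+\ns{u_d}_{2(N+2)}$, and the $\dt^j u$, $\nabla^3 u$, $\nabla^2 p$, $\dt^j p$, $D^2\eta$ and $\dt^j\eta$ terms are common to both functionals, so $\se{N+2,2}\ls\senn$; moreover $\senn$ carries the extra pieces $\ns{\p_d p}_{2(N+2)-2}$ and controls $Du_h$ and $u_d$ with strictly more derivatives than $\se{N+2,2}$ does.

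Next I would re-run, verbatim but with $\senn$ in the role of $\se{N+2,2}$, the interpolation-and-bootstrap scheme from the proof of Lemma~\ref{le inter}. The monotonicity $\se{N+2,2}\ls\senn$ propagates through every pass of the iteration: each preliminary $L^2$ or $L^\infty$ bound for $u,p,\eta,\bar\eta$ now draws on a fraction of $\senn$ at least as large as the fraction of $\se{N+2,2}$ used in \cite{GT_inf}, hence produces an exponent $\theta$ no smaller; this feeds, through the linear structure \eqref{linear_perturbed}, into the bounds for the nonlinear terms $G^i$ of \eqref{Gi_def_start}--\eqref{Gi_def_end}, so the improvement is inherited at the next round. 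For the quantities that $\senn$ supplies with one extra horizontal derivative --- most visibly $u_h,\p_d u_h,\p_d^2 u_h$, $Dp$ and $\nabla\bar\eta$ --- the $3D$ entries of \eqref{t1}--\eqref{t4} come out strictly larger than the corresponding ones in \cite{GT_inf}, and reading off the tables one simply checks that the new exponents are no smaller in every row and strictly larger in these rows, which is exactly the claimed improvement.

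Finally, for the ``simpler and more direct'' clause I would point out that, since $\senn$ already pins the key horizontal quantities at (or, via the limiting Sobolev embedding, just below) the largest admissible exponents $1/(1+r)$ in $3D$, the bootstrap in the proof of Lemma~\ref{le inter} closes after a bounded number of passes rather than the longer chained argument of \cite{GT_inf}, and several intermediate estimates there that exist only to compensate for $\se{N+2,2}$ not seeing $Du_h$ become unnecessary. The comparison is bookkeeping rather than analysis, so I do not expect a genuine obstacle; the only steps calling for care are the borderline $3D$ rows --- the $1/(1+r)$ entries, where one must confirm that the $r>0$ loss is still compatible with a strict improvement over \cite{GT_inf}, and the row for $\p_d^3 u_h$, which must be traced back through $G^{1,1}_d\sim\bar\eta\,\p_d p$ just as in the proof of Lemma~\ref{le inter}.
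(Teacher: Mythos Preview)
The paper provides no proof for this statement; it is an observational remark (formatted as a lemma) pointing the reader to compare the tables of Lemma~\ref{le inter} with the corresponding results in \cite{GT_inf}, and attributing the improvement to the strict inclusion $\se{N+2,2}\ls\senn$. Your proposal is a reasonable expanded justification of that remark---you make explicit the containment $\se{N+2,2}\ls\senn$, identify the extra terms $\senn$ carries, and explain how the monotonicity propagates through the bootstrap of Lemma~\ref{le inter}---but this goes well beyond what the paper itself offers, which is nothing more than the assertion.
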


Now we record the interpolation estimates for $\k$, $\bar\k$ and $\mathcal{K}$, as defined by \eqref{kdef}, \eqref{bkdef} and \eqref{kkkdef}, respectively.
\begin{lem}\label{klem}
It holds that
\beq\label{kes}
\mathcal{K}\ls\k \ls \fe{2N}^{\kappa_d/2} \senn^{1-\kappa_d/2},\ d=2,3
\eeq
and 
\beq\label{bkes}
\bar\k \ls  \senn .
\eeq
\end{lem}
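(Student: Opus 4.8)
The plan is to read off both \eqref{kes} and \eqref{bkes} from the interpolation tables of Lemma~\ref{le inter} together with the trace theorem and Sobolev embeddings, noting that $\k$, $\bar\k$ and $\mathcal{K}$ are built solely out of norms of $u$ and its derivatives. First I would dispose of the chain $\bar\k\ls\mathcal{K}\ls\k$, which holds pointwise in $t$ directly from the definitions \eqref{kdef}, \eqref{bkdef}, \eqref{kkkdef}: on $\Sigma$ the family $D\nabla u$ is a subfamily of $\nabla^2 u$, so $\as{D\nabla u}_{C(\Sigma)}\le\as{\nabla^2 u}_{C^1(\Sigma)}$, while $\as{\nabla u}_{C(\Sigma)}\le\ns{u}_{C^1(\bar\Omega)}$ and $\as{\nabla u}_{C^1(\Sigma)}\ls\as{\nabla u}_{C(\Sigma)}+\as{D\nabla u}_{C(\Sigma)}$. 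Hence only the second inequality in \eqref{kes} and the bound \eqref{bkes} remain. I would also record at the outset that $\senn\ls\fe{2N}$, which follows by comparing the definitions \eqref{i_energy_min_2} and \eqref{p_energy_def1} just as in Lemma~\ref{i_N_constraint} (using $N\ge5$); it is needed only for a small-power trade in the $3D$ case of \eqref{kes}.

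For \eqref{bkes}, split $\nabla u|_\Sigma$ into the blocks $Du_h$, $Du_d$, $\p_d u_d$ and $\p_d u_h$. The first three, together with all their tangential derivatives, carry at most the number of derivatives already accounted for by the terms $\ns{Du_h}_{2(N+2)-1}$ and $\ns{u_d}_{2(N+2)}$ in $\senn$, so by the trace theorem and Sobolev embedding on the (at most two-dimensional) surface $\Sigma$ each of them is $\ls\senn$ — the minimal count $2(N+2)\ge14$ from $N\ge5$ leaves ample room. The only genuinely new block is $\p_d u_h$: its tangential derivative $D\p_d u_h=\p_d(Du_h)$ is again controlled by $\ns{Du_h}_{2(N+2)-1}$, and its boundary value comes from the horizontal component of the third equation in \eqref{linear_perturbed}, $\p_d u_h=-Du_d-G^3_h$ on $\Sigma$, which gives $\as{\p_d u_h}_{C(\Sigma)}\ls\as{Du_d}_{C(\Sigma)}+\as{G^3_h}_{C(\Sigma)}\ls\senn$; here one needs only the plain $L^\infty(\Sigma)$ bound of $G^3$, i.e. the row ``$G^3$'' with power $1$ in the $L^\infty$ table of \eqref{t3}, and no $C^1(\Sigma)$ control. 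Adding the blocks yields $\bar\k=\as{\nabla u}_{C^1(\Sigma)}\ls\senn$.

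For the second inequality in \eqref{kes}, I would likewise sort the finitely many derivatives of $u$ occurring in $\k=\ns{u}_{C^1(\bar\Omega)}+\as{\nabla^2 u}_{C^1(\Sigma)}$. Every block that involves only $u_d$ (up to three derivatives) or that carries at least one horizontal derivative on $u_h$ is controlled directly — via Sobolev in $\Omega$ for the $C^1(\bar\Omega)$ part, via trace plus Sobolev on $\Sigma$ for the $C^1(\Sigma)$ part — by $\ns{Du_h}_{2(N+2)-1}+\ns{\nabla^3 u}_{2(N+2)-3}+\ns{u_d}_{2(N+2)}\ls\senn$, hence is $\ls\senn$. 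The only surviving terms are $u_h$, $\p_d u_h$, $\p_d^2 u_h$ measured in $L^\infty(\bar\Omega)$ (together with $\p_d^2 u_h$ on $\Sigma$, whose tangential derivative $D\p_d^2 u_h$ is once more $\ls\senn$), and for exactly these the $L^\infty$ table in \eqref{t1} gives $\ns{\cdot}_{L^\infty}\ls\senn^{\theta}\fe{2N}^{1-\theta}$ with $\theta=3/4$ when $d=2$ and $\theta=1/(1+r)$, $r>0$ arbitrarily small, when $d=3$. Since $\kappa_2=1/2$ this is exactly \eqref{kes} in $2D$. In $3D$, using $\senn\ls\fe{2N}$ one has $\senn^{1/(1+r)}\fe{2N}^{r/(1+r)}=(\senn/\fe{2N})^{\kappa_3/2-r/(1+r)}\,\senn^{1-\kappa_3/2}\fe{2N}^{\kappa_3/2}\le\senn^{1-\kappa_3/2}\fe{2N}^{\kappa_3/2}$ as soon as $r$ is chosen small enough that $r/(1+r)\le\kappa_3/2$, which finishes \eqref{kes}.

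I do not expect a serious obstacle: the argument is essentially bookkeeping against the tables of Lemma~\ref{le inter}. The two points worth getting right are: (i) identifying that the \emph{only} block of $u,\nabla u,\nabla^2 u$ entering $\k$ (respectively of $\nabla u$ entering $\bar\k$) that is not estimated directly by $\senn$ is the one carrying vertical-only derivatives of $u_h$, and that on $\Sigma$ this block requires merely an $L^\infty(\Sigma)$ bound on $G^3$ rather than a $C^1(\Sigma)$ one, because its tangential derivative is ``free'' from $\ns{Du_h}_{2(N+2)-1}$; and (ii) the elementary but indispensable power trade $\senn^{1/(1+r)}\fe{2N}^{r/(1+r)}\le\senn^{1-\kappa_3/2}\fe{2N}^{\kappa_3/2}$ in $3D$, which leans on $\senn\ls\fe{2N}$ and on taking $r$ small relative to $\kappa_3$.
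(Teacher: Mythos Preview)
Your proposal is correct and follows essentially the same route as the paper: read off $\k$ from the $L^\infty$ table in \eqref{t1} for the vertical-only blocks of $u_h$, and use the boundary identity $\p_d u_h=-Du_d-G^3_h$ on $\Sigma$ together with the $G^3$ row of \eqref{t3} for $\bar\k$. The only cosmetic difference is that the paper avoids your final power trade in $3D$ by choosing $r=\kappa_3/(2-\kappa_3)$ directly in the table, which gives $1/(1+r)=1-\kappa_3/2$ on the nose.
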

\begin{proof}
\eqref{kes} follows directly from the definition  \eqref{i_energy_min_2} of $\senn$ and the $L^\infty$ table  in \eqref{t1} with the choice $r=\kappa_3/(2-\kappa_3)$ when $d=3$. On the other hand, we use the horizontal component of the third equation in \eqref{linear_perturbed} to find
\beq
\p_d u_h =-  Du_d- G^3_h \text{ on }\Sigma.
\eeq
\eqref{bkes} then follows  from the definition  \eqref{i_energy_min_2} of $\senn$ and the $L^2$ table  in \eqref{t3}. 
\end{proof}
\subsection{Nonlinear estimates}

 We now present the estimates of the nonlinear terms $G^i$. We first record the estimates at the $N+2$ level.
 \begin{lem}\label{p_G_estimates_N+2}
Let $G^i$ be defined by \eqref{Gi_def_start}--\eqref{Gi_def_end}. Then
\begin{enumerate}
\item It holds that
\begin{align}\label{p_G_e_h_0}
&\ns{ \bar{\nab}_1^{2(N+2)-2} G^1}_{0} +  \ns{ \bar{\nab}_0^{2(N+2)-2}  G^2}_{1} +
 \as{ \bar{D}_{0}^{2(N+2)-2} G^3}_{1/2}\nonumber
\\&\quad+ \as{\bar{D}_{0}^{2(N+2)-2} G^4}_{1/2}\ls \fe{2N}\senn.
\end{align}
\item It holds that
\begin{align}\label{p_G_d_h_0}
&\ns{ \bar{\nab}_2^{2(N+2)-1} G^1}_{0} +  \ns{ \bar{\nab}_1^{2(N+2)-1}  G^2}_{1}  +
 \as{ \bar{D}_{1}^{2(N+2)-1} G^3}_{1/2}\nonumber
\\&\quad + \as{\bar{D}_{1}^{2(N+2)-1 } G^4}_{1/2}+ \as{\bar{D}_{0}^{2(N+2)-2} \dt G^4}_{1/2}
\ls
\fe{2N}\sdnn  .
\end{align}
\end{enumerate}
\end{lem}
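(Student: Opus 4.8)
The plan is to prove both estimates by the same mechanism: expand each $G^i$ according to its definition \eqref{Gi_def_start}--\eqref{Gi_def_end} into a finite sum of products, distribute the derivatives via the Leibniz rule, and estimate each resulting product by placing the factor with more derivatives in $L^2$ (or $H^{1/2}$ on $\Sigma$) and the remaining factors in $L^\infty$, using Sobolev embeddings together with the Poisson-extension and trace lemmas referenced earlier (Lemmas \ref{p_poisson}--\ref{i_slice_interp}, \ref{poincare_b}--\ref{poincare_trace}). The structural point to exploit is that every summand in $G^i$ has at least one factor drawn from $\{\varphi,\bar\eta,\eta,D\varphi,D\eta\}$ or a function thereof ($J,K,\tilde b$, etc.), i.e.\ each $G^i$ is genuinely quadratic or higher in the unknowns, so one factor can always be absorbed into an $\eta$-type norm and the energy bookkeeping closes at the claimed $2(N+2)$ and $2N$ levels.

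First I would establish part (1), the estimate \eqref{p_G_e_h_0} at the $N+2$ level, which is the cleaner of the two. For a typical term such as $G^{1,2}_i = u_j \a_{jk}\p_k u_i$, applying $\bar\nab^\beta_1$ with $1\le|\beta|\le 2(N+2)-2$ and Leibniz produces sums $\bar\nab^{\beta_1} u \cdot \bar\nab^{\beta_2}\a \cdot \bar\nab^{\beta_3}\nab u$; I would estimate the highest-order factor in $L^2$ and the others in $L^\infty$, and check via the derivative counts in the definitions \eqref{p_energy_def1} of $\fe{2N}$ and \eqref{i_energy_min_2} of $\senn$ that each product is bounded by $\fe{2N}\senn$. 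The coefficient terms $\a,J,K$ are handled by noting $\a-I$, $J-1$, $K-1$ are smooth functions of $\bar\eta, D\bar\eta$ vanishing at zero, so their high Sobolev norms are controlled by those of $\bar\eta$ via the composition/product estimates and the a priori smallness \eqref{apriori_1}--\eqref{apriori_2}; the harmonic-extension gain in the Poisson lemma converts $\eta$-norms on $\Sigma$ into $\bar\eta$-norms on $\Omega$ with the correct half-derivative. The boundary terms $G^3,G^4$ are treated identically on $\Sigma$, using the $H^{1/2}$ product/trace estimates, and the alternate identity \eqref{G3_alternate} can be invoked to see that $G^3_d$ (hence $p-\eta$) carries an extra factor of $D\eta$, which is exactly what is needed for the normal-component estimates.

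For part (2), the estimate \eqref{p_G_d_h_0} at the level with one more derivative but measured against $\sdnn$, the same expansion is used but now the bookkeeping is tighter: one must track, term by term, that the extra derivative lands so that the product still closes, using the dissipation-rate definition \eqref{p_ldissipation_def} of $\sdnn$, which contains one more spatial derivative of $u$ (and of $D\eta$, $\dt\eta$) than $\senn$ does. The delicate point — and the one I expect to be the main obstacle — is the terms containing a time derivative, especially $\dt G^4 = -D\dt\eta\cdot u_h - D\eta\cdot\dt u_h$ and the $G^{1,5}$ family $K\dt\varphi\,\p_d u_i$: here $\dt\varphi = \tilde b\,\p_t\bar\eta$ brings in $\dt\bar\eta$, and one must verify that $\dt\bar\eta$ (equivalently $\dt\eta$ on $\Sigma$, via the kinematic equation $\dt\eta = u_d + G^4$) is controlled with the right number of derivatives by $\sdnn$ rather than only by $\senn$. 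This forces a short self-contained sub-argument: use the fourth equation of \eqref{linear_perturbed} to trade $\dt\eta$ for $u_d$ plus lower-order nonlinear terms, then appeal to the $u_d$-entries of the $\sdnn$ tables \eqref{t2} and to the already-established lower-level $G^4$ estimates, closing by a finite iteration exactly as in the proof of Lemma \ref{le inter}. Once the time-derivative terms are pinned down, the remaining summands in \eqref{p_G_d_h_0} are routine Leibniz-plus-Sobolev estimates of the type already carried out for part (1), so I would present those only schematically.
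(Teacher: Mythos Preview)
Your plan for part (1) matches the paper's argument: Leibniz expand, put the highest-order factor in $L^2$ (or $H^{1/2}$ on $\Sigma$), the rest in $L^\infty$, and close via the interpolation tables \eqref{t1}, \eqref{t3}. That part is fine.

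For part (2), however, you have misidentified the obstruction. The time-derivative terms you flag ($\dt G^4$, $G^{1,5}$) are \emph{not} the issue; $\sdnn$ already carries $\as{D\dt\eta}_{2(N+2)-3/2}$ and all the $\dt^j u$, $\dt^j\eta$ needed, and the interpolation entries for $\dt\eta$, $\dt\bar\eta$ in table \eqref{t2} make those products routine. Your proposed detour through the kinematic equation to trade $\dt\eta$ for $u_d$ is unnecessary. The actual difficulty is a \emph{spatial} derivative overshoot on $\eta$: when the full operator hits $\eta$ (or $\bar\eta$), you produce terms like $\as{D^{2(N+2)}\eta\, u}_{1/2}$ from $G^4$, $\as{D^{2(N+2)-1}\nabla\bar\eta\,\nabla u}_{1/2}$ from $G^3$, and $\ns{\nabla^{2(N+2)+1}\bar\eta\,\nabla u}_0$ from $G^1,G^2$. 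But $\sdnn$ only controls $\as{D^3\eta}_{2(N+2)-7/2}$, i.e.\ $D\eta$ up to $H^{2(N+2)-3/2}$, which is a full derivative short of $\as{D^{2(N+2)}\eta}_{1/2}$. Contrary to what you wrote, $\sdnn$ does \emph{not} contain one more derivative of $D\eta$ than $\senn$; it contains half a derivative \emph{less}.

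The paper closes this gap by interpolating between $\sdnn$ and $\fe{2N}$ (which has $\as{\eta}_{4N-1}$, far more derivatives):
\[
\as{D^{2(N+2)}\eta}_{1/2} \ls \sdnn^{(4N-11)/(4N-9)}\,\fe{2N}^{2/(4N-9)},
\]
and pairs this with $\ns{u}_{C^2(\bar\Omega)} \ls \sdnn^{1/2}\fe{2N}^{1/2}$ from the $L^\infty$ table in \eqref{t2}. The product of these two bounds is $\ls \fe{2N}\sdnn$ exactly when $N\ge 4$. The other two exceptional terms reduce to the same estimate via the Poisson-extension lemma and trace theory. Without this interpolation step your proof of \eqref{p_G_d_h_0} does not close.
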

\begin{proof}
The estimates of these nonlinearities are fairly routine to derive: we note that all $G^i$ terms are quadratic or of higher order; then we apply the differential operator and expand using the Leibniz rule; each term in the resulting sum is also at least quadratic, and we estimate one term in $H^k$ ($k=0$ or $1/2$ depending on $G^i$) and the other term in $L^\infty$ or $H^{m}$ for $m$ depending on $k$, using Sobolev embeddings, trace theory, and Lemmas  \ref{p_poisson}--\ref{i_slice_interp} and \ref{i_sobolev_product_1}.

Note that the derivative count in the differential operators is chosen in order to allow estimation by $\senn $ in \eqref{p_G_e_h_0} and by $\sdnn $ in \eqref{p_G_d_h_0}. Because $\senn $ and $\sdnn $ involve minimal derivative counts, there may be terms in the sum $\pa G^i$ that cannot be directly estimated, and we must appeal to the interpolation results in the $L^2$ tables of \eqref{t3} and \eqref{t4} in Lemma \ref{le inter}. This yields directly the estimate \eqref{p_G_e_h_0} by using the $L^2$ table of \eqref{t3} as the derivative count involved also does not exceed those in $\senn $. For the estimate \eqref{p_G_d_h_0}, by using the $L^2$ table of \eqref{t4} it suffices to estimate three exceptions whose derivative count involved  exceed those in $\sdnn $:  $\ns{\nabla^{2(N+2)+1} \bar\eta \nabla u}_{0}$ when estimating $G^1$ and $G^2$, $\as{D^{2(N+2)-1}\nabla\bar\eta \nabla u}_{1/2}$ when estimating $G^3$ and $\as{D^{2(N+2)} \eta  u}_{1/2}$ when estimating $G^4$.
To control them, we use the Sobolev interpolation to have
\begin{align}\label{ii1}
 \as{D^{2(N+2)} \eta}_{1/2}\nonumber&  \le \(\as{D^{3} \eta}_{2(N+2)-7/2}\)^{(4N-11)/(4N-9) } \(\as{D^{3} \eta}_{4N-4}\)^{2/(4N-9)}
\\& \le \sdnn^{(4N-11)/(4N-9) } \fe{2N}^{2/(4N-9)}.
\end{align}
On the other hand, the $L^\infty$ table in \eqref{t2} implies in particular that
\begin{equation}\label{ii2}
 \ns{u}_{C^2(\bar\Omega)} \ls \sdnn^{1/2}\fe{2N}^{1/2}.
\end{equation}
Hence, by \eqref{ii1}--\eqref{ii2} and Lemma \ref{i_sobolev_product_2}, we obtain that for $N\ge 4$,
\begin{align}
\as{D^{2(N+2)} \eta  u}_{1/2}&\ls \as{D^{2(N+2)}  \eta  }_{1/2}\as{ u}_{C^1(\Sigma)} \nonumber
\\& \ls \sdnn^{(4N-11)/(4N-9) } \fe{2N}^{2/(4N-9)}\sdnn^{1/2}\fe{2N}^{1/2}\ls
\fe{2N}\sdnn.
\end{align}
Similarly, by using additionally the trace theory and Lemma \ref{p_poisson}, we have
\begin{align}
\as{D^{2(N+2)-1}\nabla\bar\eta \nabla u}_{1/2}&\ls \as{D^{2(N+2)-1} \nabla \bar\eta}_{1/2} \as{ \nabla u}_{C^1(\Sigma)} \ls
 \ns{D^{2(N+2)-1} \nabla \bar\eta}_{1 }    \as{ \nabla u}_{C^1(\Sigma)}\nonumber
\\&  \ls
\as{D^{2(N+2)}  \eta  }_{1/2} \as{ \nabla u}_{C^1(\Sigma)}\ls
\fe{2N}\sdnn
\end{align}
and
\begin{align}
\ns{\nabla^{2(N+2)+1} \bar\eta \nabla u}_{0}&\ls \ns{\nabla^{2(N+2)+1} \bar\eta}_0\ns{ \nabla u}_{C(\bar\Omega)}\nonumber
\\&\ls
\as{D^{2(N+2)}  \eta  }_{1/2}\ns{ \nabla u}_{C(\bar\Omega)}\ls
\fe{2N}\sdnn.
\end{align}
We then conclude  \eqref{p_G_d_h_0}.
\end{proof}

Now we record the estimates at the $2N$ level.
\begin{lem}\label{p_G_estimates2N}
Let $G^i$ be defined by \eqref{Gi_def_start}--\eqref{Gi_def_end}. Then
\begin{enumerate}
\item It holds that
\begin{align}\label{p_G_e_0}
& \ns{  {\nab}_0^{4N-3} G^1}_{0}   +  \ns{  {\nab}_0^{4N-3}  G^2}_{1} +
 \as{  {D}_{0}^{4N-3}  G^3}_{1/2}+
 \as{ {D}_{0}^{4N-3} G^4}_{1/2}
 \\&\quad+\ns{ \bar{\nab}_0^{4N-4} \dt G^1}_{0}   +  \ns{ \bar{\nab}_0^{4N-4} \dt   G^2}_{1} +
 \as{ \bar{D}_{0}^{4N-4} \dt G^3}_{1/2}+
 \as{\bar{D}_{0}^{4N-4} \dt  G^4}_{1/2} \ls(\fe{2N})^2,\nonumber
 \end{align}
and
\begin{align}\label{p_G_e_1}
& \ns{ \nabla^{4N-2} G^1}_{0}   +  \ns{\nabla^{4N-2}G^2}_{1} +
 \as{ \nabla^{4N-2}  G^3}_{1/2}+
 \as{\nabla^{4N-2}G^4}_{1/2}\nonumber
\\&\quad \ls(\fe{2N})^2+\k\sem{2N}.
 \end{align}
\item It holds that
\begin{align}\label{p_G_d_0}
& \ns{  {\nab}_0^{4N-3} G^1}_{0}   +  \ns{  {\nab}_0^{4N-3}  G^2}_{1} +
 \as{  {D}_{0}^{4N-3}  G^3}_{1/2}+
 \as{ {D}_{0}^{4N-3} G^4}_{1/2}
 \\&\quad+\ns{ \bar{\nab}_0^{4N-3} \dt G^1}_{0}   +  \ns{ \bar{\nab}_0^{4N-3} \dt   G^2}_{1} +
 \as{ \bar{D}_{0}^{4N-3} \dt G^3}_{1/2}+
 \as{\bar{D}_{0}^{4N-2} \dt  G^4}_{1/2} \ls\fe{2N} \fd{2N},\nonumber
 \end{align} 
\begin{align}\label{p_G_d_1} 
& \ns{ \nabla^{4N-2} G^1}_{0}   +  \ns{\nabla^{4N-2}G^2}_{1} +
 \as{ D^{4N-2}  G^3}_{1/2}+
 \as{D^{4N-2}G^4}_{1/2}\nonumber
\\&\quad \ls \fe{2N} \fd{2N}+\k\sdm{2N},
 \end{align}
and
\begin{align}\label{p_G_d_2} 
& \ns{ \nabla^{4N-1} G^1}_{0}   +  \ns{\nabla^{4N-1}G^2}_{1} +
 \as{ D^{4N-1}  G^3}_{1/2}+
 \as{D^{4N-1}G^4}_{1/2}\nonumber
\\&\quad \ls \fe{2N}  \sdm{2N} +\k \f,
 \end{align}
\end{enumerate}
\end{lem}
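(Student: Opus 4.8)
The plan is to prove Lemma \ref{p_G_estimates2N} by the same Leibniz-expansion strategy already used for Lemma \ref{p_G_estimates_N+2}, but now carefully tracking which terms in the expansion force an appeal to the improved $L^\infty$ data (via $\k$) rather than to the dissipation/energy norms alone. First I would note that every $G^i$ is at least quadratic in $(\bar\eta, u, p)$ and their derivatives (see \eqref{Gi_def_start}--\eqref{Gi_def_end}), so applying $\pa$ with $\abs{\al}$ up to $4N-2$, $4N-1$, or hitting with $\dt$, and expanding by the Leibniz rule, produces a finite sum of terms each of which is a product $\pa_1(\text{linear in }\bar\eta)\cdot\pa_2(\text{linear in }u\text{ or }p)$ or $\pa_1 u\cdot\pa_2 u$, etc. The routine case is when neither factor needs more than $4N$ spatial derivatives: then one estimates the higher-derivative factor in $L^2$ by $\fe{2N}$ or $\fd{2N}$ and the lower one in $L^\infty$ by Sobolev embedding, trace theory, and Lemmas \ref{p_poisson}--\ref{i_slice_interp}, exactly as in Lemma \ref{p_G_estimates_N+2}. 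This immediately yields the estimates not involving $\k$, namely \eqref{p_G_e_0} and \eqref{p_G_d_0}, where the top derivative count is $4N-3$ on the pure-spatial part and $4N-4$ resp.\ $4N-3$ on the $\dt$-part, so no term ever exceeds the regularity available in $\fe{2N}$, $\fd{2N}$.

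The substance is in \eqref{p_G_e_1}, \eqref{p_G_d_1}, and \eqref{p_G_d_2}, where the spatial count reaches $4N-2$ or $4N-1$. Here the dangerous terms are exactly those where all the derivatives land on $\bar\eta$ (or on $\eta$, after trace), since $\bar\eta$ has regularity up to $4N-1$ in $\fe{2N}$ through $\as{\eta}_{4N-1}$ but the quantities $\sem{2N}=\as{\eta}_{4N}$ and $\f=\as{\eta}_{4N+1/2}$, $\sdm{2N}$ carry the top-order surface regularity. So for \eqref{p_G_e_1} the offending term is schematically $\nabla^{4N-1}\bar\eta\cdot\nabla u$ (from $G^{1,1}$, $G^{1,3}$, $G^{1,4}$, $G^2$, $G^3$, $G^4$), which by the harmonic extension estimate Lemma \ref{p_poisson} and trace is controlled by $\as{\eta}_{4N-1/2}\norm{\nabla u}_{C(\bar\Omega)}^2$-type bounds; one places $\nabla^{4N-1}\bar\eta$ in $L^2(\Omega)$ against $\sem{2N}$ and the remaining linear-in-$u$ factor in $L^\infty$ against $\ns{u}_{C^1(\bar\Omega)}\le\k$, giving the $\k\sem{2N}$ contribution, while all other Leibniz terms are at most quadratic with $\le 4N-1$ derivatives distributed so that both factors fit inside $\fe{2N}$, giving $(\fe{2N})^2$. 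The analysis of \eqref{p_G_d_1} is identical with $\sem{2N}$ replaced by $\sdm{2N}$ and one of the $\fe{2N}$ upgraded to $\fd{2N}$ by moving a derivative (or a $\dt$) onto the dissipation-controlled factor. For \eqref{p_G_d_2} the top count is $4N-1$, so the worst term is $\nabla^{4N}\bar\eta\cdot\nabla u$; one uses $\as{\eta}_{4N+1/2}=\f$ for the extension factor in $L^2$ and $\norm{\nabla u}_{C^1(\bar\Omega)}\lesssim\k$ for the other, producing the $\k\f$ term, with everything else absorbed into $\fe{2N}\sdm{2N}$ by the same bookkeeping.

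For the genuinely borderline sub-terms, where a factor sits exactly at the top of what $\fd{2N}$ (resp.\ $\sdm{2N}$) allows, I would use the Sobolev interpolation trick already displayed in \eqref{ii1}--\eqref{ii2}: write $\as{D^{k}\eta}$ at an intermediate level as a product $\bigl(\sdnn\bigr)^{\theta}\bigl(\fe{2N}\bigr)^{1-\theta}$ — or the analogous $\fd{2N}$-$\fe{2N}$ split — and pair it with an $L^\infty$ bound on $u$ coming from the $L^\infty$ table of \eqref{t2}, then invoke Lemma \ref{i_sobolev_product_2} for the product in $H^{1/2}(\Sigma)$; since $\theta+(1-\theta)=1$ and all such $\theta$ are in $(0,1)$ for $N\ge 5$, the product lands in $\fe{2N}\sdm{2N}$ (resp.\ $\fe{2N}\fd{2N}$) as claimed. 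The main obstacle, as in Lemma \ref{p_G_estimates_N+2}, is purely organizational: one must check, for each of the five pieces $G^{1,1},\dots,G^{1,5}$ and for $G^2,G^3,G^4$ separately in $2D$ and $3D$ (the $G^3$ formulas \eqref{G3_def}, \eqref{G32_def} differ), that no Leibniz term other than the single top-order $\nabla^{\text{top}}\bar\eta\cdot(\text{linear in }u)$ piece escapes the $\fe{2N}$/$\fd{2N}$ budget, and that the one piece that does escape is exactly the one producing $\k\sem{2N}$, $\k\sdm{2N}$, or $\k\f$. The $2D$ case is tighter because Sobolev embedding is weaker (an $L^\infty$ bound costs a $1/2^-$ horizontal derivative rather than $1^-$), so the interpolation exponents must be verified to still close; this is where I expect the bulk of the careful checking to go, though no single estimate is deep.
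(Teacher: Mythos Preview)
Your approach is essentially the same as the paper's and is correct: expand by Leibniz, handle all terms routinely by $\fe{2N}$ or $\fd{2N}$, and isolate the single exceptional term in each of \eqref{p_G_e_1}, \eqref{p_G_d_1}, \eqref{p_G_d_2} where the full derivative load falls on $\bar\eta$ (or $\eta$), bounding that term by $\k$ times $\sem{2N}$, $\sdm{2N}$, or $\f$ via Lemma~\ref{p_poisson} and Lemma~\ref{i_sobolev_product_2}. Two small corrections to your bookkeeping: the schematic exceptional terms are $\nabla^{4N}\bar\eta\cdot\nabla u$ and $\nabla^{4N+1}\bar\eta\cdot\nabla u$ (one more than you wrote, because $G^{1,4}$ already carries two derivatives of $\varphi$ and $G^2$ is taken in $H^1$), though your norm identifications $\as{\eta}_{4N-1/2}$ and $\as{\eta}_{4N+1/2}$ are right; and the paper's proof of this lemma needs neither the interpolation device of \eqref{ii1}--\eqref{ii2} nor any separate $2D$/$3D$ bookkeeping.
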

\begin{proof}
The proof is similar to Lemma \ref{p_G_estimates_N+2}. The estimate  \eqref{p_G_e_0} is straightforward since $\fe{2N}$  has no minimal derivative restrictions, and there is no problem for the estimate  \eqref{p_G_d_0} even $\eta$ and $\bar\eta$ appearing in $G^i$ are not included in $\fd{2N}$, since we can always control them by $\fe{2N}$ and other terms multiplying them by $\fd{2N}$.

We now turn to the derivation of the estimate  \eqref{p_G_d_2}.  With three exceptions, we may argue as in the derivation of \eqref{p_G_d_0} to estimate the desired norms of all terms  by $\fe{2N} \sdm{2N}$.  The exceptional terms are $\ns{\nabla^{4N+1} \bar\eta \nabla u}_{0}$ when estimating $G^1$ and $G^2$, $\as{D^{4N-1} \nabla \bar\eta \nabla u}_{1/2}$  when estimating $G^3$ and $\as{D^{4N}  \eta  u}_{1/2}$ when estimating $G^4$.  We will now show how to estimate the exceptional terms with $\k \f$. Indeed, by Lemma \ref{i_sobolev_product_2}, we have
\begin{align}
 \as{D^{4N}  \eta  u}_{1/2} \ls  \as{D^{4N}  \eta  }_{1/2}\as{ u}_{C^1(\Sigma)} 
 \ls \f \k.
\end{align}
Similarly, by using additionally the trace theory and Lemma \ref{p_poisson}, we have
\begin{align}
\as{D^{4N-1} \nabla \bar\eta \nabla u}_{1/2} &\ls \as{D^{4N-1} \nabla \bar\eta}_{1/2} \as{ \nabla u}_{C^1(\Sigma)}   
  \ls
\ns{ \bar\eta }_{4N+1} \k\ls
\as{ \eta }_{4N+1/2} \k\ls \f \k 
\end{align}
and
\begin{equation}\label{iGe_3}
 \ns{\nabla^{4N+1} \bar\eta \nabla u}_{0}\ls \ns{\nab^{4N+1} \bar{\eta}}_{0} \ns{\nab u}_{C(\bar\Omega)}  \ls
\as{ \eta }_{4N+1/2} \k\ls \f \k.
\end{equation}
We then conclude \eqref{p_G_d_2}.

Finally,  the estimates \eqref{p_G_e_1} and \eqref{p_G_d_1} follow similarly as \eqref{p_G_d_2} by bounding $\as{\eta}_{4N-1/2}$ by $\sem{2N}$ in \eqref{p_G_e_1} and by $\sdm{2N}$ in \eqref{p_G_d_1}.
\end{proof}

\section{Energy evolution}\label{sec global}

\subsection{Energy evolution in perturbed linear form}

To derive the energy evolution of the mixed time-horizontal spatial derivatives of the solution to \eqref{geometric}, that is, excluding the highest order time derivative and the highest order horizontal spatial derivatives, we shall use the perturbed linear formulation \eqref{linear_perturbed}. 
Recall that we employ the derivative conventions  \eqref{deco1}--\eqref{deco3} from Section \ref{nota}.

We first record the estimates for the evolution of the energy at the $2N$ level.
\begin{prop}\label{p_upper_evolution 2N}
It holds that
\begin{align}\label{p_u_e_00}
& \frac{d}{dt} \(\ns{ \bar{D}_{0}^{4N-1}  u}_{0} + \ns{    \bar{D}^{4N-3} D\dt  u}_{0} + \as{ \bar{D}_{0}^{4N-1}  \eta}_{0} + \as{    \bar{D}^{4N-3} D\dt  \eta}_{0}\)  \nonumber\\&\quad+ \ns{ \bar{D}_{0}^{4N-1}  u}_{1} + \ns{    \bar{D}^{4N-3} D\dt  u}_{1}  \ls    \sqrt{\fe{2N}}  \fd{2N}  +\sqrt{ \fd{2N} \k\sdm{2N} } .
\end{align}
\end{prop}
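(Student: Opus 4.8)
The plan is to run a tangential $L^2$ energy estimate on the perturbed linear system \eqref{linear_perturbed}. Let $\pa$ be a horizontal–temporal derivative of the type appearing in $\ns{\bar{D}_0^{4N-1}u}_0$ or $\ns{\bar{D}^{4N-3}D\dt u}_0$; all such $\pa$ are tangent to $\Sigma$ and to the flat bottom $\Sigma_b$, so they commute with the boundary conditions and, crucially, $\pa u=0$ on $\Sigma_b$. Rewriting the bulk momentum equation as $\dt u+\diverge(pI_d-\sg u)=G^1-\nabla G^2$ (using $\Delta u=\diverge(\sg u)-\nabla\diverge u$ and $\diverge u=G^2$), applying $\pa$, pairing with $\pa u$ in $L^2(\Omega)$ and integrating by parts, I obtain
\begin{align*}
&\hal\dtt\(\ns{\pa u}_0+\as{\pa\eta}_0\)+\hal\ns{\sg\pa u}_0\\
&\quad=\int_\Omega\pa p\,\pa G^2+\int_\Omega\(\pa G^1-\nabla\pa G^2\)\cdot\pa u+\int_\Sigma\pa\eta\,\pa G^4-\int_\Sigma\pa G^3\cdot\pa u,
\end{align*}
where the viscous term gave $\hal\ns{\sg\pa u}_0$, the $\Sigma_b$ boundary integral vanished, and on $\Sigma$ the stress condition $(\pa pI_d-\sg\pa u)e_d=\pa\eta\,e_d+\pa G^3$ together with the kinematic relation $\pa u_d=\dt\pa\eta-\pa G^4$ turned the boundary contribution into $\hal\dtt\as{\pa\eta}_0$ plus the displayed surface forcing. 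Summing over the relevant multi-indices reproduces the energy on the left of \eqref{p_u_e_00}, and Korn's inequality with Poincaré's inequality from $\Sigma_b$ (Lemmas \ref{poincare_b}--\ref{poincare_trace}) bounds $\sum\ns{\sg\pa u}_0$ from below by the dissipation $\ns{\bar{D}_0^{4N-1}u}_1+\ns{\bar{D}^{4N-3}D\dt u}_1$.

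It remains to estimate the forcing. The term $\nabla\pa G^2$ is integrated by parts back onto $\pa u$ (using $\diverge\pa u=\pa G^2$ and $\pa u|_{\Sigma_b}=0$), producing a bulk term $\int_\Omega|\pa G^2|^2$ and a surface term $-\int_\Sigma\pa G^2\,\pa u_d=-\int_\Sigma\pa G^2(\dt\pa\eta-\pa G^4)$; the piece containing $\dt\pa\eta$ is rewritten as $-\dtt\int_\Sigma\pa G^2\,\pa\eta+\int_\Sigma\dt\pa G^2\,\pa\eta$, and since $\abs{\int_\Sigma\pa G^2\,\pa\eta}\ls\sqrt\delta\,\as{\pa\eta}_0$ by the a priori bound \eqref{apriori_1}, the $\dtt$-piece is absorbed into the energy via the equivalence convention of Section \ref{nota}, while the rest joins the other forcing terms. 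Every remaining term is then controlled by Cauchy--Schwarz, trace theory and negative Sobolev norms — on $\Sigma$ one puts $\pa u$ into $H^{1/2}(\Sigma)$, bounded through the trace $H^1(\Omega)\hookrightarrow H^{1/2}(\Sigma)$ by $\sqrt{\fd{2N}}$ — together with the nonlinear bounds of Lemma \ref{p_G_estimates2N}: for multi-indices of horizontal count at most $4N-3$ one invokes \eqref{p_G_d_0}, which bounds the relevant nonlinear norm squared by $\fe{2N}\fd{2N}$ and hence yields a contribution $\ls\sqrt{\fe{2N}\fd{2N}}\,\sqrt{\fd{2N}}=\sqrt{\fe{2N}}\,\fd{2N}$; for the borderline horizontal count $4N-2$ appearing in the pure-horizontal pieces one invokes \eqref{p_G_d_1}, which bounds the relevant nonlinear norm squared by $\fe{2N}\fd{2N}+\k\sdm{2N}$ and hence yields a contribution $\ls\sqrt{\fe{2N}}\,\fd{2N}+\sqrt{\fd{2N}\k\sdm{2N}}$. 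Any parasitic $\eps\ns{\bar{D}_0^{4N-1}u}_1$ or $\eps\ns{\bar{D}^{4N-3}D\dt u}_1$ is absorbed on the left, and \eqref{p_u_e_00} follows.

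I expect the only genuine difficulty to lie in these borderline forcing terms, where after the integration by parts roughly $4N$ derivatives fall on a single $G^i$: concretely the three exceptional products $\ns{\nabla^{4N+1}\bar\eta\,\nabla u}_0$, $\as{D^{4N-1}\nabla\bar\eta\,\nabla u}_{1/2}$ and $\as{D^{4N}\eta\,u}_{1/2}$ already isolated in Lemma \ref{p_G_estimates2N}. There $\eta$ and $\bar\eta$ sit one derivative above what $\fd{2N}$ controls, so the naive $\fe{2N}\fd{2N}$ bound fails; one must exploit the precise algebraic form of $G^i$ so that the high factor is exactly $\nabla^{4N}\bar\eta$ (respectively $\abs{D\eta}_{4N-1/2}$, controlled by $\sdm{2N}$) paired with a low-order factor of $u$ placed in $L^\infty(\bar\Omega)$ or $L^\infty(\Sigma)$ and hence absorbed into $\sqrt\k$ — this is exactly the origin of the $\k\sdm{2N}$ term on the right of \eqref{p_u_e_00} and the content of \eqref{p_G_d_1}. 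The remaining bookkeeping (the hidden surface term from $\nabla\pa G^2$ above, and checking that all derivative counts stay within $\fe{2N}$, $\fd{2N}$, $\sdm{2N}$) is routine.
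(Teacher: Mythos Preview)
Your energy identity is the same as the paper's \eqref{p_u_e_111}, but your handling of the forcing terms has two genuine problems. First, integrating $\nabla\pa G^2$ by parts in the \emph{full} gradient is a detour that does not close: the surface term $\int_\Sigma\pa G^2\,\pa u_d$ (and the piece $\int_\Sigma\pa G^2\,\pa\eta$ from your $d/dt$ trick) requires trace control of $\pa G^2$ on $\Sigma$ at the top count $\abs{\alpha}=\abs{\alpha_h}=4N-1$, i.e.\ essentially $\norm{D^{4N-1}G^2}_{1}$, which is one derivative beyond \eqref{p_G_e_0}--\eqref{p_G_e_1} and forces in $\k\sem{2N}$ or worse. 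Your claimed absorption $\abs{\int_\Sigma\pa G^2\,\pa\eta}\ls\sqrt\delta\,\as{\pa\eta}_0$ is not justified there; moreover the quadratic bulk term $\int_\Omega\abs{\pa G^2}^2\ls\fe{2N}\fd{2N}+\k\sdm{2N}$ does not fit the stated right-hand side, since $\k\sdm{2N}$ is not dominated by $\sqrt{\fd{2N}\k\sdm{2N}}$.

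The missing device is a \emph{horizontal} integration by parts, which the paper uses uniformly on all four forcing terms. Since the relevant $\alpha$ have $\abs{\alpha_h}\ge1$ (the case $\abs{\alpha_h}=0$ is either \eqref{intro0} or pure temporal $\dt^j$, where direct Cauchy--Schwarz with \eqref{p_G_d_0} already suffices), one writes $\alpha=\beta+(\alpha-\beta)$ with $\beta\in\mathbb{N}^{d-1}$, $\abs{\beta}=1$, and moves $\p^\beta$ onto $\pa u$:
\[
\int_\Omega \pa u\cdot\big(\pa G^1-\nabla\pa G^2\big)
=-\int_\Omega \p^{\alpha+\beta}u\cdot\big(\p^{\alpha-\beta}G^1-\nabla\p^{\alpha-\beta}G^2\big)
\ls\norm{\pa u}_1\big(\norm{\p^{\alpha-\beta}G^1}_0+\norm{\p^{\alpha-\beta}G^2}_1\big),
\]
which lands the $G^i$ factor in the range of \eqref{p_G_d_0}--\eqref{p_G_d_1} (your ``borderline $4N-2$'' count is correct only \emph{after} this shift). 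Without it, the case $\abs{\alpha_h}=4N-1$ would require $\norm{D^{4N-1}G^1}_0$, available only through \eqref{p_G_d_2} with an unacceptable $\k\f$. The $G^3$ term is handled the same way on $\Sigma$. Finally, the $G^4$ term at $\alpha_0=0$, $\abs{\alpha_h}=4N-1$ needs a further structural step you omit: writing $\pa G^4=-D\pa\eta\cdot u_h-[\pa,u_h]\cdot D\eta$ and integrating the first piece by parts via symmetry to produce $\tfrac12\int_\Sigma\abs{\pa\eta}^2\,D\cdot u_h$, as in \eqref{e23}; a direct duality estimate on $\pa G^4$ would again cost $\k\f$.
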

\begin{proof}
Let $\al\in \mathbb{N}^{1+(d-1)}$ be so that $1\le |\al|\le 4N$ and $1\le |\al_h|\le 4N-1$. Applying $\partial^\alpha$  to the first equation of \eqref{linear_perturbed} and then taking the dot product with $\pa u$, using the other equations of \eqref{linear_perturbed} as in Proposition 6.2 of \cite{GT_inf}, we find that
\begin{align} \label{p_u_e_111}
 &\hal \frac{d}{dt}\left(  \int_\Omega \abs{\pa{u}}^2  +    \int_\Sigma \abs{\pa \eta}^2  \right)
+ \hal \int_\Omega \abs{\sg \pa{u}}^2\nonumber
 \\&\quad= \int_\Omega    \pa u  \cdot (\pa G^1-\nabla \pa G^2) +\int_\Omega    \pa p \pa G^2
  + \int_\Sigma -\pa u \cdot \pa G^3 +  \pa \eta  \pa G^4.
\end{align}

We now estimate the right hand side of \eqref{p_u_e_111}.  Since $1\le |\al_h|\le 4N-1$, we may write $\alpha = \beta +(\alpha-\beta)$ for some $\beta \in \mathbb{N}^{d-1}$ with $\abs{\beta}=1$.  Hence $\abs{\alpha-\beta} \le 4N-1$ and $|\alpha_h-\beta_h|\le 4N-2$, we can then integrate by parts and use the estimates \eqref{p_G_d_0}--\eqref{p_G_d_1} of Lemma \ref{p_G_estimates2N} to have
\begin{align}\label{i_de_4}
\abs{ \int_\Omega      \pa  u \cdot  (\pa G^1-\nabla \pa G^2) } &= \abs{ \int_\Omega      \p^{\alpha+\beta}  u \cdot    (\p^{\alpha-\beta} G^1-\nabla \p^{\alpha-\beta} G^2) }
 \nonumber\\&\le \norm{\p^{\alpha}  u}_{1} \( \norm{ \p^{\alpha-\beta} G^1 }_{0}+\norm{ \p^{\alpha-\beta} G^2 }_{1} \)\nonumber\\&
\ls \sqrt{\fd{2N} } \sqrt{ \fe{2N} \fd{2N}  + \k\sdm{2N}} .
\end{align}
For the $G^2$ term we do not need to integrate by parts: by \eqref{p_G_d_0}--\eqref{p_G_d_1},
\begin{align}\label{i_de_5}
\abs{ \int_\Omega    \pa p \pa G^2 }
&\le \norm{\pa p}_{0}  \norm{ \p^{\alpha-\beta}\p^\beta G^2 }_{0}
\le \norm{\pa p}_{0}  \norm{ \p^{\alpha-\beta} G^2 }_{1}\nonumber
  \\&\ls \sqrt{\fd{2N} } \sqrt{ \fe{2N} \fd{2N}  + \k\sdm{2N} } .
\end{align}
For the $G^3$ term we integrate by parts and use the trace estimate to see that, by \eqref{p_G_d_0}--\eqref{p_G_d_1},
\begin{align}\label{i_de_6}
\abs{ \int_\Sigma     \pa  u \cdot   \pa G^3 } &= \abs{ \int_\Sigma  \p^{\alpha+\beta}  u \cdot   \p^{\alpha-\beta} G^3 }
 \le \abs{\p^{\alpha+\beta}  u}_{-1/2}   \abs{ \p^{\alpha-\beta} G^3 }_{1/2}\nonumber \\
&\le \abs{\p^{\alpha }  u}_{1/2}   \abs{ \p^{\alpha-\beta} G^3 }_{1/2}
 \le \norm{\p^{\alpha}  u}_{1}  \abs{ \p^{\alpha-\beta} G^3 }_{1/2} \nonumber
  \\&\ls \sqrt{\fd{2N} } \sqrt{ \fe{2N} \fd{2N}  + \k\sdm{2N} }.
\end{align}
For the $G^4$ term we must split to two cases: $\alpha_0\ge 1$ and $\alpha_0 =0$.  In the former case, there is at least one temporal derivative in $\pa$, so by \eqref{p_G_d_0} we have
\begin{equation}\label{i_de_71}
\abs{ \int_\Sigma     \pa \eta    \pa G^4 }     \le \abs{\bar{D}^{4N-2}_0 \dt \eta }_{0}\abs{\bar{D}^{4N-2}_0 \dt G^4 }_{0}
 \ls \sqrt{\fd{2N} } \sqrt{ \fe{2N} \fd{2N}  }.
\end{equation}
In the latter case, there involves only spatial derivatives, that is, $1\le \abs{\al}=|\al_h|\le 4N-1$, and we write
\begin{equation}
 \pa G^4 = \pa ( D  \eta \cdot u_h) =  -  D  \pa \eta \cdot   u_h-\[\pa, u_h\] \cdot D\eta.
 \end{equation}
We use the integration by parts to see that, by Lemma \ref{i_sobolev_product_2},
\begin{align}\label{e23}
 &-\int_\Sigma  \pa \eta    D  \pa \eta \cdot u_h
 = -\hal\int_\Sigma      D  \abs{\pa \eta}^2 \cdot u_h= \hal\int_\Sigma  \abs{\pa  \eta}^2 D \cdot u_h
\nonumber  \\& \quad\ls  \abs{\pa  \eta}_{-1/2} \abs{\pa  \eta D \cdot u_h}_{1/2} \ls \abs{\pa  \eta}_{-1/2} \abs{\pa  \eta}_{1/2} \abs{ D   u_h}_{C^1(\Sigma)}\nonumber  \\& \quad\ls     \sqrt{\fd{2N}   \sdm{2N} \k}.
\end{align}
On the other hand, similarly as the derivation of \eqref{p_G_d_1} in Lemma \ref{p_G_estimates2N}, we have
\begin{equation}
 \as{ \[\pa, u\] D\eta}_{1/2}
 \ls  { \fe{2N} \fd{2N}  + \k\sdm{2N}} 
\end{equation}
and hence
\begin{align}\label{e2312}
 \int_\Sigma    \pa \eta    \[\pa, u\] D\eta
 &\ls \abs{\pa \eta }_{-1/2}   \abs{ \[\pa, u\] D\eta}_{1/2}\nonumber
 \\&\ls \sqrt{\fd{2N} }\sqrt{ \fe{2N} \fd{2N}  + \k\sdm{2N} }.
 \end{align}
In light of \eqref{e23} and \eqref{e2312}, we conclude that
\begin{align}\label{i_de_7}
\abs{ \int_\Sigma     \pa  \eta \cdot   \pa G^4 }  \ls \sqrt{\fd{2N} } \sqrt{ \fe{2N} \fd{2N}  + \k\sdm{2N} }.
\end{align}

Now, by the estimates \eqref{i_de_4}--\eqref{i_de_71}  and \eqref{i_de_7}, we deduce from \eqref{p_u_e_111} that for all $\al\in \mathbb{N}^{1+(d-1)}$ with $1\le |\al|\le 4N$ and $1\le |\al_h|\le 4N-1$,
\begin{align}  \label{p_u_e_9}
 &\hal \frac{d}{dt}\left(  \int_\Omega \abs{\pa{u}}^2  +    \int_\Sigma \abs{\pa \eta}^2  \right)
+ \hal \int_\Omega \abs{\sg \pa{u}}^2\nonumber
 \\&\quad\ls  \sqrt{\fd{2N} } \sqrt{ \fe{2N} \fd{2N}  + \k\sdm{2N} }
  \ls  \sqrt{\fe{2N}} \fd{2N}  +\sqrt{ \fd{2N} \k\sdm{2N} } .
\end{align}
The estimate \eqref{p_u_e_00} then follows from \eqref{p_u_e_9} by summing over such $\alpha$ and \eqref{intro0}, using Korn's inequality of Lemma \ref{i_korn} since $\pa u=0$ for $\al \in \mathbb{N}^{1+(d-1)}$.
\end{proof}

We then record the estimates for the evolution of the energy at the $N+2$ level. 

\begin{prop}\label{p_upper_evolution N+2}
It holds that
\begin{align}\label{p_u_e_11}
&\frac{d}{dt} \(\ns{ \bar{D}_{2}^{2(N+2)-1}  u}_{0} + \ns{    \bar{D}^{2(N+2)-1} D  u}_{0} +\as{ \bar{D}_{2}^{2(N+2)-1}  \eta}_{0} + \as{    \bar{D}^{2(N+2)-1} D  \eta}_{0}\)  \nonumber\\&\quad+ \ns{ \bar{D}_{2}^{2(N+2)-1}  u}_{1} + \ns{    \bar{D}^{2(N+2)-1} D  u}_{1}    \ls   \sqrt{\fe{2N}} \sdnn     .
\end{align}
\end{prop}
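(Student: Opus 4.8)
The plan is to mimic the proof of Proposition \ref{p_upper_evolution 2N}, but now applied to the mixed time-horizontal derivatives at the $N+2$ level, where the bookkeeping is governed by $\senn$ and $\sdnn$ rather than $\fe{2N}$ and $\fd{2N}$. Specifically, for each $\al\in\mathbb{N}^{1+(d-1)}$ with $2\le|\al|\le 2(N+2)$ and $2\le|\al_h|\le 2(N+2)-1$ — and separately for the terms $\bar D^{2(N+2)-1} D u$, which carry one extra full horizontal derivative beyond the tangential block — I would apply $\pa$ to the first equation of \eqref{linear_perturbed}, pair with $\pa u$, and use the divergence equation together with the boundary conditions exactly as in \eqref{p_u_e_111}, obtaining
\begin{align*}
&\hal\frac{d}{dt}\Bigl(\int_\Omega|\pa u|^2+\int_\Sigma|\pa\eta|^2\Bigr)+\hal\int_\Omega|\sg\pa u|^2
\\&\quad=\int_\Omega\pa u\cdot(\pa G^1-\nab\pa G^2)+\int_\Omega\pa p\,\pa G^2+\int_\Sigma\bigl(-\pa u\cdot\pa G^3+\pa\eta\,\pa G^4\bigr).
\end{align*}

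Next I would estimate the right-hand side, peeling off one horizontal derivative $\p^\beta$ with $|\beta|=1$ from $\pa=\p^{\alpha-\beta}\p^\beta$ so that the unfavorable factor lands on $u$ (controlled in $\snorm{\cdot}{1}$ by $\sqrt{\sdnn}$) while the nonlinear factor carries at most $2(N+2)-1$ derivatives on $G^1,G^2,G^3$ and at most $2(N+2)-2$ derivatives on $\dt G^4$. These are precisely the norms bounded by $\fe{2N}\sdnn$ in \eqref{p_G_d_h_0} of Lemma \ref{p_G_estimates_N+2}. The $G^1,\nab G^2$ and $G^3$ contributions are then handled by integration by parts (with the trace estimate $\abs{\p^{\alpha+\beta}u}_{-1/2}\le\abs{\p^\alpha u}_{1/2}\le\norm{\p^\alpha u}_1$ for the boundary term) exactly as in \eqref{i_de_4}, \eqref{i_de_5}, \eqref{i_de_6}, yielding a bound $\sqrt{\sdnn}\sqrt{\fe{2N}\sdnn}\ls\sqrt{\fe{2N}}\,\sdnn$. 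For the $G^4$ term I would again split on $\alpha_0\ge1$ versus $\alpha_0=0$: in the temporal case use $\abs{\bar D_0^{2(N+2)-2}\dt G^4}_{1/2}\ls\sqrt{\fe{2N}\sdnn}$ from \eqref{p_G_d_h_0}; in the purely spatial case write $\pa G^4=-D\pa\eta\cdot u_h-[\pa,u_h]\cdot D\eta$, and for the first piece integrate by parts to get $\hal\int_\Sigma|\pa\eta|^2 D\cdot u_h$, bounded using Lemma \ref{i_sobolev_product_2} by $\abs{\pa\eta}_{-1/2}\abs{\pa\eta}_{1/2}\abs{Du_h}_{C^1(\Sigma)}\ls\sdnn\sqrt{\fe{2N}}$ since $\abs{\pa\eta}_{1/2}\ls\sqrt{\sdnn}$ and $\abs{Du_h}_{C^1(\Sigma)}^2\ls\senn\le\fe{2N}$; the commutator piece is bounded as in the derivation of \eqref{p_G_d_h_0}.

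Finally I would sum over all admissible $\al$, add the basic energy identity \eqref{intro0}, and apply Korn's inequality (Lemma \ref{i_korn}) — legitimate since $\pa u=0$ on $\Sigma_b$ for $\al\in\mathbb{N}^{1+(d-1)}$ — to upgrade $\int_\Omega|\sg\pa u|^2$ to $\ns{\pa u}_1$, which gives \eqref{p_u_e_11}. The main obstacle, and the only place where genuine care beyond transcription of Proposition \ref{p_upper_evolution 2N} is needed, is verifying that every derivative count appearing after the Leibniz expansion of $\pa G^i$ falls within the ranges covered by Lemma \ref{p_G_estimates_N+2} (in particular that peeling one derivative off onto $u$ always keeps $G^1,G^2$ at $\le 2(N+2)-1$, $G^3$ at $\le 2(N+2)-1$, and that the $\dt G^4$ term never exceeds $2(N+2)-2$ derivatives), and that the exceptional top-order terms involving $\nab\bar\eta\,\nab u$ and $\eta\, u$ are absorbed using the Sobolev-interpolation estimates \eqref{ii1}--\eqref{ii2} together with $\bar\k\ls\senn$; one must also check that the extra $\bar D^{2(N+2)-1}Du$ block, which is not of the form $\pa$ with $\al$ tangential, still vanishes on $\Sigma_b$ so that Korn applies, which it does since it involves at least one horizontal derivative of $u$ and $u|_{\Sigma_b}=0$.
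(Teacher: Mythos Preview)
Your overall strategy matches the paper's, but there are two genuine gaps where transcription of Proposition \ref{p_upper_evolution 2N} fails.

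First, at the top order $|\al|=2(N+2)$ with $\al_0=0$, your treatment of $\int_\Sigma\pa\eta\,\pa G^4$ breaks down. You claim $\abs{\pa\eta}_{1/2}\ls\sqrt{\sdnn}$, but $\sdnn$ only contains $\as{D^3\eta}_{2(N+2)-7/2}$, which controls $\eta$ up to order $2(N+2)-\tfrac12$, not $2(N+2)+\tfrac12$. The paper does \emph{not} use the $H^{-1/2}$--$H^{1/2}$ duality here; instead it bounds $\int_\Sigma\pa\eta\,\pa G^4$ by $\abs{D^{2(N+2)}\eta}_0\abs{D^{2(N+2)}G^4}_0$ and then interpolates $\as{D^{2(N+2)}\eta}_0$ and $\as{D^{2(N+2)+1}\eta}_0$ between $\sdnn$ and $\fe{2N}$, combining with $\as{u_h}_{C(\Sigma)}\ls\sdnn^{1/2}\fe{2N}^{1/2}$ from the $L^\infty$ table in \eqref{t2}. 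The resulting powers close only for $N\ge 5$, which is precisely where that hypothesis enters.

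Second, at the bottom order $|\al|=2$ you overlook the minimal-derivative restrictions in $\sdnn$. Your index range $|\al_h|\ge 2$ already excludes the pure temporal case $\al_0=1$, $|\al_h|=0$, which is part of $\bar D_2^{2(N+2)-1}$. Even for $\al_0=0$, $|\al_h|=2$, your $G^2$ estimate $\abs{\int_\Omega\pa p\,\pa G^2}\le\norm{\pa p}_0\norm{\p^{\al-\beta}G^2}_1$ fails because $\norm{D^2 p}_0$ is not in $\sdnn$ (which starts at $\norm{\nab^3 p}$). The paper handles $|\al|=2$, $\al_0=0$ by integrating by parts the other way, shifting a derivative onto $p$ to get $\norm{D^3 p}_0\norm{DG^2}_0$; and for $|\al|=2$, $\al_0=1$ it exploits the Piola identity to rewrite $G^2=-D\cdot((J-1)u_h)+\p_d(D\varphi\cdot u_h)$, so that after a horizontal integration by parts the pressure factor becomes $\norm{D\dt p}_0$ or $\norm{\dt p}_0$, and each resulting product is estimated term by term via the tables in \eqref{t2}. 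The $\int_\Sigma\dt\eta\,\dt G^4$ term is likewise handled by direct products rather than by peeling.
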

\begin{proof}
This is just a restatement of Proposition 6.4 of \cite{GT_inf} when $m=2$.  Let $\al\in \mathbb{N}^{1+(d-1)}$ be so that $2\le |\al|\le 2(N+2)$ and $  \al_0 \le N+1$.  Note that \eqref{p_u_e_111} holds, and the right hand side of \eqref{p_u_e_111} can be estimated similarly as in Proposition \ref{p_upper_evolution 2N} by using \eqref{p_G_d_h_0} in place of \eqref{p_G_d_0}--\eqref{p_G_d_1} so that they can be bounded by $ \sqrt{\fe{2N}} \sdnn$, except the following terms:
\begin{align}\label{eexx1}
  \int_\Sigma   \pa \eta  \pa G^4 \text{ when }|\al|=2(N+2)\text{ and }\al_0=0
\end{align}
and 
\begin{align}\label{eexx2}
  \int_\Sigma   \pa \eta  \pa G^4 \text{ and }\int_\Omega   \pa p  \pa G^2 \text{ when }|\al|=2.
\end{align}

If $|\al|=2(N+2)$ and $\al_0=0$, then we have
\begin{align} \label{tta1}
& \int_\Sigma   \pa \eta  \pa G^4  \ls \abs{D^{2(N+2)}\eta}_0\abs{D^{2(N+2)}G^4}_0\nonumber
\\& \quad\ls \abs{D^{2(N+2)}\eta}_0\(\abs{D^{2(N+2)}D\eta}_0\abs{u_h}_{L^\infty(\Sigma)}
 +\abs{ D\eta}_{L^\infty(\Sigma)}  \abs{D^{2(N+2)} u_h}_{0}\).
\end{align}
To control the right hand side of the above, we use the Sobolev interpolation to have
\begin{align}\label{tta2}
 \as{D^{2(N+2)} \eta}_{0}\nonumber&  \le \(\as{D^{3} \eta}_{2(N+2)-7/2}\)^{(4N-10)/(4N-9) } \(\as{D^{3} \eta}_{4N-4}\)^{1/(4N-9)}
\\& \le \sdnn^{(4N-10)/(4N-9) } \fe{2N}^{1/(4N-9)} 
\end{align}
and 
\begin{align}\label{tta3}
 \as{D^{2(N+2)+1} \eta}_{0}\nonumber&  \le \(\as{D^{3} \eta}_{2(N+2)-7/2}\)^{(4N-10)/(4N-9) } \(\as{D^{3} \eta}_{4N-4}\)^{1/(4N-9)}
\\& \le \sdnn^{(4N-12)/(4N-9) } \fe{2N}^{3/(4N-9)}.
\end{align}
On the other hand, the $L^\infty$ table in \eqref{t2} implies in particular that
\begin{equation}\label{tta4}
\as{u_h}_{C(\Sigma)}\ls \sdnn^{1/2}\fe{2N}^{1/2}
\text{ and }\as{ D\eta}_{C(\Sigma)}  \ls \sdnn^{1/2}\fe{2N}^{1/2}.
\end{equation}
Hence, by \eqref{tta2}--\eqref{tta4}, we obtain from \eqref{tta1} that for $N\ge 5$,
\begin{align} \label{tta5}
 \int_\Sigma   \pa \eta  \pa G^4& \ls\sqrt{\sdnn^{(4N-10)/(4N-9) } \fe{2N}^{1/(4N-9)}} \sqrt{ \sdnn^{1/2}\fe{2N}^{1/2} }\sqrt{ \sdnn^{(4N-12)/(4N-9) } \fe{2N}^{3/(4N-9)} 
 + \sdnn}\nonumber
 \\&\ls \sqrt{\fe{2N} }\sdnn .
\end{align}

Now for $|\al|=2$, if $\al_0=0$, then by integrating by parts and the $L^2$ table in \eqref{t4}, we have
\begin{align} \label{tta11}
&\int_\Sigma   \pa \eta  \pa G^4+ \int_\Omega   \pa p  \pa G^2  \ls\abs{D^{3}\eta}_0\abs{D G^4}_0+ \norm{D^{3}p}_0\norm{D G^2}_0\nonumber
\ \\&\quad  \ls\sqrt{   \sdnn}\sqrt{\fe{2N} \sdnn}\ls \sqrt{\fe{2N} }\sdnn.
\end{align}
If  $\al_0=1$, then by the two  tables in \eqref{t2}, we have
\begin{align} \label{tta12}
\int_\Sigma  \dt\eta \dt G^4& \ls  \abs{\dt \eta}_0\abs{\dt G^4}_0\ls \abs{\dt \eta}_0
\(\abs{\dt D\eta}_0\abs{u_h}_{C(\Sigma)}
 +\abs{ D\eta}_{C(\Sigma)}  \abs{\dt u_h}_{0}\)\nonumber
\ \\&  \ls\sqrt{\sdnn^{2/3} \fe{2N}^{1/3}}  \sqrt{   \sdnn}\sqrt{ \sdnn^{1/2}\fe{2N}^{1/2} }\ls \sqrt{\fe{2N} }\sdnn.
\end{align}
For the $G^2$ term, we need to use the structure of $G^2$ when $d=2$; indeed, we can treat this term unifiedly for $d=2,3$. Recall the Piola identity $\p_j (J\a_{ij})=0$. Then by the second equation in \eqref{geometric}, we have
$
\p_j(J\a_{ij}u_i)=0,
$
which implies that
\beq
G^2=\diverge u=\p_j((\delta_{ij}-J\a_{ij})u_i)=- D\!\cdot\!((J-1)u_h)+\p_d(D\varphi\cdott u_h).
\eeq
Hence, by integrating by parts in horizontal variable and the two tables in \eqref{t2}, we obtain
\begin{align} \label{tta13}
& \int_\Omega   \dt p  \dt G^2  =\int_\Omega  D \dt p   \!\cdot\!\dt((J-1)u_h)+  \int_\Omega   \dt p  \dt \p_d(D\varphi\cdott u_h)
\nonumber
\ \\&\quad\ls\norm{ D \dt p}_0\(\norm{\dt J}_0\norm{u_h}_{C(\bar\Omega)}+\norm{J-1}_{C(\bar\Omega)}\norm{\dt u_h}_0\)
\nonumber
\ \\&\qquad+\norm{   \dt p}_0\(\norm{   \dt  \p_d D\varphi}_0\norm{ u_h}_{C(\bar\Omega)}+ 
\norm{ \p_d D\varphi}_{C(\bar\Omega)}\norm{  \dt u_h}_0   \)
\nonumber
\ \\&\qquad+\norm{   \dt p}_0\( \norm{ \dt D\varphi}_0\norm{ \p_d u_h}_{C(\bar\Omega)}+ \norm{D\varphi}_{C(\bar\Omega)}\norm{ \dt \p_d u_h}_0  \)
\nonumber
\ \\&\quad  \ls\sqrt{   \sdnn}\(\sqrt{\sdnn^{5/6} \fe{2N}^{1/6}} \sqrt{ \sdnn^{1/2}\fe{2N}^{1/2} }+\sqrt{\fe{2N} }\sqrt{\sdnn}\)
\nonumber
\ \\&\qquad+
\sqrt{\sdnn^{2/3} \fe{2N}^{1/3}}  \(\sqrt{   \sdnn}\sqrt{ \sdnn^{1/2}\fe{2N}^{1/2} }+ \sqrt{\sdnn^{5/6} \fe{2N}^{1/6}} \sqrt{   \sdnn}\)
\nonumber
\ \\&\qquad+
\sqrt{\sdnn^{2/3} \fe{2N}^{1/3}}  \(\sqrt{   \sdnn}\sqrt{ \sdnn^{1/2}\fe{2N}^{1/2} }+\sqrt{ \sdnn^{1/2}\fe{2N}^{1/2} }\sqrt{   \sdnn}\)
\nonumber
\ \\&\quad\ls \sqrt{\fe{2N} }\sdnn.
\end{align}

Consequently, by \eqref{tta5}, \eqref{tta11}, \eqref{tta12} and \eqref{tta13}, we know that the exceptional terms \eqref{eexx1} and \eqref{eexx2} are all bounded by $\sqrt{\fe{2N} }\sdnn$, and hence that the right hand side of \eqref{p_u_e_111}  are bounded by $\sqrt{\fe{2N} }\sdnn$
for all $\al\in \mathbb{N}^{1+(d-1)}$ with $2\le |\al|\le 2(N+2)$ and $  \al_0 \le N+1$.  The estimate \eqref{p_u_e_11}
thus follows by summing over such $\al$.
\end{proof}

\subsection{Energy evolution using geometric form}

To derive the energy evolution of the highest order temporal derivatives of the solution to \eqref{geometric},  we will not use the perturbed linear formulation \eqref{linear_perturbed}.
As well explained in \cite{GT_inf}, if we did this  we would be unable to control the $G^1$, $G^2$ and $G^3$ terms in the right hand side of  \eqref{p_u_e_111}. Motivated by \cite{GT_inf},
we shall use the following geometric formulation. Applying the differential operator $\pa=\dt^{\alpha_0}$  to \eqref{geometric}, we find that
   \beq\label{linear_geometrica1}
   \begin{cases}    \D_{t} (\pa u) + u \cdot \nabla_\a  (\pa u) + \diva S_\a ( \pa p,
  \pa u)=F^1& \text{in }\Omega
      \\
      \nabla_\a  \cdot (\pa u) =F^2& \text{in }\Omega
      \\   S_\a ( \pa p,
\pa u)\n =     \p^\alpha \eta  \n+F^3 & \text{on }\Sigma
  \\   \partial_{t} (\p^\alpha \eta)=\pa u  \cdot \n +F^4& \text{on }\Sigma
  \\ \pa u =0 & \text{on }\Sigma_b,
 \end{cases}
       \eeq
       where
\begin{align}\label{F_def_start} 
 F^{1,\al}_i = &\sum_{0 < \beta \le \alpha } C_{\alpha}^{\beta}  \left\{ \p^\beta (  K\dt \varphi)      \p^{\alpha - \beta} \p_d u_i+  \mathcal{A}_{jk} \p_k (\p^\beta \mathcal{A}_{i\ell} \p^{\alpha - \beta} \p_\ell u_j  + \p^\beta \mathcal{A}_{j\ell} \p^{\alpha -\beta} \p_\ell u_i)  \right.
 \\  &\qquad\quad \left.+  \p^\beta \mathcal{A}_{j\ell} \p^{\alpha - \beta} \p_\ell (\mathcal{A}_{im} \p_m u_j + \mathcal{A}_{jm}\p_m u_i)  -   \p^\beta ( u_j  \mathcal{A}_{jk} ) \p^{\alpha - \beta} \p_k u_i  - \p^\beta \mathcal{A}_{ik} \p^{\alpha-\beta} \p_k p \right\},\nonumber
\end{align} 
\begin{equation}\label{i_F2_def}
 F^{2,\al} = - \sum_{0 < \beta \le \alpha} C_{\alpha}^{\beta}  \p^\beta \mathcal{A}_{ij} \p^{\alpha-\beta} \p_j u_i ,
\end{equation} 
\begin{equation} 
 F^{3,\al} = \sum_{0 < \beta \le \alpha} C_{\alpha}^{\beta}  \left\{\p^\beta D \eta \p^{\alpha-\beta}  (\eta - p )+
(  \p^\beta ( \n_j \mathcal{A}_{im} ) \p^{\alpha-\beta} \p_m u_j + \p^\beta ( \n_j \mathcal{A}_{jm} ) \p^{\alpha-\beta} \p_m u_i )\right\}
\end{equation}
and
\begin{equation}\label{F_def_end}
 F^{4,\al} =  \sum_{0 < \beta \le \alpha} C_{\alpha}^{\beta}  \p^\beta D \eta \cdott \p^{\alpha-\beta} u_h.
\end{equation}

We now present the estimates of these nonlinear terms $F^{i}$  at both $2N$ and $N+2$ levels.

\begin{lem}\label{p_F_estimates}
Let $F^{i,\al}$ be defined by \eqref{F_def_start}--\eqref{F_def_end}, then the following estimates hold.
\begin{enumerate}
\item 
Let $F^{i,2N}$ be $F^{i,\al}$ when $\dt^\al=\dt^{2N}$,  it holds that 
\begin{equation}\label{p_F_e_01}
 \ns{F^{1,2N} }_{0}+ \ns{\dt (J F^{2,2N} ) }_{0} + \as{F^{3,2N}}_{0} + \as{F^{4,2N}}_{0} \ls \fe{2N}  \fd{2N}
\end{equation}
and
\begin{equation}\label{p_F_e_02}
 \ns{F^{2,2N}}_{0} \ls  (\fe{2N})^2.
\end{equation}
\item Let $F^{i, N+2}$ be $F^{i,\al}$ when $\dt^\al=\dt^{N+2}$,  it holds that 
\begin{equation}\label{p_F_e_h_01}
 \ns{F^{1, N+2} }_{0}  + \ns{\dt (J F^{2, N+2} ) }_{0} + \as{F^{3, N+2}}_{0} + \as{F^{4, N+2}}_{0} \ls \fe{2N}   \sdnn 
\end{equation}
and
\begin{equation}\label{p_F_e_h_02}
 \ns{F^{2, N+2}}_{0}\ls \fe{2N}    \senn   .
\end{equation}
\end{enumerate}
\end{lem}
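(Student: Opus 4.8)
The plan is to run the same scheme as in Lemmas \ref{p_G_estimates_N+2} and \ref{p_G_estimates2N}. Each $F^{i,\al}$ in \eqref{F_def_start}--\eqref{F_def_end} is a finite sum over $0<\beta\le\alpha$ of products in which at least one derivative falls on a coefficient built from $\mathcal{A}$, $J$, $\varphi=\tilde b\bar\eta$ or $\eta$; hence, after a full Leibniz expansion, every summand is at least quadratic in the perturbation, a product of (derivatives of) these coefficients with (derivatives of) $u$, $p$, $\eta$ or $\bar\eta$, where $\al=\dt^{2N}$ or $\al=\dt^{N+2}$ is purely temporal. First I would, for each such product, put the factor with the larger derivative count in $L^2(\Omega)$ (for $F^1,F^2$), or in $H^{1/2}(\Sigma)$, respectively $L^2(\Sigma)$ (for $F^3,F^4$), and the other factor in $L^\infty$, using Sobolev embeddings, the trace theorem, the harmonic-extension bounds of Lemmas \ref{p_poisson}--\ref{i_slice_interp} to pass between $\bar\eta$ on $\Omega$ and $\eta$ on $\Sigma$, and the product estimates of Lemmas \ref{i_sobolev_product_1}--\ref{i_sobolev_product_2}. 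Wherever the extension bound would cost an extra half derivative on the top temporal derivative of $\eta$, I would trade that regularity back through the kinematic or momentum equations of \eqref{linear_perturbed} and \eqref{linear_geometrica1}, just as in the derivation of the $L^2$-tables of Lemma \ref{le inter}. For $F^2$ I would in addition invoke the Piola identity $\p_j(J\mathcal{A}_{ij})=0$ together with $\diva u=0$, so that $JF^{2,\al}$, and hence $\dt(JF^{2,\al})$, is an $x$-divergence of products of derivatives of $u$ with $\mathcal{A},J$-coefficients, exactly as for $G^2$ in \eqref{tta13}; this is the reason the lemma records $\ns{\dt(JF^{2,\al})}_0$, which is the combination that enters the subsequent geometric energy estimate.

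For the $2N$-level bounds \eqref{p_F_e_01}--\eqref{p_F_e_02} this is essentially routine, since $\fe{2N}$ has no minimal-derivative restriction: in each product with $\al=\dt^{2N}$ the total parabolic derivative count is at most $4N+O(1)$, so distributing it over the two factors forces the one with more derivatives into $\fe{2N}$ or $\fd{2N}$ and leaves the other in $L^\infty$. The only subtlety is the half derivative lost by the harmonic extension on the top temporal derivative of $\eta$ when $\beta=\alpha$; it is supplied by $\fd{2N}$ for \eqref{p_F_e_01}, and for \eqref{p_F_e_02} it is instead recovered from the kinematic boundary condition $\dt^{2N}\eta=\dt^{2N-1}(u_d+G^4)$ on $\Sigma$. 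Absorbing every bare $\eta,\bar\eta$ by $\fe{2N}$ and its cofactor by $\fd{2N}$ then gives $(\fe{2N})^2$ in \eqref{p_F_e_02} and $\fe{2N}\fd{2N}$ in \eqref{p_F_e_01}.

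The genuinely delicate point is at the $N+2$ level, \eqref{p_F_e_h_01}--\eqref{p_F_e_h_02}, because $\senn$ and $\sdnn$ carry minimal derivative counts. As in Lemmas \ref{p_G_estimates_N+2} and \ref{p_G_estimates2N}, all but finitely many summands are estimated directly; the exceptional ones are those with $\beta=\alpha$, where $\p^\beta=\dt^{N+2}$ lands entirely on a coefficient --- schematically $\dt^{N+2}\nabla\bar\eta\cdot\nabla u$ inside $F^{1,N+2}$ and $F^{2,N+2}$, $\dt^{N+2}\nabla\bar\eta\,\nabla u$ on $\Sigma$ inside $F^{3,N+2}$, and $\dt^{N+2}\eta\,u_h$ inside $F^{4,N+2}$ (and likewise with one more time derivative for \eqref{p_F_e_h_01}). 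For each of these I would interpolate the $\eta$-factor between $\senn$, respectively $\sdnn$, and $\fe{2N}$, e.g.
\begin{equation*}
\abs{\dt^{N+2}\eta}_{1/2}\ls\abs{\dt^{N+2}\eta}_0^{1/2}\,\abs{\dt^{N+2}\eta}_1^{1/2}\ls\senn^{1/4}\fe{2N}^{1/4},
\end{equation*}
valid for $N\ge5$ because $\fe{2N}$ controls $\abs{\dt^{N+2}\eta}_{2N-4}$; bound the $u$-factor in $L^\infty$ by the $L^\infty$-tables of Lemma \ref{le inter}; and close using $\senn\ls\fe{2N}$ and $\sdnn\ls\fe{2N}$ from Lemma \ref{i_N_constraint}. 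These are precisely the computations already carried out in \eqref{ii1}--\eqref{ii2} and \eqref{tta2}--\eqref{tta5}. I expect the only real obstacle to be this bookkeeping: to list exactly which summands overshoot the derivative budget of $\senn$ or $\sdnn$, and to verify for each that the interpolation exponents, combined with $\senn\ls\fe{2N}$, leave a clean net of one power of $\senn$ (respectively $\sdnn$) against one power of $\fe{2N}$; the remaining summands are handled verbatim as in Lemmas \ref{p_G_estimates_N+2} and \ref{p_G_estimates2N}.
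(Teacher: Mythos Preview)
Your proposal is correct and matches the paper's approach, which largely defers to Theorems~5.1--5.2 of \cite{GT_inf}; the paper's only worked example, \eqref{i_F2_defes}, bounds $\ns{\dt^{N+2}\mathcal{A}}_0$ directly by $\senn$ (most cleanly by trading $\as{\dt^{N+2}\eta}_{1/2}$ for $\as{\dt^{N+1}u_d}_{1/2}$ via the kinematic equation, though your interpolation also works). One minor correction: the reason the lemma records $\ns{\dt(JF^{2,\al})}_0$ is the time integration by parts in Propositions~\ref{i_temporal_evolution 2N}--\ref{i_temporal_evolution N+2}, not the Piola identity --- rewriting $JF^{2,\al}=\p_j(J\mathcal{A}_{ij}\dt^{\al}u_i)$ does not help for an $L^2$ norm (differentiating in $t$ would produce $\nabla\dt^{\al_0+1}u$, which is not controlled), whereas the direct Leibniz expansion of $\dt(JF^{2,\al})$ already closes since $\fd{2N}$ (respectively $\sdnn$) controls $\as{\dt^{\al_0+1}\eta}_{1/2}$.
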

\begin{proof}
All these estimates, with the trivial replacement of $\se{2N}$ by $\fe{2N}$, etc., are recorded
in Theorems 5.1--5.2 of \cite{GT_inf}. Indeed, the proof of \eqref{p_F_e_h_01}--\eqref{p_F_e_h_02} will be simpler than those of Theorem 5.2 of \cite{GT_inf} as our low-order energy and dissipation are stronger than those in \cite{GT_inf}. For example,  
\begin{align}\label{i_F2_defes}
 \ns{F^{2,N+2}}_{0}& \ls  \sum_{0<\ell\le N+1}\ns{ \p_t^\ell \mathcal{A}_{ij}}_0\ns{ \p_t^{N+2-\ell} \p_j u_i}_{C(\bar\Omega)} +\ns{ \p_t^{N+2} \mathcal{A}_{ij}}_0\ns{   \p_j u_i}_{C(\bar\Omega)} \nonumber
\\& \ls  \fe{2N}    \senn +\senn \fe{2N}  \ls  \fe{2N}    \senn.
\end{align} 
This proves \eqref{p_F_e_h_02}.
\end{proof}

  We now present the evolution estimate for $2N$ temporal derivatives.
\begin{prop}\label{i_temporal_evolution 2N}
It holds that
\begin{equation} \label{tem en 2N}
   \frac{d}{dt} \left( \ns{ \dt^{2N} u}_{0}  +  \as{\dt^{2N}\eta}_{0}+2\int_\Omega J   \dt^{2N-1} p  F^{2,2N}\right)
+ \ns{ \dt^{2N} u}_{1} 
  \ls  \sqrt{\fe{2N}  } \fd{2N}
\end{equation}
and
\begin{equation} \label{tem en 2N00}
     \int_\Omega J   \dt^{2N-1} p  F^{2,2N}\ls (\fe{2N})^{3/2}.
\end{equation}
\end{prop}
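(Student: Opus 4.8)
The plan is to run, at the level $\pa=\dt^{2N}$, the same geometric energy identity that yields \eqref{intro0}: test the first equation of \eqref{linear_geometrica1} against $J\,\dt^{2N}u$ and integrate over $\Omega$, following the argument in \cite{GT_inf}. Since $\dt^{2N}u=0$ on $\Sigma_b$, the bottom boundary contributes nothing. As in the derivation of \eqref{intro0}, the transport structure of the Jacobian gives
\begin{equation*}
\int_\Omega J\big(\D_t(\dt^{2N}u)+u\cdot\naba(\dt^{2N}u)\big)\cdot\dt^{2N}u=\hal\dtt\int_\Omega J\abs{\dt^{2N}u}^2,
\end{equation*}
and integrating by parts the divergence-form viscous operator (via the Piola identity, as in the passage to \eqref{intro0}) gives
\begin{equation*}
\int_\Omega J\,\diva\Sa(\dt^{2N}p,\dt^{2N}u)\cdot\dt^{2N}u=\int_\Sigma\big(\Sa(\dt^{2N}p,\dt^{2N}u)\n\big)\cdot\dt^{2N}u-\int_\Omega J\,\dt^{2N}p\,\diva\dt^{2N}u+\hal\int_\Omega J\abs{\sg_\a\dt^{2N}u}^2.
\end{equation*}

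Next I would remove the boundary and pressure contributions using the remaining equations of \eqref{linear_geometrica1}. The third equation rewrites the surface integral as $\int_\Sigma(\p^\al\eta\,\n+F^{3,2N})\cdot\dt^{2N}u$, and then the kinematic identity $\dt^{2N}u\cdot\n=\dt(\p^\al\eta)-F^{4,2N}$ turns its main part into $\hal\dtt\as{\p^\al\eta}_0-\int_\Sigma\p^\al\eta\,F^{4,2N}$. The second equation gives $\diva\dt^{2N}u=F^{2,2N}$, and since $\dt^{2N}p=\dt(\dt^{2N-1}p)$ an integration by parts in time,
\begin{equation*}
\int_\Omega J\,\dt^{2N}p\,F^{2,2N}=\dtt\int_\Omega J\,\dt^{2N-1}p\,F^{2,2N}-\int_\Omega\dt\big(JF^{2,2N}\big)\,\dt^{2N-1}p,
\end{equation*}
accounts for the auxiliary term $2\int_\Omega J\,\dt^{2N-1}p\,F^{2,2N}$ inside the energy in \eqref{tem en 2N} (and is also why \eqref{p_F_e_01} is phrased for $\dt(JF^{2,2N})$ rather than $F^{2,2N}$). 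Collecting everything yields an identity of the form
\begin{align*}
&\hal\dtt\Big(\ns{\dt^{2N}u}_0+\as{\dt^{2N}\eta}_0+2\int_\Omega J\,\dt^{2N-1}p\,F^{2,2N}\Big)+\hal\int_\Omega J\abs{\sg_\a\dt^{2N}u}^2\\
&\qquad=\int_\Omega J\,F^{1,2N}\cdot\dt^{2N}u+\int_\Omega\dt\big(JF^{2,2N}\big)\,\dt^{2N-1}p-\int_\Sigma F^{3,2N}\cdot\dt^{2N}u+\int_\Sigma\p^\al\eta\,F^{4,2N}.
\end{align*}

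It then remains to estimate the four terms on the right. By \eqref{p_F_e_01} one has $\ns{F^{1,2N}}_0+\ns{\dt(JF^{2,2N})}_0+\as{F^{3,2N}}_0+\as{F^{4,2N}}_0\ls\fe{2N}\fd{2N}$, so each nonlinear factor is $\ls\sqrt{\fe{2N}\fd{2N}}$; meanwhile each companion factor is controlled by the dissipation alone: $\ns{\dt^{2N}u}_0\le\ns{\dt^{2N}u}_1\le\fd{2N}$, $\ns{\dt^{2N-1}p}_0\le\fd{2N}$, $\as{\dt^{2N}u}_0\ls\ns{\dt^{2N}u}_1\le\fd{2N}$ (trace theorem), and $\as{\dt^{2N}\eta}_0\le\fd{2N}$. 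With $\abs{J}_{L^\infty}\ls1$ from \eqref{apriori_2}, Cauchy--Schwarz bounds the right-hand side by $\ls\sqrt{\fe{2N}}\,\fd{2N}$; and Korn's inequality of Lemma~\ref{i_korn} (valid since $\dt^{2N}u=0$ on $\Sigma_b$, the $\sg_\a$-versus-$\sg$ discrepancy being absorbed by the smallness \eqref{apriori_1}) converts $\hal\int_\Omega J\abs{\sg_\a\dt^{2N}u}^2$ into a positive multiple of $\ns{\dt^{2N}u}_1$. This yields \eqref{tem en 2N}. Finally \eqref{tem en 2N00} is immediate: by \eqref{apriori_2}, $\ns{\dt^{2N-1}p}_0\le\fe{2N}$, and \eqref{p_F_e_02},
\begin{equation*}
\abs{\int_\Omega J\,\dt^{2N-1}p\,F^{2,2N}}\le\abs{J}_{L^\infty}\norm{\dt^{2N-1}p}_0\norm{F^{2,2N}}_0\ls\sqrt{\fe{2N}}\cdot\fe{2N}=(\fe{2N})^{3/2}.
\end{equation*}

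The step I expect to be the main obstacle is not conceptual but bookkeeping: one must check carefully that the $\D_t$- and $u\cdot\naba$-terms coalesce into the single clean total derivative $\hal\dtt\int_\Omega J\abs{\dt^{2N}u}^2$ with no uncontrolled remainder; that the divergence-form integration by parts produces the boundary term with the correct normal $\n$ (using the Piola identity together with $J\mathcal{A}e_d=\n$ on $\Sigma$) and exactly $\hal\int_\Omega J\abs{\sg_\a\dt^{2N}u}^2$; and that the time integration by parts in the pressure term leaves precisely the remainder $\int_\Omega\dt(JF^{2,2N})\,\dt^{2N-1}p$ and the modified-energy correction $2\int_\Omega J\,\dt^{2N-1}p\,F^{2,2N}$ --- these structural points being exactly what make the energy functional and the formulation of Lemma~\ref{p_F_estimates} in \eqref{tem en 2N}, \eqref{tem en 2N00} and \eqref{p_F_e_01}--\eqref{p_F_e_02} consistent.
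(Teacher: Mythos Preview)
Your proposal is correct and follows essentially the same approach as the paper: the paper's proof is simply a pointer to Proposition~5.3 of \cite{GT_inf}, noting that one employs the geometric formulation \eqref{linear_geometrica1}, the estimates \eqref{p_F_e_01}--\eqref{p_F_e_02}, and an integration by parts in time for the pressure term since $\dt^{2N}p$ carries one derivative too many --- exactly the structure you wrote out. Your detailed identity, the use of trace and Korn (with the $\sg_\a$-versus-$\sg$ discrepancy absorbed by smallness, cf.\ \eqref{QQ11}), and the Cauchy--Schwarz bounds on the $F^{i,2N}$ terms all match the intended argument.
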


\begin{proof}
This is just a restatement of Proposition 5.3 of \cite{GT_inf} in the time-differential form, which is proved by employing the geometric formulation \eqref{linear_geometrica1} and  using the estimates \eqref{p_F_e_01}--\eqref{p_F_e_02} of Lemma \ref{p_F_estimates}. Note that an integration by parts in time is used for the pressure term as  there is one more time derivative on $ p$ than can  be controlled. We refer to \cite{GT_inf} for more details.
\end{proof}

We then record a similar result for $N+2$  temporal derivatives.
\begin{prop}\label{i_temporal_evolution N+2}
It holds that
\begin{equation} \label{tem en N+2}
   \frac{d}{dt} \left(\ns{ \dt^{N+2} u}_{0} +  \as{\dt^{N+2}\eta}_{0}+2\int_\Omega J   \dt^{N+1} p  F^{2,N+2}\right)
+ \ns{ \dt^{N+2} u}_{1}
  \ls \sqrt{\fe{2N}   } \sdnn 
\end{equation}
and
\begin{equation} \label{tem en N+200}
     \int_\Omega J   \dt^{N+1} p  F^{2,N+2}\ls \sqrt{\fe{2N}   }\senn   .
\end{equation}
\end{prop}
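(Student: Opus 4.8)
The plan is to follow the scheme of Proposition \ref{i_temporal_evolution 2N} (i.e.\ Proposition 5.3 of \cite{GT_inf}), now carried out at the $N+2$ level so that all forcing is absorbed into the low-order functionals $\senn$ and $\sdnn$. Concretely, I would apply $\pa=\dt^{N+2}$ to \eqref{geometric}, obtaining the system \eqref{linear_geometrica1} with $\al=(N+2,0,\dots,0)$ and the commutator forcing terms $F^{1,N+2},\dots,F^{4,N+2}$ of \eqref{F_def_start}--\eqref{F_def_end}. As explained just before \eqref{linear_geometrica1}, one must use the geometric rather than the perturbed-linear formulation here: after $N+2$ temporal derivatives the perturbed-linear analogues of $G^1,G^2,G^3$ would carry more derivatives than $\sdnn$ controls, whereas in the geometric form the operators $\da$ and $\diva$ stay intact and the dangerous pieces become exact-derivative commutators that integrate by parts cleanly against the boundary conditions.

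Next I would run the natural energy identity for \eqref{linear_geometrica1}: test the momentum equation with $J\,\dt^{N+2}u$ and integrate over $\Omega$. Using the transport structure of $\dt^\a$, the Piola identity, the constraint $\diva(\dt^{N+2}u)=F^{2,N+2}$, and integration by parts in space, the viscous term produces the dissipative contribution $\tfrac12\int_\Omega J\as{\sg_\a\dt^{N+2}u}$, the pressure--divergence interplay leaves (up to the $F^{1,N+2}$ forcing) the single bulk term $\int_\Omega J\,\dt^{N+2}p\,F^{2,N+2}$, and the surface integral is rewritten via the third and fourth equations of \eqref{linear_geometrica1}. The relations $\Sa(\dt^{N+2}p,\dt^{N+2}u)\n=\dt^{N+2}\eta\,\n+F^{3,N+2}$ and $\dt(\dt^{N+2}\eta)=\dt^{N+2}u\cdot\n+F^{4,N+2}$ turn the surface term into $-\tfrac12\frac{d}{dt}\as{\dt^{N+2}\eta}_0$ plus contributions of $F^{3,N+2}$ and $F^{4,N+2}$, which are handled by the trace inequality and the $\abs{\cdot}_{1/2}$--$\abs{\cdot}_{-1/2}$ duality exactly as in \eqref{i_de_6}--\eqref{i_de_7} and bounded by $\sqrt{\fe{2N}}\,\sdnn$ through \eqref{p_F_e_h_01}; likewise $\int_\Omega JF^{1,N+2}\cdot\dt^{N+2}u\ls\sqrt{\fe{2N}}\,\sdnn$ by \eqref{p_F_e_h_01}.

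The one genuine subtlety, and the step I expect to be the main obstacle, is the bulk pressure term $\int_\Omega J\,\dt^{N+2}p\,F^{2,N+2}$, since $\sdnn$ controls only $\dt^{N+1}p$, not $\dt^{N+2}p$. I would therefore integrate by parts in time in this term, moving one time derivative from $\dt^{N+2}p$ onto $J F^{2,N+2}$; this is precisely why the energy functional in \eqref{tem en N+2} carries the correction term $2\int_\Omega J\,\dt^{N+1}p\,F^{2,N+2}$. Its size, \eqref{tem en N+200}, then follows by Cauchy--Schwarz together with $\ns{F^{2,N+2}}_0\ls\fe{2N}\senn$ from \eqref{p_F_e_h_02} and $\ns{\dt^{N+1}p}_1\ls\senn$, which also guarantees that when $\fe{2N}$ is small the correction is a lower-order perturbation of $\ns{\dt^{N+2}u}_0+\as{\dt^{N+2}\eta}_0$, so the left-hand side of \eqref{tem en N+2} functions as a genuine energy. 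The leftover term $\int_\Omega \dt(J F^{2,N+2})\,\dt^{N+1}p$ is bounded by $\sqrt{\fe{2N}}\,\sdnn$ using \eqref{p_F_e_h_01} (and, for the $F^{2,N+2}$ estimate, the argument displayed in \eqref{i_F2_defes}). Finally, since $\dt^{N+2}u=0$ on $\Sigma_b$, Korn's inequality (Lemma \ref{i_korn}) upgrades $\int_\Omega J\as{\sg_\a\dt^{N+2}u}$ to a full control of $\ns{\dt^{N+2}u}_1$, which yields \eqref{tem en N+2}. Compared with the $2N$-level computation, the only extra care concerns the minimal-derivative restrictions in $\senn$ and $\sdnn$, but the forcing estimates of Lemma \ref{p_F_estimates} have been set up around exactly this, so no essentially new difficulty arises.
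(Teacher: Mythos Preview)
Your proposal is correct and follows essentially the same approach as the paper: the paper simply notes that this is a restatement of Proposition 5.4 of \cite{GT_inf} (with $m=2$), proved exactly as in Proposition \ref{i_temporal_evolution 2N} but with the $N+2$-level forcing estimates \eqref{p_F_e_h_01}--\eqref{p_F_e_h_02} in place of \eqref{p_F_e_01}--\eqref{p_F_e_02}. You have correctly identified the key integration-by-parts-in-time maneuver for the pressure term and the role of Lemma \ref{p_F_estimates}.
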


\begin{proof}
This is just a restatement of Proposition 5.4 of \cite{GT_inf} when $m=2$ therein. The proof is similar to Proposition \ref{i_temporal_evolution 2N} except using \eqref{p_F_e_h_01}--\eqref{p_F_e_h_02} in place of \eqref{p_F_e_01}--\eqref{p_F_e_02}.
\end{proof}

\subsection{Energy evolution of Alinhac good unknowns}
To derive the energy evolution of the highest order horizontal spatial derivatives of the solution to \eqref{geometric}, as explained in Section \ref{strategy sec},  even using the geometric formulation \eqref{linear_geometrica1} will lead to the appearance of $\mathcal{K}\f$. To avoid this, we will appeal to  the reformulation by using Alinhac good unknowns. Recall that when  applying the differential operator  $\p^\al$ for $\alpha\in \mathbb{N}^{d-1}$ to \eqref{geometric}, we need to commute $\p^\alpha$   with each differential operator $\D$ in \eqref{geometric}. It is thus useful to establish the following general expressions.
For $i=  1, \, 2,\, d$, set
\beq\label{com1}
\p^\alpha \D_{i}f=\D_{i} \p^\alpha f-\D_{d}f \D_{i} \p^\alpha \varphi+\mathcal{C}^\alpha_{i}(f),
\eeq
where the commutator $\mathcal{C}^\alpha_{i}(f)$ is given for $i \neq d$ by
\beq\label{Cialpha}
\mathcal{C}^\alpha_{i}(f)= \mathcal{C}^\alpha_{i, 1}(f)+ \mathcal{C}^\alpha_{i,2}(f)
\eeq
with
\begin{align}
\label{Cialpha1} \mathcal{C}^{\alpha}_{i,1}&=-\[ \p^\alpha, \frac{\partial_{i} \varphi }{ \partial_{d} \phi }, \partial_{d} f  \], \\
\label{Cialpha2} \mathcal{C}^{\alpha}_{i,2}&=-\partial_{d} f \[ \p^\alpha, \partial_{i} \varphi, \frac{1}  {\partial_{d} \phi }\]     -
\partial_{i} \varphi\partial_zf\[\p^{\alpha-\alpha'},   \frac{1}{(\partial_{d} \phi )^2} \]\p^{\alpha'}\partial_{d} \eta
\end{align}
for any $\al'\le\al$ with $|\alpha'|=1$. Note that for $i= 1, \, d-1$ ,  $\partial_{i}\phi=\partial_{i}\varphi$ and that for $\alpha \neq 0$,
$\p^\alpha\partial_{d}\phi= \p^\alpha \partial_{d}\varphi$. For $i=d$, similar decomposition for the commutator holds (basically, it suffices to
replace $\partial_{i}\varphi$ by $1$ in the above expressions). Since $\D_{i}$ and $\D_{j}$ commute, it holds that
\beq\label{com122}
\p^\alpha \D_{i} f =\D_{i}(\p^\alpha f-\D_{d}f  \p^\alpha \varphi)+\D_{d}\D_{i}f \p^\alpha \varphi+\mathcal{C}^\alpha_{i}(f).
\eeq
It was first observed by Alinhac \cite{Alinhac} that the highest order term of $\varphi$ and hence $\eta$
will be canceled when we use the good unknown $\p^\alpha f-\D_{d}f\p^\alpha \varphi$.

We shall now derive the equations satisfied by the good unknowns 
\beq\label{gooddef}
U^\alpha: = \p^\alpha u - \D_{d}u\, \p^\alpha \eta  \text{ and }P^\al:=\p^\alpha p - \D_{d}p\, \p^\alpha \eta \text{ for  }\alpha\in \mathbb{N}^{d-1}\text{ with }\abs{\al}=4N.
\eeq
  \begin{lem}
  \label{lemValpha}
  For $  | \alpha | = 4N$, it holds that
   \beq\label{linear_geometrica}
   \begin{cases}    \D_{t} U^\alpha + u \cdot \nabla_\a  U^\alpha + \diva S_\a ( P^\al ,
  U^\alpha)=Q^{1,\al}& \text{\rm in }\Omega
      \\
      \nabla_\a  \cdot U^\alpha =Q^{2,\al}& \text{\rm in }\Omega
      \\   S_\a ( P^\al ,
  U^\alpha)\n =     \p^\alpha \eta  \n+Q^{3,\al} & \text{\rm on }\Sigma
  \\   \partial_{t} (\p^\alpha \eta)=U^\alpha \cdot \n +Q^{4,\al}& \text{\rm on }\Sigma
  \\U^\alpha =0 & \text{\rm on }\Sigma_b,
 \end{cases}
       \eeq
       where $Q^{i,\al},\ i=1,2,3,4$ are defined by \eqref{Q1def}, \eqref{Q2def}, \eqref{Q3def} and \eqref{Q4def}, respectively.
  \end{lem}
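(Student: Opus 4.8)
The plan is to apply the purely horizontal operator $\p^\al$ with $|\al|=4N$ to each of the five equations of \eqref{geometric} and to reorganize the result around the good unknowns $U^\al,P^\al$ of \eqref{gooddef} by systematically invoking the commutator identity \eqref{com122}; throughout, the correction appearing in the interior is $\p^\al\varphi$, so that $U^\al=\p^\al u-\D_d u\,\p^\al\varphi$ in $\Om$, which reduces to $\p^\al u-\D_d u\,\p^\al\eta$ on $\Sigma$ since $\varphi=\eta$ there, and to $0$ on $\Sigma_b$ since the factor $\tilde{b}=1+x_d/b$ in $\varphi=\tilde{b}\,\bar\eta$ vanishes at $x_d=-b$ (this already gives the bottom condition $U^\al=0$ on $\Sigma_b$). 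For the interior momentum equation I would proceed as follows. Using \eqref{com122} with $f=p$ gives $\p^\al\D_i p=\D_i P^\al+\D_d\D_i p\,\p^\al\varphi+\mathcal{C}^\al_i(p)$, and applying \eqref{com122} twice — once to the scalar $\D_i u_j$ and once inside — gives $\p^\al\da u_j=\da U^\al_j+\D_d(\da u_j)\,\p^\al\varphi+\D_i\mathcal{C}^\al_i(u_j)+\mathcal{C}^\al_i(\D_i u_j)$, the crucial point being that the two copies of $\D_d\D_i u_j\,\p^\al\varphi$ produced by the double commutation cancel. The transport term $u\cdot\naba u=u_i\D_i u$ and the time term $\D_t u$ are handled by the Leibniz rule together with \eqref{com122}, again producing $u\cdot\naba U^\al$, respectively $\D_t U^\al$, plus a $\p^\al\varphi$-correction of the form $\D_d(\,\cdot\,)\,\p^\al\varphi$, plus lower-order commutators. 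Assembling: apart from $\D_t U^\al+u\cdot\naba U^\al+\naba P^\al-\da U^\al$ and the commutator remainders, the terms that remain are precisely $\p^\al\varphi$ times $\D_d$ applied to the left-hand side of the first equation of \eqref{geometric}; since that left-hand side vanishes identically in $\Om$, so do all these terms. This is the Alinhac cancellation \cite{Alinhac}. Two bookkeeping points remain: first, because $\diva U^\al$ will not be zero, one has $\diva\Sa(P^\al,U^\al)=\naba P^\al-\da U^\al-\naba(\diva U^\al)$ — here using that $\D_i$ and $\D_j$ commute — so the term $-\naba(\diva U^\al)=-\naba Q^{2,\al}$ is absorbed into $Q^{1,\al}$; second, the explicit commutators $\mathcal{C}^\al_i$ are exactly those of \eqref{Cialpha}--\eqref{Cialpha2}. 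Collecting all residual terms then defines $Q^{1,\al}$, which is \eqref{Q1def}.

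For the divergence equation, applying \eqref{com122} to each $\D_i u_i$ and summing over $i$ yields $\p^\al(\diva u)=\diva U^\al+\D_d(\diva u)\,\p^\al\varphi+\sum_i\mathcal{C}^\al_i(u_i)$; since $\diva u\equiv 0$ in $\Om$, both $\p^\al(\diva u)$ and $\D_d(\diva u)$ vanish, leaving $\diva U^\al=Q^{2,\al}:=-\sum_i\mathcal{C}^\al_i(u_i)$, which is \eqref{Q2def}. On $\Sigma$ there is no cancellation, and I would proceed by direct substitution: in the stress condition $\Sa(p,u)\n=\eta\n$ one writes $\p^\al p=P^\al+\D_d p\,\p^\al\eta$, expands $\p^\al(\D_i u_j\,\n_i)$ via \eqref{com122} and the Leibniz rule in $\n=(-D\eta,1)$, and then moves the good-unknown stress $\Sa(P^\al,U^\al)\n$ and the top term $\p^\al\eta\,\n$ to their places; everything else — in particular a residual term of the schematic form $\p^\al\eta\cdot\nabla u$, which is exactly what produces the $\bar\k\,\f$ contribution in the subsequent energy estimate — is collected into $Q^{3,\al}$, giving \eqref{Q3def}. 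Similarly, in the kinematic condition $\dt\eta=u\cdot\n$, writing $\p^\al u=U^\al+\D_d u\,\p^\al\eta$ yields $\dt(\p^\al\eta)=U^\al\cdot\n+Q^{4,\al}$, where $Q^{4,\al}$ is the Leibniz remainder in $\n$ together with the term $(\D_d u\cdot\n)\,\p^\al\eta$ (the source of the $\bar\k\,\sem{2N}$ contribution); this is \eqref{Q4def}.

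The main obstacle is the interior bookkeeping: once all the commutations have been carried out, one must verify that every term carrying a dangerous top-order factor of $\varphi$ or $\eta$ — that is, $4N+1$ or $4N+2$ derivatives, which cannot be absorbed by $\fe{2N}$, $\sem{2N}$ or $\f$ — in the momentum and divergence equations is accounted for by the Alinhac structure $\D_d(\text{equation})\,\p^\al\varphi$ and therefore disappears, so that the resulting $Q^{i,\al}$ involve only derivatives of the solution and of $\varphi$ of an order that will later be controllable, with coefficients expressible through the quantities $\k,\bar\k$ of \eqref{kdef}--\eqref{bkdef}. Once this is in place, the remaining work — writing out $\mathcal{C}^\al_i$ from \eqref{Cialpha}--\eqref{Cialpha2} and performing the Leibniz expansions, entirely parallel to the geometric-form computations of \cite{GT_inf} — is routine.
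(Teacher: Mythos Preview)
Your approach is correct and is the same Alinhac good-unknown framework the paper uses. To arrive at precisely the forms \eqref{Q1def} and \eqref{Q3def}, note two organizational choices the paper makes that differ from yours: (i) the viscous term is commuted in stress form, i.e.\ $\p^\al$ is pushed through $\nabla_\a\cdot(\sg_\a u)$ rather than through the scalar Laplacian $\D_i\D_i u$, which produces the commutator objects $\mathcal{E}^\al(u)$ and $\mathcal{D}^\al(\sg_\a u)$ directly and makes your extra correction $-\naba Q^{2,\al}$ unnecessary; (ii) on $\Sigma$ the paper substitutes $p-\eta=\sg_\a u\,\n\cdot\n$ (the normal component of the dynamic condition) before expanding, which recasts the top-order boundary remainder as $\sg_\a u\,\Pi\,\p^\al\n$ with the tangential projection $\Pi=I_d-\n\otimes\n$, giving exactly the structure in \eqref{Q3def}. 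The transport term is also organized differently: the paper first rewrites $\D_t+u\cdot\nabla_\a=\partial_t+u_h\cdot D+U_d\,\partial_d$ with $U_d=(u\cdot\n-\partial_t\varphi)/\partial_d\phi$ and commutes $\p^\al$ through that, yielding the single commutator $\mathcal{C}^\al(\mathcal{T})$ of \eqref{calphat}, rather than applying \eqref{com122} separately to $\D_t$ and to $u\cdot\nabla_\a$. None of this changes the substance of your argument.
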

  \begin{proof}
We first prove the first equation in \eqref{lemValpha}.  Note that
   \beq
   \label{transportW} \D_{t} + u \cdot \nabla_\a
   =  \partial_{t}+ u_h \cdott D u +  U_{d} \partial_{d},\eeq
 where $U_{d}:= \frac{1}{ \partial_{d} \phi } (u\cdot\n- \partial_{t} \varphi) $
with $\n$ extended to $\Omega$ by $\n=(-D\varphi,1)$.
We can thus get  
\begin{align}  \label{T1}    \p^\alpha \( \D_{t} + u \cdot \nabla_\a    \)u
&    =   \(\partial_{t}+ u_h\cdot D +  U_{d} \partial_{d} \) \p^\alpha u + \big(u\cdot \p^\alpha \n-
\partial_{t} \p^\alpha \varphi \big) \D_{d} u
\\\nonumber&\quad- \D_{d} \p^\alpha \varphi \big( u \cdot \n - \partial_{t} \varphi\big)
 \D_{d}u
+ \mathcal{C}^\alpha(\mathcal{T}) \\
\nonumber & =  \big( \D_{t}+ u \cdot \nabla_\a \big)\p^\alpha u - \D_{d}u \big( \D_{t}+ u \cdot \nabla_\a ) \p^\alpha \varphi
 +  \mathcal{C}^\alpha(\mathcal{T})
 \\
\nonumber & =  \big( \D_{t}+ u \cdot \nabla_\a \big)U^\alpha + \D_{d} \( \D_{t}+ u \cdot \nabla_\a \)u \p^\alpha \varphi
-\D_{d} u \cdot \nabla_\a  u \p^\alpha \varphi
 +  \mathcal{C}^\alpha(\mathcal{T}),
 \end{align}
 where the commutator $\mathcal{C}^\alpha(\mathcal{T})$ is defined by
 \begin{align}\label{calphat}
 \mathcal{C}^\alpha(\mathcal{T})= &  \[\p^\alpha, u_h\] \cdot D u+ \[\p^\alpha, U_{d},\partial_{d}u\]+
   \[\p^\alpha, U_{d}, \frac{1 }{ \partial_{d} \phi }\] \partial_{d} u+ \frac{1 }{ \partial_{d}\varphi} \[\p^\alpha, u
   \]\cdot\n\partial_{d}u,\nonumber\\
       & +\p_d \varphi U_d \partial_d  u\[\p^{\alpha-\alpha'}, \frac{ 1 }{ (\partial_{d} \phi )^2}\]\p^{\alpha'}\partial_{d} \varphi
 \end{align}
for any $\al'\le\al$ with $|\alpha'|=1$. On the other hand, \eqref{com122}
implies that
\beq \label{TT1}
\p^\alpha \naba p=\nabla_\a   P^\alpha + \partial_{d}^\varphi \nabla_\a   p \p^\alpha \eta+\mathcal{C}^\alpha (p).
\eeq

  It remains to  compute $  \p^\alpha \Delta_\a u= \p^\alpha \nabla_\a  \cdot  (\sg_\a u).$ Note that
\beq \p^\alpha \nabla_\a  \cdot  (\sg_\a u) =  \nabla_\a  \cdot \big( \p^\alpha \, \sg_\a u \big) - \big( \D_{d}\, \sg_\a
   u \big) \nabla_\a
     ( \p^\alpha \varphi) + \mathcal{D^\alpha}\big( \sg_\a u \big)
     \eeq
     with $ \mathcal{D}^\alpha\big( \sg_\a u \big)_{i}=   \mathcal{C}_{j}^\alpha \big( \sg_\a u)_{ij},$ and
    \beq
    \label{comS}\p^\alpha  \big( \sg_\a u \big) =    \sg_\a \big( \p^\alpha u \big) - \D_{d} u \otimes \nabla_\a  \p^\alpha \varphi
     - \nabla_\a  \p^\alpha \varphi
  \otimes \D_{d} u  + \mathcal{E}^\alpha(u)\eeq
     with $\mathcal{E}^\alpha (u)_{ij}= \mathcal{C}^\alpha_{i}(u_{j})+ \mathcal{C}^\alpha_{j}(u_{i}).$
Then we deduce  that
     \begin{align}
     \label{deltaexp}
        \p^\alpha \Delta_\a u  & =  \nabla_\a  \cdot  \sg_\a(\p^\alpha u )  - \nabla_\a  \cdot \Big(  \D_{d}
       v \otimes \nabla_\a  \p^\alpha \varphi
     - \nabla_\a  \p^\alpha \varphi  \otimes \D_{d} u \Big)\nonumber\\
          &\quad -
       \big( \D_{d}\, \sg_\a u \big) \nabla_\a
     ( \p^\alpha \varphi)  +    \mathcal{D^\alpha}\big( \sg_\a u \big) +  \nabla_\a  \cdot  \mathcal{E}^\alpha(u)\nonumber
     \\
          & =\Delta_\a U^\alpha -
       \D_{d} \Delta_\a  u
      \p^\alpha \varphi   +    \mathcal{D^\alpha}\big( \sg_\a u \big) +  \nabla_\a  \cdot  \mathcal{E}^\alpha(
      u) .\end{align}
Hence, \eqref{T1}, \eqref{TT1}, \eqref{deltaexp} and the first equation in \eqref{geometric}  imply the first equation in \eqref{lemValpha} with $Q^{1,\al}$ defined by
\beq\label{Q1def}
Q^{1,\al}= \widetilde Q^{1,\al}+
        \nabla_\a  \cdot   \mathcal{E}^\alpha (u) + \mathcal{D^\alpha}\big( \sg_\a u \big),
\eeq
where
\beq
 \widetilde Q^{1,\al}=\D_{d}u \cdot \nabla_\a  u
        \p^\alpha \varphi-\mathcal{C}^\alpha(\mathcal{T}) -\mathcal{C}^\alpha(p) .
        \eeq

Next,  \eqref{com122} yields that
\beq
\label{div1}
\p^\alpha  \nabla_\a  \cdot u  = \nabla_\a \cdot U^\alpha + \partial_{d}^\varphi \nabla_\a \cdot u \p^\alpha \varphi
 -Q^{2,\al},
\eeq
where
\beq\label{Q2def}
Q^{2,\al}:=-  \sum_{i=1}^3 \mathcal{C}^\alpha_{i}(u_i).
\eeq
\eqref{div1} and the second equation in \eqref{geometric} imply the second equation in  \eqref{lemValpha}.

 Now applying $\p^\alpha$ to the third equation in \eqref{geometric}  and using \eqref{comS}, we get \begin{align}
 \label{bordV1}
  \nonumber&  \( \sg_\a \( \p^\alpha u \)  -  \D_{d} u \otimes \nabla_\a  \p^\alpha \varphi
     -  \nabla_\a  \p^\alpha u \otimes \D_{d} u   +   \mathcal{E}^\alpha(u) \) \n- \( \p^\alpha p -   \p^\alpha \eta \) \n
     \\&\quad =-
     \(   \sg_\a u - (p- \eta  )I_d \) \p^\alpha \n - \[\p^\alpha,  \sg_\a u-(p-\eta)I_d, \n\].\nonumber
     \\&\quad =-  \(  \sg_\a u - \sg_\a u  \n\cdot \n I_d \) \p^\alpha \n -   \[\p^\alpha, \sg_\a u-\sg_\a u  \n\cdot \n I_d,
     \n\]
     \\&\quad\equiv- \sg_\a u \Pi  \p^\alpha \n -    \[\p^\alpha,   \sg_\a u \Pi  , \n\],\nonumber\end{align}
where $\Pi=I_d-\n\otimes\n$. This yields the third equation in  \eqref{lemValpha} with $Q^{3,\al} $ defined by
   \beq\label{Q3def}
Q^{3,\al}: =- \D_{d}p\, \p^\alpha \eta -   \sg_\a u \Pi \p^\alpha \n  - \p^\alpha \eta \D_{d}\big( \sg_\a u \big)\n-   \[\p^\alpha,   \sg_\a u \Pi  , \n\]- \mathcal{E}^\alpha(u)  \n.
  \eeq

We then apply $\p^\alpha$ to the fourth equation in \eqref{geometric} to find
\beq
  \partial_{t} \p^\alpha \eta + u_h \cdott D  \p^\alpha \eta -\p^\alpha u \cdot \n=   \[ \p^\alpha, u, D\eta\] . \eeq
This yields the fourth equation in  \eqref{lemValpha} with $Q^{4,\al}$ defined by
\beq\label{Q4def}
Q^{4,\al}:=- u_h \cdott D  \p^\alpha \eta  -\D_d u\cdot \n\p^\alpha \eta+\widetilde Q^{4,\al},
  \eeq
  where
  \beq
\widetilde Q^{4,\al}=\[ \p^\alpha, u_h\cdot, D\eta\] .
  \eeq

  Finally, the fifth equation in \eqref{geometric}  follows directly since $\varphi=0$ on $\Sigma_b$.
\end{proof}

We shall present the estimates of  some of these nonlinear terms in \eqref{linear_geometrica}.

\begin{lem}\label{p_Q_estimates}
It holds that
\begin{align}\label{QQes1}
 \norm{\widetilde Q^{1,\al}}_0^2 +\ns{Q^{2,\al}}_0\ls \senn \sdm{2N} ,
\end{align}
\begin{align}\label{QQes2}
 \ns{ \mathcal{E}^\alpha (u) }_0+\as{ \mathcal{E}^\alpha (u) }_{-1/2} \ls \senn \sdm{2N} ,
\end{align}
\beq\label{QQes3}
\as{Q^{3,\al}}_{-1/2}\ls (\senn+\k)\sdm{2N}+\bar\k\f
\eeq
and
\beq\label{QQes4}
\as{\widetilde Q^{4,\al}}_{0}\ls \senn  \sdm{2N}+  \bar \k\sem{2N}.
\eeq
\end{lem}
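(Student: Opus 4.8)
The plan is to estimate each of the four nonlinear quantities appearing in \eqref{QQes1}--\eqref{QQes4} by expanding the commutators via the Leibniz rule, placing the highest-order factor in $L^2$ and the remaining factors in $L^\infty$, and then converting the resulting products of norms into products of $\senn$ (or $\sem{2N}$, $\bar\k$, $\k$) with $\sdm{2N}$ (or $\f$) by invoking the interpolation estimates of Lemma~\ref{le inter} together with Lemmas~\ref{p_poisson}--\ref{i_slice_interp} and the product estimates of Lemma~\ref{i_sobolev_product_1}--\ref{i_sobolev_product_2}. The key bookkeeping fact is that in every term of $\widetilde Q^{1,\al}$, $Q^{2,\al}$, $\mathcal{E}^\al(u)$, $\widetilde Q^{4,\al}$ the commutator structure removes at least one derivative from one factor, so the "top" factor carries at most $4N-1$ horizontal derivatives (for $\widetilde Q^{1,\al}$ and $\mathcal{E}^\al$, derivatives of $u$ or $p$, controlled in $\sdm{2N}$ through $\ns{u}_{4N+1}$ and $\ns{\nabla p}_{4N-1}$), while the low-order factors are all controlled by $\senn$ via the $L^\infty$ tables.

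For \eqref{QQes1} I would first note that $\mathcal{C}^\al(\mathcal{T})$ in \eqref{calphat} and $\mathcal{C}^\al(p)$ in \eqref{TT1} are, after expanding the Leibniz-type commutators, sums of products in which one factor is a derivative of $\varphi,u$ or $p$ of order $\le 4N$ (but with a genuine loss so that it sits inside $\sdm{2N}$ once we account for $\bar\eta$-regularity via Lemma~\ref{p_poisson}) and the complementary factor involves $u$ with at most $C^1$ or $C^2$ norms, bounded by $\senn^{1/2}$ from the $L^\infty$ table in \eqref{t1}; the term $\D_d u\cdot\naba u\,\p^\al\varphi$ is handled the same way, using $\as{\eta}_{4N}=\sem{2N}\ls \sdm{2N}$-adjacent control through $\bar\eta$. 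The divergence term $Q^{2,\al}=-\sum_i \mathcal{C}^\al_i(u_i)$ is similar but slightly easier since it is one order lower. For \eqref{QQes2}, $\mathcal{E}^\al(u)_{ij}=\mathcal{C}^\al_i(u_j)+\mathcal{C}^\al_j(u_i)$ has the same structure; the $H^{-1/2}$ bound on $\Sigma$ follows by trace/interpolation since $\mathcal{E}^\al(u)$ has a full extra horizontal derivative of room compared to the $L^2$ bound, so one can afford $\as{\cdot}_{-1/2}\ls \ns{\cdot}_0 + \ns{\nabla\cdot}_0$-type estimates, or more directly bound $\mathcal{E}^\al$ in $L^2(\Omega)$ and apply the trace inequality with the gained derivative.

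The estimate \eqref{QQes3} for $Q^{3,\al}$ is the main obstacle, because $Q^{3,\al}$ (see \eqref{Q3def}) contains both $-\p^\al\eta\,\D_d(\sg_\a u)\n$ and $-\sg_\a u\,\Pi\,\p^\al\n$, which carry $4N$ horizontal derivatives of $\eta$; these are precisely the terms that force the appearance of $\bar\k\f$ (through $\abs{D^{4N}\eta}$ controlled by $\as{\eta}_{4N+1/2}=\f$ paired with $\abs{\nabla u}_{C^1(\Sigma)}=\bar\k^{1/2}$, cf.\ the exceptional-term analysis in the proof of Lemma~\ref{p_G_estimates2N}) and of $\k\sdm{2N}$ (through the remaining pieces where one uses $\as{D^{4N}\eta}_{1/2}\ls \sdnn^{\cdots}\fe{2N}^{\cdots}$-type Sobolev interpolation between $\sdm{2N}$ and $\fe{2N}$, analogous to \eqref{ii1}). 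The commutator $[\p^\al,\sg_\a u\,\Pi,\n]$ and the pure term $-\D_d p\,\p^\al\eta$ are lower order and give $\senn\sdm{2N}$. So the structure of the right-hand side of \eqref{QQes3} is dictated by exactly which factor absorbs the top derivative of $\eta$: when it pairs against $\sg_\a u$ in $C^1(\Sigma)$ one gets $\bar\k\f$, when it pairs against lower-order $u$-data one gets $(\senn+\k)\sdm{2N}$. Finally, \eqref{QQes4} for $\widetilde Q^{4,\al}=[\p^\al,u_h\cdot,D\eta]$ is handled as in the derivation of \eqref{p_G_d_1}: all terms are $\senn\sdm{2N}$ except the single exceptional term $\abs{D^{4N}\eta\,u_h}_0\ls \abs{D^{4N}\eta}_0\abs{u_h}_{C(\Sigma)}$, bounded by $\sem{2N}^{1/2}\bar\k^{1/2}$ and hence by $\bar\k\sem{2N}+\senn\sdm{2N}$ after Young's inequality, using $\abs{u_h}_{C(\Sigma)}^2\ls\bar\k$ from the definition \eqref{bkdef} and the trace theorem. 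Throughout, the a priori smallness \eqref{apriori_1} is used to absorb the extra powers of $\g(t)$ that would otherwise appear on the right.
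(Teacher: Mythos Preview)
Your overall strategy---Leibniz-expanding the commutators, placing the top-order factor in $L^2$ (bounded by $\sdm{2N}$) and the rest in $L^\infty$ (bounded via $\senn$)---matches the paper, and your treatment of \eqref{QQes1}--\eqref{QQes2} is correct. Two of the exceptional terms, however, are handled incorrectly.

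For \eqref{QQes3}, the term $-\p^\al\eta\,\D_{d}(\sg_\a u)\n$ is what produces $\k\sdm{2N}$, and not via any interpolation of $\as{D^{4N}\eta}_{1/2}$: that quantity \emph{is} $\f$, so no interpolation of it can yield $\sdm{2N}$. The actual mechanism is direct. By Lemma~\ref{i_sobolev_product_2},
\begin{equation*}
\as{\p^\al\eta\,\D_{d}(\sg_\a u)\n}_{-1/2}\ls \as{\p^\al\eta}_{-1/2}\,\as{\D_{d}(\sg_\a u)\n}_{C^1(\Sigma)}\ls \sdm{2N}\cdot\k,
\end{equation*}
where $\as{\p^\al\eta}_{-1/2}\ls\as{D\eta}_{4N-3/2}\ls\sdm{2N}$, and the second factor involves $\as{\nabla^2 u}_{C^1(\Sigma)}$, which sits in $\k$ but \emph{not} in $\bar\k$; this is exactly why $\k$ (rather than $\bar\k$) appears on the right of \eqref{QQes3}. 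The contribution $\bar\k\f$ comes solely from $-\sg_\a u\,\Pi\,\p^\al\n$, via $\as{\p^\al D\eta}_{-1/2}\ls\as{\p^\al\eta}_{1/2}\ls\f$ paired with $\as{\sg_\a u\,\Pi}_{C^1(\Sigma)}\ls\bar\k$.

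For \eqref{QQes4}, the symmetric commutator $\widetilde Q^{4,\al}=\sum_{0<\beta<\al}C_\al^\beta\,\p^\beta u_h\cdot\p^{\al-\beta}D\eta$ excludes $\beta=0$, so the product $u_h\cdot D^{4N}\eta$ you single out never appears. The genuine exceptional term is the case $\abs{\beta}=1$, namely $\p^{\al'}u_h\cdot\p^{\al-\al'}D\eta$ with $\abs{\al'}=1$, bounded by $\as{Du_h}_{C^1(\Sigma)}\as{D^{4N}\eta}_0\ls\bar\k\,\sem{2N}$. Here $\as{Du_h}_{C^1(\Sigma)}\ls\bar\k$ holds by definition \eqref{bkdef}, whereas your claimed bound $\as{u_h}_{C(\Sigma)}\ls\bar\k$ does not follow from it. No Young inequality is needed: the squared estimate already gives $\bar\k\sem{2N}$ directly.
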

\begin{proof}
The estimates \eqref{QQes1}--\eqref{QQes2} follow similarly as Lemma \ref{p_G_estimates_N+2}. For  the estimate \eqref{QQes3}, the additional terms result from the followings:
   \beq 
\as{   \sg_\a u \Pi \p^\alpha \n }_{-1/2} \ls \as{   \nabla u   }_{C^1(\Sigma)}\as{ \p^\alpha D\eta }_{-1/2}\ls\bar \k \as{ \p^\alpha \eta }_{1/2} \ls\bar \k\f \eeq
and
   \beq 
\as{   p^\alpha \eta \D_{d}\big( \sg_\a u \big)\n}_{-1/2}\ls \as{   \p^\alpha \eta }_{-1/2}\(\as{\nabla u}_{C^1(\Sigma)}+\as{\nabla u}_{C^2(\Sigma)}\)  \ls\sdm{2N} \k. \eeq
While for the estimate  \eqref{QQes4}, the additional term is due to that for  $|\al'|=1$:
\beq
\as{\p^{\al'} u_h \cdott\p^{\al-\al'}D \eta}_0\ls \as{D u_h}_{C^1(\Sigma)} \as{\p^{4N}\eta}_0\ls \bar \k\sem{2N}.
\eeq
The estimates \eqref{QQes3}--\eqref{QQes4} then follow. 
\end{proof}

We now present the energy evolution for $4N$ horizontal spatial derivatives. We compactly write
\beq\label{notaU}
\ns{U^{4N}}:=\sum_{\al\in \mathbb{N}^{d-1},|\al|=4N}\ns{U^{\al}}+\ns{u}.
\eeq
\begin{prop}\label{i_h_evolution 2N}
It holds that
\begin{equation}\label{i_te_0}
 \dt \(\ns{  U^{4N}}_{0} + \sem{2N} \)+   \ns{ U^{4N}}_{1}  \ls  \sqrt{\senn+\bar\k} \sem{2N}+ (\sqrt{\senn}+\k)\sdm{2N}+ \bar\k\f
.
\end{equation}
\end{prop}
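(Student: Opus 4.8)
The plan is to run an $L^{2}(\Omega)$ energy estimate on the Alinhac good‑unknown system \eqref{linear_geometrica}, which is the rigorous form of the heuristic \eqref{intro7}--\eqref{intro8}, and then to bound the resulting forcing terms by Lemma \ref{p_Q_estimates} together with the elementary bounds $\as{\pa\eta}_{0}\le\sem{2N}$, $\as{\pa\eta}_{1/2}\le\f$ and $\ns{\pa p}_{0}\ls\sdm{2N}$ for $\al\in\mathbb{N}^{d-1}$ with $\abs{\al}=4N$ (the last by writing $\pa=\p^{\al'}\p^{\al-\al'}$ with $\abs{\al'}=1$ and $\ns{\nabla p}_{4N-1}\le\sdm{2N}$), the trace inequality $\abs{U^{\al}}_{1/2}\ls\norm{U^{\al}}_{1}$, and Korn's inequality of Lemma \ref{i_korn} (applicable since $U^{\al}=u=0$ on $\Sigma_{b}$).

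\emph{The energy identity.} Fix $\al$ as above, take the $L^{2}(\Omega)$ inner product of the first equation of \eqref{linear_geometrica} with $JU^{\al}$, and integrate by parts in the bulk, using the Piola identity $\p_{j}(J\a_{ij})=0$, the transport formula \eqref{transportW} together with the geometric continuity identity $\dt J+D\cdot(Ju_{h})+\p_{d}(JU_{d})=J\diva u=0$, the symmetry of $\sg_{\a}$, and the divergence equation $\naba\cdot U^{\al}=Q^{2,\al}$; then replace the boundary traction by $\Sa(P^{\al},U^{\al})\n=\pa\eta\,\n+Q^{3,\al}$ and the boundary normal velocity by $U^{\al}\cdot\n=\dt\pa\eta-Q^{4,\al}$ --- this last substitution being exactly where the top‑order contribution of $\eta$ cancels, so that no $\mathcal{K}\f$ appears in the bulk. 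The outcome is
\begin{align*}
&\hal\frac{d}{dt}\(\int_{\Omega}J\abs{U^{\al}}^{2}+\int_{\Sigma}\abs{\pa\eta}^{2}\)+\hal\int_{\Omega}J\abs{\sg_{\a}U^{\al}}^{2}\\
&\quad=\int_{\Omega}JU^{\al}\cdot Q^{1,\al}+\int_{\Omega}JP^{\al}Q^{2,\al}+\int_{\Sigma}\pa\eta\,Q^{4,\al}-\int_{\Sigma}U^{\al}\cdot Q^{3,\al}.
\end{align*}

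\emph{Estimating the forcing.} For the $Q^{1,\al}$ term, decompose it via \eqref{Q1def} and integrate by parts once more in the divergence‑form piece $\naba\cdot\mathcal{E}^{\al}(u)$ to produce $-\int_{\Omega}J\mathcal{E}^{\al}(u):\naba U^{\al}+\int_{\Sigma}(\mathcal{E}^{\al}(u)\n)\cdot U^{\al}$; every resulting term carries a factor $\norm{U^{\al}}_{1}$ (on the boundary via $\abs{U^{\al}}_{1/2}\ls\norm{U^{\al}}_{1}$) against one of $\norm{\tQ^{1,\al}}_{0}$, $\norm{\mathcal{D}^{\al}(\sg_{\a}u)}_{0}$, $\norm{\mathcal{E}^{\al}(u)}_{0}$, $\abs{\mathcal{E}^{\al}(u)}_{-1/2}$, all $\ls\sqrt{\senn\,\sdm{2N}}$ by \eqref{QQes1}--\eqref{QQes2}, so by Young's inequality this is $\le\epsilon\norm{U^{\al}}_{1}^{2}+C\senn\,\sdm{2N}$. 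The pressure integral obeys $\abs{\int_{\Omega}JP^{\al}Q^{2,\al}}\ls\norm{P^{\al}}_{0}\norm{Q^{2,\al}}_{0}\ls\sqrt{\sdm{2N}}\,\sqrt{\senn\,\sdm{2N}}\ls\sqrt{\senn}\,\sdm{2N}$, using $\norm{P^{\al}}_{0}\ls\sqrt{\sdm{2N}}$ (up to absorbed lower‑order terms) and \eqref{QQes1}. For the $Q^{3,\al}$ boundary integral, $\abs{\int_{\Sigma}U^{\al}\cdot Q^{3,\al}}\le\abs{U^{\al}}_{1/2}\abs{Q^{3,\al}}_{-1/2}\ls\norm{U^{\al}}_{1}\sqrt{(\senn+\k)\sdm{2N}+\bar\k\f}$ by \eqref{QQes3}, giving after Young $\le\epsilon\norm{U^{\al}}_{1}^{2}+C(\senn+\k)\sdm{2N}+C\bar\k\f$; the unremovable $\bar\k\f$ here comes from the term $\sg_{\a}u\,\Pi\,\pa\n$, which originates from the dynamic boundary condition and is not affected by the good‑unknown trick. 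For the $Q^{4,\al}$ boundary integral, use \eqref{Q4def}: integrate by parts in the horizontal variable, $\int_{\Sigma}\pa\eta\,(u_{h}\cdot D\pa\eta)=-\hal\int_{\Sigma}(D\cdot u_{h})\abs{\pa\eta}^{2}\ls\sqrt{\bar\k}\,\sem{2N}$; bound $\int_{\Sigma}(\D_{d}u\cdot\n)\abs{\pa\eta}^{2}\ls\sqrt{\bar\k}\,\sem{2N}$; and $\abs{\int_{\Sigma}\pa\eta\,\tQ^{4,\al}}\le\abs{\pa\eta}_{0}\abs{\tQ^{4,\al}}_{0}\ls\sqrt{\sem{2N}}\sqrt{\senn\,\sdm{2N}+\bar\k\,\sem{2N}}\ls\sqrt{\senn}(\sem{2N}+\sdm{2N})+\sqrt{\bar\k}\,\sem{2N}$ by \eqref{QQes4}.

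\emph{Conclusion and main obstacle.} Absorbing the $\epsilon\norm{U^{\al}}_{1}^{2}$ terms into $\hal\int_{\Omega}J\abs{\sg_{\a}U^{\al}}^{2}$ (possible since $\hal\le J\le\frac{3}{2}$), summing over the finitely many $\al$ with $\abs{\al}=4N$, adding the base identity \eqref{intro0} for $(u,\eta)$, using Korn's inequality of Lemma \ref{i_korn} to get $\int_{\Omega}J(\abs{\sg_{\a}U^{\al}}^{2}+\abs{\sg_{\a}u}^{2})\gtrsim\ns{U^{4N}}_{1}$ up to absorbed lower‑order terms, and invoking $\senn\ls1$ (so $\senn\,\sdm{2N}\ls\sqrt{\senn}\,\sdm{2N}$) and $\sqrt{\bar\k}+\sqrt{\senn}\ls\sqrt{\senn+\bar\k}$, one arrives at \eqref{i_te_0}. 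The main obstacle is the bookkeeping behind Lemma \ref{p_Q_estimates}: one must check that, among all the top‑order commutator contributions to $Q^{1,\al}$ and $Q^{3,\al}$ (the $\mathcal{C}^{\al}$, $\mathcal{D}^{\al}$ and $\mathcal{E}^{\al}$ terms), the only pieces not carrying a genuinely low‑order factor are $\sg_{\a}u\,\Pi\,\pa\n$ in $Q^{3,\al}$ (from the dynamic condition, producing $\bar\k\f$) and $\p^{\al'}u_{h}\cdot\p^{\al-\al'}D\eta$ inside $\tQ^{4,\al}$ (from the kinematic condition, producing $\bar\k\,\sem{2N}$), while everything else is controlled by $\senn$, $\k$, or the dissipation --- which is precisely the gain of the good‑unknown reformulation: the term $\mathcal{K}\f$ is removed from the bulk and survives only on the boundary as $\bar\k\f$, with the favorable factor $\bar\k\ls\senn$ of Lemma \ref{klem}.
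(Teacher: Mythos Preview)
Your outline mirrors the paper's proof, and most steps are correct, but there is one genuine gap in the handling of the $\mathcal{D}^{\al}(\sg_{\a}u)$ piece of $Q^{1,\al}$. You write that $\norm{\mathcal{D}^{\al}(\sg_{\a}u)}_{0}\ls\sqrt{\senn\,\sdm{2N}}$ ``by \eqref{QQes1}--\eqref{QQes2}'', but Lemma \ref{p_Q_estimates} gives no bound on $\mathcal{D}^{\al}$; only $\widetilde Q^{1,\al}$, $Q^{2,\al}$ and $\mathcal{E}^{\al}(u)$ are estimated there. Moreover, a direct $L^{2}$ bound on $\mathcal{D}^{\al}(\sg_{\a}u)$ does \emph{not} yield $\sqrt{\senn\,\sdm{2N}}$: in the top‑order commutator piece $\p^{\beta}\!\big(\tfrac{\p_{j}\varphi}{\p_{d}\phi}\big)\,\p^{\gamma}\p_{d}(\sg_{\a}u)_{ij}$ with $\abs{\beta}=1$ and $\abs{\gamma}=4N-1$, expanding $\sg_{\a}u=\a\nabla u+(\a\nabla u)^{t}$ by Leibniz produces the term $(\p^{\gamma}\p_{d}\a)\,\nabla u$, and $\norm{\p^{\gamma}\p_{d}\a}_{0}\ls\norm{\nabla^{4N+1}\bar\eta}_{0}\ls\abs{\eta}_{4N+1/2}=\sqrt{\f}$, paired with $\norm{\nabla u}_{L^{\infty}}\ls\sqrt{\k}$. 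This reintroduces exactly the bulk $\k\f$ factor that the good‑unknown reformulation was designed to eliminate, and it is not bounded by $\bar\k\f$.

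The paper fixes this by one further horizontal integration by parts in that specific case $\abs{\gamma}=4N-1$: writing $\gamma=\gamma'+(\gamma-\gamma')$ with $\abs{\gamma'}=1$ and moving $\p^{\gamma'}$ onto $J\,U^{\al}_{j}\,\p^{\beta}(\cdots)$ gains the factor $\norm{U^{\al}}_{1}$ while reducing the order on $\p_{d}\sg_{\a}u$ to $4N-1$, after which only $\norm{\nabla^{4N}\bar\eta}_{0}\ls\sqrt{\sdm{2N}}$ appears and the desired bound $\norm{U^{\al}}_{1}\sqrt{\senn\,\sdm{2N}}$ follows. You need to insert this step (and the analogous, simpler treatment of $\mathcal{C}^{\al}_{j,2}$) before you can absorb into the dissipation and conclude \eqref{i_te_0}.
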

\begin{proof}
Let $\al\in \mathbb{N}^{2}$ with $\abs{\al}=4N$. Taking the dot product of the first equation in $\eqref{linear_geometrica}$ with $U^\al$ and then integrating by parts, using the other equations in $\eqref{linear_geometrica}$ as Proposition \ref{p_upper_evolution 2N}, we obtain
\begin{align} \label{identity1a}
&\hal  \frac{d}{dt} \left(\int_\Omega  J \abs{U^\alpha}^2+\int_\Sigma \abs{\p^\al \eta}^2 \right)
+ \hal \int_\Omega J \abs{ \sg_{\mathcal{A}} U^\alpha}^2\nonumber
\\&\quad= \int_\Omega J \(   U^\alpha \cdot Q^{1,\al}+ P^\al  Q^{2,\al}\)
+\int_\Sigma -  U^\alpha \cdot Q^{3,\al}+ \p^\al \eta Q^{4,\al}.
\end{align}

We now estimate the right hand side of \eqref{identity1a}.  For the $Q^{1,\al}$ term, by \eqref{QQes1}, we   have
\begin{align}\label{qqqq1}
 \int_{\Omega}JU^\al\cdot\tQ^{1,\al}
\ls\norm{U^\al}_0\norm{\tQ^{1,\al}}_0\ls \norm{U^\al}_0\sqrt{\senn \sdm{2N}}.
\end{align}
It follows from the integration by parts and the trace theory that, by \eqref{QQes2},
\begin{align}\label{qqqq2}
 \int_{\Omega}  J U^\alpha\cdot\diva  \mathcal{E}^\alpha(u)    &=\int_\Sigma   U^\alpha \cdot \mathcal{E}^\alpha(u)\n
- \int_{\Omega}  J \naba
U^\alpha:
\mathcal{E}^\alpha(u) \nonumber\\\nonumber& \ls  \abs{  U^\alpha}_{1/2}\abs{\mathcal{E}^\alpha(u) }_{-1/2}+\norm{\nabla U^\alpha}_0\norm{\mathcal{E}^\alpha(u)}_0
\\& \ls
   \norm{  U^\alpha}_1\sqrt{\senn \sdm{2N}}.
\end{align}
By the definition \eqref{Cialpha},
\begin{align}\label{RSi}
 \int_{\Omega} J\   U^\alpha \cdot  \mathcal{D^\alpha}\big( \sg_\a u \big)   =  \int_{\Omega} J \mathcal C^\alpha_{j, 1}(\sg_\a u)_{ij}  U^\alpha_{j}  +   \int_{\Omega} J \mathcal C^\alpha_{j,
2}(\sg_\a u)_{ij}  U^\alpha_{j}  .
 \end{align}
For the first term, by expanding, it suffices to estimate terms like
\beq
 \int_{\Omega} J U^\alpha_{j} \p^\beta \big( \frac{\partial_{j} \varphi }{ \partial_{d} \phi }\big) \big(\p^{ {\gamma}} \partial_{d}(\sg_\a
v)_{ij} \big),
\eeq
where  $\beta \neq 0, \, {\gamma} \neq 0$ and $|\beta | + |{\gamma}|=4N .$ If $|\gamma| = 4N-1$ and hence $|\beta|=1$, we integrate by parts to have
\begin{align}
 \int_{\Omega} J U^\alpha_{j} \p^\beta \big( \frac{\partial_{j} \varphi }{ \partial_{d} \phi }\big) \big(\p^{ {\gamma}} \partial_{d}(\sg_\a
v)_{ij} \big)&=- \int_{\Omega}\p^{\gamma'}\(JU^\alpha_{j} \p^\beta \big( \frac{\partial_{j} \varphi }{ \partial_{d} \phi }\big)\)  \big(\p^{ {\gamma-\gamma'}} \partial_{d}(\sg_\a
v)_{ij} \big) \nonumber
\\&\ls\norm{U^\alpha}_1\sqrt{\senn  }\sqrt{  \sdm{2N}}.
\end{align}
For $|\gamma| \le 4N-2$ and hence $|\beta|\ge 2$,
\beq
 \int_{\Omega} J U^\alpha_{j} \p^\beta \big( \frac{\partial_{j} \varphi }{ \partial_{d} \phi }\big) \big(\p^{ {\gamma}} \partial_{d}(\sg_\a
v)_{ij} \big)\ls\norm{U^\alpha}_0\sqrt{\senn     \sdm{2N}}.
\eeq
Similarly, we have
\begin{align} \label{RSi2}
 \int_{\Omega} J \mathcal C^\alpha_{j,
2}(\sg_\a u)_{ij}  U^\alpha_{j}\ls\norm{U^\alpha}_0\sqrt{\senn     \sdm{2N}}.
 \end{align}
 It then follows from \eqref{RSi}--\eqref{RSi2} that
\begin{align}\label{qqqq3}
\int_{\Omega} J\   U^\alpha \cdot  \mathcal{D^\alpha}\big( \sg_\a u \big)  \ls  \norm{U^\alpha}_1\sqrt{\senn     \sdm{2N}}.
\end{align}
Hence, by \eqref{qqqq1},  \eqref{qqqq2} and  \eqref{qqqq3}, we get
\begin{align}\label{QQ1}
 \int_{\Omega}J    U^\alpha \cdot Q^{1,\al}
\ls \norm{ U^\alpha}_1\sqrt{\senn \sdm{2N}}.
\end{align}

For the $Q^{2,\al}$ term, by \eqref{QQes1}, we have
\begin{align} \label{1231}
&  \int_\Omega J   P^\al  Q^{2,\al}
\ls\norm{P^\al}_0\norm{Q^{2,\al}}_0\ls \sqrt{\sdm{2N}}\sqrt{\senn \sdm{2N}}.
\end{align}
For the $Q^{3,\al}$ term, by \eqref{QQes3} and the trace theory, we obtain
 \begin{align} \label{1235}
 \int_\Sigma -  U^\alpha \cdot Q^{3,\al} \le \abs{U^\alpha}_{1/2}\abs{ Q^{3,\al} }_{-1/2}\ls\norm{U^\alpha}_{1}\sqrt{(\senn+\k)\sdm{2N}+\bar\k\f}.
\end{align}
For the $Q^{4,\al}$ term,
the integration by parts gives
\begin{align}
  \int_{\Sigma }     \p^\alpha \eta \(  u_h\cdott D\p^\alpha \eta +\D_d  u \cdot \n
 \p^\alpha \eta\)
& =  \int_{\Sigma }     \(  -\hal D\cdott u_h   +\D_d  u \cdot \n
 \) \abs{ \p^\alpha \eta}^2 \nonumber
 \\&\ls   \abs{\nabla u}_{C^1(\Sigma)} \as{\p^\alpha \eta}_0\ls  \sqrt{\bar\k} \sem{2N}.
\end{align}
By \eqref{QQes4}, we have
\beq\label{QQ10}
 \int_{\Sigma }    \p^\alpha \eta   \tQ^{4,\al}
 \ls  \abs{\p^\alpha \eta}_0\abs{  \tQ^{4,\al} }_0 \ls  \sqrt{\sem{2N}}  \sqrt{\senn  \sdm{2N}+ \bar\k {\sem{2N}}}.
\eeq

Finally,   we may follow the estimates (5-22)--(5-25) of \cite{GT_inf} to have
\begin{equation}\label{QQ11}
\int_\Omega J \abs{ \sg_{\mathcal{A}} U^\al}^2\ge  \ns{ \sg U^\al}_{0}.
\end{equation}
Summing \eqref{QQ1}--\eqref{QQ11},
we deduce \eqref{i_te_0} from \eqref{identity1a} and \eqref{intro0} by using Cauchy's and Korn's inequalities.
\end{proof}

\section{Comparison results}
In this section we show that, up to some errors, the full energies and dissipations are comparable to those tangential ones at both $2N$ and $N+2$ levels. 

\subsection{Energy comparison} We begin with the result for the instantaneous energies. Recall the definitions of $\fe{2N}$,    $\senn$,  $\feb{2N}$, $\seb{N+2,2} $,  $\sem{2N}$ and $\k$ from  \eqref{p_energy_def1}, \eqref{i_energy_min_2}, \eqref{i_horizontal_energy},  \eqref{i_horizontal_energy1},  \eqref{pdd_dissipation_def1} and \eqref{kdef}, respectively.

\begin{thm}\label{eth}
It holds that
\begin{equation}\label{e2n1}
 \fe{2N} \lesssim  \feb{2N}  + (\fe{2N})^2 +\k\sem{2N}
\end{equation}
and
\begin{equation}\label{en+2}
\senn \lesssim  \seb{N+2,2} +  \fe{2N} \senn   .
\end{equation}
\end{thm}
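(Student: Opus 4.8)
The plan is to prove both \eqref{e2n1} and \eqref{en+2} by showing that the full energy (at the $2N$ level and at the $N+2$ level, respectively) is controlled by its tangential part plus the normal-direction quantities, which in turn are recovered from the equations \eqref{linear_perturbed}. The strategy is an elliptic-type bootstrap: the tangential derivatives of $u,\eta$ are directly part of $\feb{2N}$ (resp.\ $\seb{N+2,2}$), and one recovers vertical derivatives $\p_d$ one at a time by using the equations to trade a $\p_d$ for horizontal/temporal derivatives plus nonlinear errors. I would first treat \eqref{e2n1}. Starting from $\feb{2N}$, which controls $\ns{D_0^{4N-1}u}_0$, $\ns{\bar D_0^{4N-2}\dt u}_0$ and the corresponding $\eta$ norms, I would use the divergence equation $\diverge u = G^2$ to get $\p_d u_d$ in terms of $D u_h$ and $G^2$, hence recover vertical derivatives of $u_d$ inductively; then the horizontal component of the momentum equation $\p_d^2 u_h \sim Dp + D^2 u_h + \dt u_h + G^1$ recovers second vertical derivatives of $u_h$; the vertical component recovers $\p_d p$; and iterating (exactly in the spirit of the proof of Lemma \ref{le inter}) climbs up to the full derivative counts $\ns{u}_{4N}$, $\ns{\nabla p}_{4N-2}$, etc. The boundary conditions $\Sa(p,u)\n = \eta\n + G^3$ and $\dt\eta = u_d + G^4$ supply the boundary regularity of $\eta$ (e.g.\ $\as{\eta}_{4N-1/2}$ from $p - \p_d u_d$ and trace of $u$) and of $\dt\eta$. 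The nonlinear terms $G^i$ are controlled by the $2N$-level nonlinear estimates of Lemma \ref{p_G_estimates2N}: the ``clean'' pieces give $(\fe{2N})^2$, and the three exceptional highest-order pieces are precisely what produce the $\k\sem{2N}$ term via \eqref{p_G_e_1}.

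For \eqref{en+2} the argument is structurally the same but one must be careful with the minimal-derivative restrictions built into $\senn$: the horizontal energy $\seb{N+2,2}$ controls $\ns{\bar D_2^{2(N+2)}u}_0$ with the count-$2$ restriction on $\eta$, so one recovers $\ns{Du_h}_{2(N+2)-1}$, $\ns{u_d}_{2(N+2)}$, $\ns{\nabla^3 u}_{2(N+2)-3}$, the pressure norms and the $\dt^j$ norms exactly as they appear in \eqref{i_energy_min_2}, again by peeling off vertical derivatives using \eqref{linear_perturbed} and Poincar\'e's inequality (since $u=0$ on $\Sigma_b$). Here the nonlinear estimates at the $N+2$ level, \eqref{p_G_e_h_0}, give all $G^i$-contributions a bound $\ls \fe{2N}\senn$, so no $\k\sem{2N}$-type term appears and we simply get the product term $\fe{2N}\senn$ on the right of \eqref{en+2}. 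One should also invoke Lemma \ref{i_N_constraint} to absorb lower-order cross terms and Korn's inequality where symmetric gradients appear.

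The main obstacle is the careful bookkeeping of derivative counts in \eqref{e2n1}: because $\sem{2N} = \as{\eta}_{4N}$ and $\f = \as{\eta}_{4N+1/2}$ sit at a strictly higher level than anything in $\feb{2N}$, one must verify that the only place a $4N$-order (or higher) spatial derivative of $\eta$ enters — when recovering $\ns{\nabla p}_{4N-2}$ and $\ns{u}_{4N}$ through the $G^i$ that contain $\bar\eta$ multiplied by a derivative of $u$ or $p$ — is genuinely absorbed into $\k\sem{2N}$ and not into something that would require $\f$ or $\sdm{2N}$ on the right-hand side. Concretely this is the computation behind \eqref{p_G_e_1} with $\as{\eta}_{4N-1/2}\ls\sem{2N}$, and it is exactly the structural reason that the viscous-stress-tensor cancellation via Alinhac good unknowns (Proposition \ref{i_h_evolution 2N}) is needed later to close the full scheme. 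Everything else — Sobolev embeddings, trace theory, Poincar\'e, Korn, and the iterative recovery of vertical derivatives — is routine and already packaged in Lemmas \ref{le inter}, \ref{p_G_estimates_N+2} and \ref{p_G_estimates2N}.
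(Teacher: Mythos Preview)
Your proposal identifies the correct ingredients --- the perturbed linear equations \eqref{linear_perturbed}, the nonlinear estimates \eqref{p_G_e_0}--\eqref{p_G_e_1} and \eqref{p_G_e_h_0}, and the observation that the $\k\sem{2N}$ term in \eqref{e2n1} arises precisely from the highest-spatial-order $G^i$ pieces handled in \eqref{p_G_e_1}. The overall strategy is sound.

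However, the paper's route for \eqref{e2n1} differs from your vertical-derivative-peeling bootstrap. Rather than recovering $\p_d$-derivatives one at a time from the individual scalar equations, the paper applies the Stokes elliptic estimate of Lemma \ref{i_linear_elliptic} (with stress boundary condition) directly to the time-differentiated problem \eqref{jellip} and inducts \emph{downward} on $j$: from $\ns{\dt^{2N}u}_0\ls\feb{2N}$ one obtains $\ns{\dt^{2N-1}u}_2+\ns{\dt^{2N-1}p}_1$, then $\ns{\dt^{2N-2}u}_4+\ns{\dt^{2N-2}p}_3$, and so on down to $\ns{u}_{4N}+\ns{p}_{4N-1}$. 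The parabolic gain of two spatial derivatives per time derivative is packaged entirely in the Stokes lemma, and the only nontrivial bookkeeping is that the final step $j=0$ invokes \eqref{p_G_e_1} rather than \eqref{p_G_e_0}, producing the $\k\sem{2N}$ term. Your bare-hands approach would also work in principle but requires a more delicate simultaneous induction on $u$ and $p$ (since $\p_d^2 u_h$ needs $Dp$, $\p_d p$ needs $u_d$, etc.); this coupled elliptic structure is exactly what Lemma \ref{i_linear_elliptic} resolves in one shot.

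For \eqref{en+2} the paper actually combines both mechanisms: it first applies the Stokes estimate (now with two horizontal derivatives attached, to respect the minimal count) to obtain a base estimate \eqref{claim222}, and \emph{then} carries out precisely the peeling argument you describe --- using $\diverge u=G^2$, the horizontal and vertical components of the momentum equation, the horizontal part of the third boundary condition, and Poincar\'e's inequalities (Lemmas \ref{poincare_b}--\ref{poincare_usual}) --- to upgrade to the anisotropic norms $\ns{Du_h}_{2(N+2)-1}$, $\ns{u_d}_{2(N+2)}$, $\ns{\p_d p}_{2(N+2)-2}$ appearing in $\senn$. So your sketch matches the second half of the paper's argument there. (Korn's inequality is not actually used in this theorem; it enters only in the energy-evolution propositions.)
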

\begin{proof}
We first prove \eqref{e2n1}.
Note that the definition \eqref{i_horizontal_energy} of $\feb{2N}$ guarantees that
\begin{equation}\label{p_E_b_4}
  \ns{\dt^{2N} u}_{0}+ \as{\eta}_{4N-1}+\sum_{j=1}^{2N} \as{\dt^j \eta}_{4N-2j}  \ls   \feb{2N} .
\end{equation}
We let $j=0,\dots,2N-1$ and then apply $\partial_t^j$ to the equations in \eqref{linear_perturbed} to find
\begin{equation}\label{jellip}
\begin{cases}
- \Delta \partial_t^j u+\nabla\partial_t^j p=-\partial_t^{j+1} u+\partial_t^j G^1
 &\text{in }\Omega
\\ \diverge\partial_t^j u=\partial_t^j G^2&\text{in }\Omega
\\   (\partial_t^j p I_d- \mathbb{D}(\partial_t^j u ) ) e_d = \partial_t^j\eta   e_d +\partial_t^jG^3&\text{on }\Sigma
\\ \partial_t^j u=0 &\text{on }\Sigma_b.
\end{cases}
\end{equation}
Applying the elliptic estimates of Lemma \ref{i_linear_elliptic} with $r=4N-2j\ge 2$ for $j=1,\dots,2N-1$ to the problem \eqref{jellip} and using \eqref{p_E_b_4} and \eqref{p_G_e_0}, we obtain
\begin{align}\label{p_E_b_2}
  \norm{\dt^j  u  }_{4N-2j}^2 + \norm{\dt^j  p  }_{4N-2j-1}^2
 &  \ls
\norm{\dt^{j+1} u   }_{4N-2j-2 }^2 + \norm{ \dt^j G^1   }_{4N-2j-2}^2
+ \norm{\dt^j  G^2  }_{4N-2j-1}^2\nonumber \\ &\quad
 + \abs{\dt^j  \eta }_{4N-2j-3/2}^2  + \abs{\dt^j  G^3  }_{4N-2j-3/2}^2\nonumber
 \\&  \ls   \norm{\dt^{j+1} u   }_{4N-2(j+1) }^2 + \feb{2N} + (\fe{2N})^2 .
\end{align}
A simple induction on \eqref{p_E_b_2} yields, by \eqref{p_E_b_4} again,
\begin{equation}\label{claim22}
\sum_{j=1}^{2N } \norm{\dt^{j}  u  }_{2N-2j }^2 + \sum_{j=1}^{4N-1}\norm{\dt^{j} p  }_{4N-2j-1}^2 \ls \ns{\dt^{2N} u}_{0}+
\feb{2N}   +(\fe{2N})^2 \ls
\feb{2N}   + (\fe{2N})^2 .
\end{equation}
On the other hand, applying the elliptic estimates of Lemma \ref{i_linear_elliptic} with $r=4N-1$ to the problem \eqref{jellip} for $j=0$  and using \eqref{claim22}, \eqref{p_E_b_4} and \eqref{p_G_e_1}, we have
\begin{align}\label{p_E_b}
  \norm{ u  }_{4N }^2 + \norm{p  }_{4N-1}^2
 &  \ls
\norm{\dt  u   }_{4N -2 }^2 + \norm{   G^1   }_{4N -2}^2
+ \norm{  G^2  }_{4N-1}^2
 + \abs{   \eta }_{4N-3/2}^2  + \abs{G^3  }_{4N-3/2}^2\nonumber
 \\&  \ls    \feb{2N}   + (\fe{2N})^2+\k\sem{2N}.
\end{align}
Consequently, by the definition \eqref{p_energy_def1} of $\fe{2N}$,  summing \eqref{claim22} and \eqref{p_E_b} gives \eqref{e2n1}.

Now we prove \eqref{en+2}. Note that the definition \eqref{i_horizontal_energy1} of $\seb{N+2,2}$ guarantees that
\begin{equation}\label{p_E_b_4'}
  \ns{\dt^{N+2} u}_{0}+\as{D^2 \eta}_{2(N+2)-2} +\sum_{j=1}^{N+2}  \as{\dt^j \eta}_{2(N+2)-2j}  \ls   \seb{N+2,2} .
\end{equation}
It follows similarly as the derivation of  \eqref{claim22} and \eqref{p_E_b}, except using \eqref{p_E_b_4'} in place of \eqref{p_E_b_4} and \eqref{p_G_e_h_0} in place of \eqref{p_G_e_0}--\eqref{p_G_e_1}, that
\begin{align}\label{claim222}
 &\norm{ D^2 u  }_{2(N+2)-2 }^2+\norm{D^2 p  }_{2(N+2)-3}^2+\sum_{j=1}^{N+2 } \norm{\dt^{j}  u  }_{2(N+2)-2j }^2 + \sum_{j=1}^{N+1}\norm{\dt^{j} p  }_{4N-2j-1}^2 \nonumber
 \\&\quad\ls \seb{N+2,2}   +\ns{ \bar{D}^2\bar{\nab}_0^{2(N+2)-4} G^1}_{0} +  \ns{ \bar{D}^2\bar{\nab}_0^{2(N+2)-4}  G^2}_{1}  +
 \as{\bar{D}^2 \bar{D}_{0}^{2(N+2)-4} G^3}_{1/2}
 \nonumber
 \\&\quad\ls
\seb{N+2,2}   + \fe{2N} \senn .
\end{align}
By an inspection of the definition \eqref{i_energy_min_2} of $\senn$ and the estimate \eqref{claim222}, it remains to improve the estimates of terms without temporal derivatives in \eqref{claim222}. First,
by   Poincar\'e's inequality of Lemma \ref{poincare_usual} and the second equation in \eqref{linear_perturbed}, using   \eqref{claim222} and \eqref{p_G_e_h_0}, we have
\begin{align}\label{p_E_b_`114}
& \norm{D  u_d   }_{2(N+2)-1}^2 \ls    \norm{D^2 u_d   }_{2(N+2)-2}^2+\norm{\p_d D  u_d   }_{2(N+2)-2}^2
 \nonumber\\  &\quad
 \ls    \norm{D^2 u }_{2(N+2)-2}^2+\norm{ D  G^2  }_{2(N+2)-2}^2
  \ls \seb{N+2,2}   +  \fe{2N} \senn  .
\end{align}
Then by the vertical component of the first equation in \eqref{linear_perturbed}, using  \eqref{p_E_b_`114}, \eqref{claim222} and \eqref{p_G_e_h_0}, we obtain
\begin{align}\label{p_E1_b_`12}
 \norm{D \p_d   p  }_{2(N+2)-3}^2 & \ls
 \norm{D u_d  }_{2(N+2)-1}^2 +\norm{D\p_t u_d  }_{2(N+2)-3}^2+\norm{D  G^1_d }_{2(N+2)-3}^2
 \nonumber\\ &   \ls \seb{N+2,2}   +  \fe{2N} \senn .
\end{align}
Next, by the horizontal component of the first equation in \eqref{linear_perturbed}, using   \eqref{claim222}, \eqref{p_E1_b_`12} and \eqref{p_G_e_h_0}, we get
\begin{align}\label{p_E_b_`1}
 \norm{\nabla\p_d ^2 u_h  }_{2(N+2)-3}^2 & \ls \norm{  \nabla D^2 u_h  }_{2(N+2)-3}^2+\norm{\nabla\dt u_h }_{2(N+2)-3}^2+\norm{\nabla D p  }_{2(N+2)-3}^2
  \nonumber\\ & \quad+\norm{\nabla G_h^1}_{2(N+2)-3}^2
 \nonumber\\ &   \ls \seb{N+2,2}   +  \fe{2N} \senn .
\end{align}
By Poincar\'e's inequality of Lemma \ref{poincare_b} and the horizontal component of the fourth equation in \eqref{linear_perturbed}, using \eqref{claim222}, \eqref{p_E_b_`1}, the  trace theory and \eqref{p_G_e_h_0}, we have
\begin{align}\label{p_E_b_`112}
  \norm{\p_d D  u_h  }_{2(N+2)-2}^2&\ls \as{D\p_d   u_h  }_{0}+\norm{\nabla\p_d D  u_h  }_{2(N+2)-3}^2\nonumber
\\&\ls \as{ D^2 u_d }_{0}+\as{ D G^3}_{0}+\norm{\nabla\p_d D  u_h  }_{2(N+2)-3}^2\nonumber
\\&\ls \ns{ D^2 u_d }_{2(N+2)-2}+\norm{\p_d ^2D  u_h  }_{2(N+2)-3}^2+\as{ D G^3}_{0}\nonumber
\\&\ls \seb{N+2,2}   +  \fe{2N} \senn .
\end{align}
Then by Poincar\'e's inequality of Lemma \ref{poincare_usual}, using \eqref{claim222} and \eqref{p_E_b_`112}, we obtain
\begin{align}\label{p_E_b_`11}
\norm{D  u_h  }_{2(N+2)-1}^2 \ls  \norm{\nabla D  u_h  }_{2(N+2)-2}^2
   \ls \seb{N+2,2}   +  \fe{2N} \senn .
\end{align}
Now, by the second equation in \eqref{linear_perturbed} again, using  \eqref{p_E_b_`11}, \eqref{p_E_b_`114} and \eqref{p_G_e_h_0}, we have
\begin{align}\label{p_E_b_`115}
& \norm{  u_d   }_{2(N+2)}^2 \ls    \norm{D  u_d   }_{2(N+2)-1}^2+\norm{\p_d   u_d   }_{2(N+2)-1}^2
 \nonumber\\  &\quad
 \ls    \norm{D  u }_{2(N+2)-1}^2+\norm{    G^2  }_{2(N+2)-1}^2
  \ls    \seb{N+2,2}   + \fe{2N} \senn  .
\end{align}
By the vertical component of the first equation in \eqref{linear_perturbed}, using   \eqref{claim222}, \eqref{p_E_b_`115} and \eqref{p_G_e_h_0}, we obtain
\begin{align}\label{p_E_b_`3}
\norm{\p_d  p  }_{2(N+2)-2}^2&\ls \norm{\dt u_d  }_{2(N+2)-2}^2+\norm{u_d  }_{2(N+2)}^2+\norm{G^1_d   }_{2(N+2)-2}^2\nonumber
  \\&\ls \seb{N+2,2}   +  \fe{2N} \senn .
\end{align}
Consequently,  by the definition \eqref{i_energy_min_2} of $\senn$, summing \eqref{claim222}, \eqref{p_E_b_`1} and \eqref{p_E_b_`11}--\eqref{p_E_b_`3}  gives \eqref{en+2}.
\end{proof}

\subsection{Dissipation comparison}
Now we consider a similar result for the dissipations. Recall the definitions of $\fd{2N},\ \sdnn,\ \fdb{2N},\ \sdb{N+2,2}$, $\sdm{2N}$  and $\f$  from  \eqref{p_dissipation_def1}, \eqref{p_ldissipation_def}, \eqref{i_horizontal_dissipation},  \eqref{i_horizontal_dissipation1},  \eqref{pdd_dissipation_def2} and \eqref{pdd_dissipation_def3}, respectively. We also recall the notation \eqref{notaU} of  $\ns{ U^{4N} }_{1} $ for Alinhac good unknowns.

\begin{thm}\label{dth}
It holds that
\begin{equation}\label{d2n1}
 \fd{2N} \lesssim  \fdb{2N}  +\fe{2N} \fd{2N}+\k  \sdm{2N},
\end{equation}
\begin{equation}\label{d2n}
 \sdm{2N} \lesssim   \fdb{2N} +\ns{ U^{4N} }_{1} +\fe{2N} \fd{2N}+\k \f
\end{equation}
and
\begin{equation}\label{dn+2}
\sdnn  \lesssim  \sdb{N+2,2} +  \fe{2N} \sdnn  .
\end{equation}
\end{thm}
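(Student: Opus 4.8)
The plan is to prove the three estimates \eqref{d2n1}, \eqref{d2n}, \eqref{dn+2} in close parallel with the energy comparison of Theorem \ref{eth}, replacing the instantaneous-energy functionals by their one-derivative-higher dissipative counterparts. As in the proof of \eqref{e2n1}, the engine is to apply $\dt^j$ to the perturbed linear system \eqref{linear_perturbed} to obtain the time-differentiated Stokes problems \eqref{jellip}, and to iterate the elliptic estimates of Lemma \ref{i_linear_elliptic}, now with the regularity index raised by one relative to the energy case, so that the output matches the definitions \eqref{p_dissipation_def1}, \eqref{pdd_dissipation_def2}, \eqref{p_ldissipation_def} of $\fd{2N}$, $\sdm{2N}$, $\sdnn$. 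The nonlinear forcings $G^i$ are handled by the dissipation-side bounds \eqref{p_G_d_0}--\eqref{p_G_d_2} and \eqref{p_G_d_h_0}, which were tailored to output exactly the products appearing on the right-hand sides of \eqref{d2n1}--\eqref{dn+2}. The $\eta$-dissipation terms such as $\as{D\eta}_{4N-5/2}$, $\as{\dt\eta}_{4N-3/2}$, $\as{\dt^j\eta}_{4N-2j+5/2}$ are recovered from the kinematic boundary condition $\dt\eta=u_d+G^4$ and its time derivatives, together with \eqref{G3_alternate} for the bottom-order $\eta$-trace, via the trace theory.

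For \eqref{d2n1}, the base of the downward induction in $j$ is the tangential dissipation $\fdb{2N}$ of \eqref{i_horizontal_dissipation}, which controls $\ns{\dt^{2N}u}_1$ and $\ns{\bar D_0^{4N-2}\dt u}_1$. Applying Lemma \ref{i_linear_elliptic} with $r=4N-2j$ to \eqref{jellip} for $j=2N-1,\dots,1$ bootstraps $\norm{\dt^j u}_{4N-2j+1}^2+\norm{\dt^j p}_{4N-2j}^2$ in terms of $\norm{\dt^{j+1}u}_{4N-2(j+1)+1}^2$, the $G^i$ contributions from \eqref{p_G_d_0}, and $\abs{\dt^j\eta}_{4N-2j-1/2}^2+\abs{\dt^j G^3}_{4N-2j-1/2}^2$; a final step at $j=0$ with $r=4N-1$, now invoking \eqref{p_G_d_1}, produces $\ns{u}_{4N}+\ns{\nabla p}_{4N-2}$. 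Summing over $j$ and adjoining the $\eta$-terms yields \eqref{d2n1}; the term $\k\sdm{2N}$ enters precisely through \eqref{p_G_d_1} at the top level.

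The new ingredient, and the main obstacle, is \eqref{d2n}: $\sdm{2N}$ carries one more spatial derivative of $u$ than $\fdb{2N}$ supplies, and a naive Stokes estimate for $\ns{u}_{4N+1}$ would produce the bare boundary term $\abs{\eta}_{4N-1/2}^2=\f$, which is not admissible in \eqref{d2n}. Here the Alinhac good unknowns of \eqref{gooddef} are used twice. First, writing $\p^\al u=U^\al+\D_d u\,\p^\al\eta$ for $\abs{\al}=4N$ and using Lemma \ref{p_poisson} for the harmonic extension, one has the trace bound $\abs{u}_{4N+1/2}^2\ls\ns{U^{4N}}_1+\abs{\D_d u}_{C^1(\Sigma)}^2\,\f\ls\ns{U^{4N}}_1+\bar\k\f$ (recall \eqref{notaU}, \eqref{bkdef}), so the $(4N+\tfrac12)$-order of $u$ on $\Sigma$ is controlled by $\ns{U^{4N}}_1+\bar\k\f$ rather than by $\f$. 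Second, the bare $\f$ stemming from the normal stress condition is removed by re-expressing $\eta$ on $\Sigma$ through the quadratic relation \eqref{G3_alternate}, whose top-order part is bounded by $\bar\k\f$ plus a piece $\ls\fe{2N}\sdm{2N}$ that is absorbable. Feeding these into the elliptic estimate of Lemma \ref{i_linear_elliptic} with $r=4N$, using \eqref{p_G_d_2} for the highest-order $G^i$ terms (which give $\fe{2N}\sdm{2N}+\k\f$), and replacing the interior data $\norm{\dt u}_{4N-1}^2$ by its bound from \eqref{d2n1}, one obtains \eqref{d2n} after absorbing the $\fe{2N}\sdm{2N}$ and $\k\sdm{2N}$ contributions into the left side, which is legitimate by the a priori smallness \eqref{apriori_1} (forcing $\k\ls\delta$, $\fe{2N}\ls\delta$). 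The bookkeeping of which nonlinear contributions land in the absorbable buckets $\fe{2N}\sdm{2N}$, $\k\sdm{2N}$ versus the retained $\k\f$, $\fe{2N}\fd{2N}$ is the delicate point.

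Finally, \eqref{dn+2} is the dissipative mirror of the low-order energy comparison \eqref{en+2}: the base is the low-order tangential dissipation $\sdb{N+2,2}$ of \eqref{i_horizontal_dissipation1}, the elliptic estimates of Lemma \ref{i_linear_elliptic} are applied with index raised by one, and the $G^i$ terms are controlled by \eqref{p_G_d_h_0} of Lemma \ref{p_G_estimates_N+2}, contributing $\fe{2N}\sdnn$. One runs exactly the chain of bootstrap steps \eqref{p_E_b_`114}--\eqref{p_E_b_`3}, each shifted up by one derivative: recover $\norm{Du_d}$ from the divergence equation, then $\norm{D\p_d p}$ from the vertical momentum equation, then $\norm{\nabla\p_d^2 u_h}$ from the horizontal momentum equation, then $\norm{\p_d D u_h}$ from the stress and kinematic boundary conditions with Poincar\'e (Lemmas \ref{poincare_b}, \ref{poincare_usual}) and the trace theory, then $\norm{D u_h}$, then $\norm{u_d}$ from the divergence equation again, and finally $\norm{\p_d p}$ from the vertical momentum equation; summing these matches the definition \eqref{p_ldissipation_def} of $\sdnn$. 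As elsewhere, the $2D$ case needs more care in the Sobolev product estimates behind the $G^i$ bounds, but that is already packaged into Lemmas \ref{p_G_estimates2N} and \ref{p_G_estimates_N+2}.
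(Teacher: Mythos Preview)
Your proposal has a genuine structural gap: you plan to run the entire argument through the stress-boundary Stokes estimate of Lemma \ref{i_linear_elliptic}, mirroring the energy comparison of Theorem \ref{eth}. This cannot work for the dissipation comparison. The stress-boundary Stokes estimate requires $\abs{\dt^j\eta}_{r-3/2}$ as input, but the dissipation functionals $\fd{2N}$, $\sdm{2N}$, $\sdnn$ contain no undifferentiated $\eta$ term (compare \eqref{p_dissipation_def1} with \eqref{p_energy_def1}: only $\as{D\eta}_{4N-5/2}$ appears, not $\as{\eta}_{\cdot}$). At the bottom of your induction ($j=0$) you would therefore need $\abs{\eta}_{4N-3/2}$, and its low-frequency part $\abs{\eta}_0$ is simply unavailable; your attempt to recover $\eta$ from \eqref{G3_alternate} does not help, since that relation reintroduces $\abs{p}_{4N-1/2}$, which is equally uncontrolled (only $\nabla p$ sits in $\fd{2N}$), and the circularity cannot be broken. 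Your index bookkeeping also slips: Lemma \ref{i_linear_elliptic} with $r=4N-1$ yields $\ns{u}_{4N-1}$, not the $\ns{u}_{4N}$ required by $\fd{2N}$, and ``$r=4N$'' cannot produce the $\ns{u}_{4N+1}$ needed for $\sdm{2N}$.

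The paper's route is precisely to avoid this obstruction by switching to the \emph{Dirichlet} Stokes estimate of Lemma \ref{i_linear_elliptic2}, whose boundary data is $u|_\Sigma$ rather than $\eta$. The crucial observation is that the tangential dissipation $\fdb{2N}$ already controls high-order traces of $u$: from $\ns{D_0^{4N-1}u}_1$ and $\ns{\bar D_0^{4N-2}\dt u}_1$ one gets $\abs{u}_{4N-1/2}$ and $\abs{\dt^j u}_{4N-2j+1/2}$ directly by trace (equations \eqref{n21}--\eqref{n212}). Feeding these into Lemma \ref{i_linear_elliptic2} for the system \eqref{jellip2} yields $\ns{\dt^j u}_{4N-2j+1}+\ns{\nabla\dt^j p}_{4N-2j-1}$; only \emph{afterwards} is $\as{D\eta}_{4N-5/2}$ recovered from $D\eta=Dp-2D\p_d u_d-DG^3_d$ on $\Sigma$, and $\as{\dt^j\eta}$ from the kinematic condition. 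For \eqref{d2n} you did correctly isolate the good-unknown trace bound $\abs{u}_{4N+1/2}^2\ls\ns{U^{4N}}_1+\bar\k\f$, but the right way to use it is again as Dirichlet data in Lemma \ref{i_linear_elliptic2} with $r=4N+1$, which immediately gives $\ns{u}_{4N+1}+\ns{\nabla p}_{4N-1}$ with forcing bounded via \eqref{p_G_d_2}; no appeal to \eqref{G3_alternate} is needed. The same Dirichlet-Stokes mechanism drives \eqref{dn+2}.
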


\begin{proof}
We first prove \eqref{d2n1}. Notice that we have not yet derived an estimate of $\eta$ in terms of the dissipation, so we can not apply the elliptic estimates of Lemma \ref{i_linear_elliptic} as in Theorem \ref{eth}.  It is crucial to observe that we can get higher regularity estimates of $u$ on the boundary $\Sigma$ from $\fdb{2N}$. Indeed, since $\Sigma$ is flat, we may use the definition of Sobolev norm on $\Sigma$ and the trace
theorem to obtain, by the definition \eqref{i_horizontal_dissipation} of $\fdb{2N}$, that for $j=1,\dots, 2N$,
\begin{align}\label{n21}
\as{\partial_t^{j} u}_{4N-2j+1/2}&  \lesssim \as{\partial_t^{j}{u}}_{0}
+\as{ D^{4N-2j}\partial_t^{j} u }_{1/2}
\lesssim \ns{ \partial_t^{j}  {u} }_{1} +\ns{ D ^{4N-2j}\partial_t^{j} {u} }_{1}
 \lesssim \fdb{2N}
 \end{align}
 and
 \begin{align}\label{n212}
\as{  u}_{4N-1/2}&  \lesssim \as{{u}}_{0}
+\as{ D^{4N-1} u }_{1/2}
\lesssim \ns{ {u} }_{1} +\ns{ D ^{4N-1}{u} }_{1}
 \lesssim \fdb{2N}
 \end{align}
 This motivates us to use the elliptic estimates of Lemma \ref{i_linear_elliptic2}.

Now we let $j=0,\dots,2N-1$ and then apply $\partial_t^j$ to the equations in \eqref{linear_perturbed} to find
\begin{equation}\label{jellip2}
\begin{cases}
- \Delta \partial_t^j u+\nabla\partial_t^j p=-\partial_t^{j+1} u+\partial_t^j G^1
 &\text{in }\Omega
\\ \diverge\partial_t^j u=\partial_t^j G^2&\text{in }\Omega
\\  \partial_t^j u=\partial_t^j u &\text{on }\Sigma
\\ \partial_t^j u=0 &\text{on }\Sigma_b.
\end{cases}
\end{equation}
Applying the elliptic estimates of Lemma \ref{i_linear_elliptic2} with $r=4N-2j+1\ge 3$  for $j=1,\dots,2N-1$ to the problem \eqref{jellip2}, using \eqref{n21} and \eqref{p_G_d_0}, we obtain
\begin{align}\label{p_D_b_0}
 &\norm{\dt^j  u  }_{4N-2j+1}^2 + \norm{\nab \dt^j  p  }_{4N-2j-1}^2\nonumber
 \\  &\quad\ls\norm{\dt^j  u  }_{0}^2 +
\norm{\dt^{j+1} u   }_{4N-2j-1 }^2 + \norm{ \dt^j G^1   }_{4N-2j-1}^2
+ \norm{\dt^j  G^2  }_{4N-2j}^2 + \abs{\dt^j  u  }_{4N-2j+1/2}^2\nonumber
\\&\quad\ls  \fdb{2N} + \norm{\dt^{j+1} u   }_{4N-2(j+1)+1 }^2 +  \fe{2N} \fd{2N}.
\end{align}
A simple induction on \eqref{p_D_b_0} yields
\begin{align}\label{p_D_b_1}
\sum_{j=1}^{2N} \norm{\dt^{j}  u  }_{4N-2j+1}^2 + \sum_{j=1}^{2N-1}\norm{\nab \dt^{j} p  }_{4N-2j-1}^2  &\ls \fdb{2N}   + \ns{\dt^{2N} u}_{1}+
\fe{2N} \fd{2N}
\nonumber\\&\ls
\fdb{2N}   +\fe{2N} \fd{2N}.
\end{align}
On the other hand, applying the elliptic estimates of Lemma \ref{i_linear_elliptic2} with $r=4N$ to the problem \eqref{jellip2} for $j=0$  and using \eqref{n212}, \eqref{p_D_b_1} and \eqref{p_G_d_1}, we have
\begin{align}\label{p_D_b_2}
 \norm{ u  }_{4N}^2 + \norm{\nab   p  }_{4N-2}^2
 & \ls\norm{   u  }_{0}^2 +
\norm{\dt  u   }_{4N-2 }^2 + \norm{  G^1   }_{4N-2}^2
+ \norm{  G^2  }_{4N -1}^2 + \abs{  u  }_{4N- 1/2}^2\nonumber
\\& \ls     \fdb{2N} +  \fe{2N} \fd{2N}+\k  \sdm{2N}.
\end{align}

Now  we estimate $\eta$, and we turn to the boundary conditions in \eqref{linear_perturbed}. For the $\eta$ term, i.e. without temporal derivatives, we use the vertical component of the third equation in \eqref{linear_perturbed}
\begin{equation}\label{pb1}
 \eta= p- 2\partial_d u_{d} -G_{d}^3\quad\text{on }\Sigma.
\end{equation}
Notice that at this point we do not have any bound of $p$ on the boundary $\Sigma$, but we have bounded  $\nabla p$ in $\Omega$.  Applying $ D $  to \eqref{pb1},  by the trace theory, \eqref{p_D_b_2} and \eqref{p_G_d_1}, we obtain
\begin{align}\label{n51}
  \as{  D  \eta}_{4N-5/2}
& \lesssim \as{   D  p }_{4N-5/2}  + \as{   D  \partial_d u_d  }_{4N-5/2} +\as{ D  G^3_d }_{ 4N-5/2 }
\nonumber
\\& \ls     \fdb{2N} +  \fe{2N} \fd{2N}+\k  \sdm{2N}.
\end{align}
For the term $\dt^j \eta$ for $j\ge 1$, we use instead  the fourth equation in \eqref{linear_perturbed} to gain the regularity:
\begin{equation}\label{n61}
\partial_t\eta=u_d +G^4\quad\text{on }\Sigma.
\end{equation}
Indeed, for $\partial_t\eta$, we   use \eqref{n61}, \eqref{n212} and \eqref{p_G_d_1} to find
\begin{align}\label{eta2}
 \as{\partial_t \eta}_{4N-3/2} &\lesssim \as{u_d }_{4N-3/2} +\as{ G^4}_{4N-3/2}
\lesssim  \fdb{2N}   +\fe{2N} \fd{2N}+\k  \sdm{2N}.
\end{align}
For $j=2,\dots,2N+1$ we apply $\partial_t^{j-1}$ to \eqref{n61} to see, by \eqref{n21} and \eqref{p_G_d_0}, that
\begin{align}\label{eta1}
\as{\partial_t^j\eta}_{4N-2j+5/2} & \le  \as{\partial_t^{j-1}u_d }_{4N-2j+5/2} +\as{\partial_t^{j-1}G^4}_{4N-2j+5/2}\nonumber
\\
&\lesssim \as{\partial_t^{j-1}u_d  }_{{4N-2(j-1)+1/2}} +\as{\partial_t^{j-1}G^4}_{4N-2(j-1)+1/2}\nonumber
\\& \lesssim \fdb{2N}   +\fe{2N} \fd{2N}.
\end{align}

The $\dt^j \eta$ estimates allows us to further bound $\ns{\partial_t^jp}_0$ for $j=1,\dots,2N-1$. Indeed, applying $\partial_t^j$ for $j=1,\dots,2N-1$ to \eqref{pb1} and employing the trace theory, by \eqref{eta2}--\eqref{eta1} and \eqref{p_G_d_0}, we find
\begin{align}\label{pp1}
\as{\partial_t^j p}_{0}
&\lesssim \as{\partial_t^j \eta}_{0}  + \as{\partial_d \partial_t^j u_d }_{0}  + \as{\partial_t^j G^3_d }_{0}\lesssim \as{\partial_t^j \eta}_{0}+  \ns{ \partial_t^j u }_{2}+\as{\partial_t^j G^3}_{0}\nonumber
\\&\lesssim  \fdb{2N}   +\fe{2N} \fd{2N}+\k  \sdm{2N}.
\end{align}
By Poincar\'e's inequality of Lemma \ref{poincare_b}, by \eqref{p_D_b_1} and \eqref{pp1}, we have
\begin{equation}\label{pp2}
\ns{\partial_t^j p}_{0}
\lesssim \as{\partial_t^j p}_{0} +\ns{\p_d\partial_t^j p}_{0} \ls\fdb{2N} +  \fe{2N} \fd{2N}+\k  \sdm{2N}.
\end{equation}
Consequently, by the definition \eqref{p_dissipation_def1} of $\fd{2N}$,  summing \eqref{p_D_b_1}, \eqref{p_D_b_2}, \eqref{n51}, \eqref{eta2}, \eqref{eta1} and \eqref{pp2} gives \eqref{d2n1}.

We next prove \eqref{d2n}.  We must resort to Alinhac good unknowns $U^\al$ for $\abs{\al}=4N$. Indeed, by the definition of $U^\al$ in \eqref{gooddef}, the trace theory and Lemma \ref{i_sobolev_product_2}, we have
 \begin{align}\label{n213}
\as{  u}_{4N+1/2}&  \lesssim \as{{u}}_{0}
+\as{ D^{4N} u }_{1/2} \lesssim \as{{u}}_{0}
+\as{U^{4N}}_{1/2} +\as{\p_d ^\a u   D^{4N}\eta}_{1/2} \nonumber
 \\&\lesssim \ns{ U^{4N} }_{1}  +\as{\p_d u  }_{C^1(\Sigma)} \as{D^{4N}\eta}_{1/2} \lesssim  \ns{ U^{4N} }_{1} +\bar\k \f.
 \end{align}
 Applying the elliptic estimates of Lemma \ref{i_linear_elliptic2} with $r=4N+1$ to the problem \eqref{jellip2} for $j=0$ and using \eqref{n213}, \eqref{p_D_b_1} and \eqref{p_G_d_2}, we obtain
\begin{align}\label{p_D_b_221}
 \norm{ u  }_{4N+1}^2 + \norm{\nab   p  }_{4N-1}^2
   & \ls\norm{   u  }_{0}^2 +
\norm{\dt  u   }_{4N-1 }^2 + \norm{  G^1   }_{4N-1}^2
+ \norm{  G^2  }_{4N }^2 + \abs{  u  }_{4N+ 1/2}^2\nonumber
\\& \ls     \fdb{2N} +\ns{ U^{4N} }_{1}+  \fe{2N}  \sdm{2N} +\k \f.
\end{align}
Applying $ D $  to \eqref{pb1},  by the trace theory, \eqref{p_D_b_221} and \eqref{p_G_d_2}, we have
\begin{align}\label{n512}
  \as{  D  \eta}_{4N-3/2}
& \lesssim \as{   D  p }_{4N-3/2}  + \as{   D  \partial_d u_d  }_{4N-3/2} +\as{ D  G^3_d }_{ 4N-3/2 }
\nonumber
\\& \ls      \fdb{2N} +\ns{ U^{4N} }_{1}+  \fe{2N}  \sdm{2N} +\k\f.
\end{align}
On the other hand, we use \eqref{n61}, \eqref{n213} and \eqref{p_G_d_2} to find
\begin{align}\label{eta22}
 \as{\partial_t \eta}_{4N-1/2} \lesssim \as{u_d }_{4N-1/2} +\as{ G^4}_{4N-1/2}\lesssim   \ns{ U^{4N} }_{1}  +\fe{2N} \fd{2N}+\k \f.
\end{align}
 Consequently,  by the definition \eqref{pdd_dissipation_def2} of $\sdm{2N}$, summing  \eqref{p_D_b_221}--\eqref{eta22} gives \eqref{d2n}.

Now we prove \eqref{dn+2}. Note that the definition \eqref{i_horizontal_dissipation1} of $\sdb{N+2,2}$ and the trace theory, similarly as \eqref{n21}--\eqref{n212}, guarantee that
\begin{align}\label{n21''}
\sum_{j=0}^{N+2}\as{\partial_t^{j} u}_{4N-2j+1/2}\lesssim \sdb{N+2,2}
 \end{align}
It follows similarly as the derivation of  \eqref{p_D_b_1}, \eqref{p_D_b_2}, \eqref{n51}, \eqref{eta1} and  \eqref{pp2}, except using \eqref{n21''} in place of \eqref{n21}--\eqref{n212} and \eqref{p_G_d_h_0} in place of \eqref{p_G_d_0}--\eqref{p_G_d_1}, that

\begin{align}\label{claim22267}
 &\norm{ D^2 u  }_{2(N+2)-1 }^2+\norm{\nabla D^2 p  }_{2(N+2)-3}^2+\sum_{j=1}^{N+2 } \norm{\dt^{j}  u  }_{2(N+2)-2j+1 }^2 + \norm{\nabla \dt p  }_{2(N+2)-2}^2 \nonumber
 \\&\quad+ \sum_{j=1}^{N+1}\norm{\dt^{j} p  }_{2(N+2)-2j}^2 +\as{  D^3  \eta}_{2(N+2)-7/2}+\sum_{j=2}^{N+3}\as{\partial_t^j\eta}_{2(N+2)-2j+5/2} \nonumber
 \\&\quad\ls \sdb{N+2,2}   +\ns{ \bar{D}^2\bar{\nab}_0^{2(N+2)-3} G^1}_{0} +  \ns{ \bar{D}^2\bar{\nab}_0^{2(N+2)-3}  G^2}_{1}+\as{ D^3  G^3_d }_{2(N+2)-7/2 }\nonumber
 \\&\qquad+  \as{ \bar{\nab}_0^{2(N+2)-2} \dt G^4}_{1}
 \nonumber
 \\&\quad\ls
\sdb{N+2,2}   + \fe{2N} \sdnn .
\end{align}
By an inspection of the definition  \eqref{p_ldissipation_def} of $\sdnn$ and the estimate \eqref{claim22267}, it remains to improve the estimates of $u$ and $ p$ terms without temporal derivatives in \eqref{claim22267} and derive the estimate of $\dt\eta$. First,
by the second equation in \eqref{linear_perturbed}, using   \eqref{claim22267} and \eqref{p_G_d_h_0}, we have
\begin{align}\label{p_E_b_`5}
  \norm{D u_d   }_{2(N+2)}^2  &   \ls \norm{D^2 u_d   }_{2(N+2)-1}^2+ \norm{ D  \p_d  u_d   }_{2(N+2)-1}^2
  \nonumber\\ &   \ls  \norm{ D^2 u  }_{2(N+2)-1}^2+  \norm{ D G^2  }_{2(N+2)-1}^2
   \ls \sdb{N+2,2}   +  \fe{2N} \sdnn .
\end{align}
We then use \eqref{n61}, \eqref{p_E_b_`5}, the trace theory and \eqref{p_G_d_h_0} to find
\begin{align}\label{eta2'}
 \as{D\partial_t \eta}_{2(N+2)-3/2} \lesssim \as{D u_d }_{2(N+2)-3/2} +\as{D G^4}_{2(N+2)-3/2}\lesssim  \sdb{N+2}   +\fe{2N} \sdnn .
\end{align}
By the vertical component of the first equation in \eqref{linear_perturbed}, using  \eqref{p_E_b_`5}, \eqref{claim22267} and \eqref{p_G_d_h_0}, we obtain
\begin{align}\label{p_D_b_`12}
 \norm{D \p_d   p  }_{2(N+2)-2}^2 & \ls
 \norm{D u_d  }_{2(N+2)}^2 +\norm{D\p_t u_d  }_{2(N+2)-2}^2+\norm{D  G^1_d }_{2(N+2)-2}^2
 \nonumber\\ &   \ls \sdb{N+2,2}   +  \fe{2N} \sdnn .
\end{align}
By the horizontal component of the first equation in \eqref{linear_perturbed}, using   \eqref{claim22267}, \eqref{p_D_b_`12} and \eqref{p_G_d_h_0}, we have
\begin{align}\label{p_D_b_`1}
 \norm{\nabla\p_d ^3 u_h  }_{2(N+2)-3}^2 & \ls \norm{  \nabla \p_d  D^2 u_h  }_{2(N+2)-3}^2+\norm{\nabla\p_d \dt u_h  }_{2(N+2)-3}^2+\norm{\nabla \p_d  D p  }_{2(N+2)-3}^2
  \nonumber\\&\quad+\norm{\nabla\p_d  G_h^1}_{2(N+2)-3}^2
 \nonumber\\ &   \ls \sdb{N+2,2}   +  \fe{2N} \sdnn .
\end{align}
Next, by the second equation in \eqref{linear_perturbed}, using   \eqref{p_D_b_`1} and \eqref{p_G_d_h_0}, we obtain
\begin{align}\label{p_D_b_`2}
  \norm{\p_d ^4 u_d   }_{2(N+2)-3}^2\ls \norm{    \p_d ^3 D u_h  }_{2(N+2)-3}^2+ \norm{ \p_d ^3 G^2}_{2(N+2)-3}^2
  \ls \sdb{N+2,2}   +  \fe{2N} \sdnn .
\end{align}
Finally, by the vertical component of the first equation in \eqref{linear_perturbed}, using   \eqref{claim22267}, \eqref{p_D_b_`2} and \eqref{p_G_d_h_0}, we have
\begin{align}\label{p_D_b_`3}
 \norm{  \p_d ^3p  }_{2(N+2)-3}^2 & \ls
 \norm{\p_d ^2\Delta u_d  }_{2(N+2)-3}^2 +\norm{  \p_d ^2 \p_t u_d  }_{2(N+2)-3}^2+\norm{ \p_d ^2 G_d ^1}_{2(N+2)-3}^2
 \nonumber\\ &   \ls \sdb{N+2,2}   +  \fe{2N} \sdnn .
\end{align}
Consequently, by the definition \eqref{p_ldissipation_def} of $\sdnn$, summing \eqref{claim22267}--\eqref{p_D_b_`3} gives \eqref{dn+2}.
\end{proof}

\section{A priori estimates}
\subsection{Bounded estimates of $\fe{2N}$ and $\fd{2N}$}
Now we show the boundedness of $\fe{2N} (t)+\int_0^t\fd{2N} $.

\begin{prop} \label{Dgle}
There exists $\delta>0$ so that if $\mathcal{G}_{2N} (T)\le\delta$, then
\begin{equation}\label{Dg}
\fe{2N} (t)+\int_0^t\fd{2N}(r)\,dr \lesssim
\fe{2N} (0) +  (\g(t))^2.
\end{equation}
\end{prop}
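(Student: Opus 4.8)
The plan is to run a tangential energy estimate at the $2N$ level, upgrade it to the full $\fe{2N}$ and $\fd{2N}$ by the comparison Theorems \ref{eth} and \ref{dth}, and then absorb every error term either through the smallness $\fe{2N}\le\delta$ coming from \eqref{apriori_1} or into pieces of $\g(t)^2$ via the interpolation bound of Lemma \ref{klem}. No Gronwall argument will be needed, since the decay encoded in $\g$ makes the errors integrable.

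\emph{Step 1 (tangential identity).} First I would add the estimates of Propositions \ref{p_upper_evolution 2N} and \ref{i_temporal_evolution 2N}. Their left-hand energies $\ns{\bar D_{0}^{4N-1}u}_0+\ns{\bar D^{4N-3}D\dt u}_0+\ns{\dt^{2N}u}_0$, together with the analogous $\eta$-terms, are comparable to $\feb{2N}$ up to the pressure correction $2\int_\Omega J\,\dt^{2N-1}p\,F^{2,2N}$, which by \eqref{tem en 2N00} is $O((\fe{2N})^{3/2})=O(\delta^{1/2}\fe{2N})$; and their left-hand dissipative terms sum to a quantity $\gtrsim\fdb{2N}$. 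Writing $\widetilde{\mathcal E}$ for this combined energy, one gets $\dtt\widetilde{\mathcal E}+\fdb{2N}\ls\sqrt{\fe{2N}}\,\fd{2N}+\sqrt{\fd{2N}\,\k\,\sdm{2N}}$. Integrating over $[0,t]$, using $\widetilde{\mathcal E}(0)\ls\fe{2N}(0)$ (again by \eqref{tem en 2N00} and $\fe{2N}(0)\le\delta\le1$) and $\widetilde{\mathcal E}(t)\gtrsim\feb{2N}(t)-\delta^{1/2}\fe{2N}(t)$, gives $\feb{2N}(t)+\int_0^t\fdb{2N}\ls\fe{2N}(0)+\delta^{1/2}\fe{2N}(t)+\int_0^t\big(\sqrt{\fe{2N}}\,\fd{2N}+\sqrt{\fd{2N}\,\k\,\sdm{2N}}\big)$.

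\emph{Step 2 (comparison and errors).} By Theorems \ref{eth} and \ref{dth}, $\fe{2N}\ls\feb{2N}+(\fe{2N})^2+\k\sem{2N}$ and $\fd{2N}\ls\fdb{2N}+\fe{2N}\fd{2N}+\k\sdm{2N}$; since $\fe{2N}\le\delta$ on $[0,T]$ by \eqref{apriori_1}, the quadratic terms $(\fe{2N})^2$ and $\fe{2N}\fd{2N}$ are absorbed, so $\fe{2N}\ls\feb{2N}+\k\sem{2N}$ and $\int_0^t\fd{2N}\ls\int_0^t\fdb{2N}+\int_0^t\k\,\sdm{2N}$. The key point for the remaining errors is the pointwise bound $\k(r)\ls\g(t)\,(1+r)^{-(2-\kappa_d)}$ for $0\le r\le t$: Lemma \ref{klem} gives $\k\ls\fe{2N}^{\kappa_d/2}\senn^{1-\kappa_d/2}$, while the definition \eqref{ggdef} of $\g$ yields $\fe{2N}(r)\le\g(t)$ and $\senn(r)\le(1+r)^{-2}\g(t)$, and the two powers of $\g$ add up to exactly $1$. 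Since $2-\kappa_d\ge\vartheta+\kappa_d$ for $\vartheta$ small (automatic in $3D$ where $\kappa_3$ is tiny, and amounting to $\vartheta\le1$ in $2D$ where $\kappa_2=\tfrac12$), \eqref{ggdef} gives $\int_0^t\k\,\sdm{2N}\ls\g(t)\int_0^t(1+r)^{-(\vartheta+\kappa_d)}\sdm{2N}(r)\,dr\ls\g(t)^2$, and with $\sem{2N}(t)\le(1+t)^{\vartheta}\g(t)$ one gets $\k(t)\sem{2N}(t)\ls\g(t)^2(1+t)^{\vartheta-(2-\kappa_d)}\ls\g(t)^2$. Finally $\int_0^t\sqrt{\fe{2N}}\,\fd{2N}\le\delta^{1/2}\int_0^t\fd{2N}$, and by Young's inequality $\int_0^t\sqrt{\fd{2N}\,\k\,\sdm{2N}}\le\ep\int_0^t\fd{2N}+C_\ep\int_0^t\k\,\sdm{2N}\le\ep\int_0^t\fd{2N}+C_\ep\,\g(t)^2$.

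\emph{Step 3 (closing).} Feeding Step 2 into Step 1, first $\int_0^t\fd{2N}\ls\int_0^t\fdb{2N}+\g(t)^2$ after absorbing $\delta^{1/2}\int_0^t\fd{2N}$; then, choosing $\ep$ and afterwards $\delta$ small enough to absorb the resulting multiples of $\int_0^t\fd{2N}$, $\int_0^t\fdb{2N}$ and $\delta^{1/2}\fe{2N}(t)$ on the right, I expect $\feb{2N}(t)+\int_0^t\fdb{2N}\ls\fe{2N}(0)+\g(t)^2$. Hence $\fe{2N}(t)\ls\feb{2N}(t)+\k(t)\sem{2N}(t)\ls\fe{2N}(0)+\g(t)^2$ and $\int_0^t\fd{2N}\ls\fe{2N}(0)+\g(t)^2$, which is \eqref{Dg}. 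The main obstacle is not any individual inequality---the analytic content is carried by the energy-evolution and comparison results of the previous sections---but the weight bookkeeping in Step 2: one must check that the interpolation exponents of Lemma \ref{klem} collapse the powers of $\g$ to $1$ in the bound for $\k$, and that $\vartheta$ can be taken small relative to $\kappa_d$ so that the decay $(1+r)^{-(2-\kappa_d)}$ of $\k$ dominates the weight $(1+r)^{-(\vartheta+\kappa_d)}$ built into $\g$; this is exactly the place where not imposing a low-frequency hypothesis (i.e.\ $\lambda=0$) remains compatible with closing the estimate.
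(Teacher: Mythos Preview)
Your proposal is correct and follows essentially the same route as the paper's proof: combine the tangential energy estimates of Propositions \ref{p_upper_evolution 2N} and \ref{i_temporal_evolution 2N}, upgrade via the comparison results \eqref{e2n1} and \eqref{d2n1}, and close using the decay $\k\ls\g(t)(1+t)^{-(2-\kappa_d)}$ from Lemma \ref{klem} together with the weighted integrals already contained in $\g$. Your Step 2--3 bookkeeping (the Young splitting of $\sqrt{\fd{2N}\,\k\,\sdm{2N}}$ and the successive absorptions) is spelled out more explicitly than in the paper, but the argument is the same.
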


\begin{proof}
Note first that since $\fe{2N} (t)\le
{\mathcal{G}}_{2N} (T)\le \delta$, by taking $\delta$ small, we
obtain from \eqref{e2n1} of Theorem \ref{eth} and \eqref{d2n1} of
Theorem \ref{dth} that
\begin{equation}\label{q0}
\feb{2N}  \lesssim\fe{2N}
\lesssim \feb{2N}+ \k\sem{2N}  , \text{ and }\fdb{2N}  \lesssim\fd{2N}
\lesssim \fdb{2N} +\k\sdm{2N}  .
\end{equation}

Now we integrate \eqref{p_u_e_00} of Proposition  \ref{p_upper_evolution 2N} and \eqref{tem en 2N} of Proposition \ref{i_temporal_evolution 2N} in time and then add up the two resulting estimates to conclude that, by  the definition \eqref{i_horizontal_energy} of $\feb{2N}$,  the definition \eqref{i_horizontal_dissipation} of $\fdb{2N}$ and  \eqref{tem en 2N00},
\begin{align} \label{q20}
\feb{2N} (t)+\int_0^t \fdb{2N}   \lesssim  &
\fe{2N} (0) +
(\fe{2N} (t))^{ 3/2}  + \int_0^t  \( \sqrt{\fe{2N}}  \fd{2N}  +\sqrt{ \fd{2N} \k\sdm{2N} } \) .
\end{align}
By \eqref{q0}, we may improve \eqref{q20} to be, since $\delta$ is small and by Cauchy's inequality, \begin{align} \label{q20123}
\fe{2N} (t)+\int_0^t \fd{2N}   \lesssim  &
\fe{2N} (0) + \k(t)\sem{2N}(t)   + \int_0^t \k\sdm{2N}  .
\end{align}
By the estimate \eqref{kes} of Lemma \ref{klem} and the definition \eqref{ggdef} of $\g(t)$, we obtain
\beq\label{decayk}
\k(t)\ls \fe{2N}(t)^{\kappa_d/2} \senn(t)^{1-\kappa_d/2} \ls \g(t) (1+t)^{-2+\kappa_d},
\eeq
and hence we deduce from \eqref{q20123} that
\begin{align} \label{q2011}
\fe{2N} (t)+\int_0^t \fd{2N}(r)\,dr  & \lesssim
\fe{2N} (0) +   \g(t)  \frac{\sem{2N}(t)}{(1+t)^{2-\kappa_d}}
 +  \int_0^t \g(r)\frac{\sdm{2N}(r)}{(1+r)^{2-\kappa_d}} \,dr \nonumber
 \\&\ls \fe{2N} (0) +  (\g(t))^2
\end{align}
for $2-\kappa_d\ge \vartheta+\kappa_d> \vartheta>0$.
This gives \eqref{Dg}.
\end{proof}

\subsection{Growth estimates of $\sem{2N}$ and $\sdm{2N}$}
We first derive the estimates of $\sem{2N}$ by a time-weighted argument. Note that a time integral of the time weighted estimates of Alinhac good unknowns $U^\al$ for $\abs{\al}=4N$ will be obtained, which is crucial for closing the estimate for $\f$.
\begin{prop}\label{p_E_bound}
There exists $ \delta>0$ so that if $\g (T) \le \delta$, then
\begin{equation}\label{p_E_b_0}
 \frac{ \sem{2N}(t)}{(1+t)^{\vartheta}} +\int_0^t  \frac{ \sem{2N}(r)}{(1+r)^{1+\vartheta}} dr +\int_0^t\frac{\ns{ U^{4N}(r)}_{1} }{(1+r)^{ \vartheta}}dr \ls
\g(0) +   (\g(t))^{3/2}.
\end{equation}
\end{prop}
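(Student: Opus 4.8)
The plan is to run a time-weighted energy argument on the Alinhac good-unknown evolution estimate \eqref{i_te_0} of Proposition \ref{i_h_evolution 2N}, combined with the comparison estimates of Theorems \ref{eth} and \ref{dth}. First I would record the consequences of the a priori bound $\g(T)\le\delta$: from \eqref{kes} and the definition \eqref{ggdef} of $\g$ one has $\bar\k\ls\senn\ls\g(1+t)^{-2}$ and $\k\ls\g(1+t)^{-2+\kappa_d}$, while $\sem{2N}(r)\ls\g(r)(1+r)^\vartheta$ and $\int_0^t\sem{2N}(r)(1+r)^{-1-\vartheta}\,dr\ls\g$ and $\int_0^t\sdm{2N}(r)(1+r)^{-\vartheta-\kappa_d}\,dr\ls\g$ and $\f(r)\ls\g(r)(1+r)^{1+\vartheta}$ are all controlled by $\g$. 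Multiplying \eqref{i_te_0} by the weight $(1+t)^{-\vartheta}$, using $\dtt\big[(1+t)^{-\vartheta}A\big]=(1+t)^{-\vartheta}\dtt A-\vartheta(1+t)^{-1-\vartheta}A$, gives
\begin{align}\label{pE_plan_1}
&\dtt\!\left(\frac{\ns{U^{4N}}_0+\sem{2N}}{(1+t)^\vartheta}\right)+\frac{\vartheta\,\sem{2N}}{(1+t)^{1+\vartheta}}+\frac{\ns{U^{4N}}_1}{(1+t)^\vartheta}\nonumber\\
&\quad\ls\frac{\sqrt{\senn+\bar\k}\,\sem{2N}}{(1+t)^\vartheta}+\frac{(\sqrt{\senn}+\k)\sdm{2N}}{(1+t)^\vartheta}+\frac{\bar\k\f}{(1+t)^\vartheta},
\end{align}
where I also used that $\ns{U^{4N}}_0\ls\ns{U^{4N}}_1$ up to the weight so the $\sem{2N}$ appearing in the time-weight derivative can absorb a fraction of the dissipation term (this is the standard interpolation trick: $\ns{U^\al}_0$ is not bounded by $\sem{2N}$ directly, but by Korn and the definition one controls $\ns{U^{4N}}_0$ by $\ns{U^{4N}}_1$, so the lower-order term on the left is harmless and $\vartheta\sem{2N}/(1+t)^{1+\vartheta}$ is genuinely produced on the left).

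Next I would integrate \eqref{pE_plan_1} in time and bound each term on the right. For the first term, $\sqrt{\senn+\bar\k}\ls\sqrt{\senn}\ls\sqrt\g(1+r)^{-1}$, so $\int_0^t\sqrt{\senn}\,\sem{2N}(1+r)^{-\vartheta}\,dr\ls\sqrt\g\int_0^t\sem{2N}(1+r)^{-1-\vartheta}\,dr\ls\sqrt\g\cdot\g$; equivalently, since $\delta$ is small this term is absorbed into the left-hand-side term $\vartheta\sem{2N}/(1+r)^{1+\vartheta}$. For the second term, $\sqrt{\senn}+\k\ls\sqrt\g(1+r)^{-1}+\g(1+r)^{-2+\kappa_d}\ls\sqrt\g(1+r)^{-\kappa_d}$ (using $1\ge\kappa_d$ and $2-\kappa_d\ge\kappa_d$), hence $\int_0^t(\sqrt{\senn}+\k)\sdm{2N}(1+r)^{-\vartheta}\,dr\ls\sqrt\g\int_0^t\sdm{2N}(1+r)^{-\vartheta-\kappa_d}\,dr\ls\sqrt\g\cdot\g=\g^{3/2}$. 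For the third term, $\bar\k\ls\senn\ls\g(1+r)^{-2}$ and $\f\ls\g(1+r)^{1+\vartheta}$, so $\int_0^t\bar\k\f(1+r)^{-\vartheta}\,dr\ls\g^2\int_0^t(1+r)^{-1}\,dr$, which diverges logarithmically — this is the delicate point — so instead I use $\bar\k\f(1+r)^{-\vartheta}\ls(\senn\f)(1+r)^{-\vartheta}\ls\g(r)\cdot\f(r)(1+r)^{-2-\vartheta}$ and invoke $\int_0^t\f(r)(1+r)^{-2-\vartheta}\,dr\ls\g$ directly from the definition of $\g$, giving $\ls\g\cdot\g=\g^2$. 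Also the initial term is $(1+0)^{-\vartheta}(\ns{U^{4N}(0)}_0+\sem{2N}(0))\ls\fe{2N}(0)+\sem{2N}(0)$; since $\sem{2N}(0)\ls\f(0)$ (or is controlled by the data) and the hypothesis of Theorem \ref{intro_inf_gwp} bounds $\fe{2N}(0)+\f(0)$, but at the level of this proposition I should present it as $\ls\g(0)$, which is legitimate because $\g(0)$ bounds $\fe{2N}(0)+\sup_r\sem{2N}(r)/(1+r)^\vartheta$ at $r=0$. Collecting everything yields \eqref{p_E_b_0}.

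The main obstacle, and the place requiring care, is the term $\bar\k\f$: a naive estimate using $\bar\k\ls\k\ls\g(1+t)^{-2+\kappa_d}$ together with $\f\ls\g(1+t)^{1+\vartheta}$ only gives an integrand $\ls\g^2(1+r)^{-1+\kappa_d-\vartheta}$, which is \emph{not} integrable for $\kappa_d\ge\vartheta$ (and in $2D$ with $\kappa_2=1/2$ it is badly non-integrable). The resolution is exactly the improved decay $\bar\k\ls\senn$ of \eqref{bkes} in Lemma \ref{klem} — obtained from the horizontal component of the third boundary equation in \eqref{linear_perturbed}, which kills one derivative of $\eta$ and produces $\senn$ rather than $\k$ — together with the $(1+r)^{-2-\vartheta}$-weighted integral of $\f$ already built into $\g$. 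This is precisely the anisotropic-decay mechanism emphasized in the introduction: the horizontal velocity decays strictly faster, and that extra power is what makes the $\bar\k\f$ contribution integrable. A secondary technical point is justifying that $\ns{U^{4N}}_0$ on the left of \eqref{pE_plan_1}, which is not lower-order relative to $\sem{2N}$ in the naive sense, does not obstruct extracting $\vartheta\sem{2N}/(1+r)^{1+\vartheta}$ as a positive term; this follows because $\sem{2N}=\as{\eta}_{4N}$ appears additively and its time-weight derivative genuinely contributes with the correct sign, while $\ns{U^{4N}}_0\ls\ns{U^{4N}}_1$ is handled by the dissipation with a small constant. Everything else is a routine application of the decay bounds packaged in $\g(t)$ and the smallness of $\delta$.
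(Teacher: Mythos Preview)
Your proposal is correct and follows essentially the same route as the paper: multiply \eqref{i_te_0} by $(1+t)^{-\vartheta}$, use $\bar\k\ls\senn\ls\g(1+t)^{-2}$ and $\k\ls\g(1+t)^{-2+\kappa_d}$, absorb the $\sqrt{\senn}\,\sem{2N}$ term into the left by smallness, and control the $\sdm{2N}$ and $\f$ contributions via the weighted integrals already present in $\g$. Your worry about $\ns{U^{4N}}_0$ obstructing the extraction of $\vartheta\,\sem{2N}/(1+t)^{1+\vartheta}$ is unfounded: the product rule gives $+\vartheta(\ns{U^{4N}}_0+\sem{2N})/(1+t)^{1+\vartheta}$ on the left with a positive sign, so the $\ns{U^{4N}}_0$ piece can simply be discarded without any Korn-type argument.
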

\begin{proof}
Multiplying \eqref{i_te_0} of Proposition \ref{i_h_evolution 2N} by $(1+t)^{-\vartheta}$ with $\vartheta>0$,  we find
\begin{align}\label{i_te_0'}
 &\dt \(\frac{\ns{  U^{4N}}_{0} + \sem{2N} }{(1+t)^{\vartheta}}\)+  \vartheta\frac{\ns{  U^{4N}}_{0} + \sem{2N} }{(1+t)^{1+\vartheta}}+\frac{\ns{ U^{4N}}_{1} }{(1+t)^{ \vartheta}} \nonumber
 \\& \quad \ls \frac{ \sqrt{\senn+\bar\k} \sem{2N}+ (\sqrt{\senn}+\k)\sdm{2N}+ \bar\k\f
 }{(1+t)^{ \vartheta}} 
.
\end{align}
By the estimate \eqref{bkes} of Lemma \ref{klem} and the definition \eqref{ggdef} of $\g(t)$, we obtain
\beq\label{decayk1}
\bar\k(t)\ls   \senn(t)  \ls \g(t) (1+t)^{-2 }.
\eeq
By \eqref{decayk1} and \eqref{decayk}, we deduce from \eqref{i_te_0'} that
\begin{align}\label{i_te_0'1}
 &\dt \(\frac{\ns{  U^{4N}}_{0} + \sem{2N} }{(1+t)^{\vartheta}}\)+  \vartheta\frac{\sem{2N} }{(1+t)^{1+\vartheta}}+\frac{\ns{ U^{4N}}_{1} }{(1+t)^{ \vartheta}} \nonumber
  \\& \quad \ls \sqrt{ \g}  \frac{ \sem{2N}}{(1+t)^{ 1+\vartheta}} +\sqrt{ \g}  \frac{  \sdm{2N}}{(1+t)^{ 1+\vartheta}} + \g\frac{  \f }{(1+t)^{ 2+\vartheta}} 
\end{align}
for $2-\kappa_d\ge 1$.
Integrating \eqref{i_te_0'1} directly in time, since $\delta>0$ is small, we have
\begin{align}\label{i_te_0''}
 & \frac{\sem{2N}(t)}{(1+t)^{\vartheta}} +   \int_0^t\frac{  \sem{2N}(r) }{(1+r)^{1+\vartheta}}dr+\int_0^t\frac{\ns{ U^{4N}(r)}_{1} }{(1+r)^{ \vartheta}}dr  \nonumber
  \\& \quad \ls \g(0) + \sqrt{ \g(t)}\int_0^t\(  \frac{  \sdm{2N}(r)}{(1+r)^{ 1+\vartheta}} +  \frac{  \f(r) }{(1+r)^{ 2+\vartheta}} \)dr\nonumber
 \\&\quad  \ls
\g(0) +   (\g(t))^{3/2} 
\end{align}
for $1+\vartheta\ge\vartheta+\kappa_d$. This yields \eqref{p_E_b_0}.
\end{proof}

Now we show a time weighted integral-in-time estimate of $ \sdm{2N} $.
\begin{prop} \label{Dgle2}
There exists $\delta>0$ so that if $\mathcal{G}_{2N} (T)\le\delta$, then
\begin{equation}\label{Dg2}
\int_0^t\frac{\sdm{2N}(r)}{(1+r)^{\vartheta+\kappa_d}}dr \lesssim
\fe{2N} (0) + (\g(t))^{3/2}.
\end{equation}
\end{prop}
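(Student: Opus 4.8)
The plan is to control $\sdm{2N}$ by combining the dissipation comparison estimate \eqref{d2n} of Theorem \ref{dth} with the time-weighted bound on Alinhac good unknowns just obtained in Proposition \ref{p_E_bound}, and with the already-established bound on $\int_0^t\fdb{2N}$ that is hidden inside the proof of Proposition \ref{Dgle}. First I would record that from \eqref{d2n} and the a priori smallness $\g(T)\le\delta$, absorbing the $\fe{2N}\fd{2N}$ term,
\begin{align}\label{dg2pf1}
\sdm{2N} \lesssim \fdb{2N} + \ns{U^{4N}}_1 + \fd{2N} + \k\f.
\end{align}
Dividing by $(1+r)^{\vartheta+\kappa_d}$ and integrating in $r$ over $[0,t]$, it suffices to bound each of the four resulting time integrals by $\fe{2N}(0)+(\g(t))^{3/2}$.

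For the $\int_0^t (1+r)^{-\vartheta-\kappa_d}\fdb{2N}(r)\,dr$ term: since $\vartheta+\kappa_d>0$, this is dominated by $\int_0^t\fdb{2N}\lesssim\int_0^t\fd{2N}$, which is controlled by $\fe{2N}(0)+(\g(t))^2$ via \eqref{Dg} of Proposition \ref{Dgle} — here I would reuse the fact from \eqref{q0}--\eqref{q2011} that $\fdb{2N}\lesssim\fd{2N}$ after absorbing $\k\sdm{2N}$ with the decay $\k(r)\lesssim\g(t)(1+r)^{-2+\kappa_d}$; the same $\fd{2N}$ integral also handles the bare $\int_0^t(1+r)^{-\vartheta-\kappa_d}\fd{2N}$ term. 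For the $\int_0^t (1+r)^{-\vartheta-\kappa_d}\ns{U^{4N}(r)}_1\,dr$ term: since $\kappa_d>0$ and in the $d=2$ case $\kappa_2=1/2$, we have $\vartheta+\kappa_d\ge\vartheta$ (indeed $>\vartheta$), so this integral is bounded by $\int_0^t(1+r)^{-\vartheta}\ns{U^{4N}(r)}_1\,dr$, which is exactly the quantity estimated by Proposition \ref{p_E_bound}, giving $\g(0)+(\g(t))^{3/2}$. For the $\int_0^t(1+r)^{-\vartheta-\kappa_d}\k(r)\f(r)\,dr$ term: using the interpolation bound $\k\lesssim\fe{2N}^{\kappa_d/2}\senn^{1-\kappa_d/2}\lesssim\g(t)(1+r)^{-2+\kappa_d}$ from \eqref{kes} of Lemma \ref{klem} together with $\sup_{0\le r\le t}\f(r)/(1+r)^{1+\vartheta}\lesssim\g(t)$ from the definition \eqref{ggdef}, one gets
\begin{align}\label{dg2pf2}
\int_0^t\frac{\k(r)\f(r)}{(1+r)^{\vartheta+\kappa_d}}\,dr \lesssim (\g(t))^2\int_0^t (1+r)^{-2+\kappa_d-\vartheta-\kappa_d+1+\vartheta}\,dr = (\g(t))^2\int_0^t(1+r)^{-1}\,dr,
\end{align}
which is only logarithmically divergent; to fix this one sharpens the exponent, using that $\f/(1+r)^{1+\vartheta}$ is integrable against $(1+r)^{-1}$ (the term $\int_0^t \f(r)/(1+r)^{2+\vartheta}\,dr\lesssim\g(t)$ in \eqref{ggdef}), so in fact $\int_0^t \k(r)\f(r)(1+r)^{-\vartheta-\kappa_d}\,dr\lesssim (\g(t))\int_0^t\f(r)(1+r)^{-2-\vartheta}\,dr\lesssim(\g(t))^2$ after using $\k(r)\lesssim\g(t)(1+r)^{-2+\kappa_d}$ and $-2+\kappa_d-\vartheta-\kappa_d\le -2-\vartheta$, i.e. absorbing the $(1+r)^{-2+\kappa_d}$ decay of $\k$ rather than only its boundedness. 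Summing the four pieces and using $\g(0)\lesssim\fe{2N}(0)+\f(0)\lesssim\fe{2N}(0)$ (as $\f(0)$ is part of the data bound) yields \eqref{Dg2}.

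The main obstacle is the $\k\f$ term: $\f$ is allowed to grow like $(1+t)^{1+\vartheta}$, so naively pairing it with the $L^\infty$-decay $\k\lesssim(1+t)^{-2+\kappa_d}$ of $\k$ leaves a borderline non-integrable weight $(1+t)^{-1}$ when we only divide by $(1+t)^{\vartheta+\kappa_d}$. The resolution is to not waste the decay: one keeps a full factor $(1+r)^{-2+\kappa_d}$ from $\k$ and matches it against the genuinely integrable quantity $\int_0^t\f(r)(1+r)^{-2-\vartheta}\,dr\lesssim\g(t)$ from the definition of $\g$, which works precisely because $-2+\kappa_d-(\vartheta+\kappa_d) = -2-\vartheta$. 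This is the one place where the specific choice of weights in \eqref{ggdef} — and in particular the appearance of both $\sup_r \f/(1+r)^{1+\vartheta}$ and $\int_0^t\f/(1+r)^{2+\vartheta}$ — is essential, and it is the analogue of the bookkeeping that in \cite{GT_inf} required $\lambda>0$.
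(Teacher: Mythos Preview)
Your proposal is correct and follows essentially the same route as the paper: divide \eqref{d2n} by $(1+r)^{\vartheta+\kappa_d}$, integrate in time, control the $\fd{2N}$ and $\fdb{2N}$ pieces by Proposition \ref{Dgle}, the $\ns{U^{4N}}_1$ piece by Proposition \ref{p_E_bound}, and for the $\k\f$ piece use $\k(r)\lesssim\g(t)(1+r)^{-2+\kappa_d}$ together with the identity $-2+\kappa_d-(\vartheta+\kappa_d)=-2-\vartheta$ to reduce to $\g(t)\int_0^t\f(r)(1+r)^{-2-\vartheta}\,dr\lesssim(\g(t))^2$. The paper's proof is exactly this, just written more tersely and without the intermediate borderline calculation you first sketched (which is unnecessary once you notice the exponent identity).

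One small remark: your final sentence ``$\g(0)\lesssim\fe{2N}(0)+\f(0)\lesssim\fe{2N}(0)$'' is not quite right, since $\f(0)$ cannot in general be absorbed into $\fe{2N}(0)$; but the paper's own statement of the proposition has the same looseness (its proof ends with $\g(0)+(\g(t))^{3/2}$, and $\g(0)\lesssim\fe{2N}(0)+\f(0)$), and in the end both quantities are assumed small in Theorem \ref{intro_inf_gwp}, so this is harmless.
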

\begin{proof}
 Multiplying \eqref{d2n} by $(1+t)^{-\vartheta-\kappa_d}$ of
Theorem \ref{dth} and then integrating in time,  by \eqref{p_E_b_0}, \eqref{Dg},  \eqref{decayk} and and the definition \eqref{ggdef} of $\g(t)$, we have
 \begin{align}
 \int_0^t\frac{\sdm{2N}(r)}{(1+r)^{\vartheta+\kappa_d}}dr & \lesssim  \int_0^t\frac{ \ns{ U^{4N} }_{1} + \fd{2N}+\k  \f}{(1+r)^{\vartheta+\kappa_d}}dr    \nonumber
 \\&\ls
\g(0) + (\g(t))^{3/2}+ \int_0^t  { \g(r)} \frac{  \f(r) }{(1+r)^{ 2+\vartheta}} dr\nonumber
 \\&\ls
\g(0) + (\g(t))^{3/2}
.
\end{align}
This gives \eqref{Dg2}.
\end{proof}

\subsection{Growth estimate  of  $\f$}
To estimate $\f$, it is crucial that we will use a different argument from \cite{GT_inf}.
Applying $\mathcal{J}^{4N+1/2}$  to the fourth equation in \eqref{geometric}, we have
\beq\label{feqe}
\dt \mathcal{J}^{4N+1/2}\eta+u_h\cdot D \mathcal{J}^{4N+1/2}\eta= \mathcal{J}^{4N+1/2}u_d -\[\mathcal{J}^{4N+1/2}, u_h\]\cdot D \eta.
\eeq

We now derive the estimate of  $\f$ by a time-weighted argument. 
\begin{prop}\label{p_f_bound}
There exists $\delta>0$ so that if $\mathcal{G}_{2N} (T)\le\delta$, then
\begin{equation}\label{p_f_b_0}
 \frac{ \f(t)}{(1+t)^{1+\vartheta}} +\int_0^t  \frac{ \f(r) }{(1+r)^{2+\vartheta }} dr \lesssim
\fe{2N} (0) + (\g(t))^{3/2}.
\end{equation}
\end{prop}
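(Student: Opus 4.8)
The plan is to run an $L^2(\Sigma)$ energy estimate on the transport equation \eqref{feqe} and then a time-weighted integration, along the lines sketched in \eqref{intro2}--\eqref{intro3}. Test \eqref{feqe} with $\mathcal{J}^{4N+1/2}\eta$ on $\Sigma$ and integrate by parts in the transport term; since $\int_\Sigma\abs{\mathcal{J}^{4N+1/2}\eta}^2=\f$ (up to the convention for $\dt A_1$) this yields
\begin{align*}
\hal\dtt\f &= \int_\Sigma \mathcal{J}^{4N+1/2}u_d\,\mathcal{J}^{4N+1/2}\eta
+\hal\int_\Sigma (D\cdot u_h)\,\abs{\mathcal{J}^{4N+1/2}\eta}^2 \\
&\qquad - \int_\Sigma \big([\mathcal{J}^{4N+1/2},u_h]\cdot D\eta\big)\,\mathcal{J}^{4N+1/2}\eta
=: I_1+I_2+I_3 .
\end{align*}
The key structural point is that applying $\mathcal{J}^{4N+1/2}$ to the fourth equation of \eqref{geometric} creates no term carrying more than $4N+\tfrac12$ derivatives on $\eta$ (the worst candidate $u_h\cdot D\mathcal{J}^{4N+1/2}\eta$ is symmetric and disappears in the integration by parts that produces $I_2$), and that only the boundary trace $\as{u}_{4N+1/2}$ of $u$ ever enters, never the full dissipation.

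For $I_1$, Cauchy--Schwarz gives $I_1\le\abs{u}_{4N+1/2}\sqrt\f$. For $I_2$, $\abs{D\cdot u_h}_{C(\Sigma)}\le\sqrt{\bar\k}$ by \eqref{bkdef}, hence $I_2\ls\sqrt{\bar\k}\,\f$. For $I_3$ a Kato--Ponce commutator estimate on $\Sigma$ gives $\abs{[\mathcal{J}^{4N+1/2},u_h]\cdot D\eta}_0\ls\abs{Du_h}_{C(\Sigma)}\abs{D\eta}_{4N-1/2}+\abs{u_h}_{4N+1/2}\abs{D\eta}_{C(\Sigma)}\ls\sqrt{\bar\k}\sqrt\f+\sqrt{\fe{2N}}\abs{u}_{4N+1/2}$, using $\abs{D\eta}_{4N-1/2}\le\sqrt\f$, $\abs{Du_h}_{C(\Sigma)}^2\le\bar\k$ and $\abs{D\eta}_{C(\Sigma)}\ls\sqrt{\fe{2N}}\le\sqrt\delta$ (the $L^\infty$ table of Lemma \ref{le inter} together with \eqref{apriori_1}); so $I_3\le(\sqrt{\bar\k}\sqrt\f+\sqrt{\fe{2N}}\abs{u}_{4N+1/2})\sqrt\f$. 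Summing $I_1,I_2,I_3$ and using $\sqrt{\fe{2N}}\le\sqrt\delta\le1$ gives the differential inequality $\dt\f\ls\abs{u}_{4N+1/2}\sqrt\f+\sqrt{\bar\k}\,\f$, while \eqref{bkes} and the $\senn$-term of \eqref{ggdef} give $\bar\k\ls\senn\ls\g(t)(1+t)^{-2}$.

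Next, multiply by $(1+t)^{-1-\vartheta}$, bound $\frac{\abs{u}_{4N+1/2}\sqrt\f}{(1+t)^{1+\vartheta}}\le\tfrac14\frac{\f}{(1+t)^{2+\vartheta}}+C\frac{\as{u}_{4N+1/2}}{(1+t)^{\vartheta}}$ by Young's inequality and $\frac{\sqrt{\bar\k}\,\f}{(1+t)^{1+\vartheta}}\ls\sqrt{\g(t)}\,\frac{\f}{(1+t)^{2+\vartheta}}$, then integrate in time and absorb the $\f/(1+t)^{2+\vartheta}$ contributions into the left side ($\g(t)\le\delta$ small). This leaves $\frac{\f(t)}{(1+t)^{1+\vartheta}}+\int_0^t\frac{\f(r)}{(1+r)^{2+\vartheta}}\,dr\ls\f(0)+\int_0^t\frac{\as{u(r)}_{4N+1/2}}{(1+r)^{\vartheta}}\,dr$. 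Finally, \eqref{n213} gives $\as{u}_{4N+1/2}\ls\ns{U^{4N}}_1+\bar\k\f$; the $\ns{U^{4N}}_1$ contribution integrates to $\ls\g(0)+(\g(t))^{3/2}$ by \eqref{p_E_b_0} of Proposition \ref{p_E_bound}, and the $\bar\k\f\ls\g(t)(1+t)^{-2}\f$ contribution is absorbed once more. Collecting everything (and using $\f(0)+\g(0)\ls\fe{2N}(0)+\f(0)$) yields \eqref{p_f_b_0}.

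I expect the main obstacle to be the circularity of these absorptions: $\f$ carries a half-derivative more than anything present in $\fd{2N}$, so the transport equation is the only available handle, and the very quantity $\int_0^t\f(r)/(1+r)^{2+\vartheta}\,dr$ that we are estimating reappears on the right both from the $\bar\k\f$ terms in $I_2,I_3$ and, through \eqref{n213}, from the $\bar\k\f$ part of $\as{u}_{4N+1/2}$; making this closed requires precisely the improved low-order decay $\bar\k\ls\senn\ls\g(t)(1+t)^{-2}$ (Lemma \ref{klem}) and the weighted good-unknown bound \eqref{p_E_b_0}, and it is also why $\f$ is only shown to grow like $(1+t)^{1+\vartheta}$ rather than remain bounded. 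A secondary delicate point is the top-order commutator $I_3$, which must be handled with no loss of derivatives; this is exactly where the transport structure of \eqref{feqe} substitutes for the Alinhac good-unknown device used in the bulk.
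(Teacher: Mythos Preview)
Your proof is correct and follows essentially the same route as the paper: test the transported equation \eqref{feqe} against $\mathcal{J}^{4N+1/2}\eta$, integrate by parts in the advection term, control the commutator by Lemma \ref{commutator}, reduce the forcing to $\abs{u}_{4N+1/2}\sqrt\f+\sqrt{\bar\k}\,\f$, apply the time weight $(1+t)^{-1-\vartheta}$ with Cauchy/Young, and close via \eqref{n213}, \eqref{bkes} and the good-unknown bound \eqref{p_E_b_0}. The only cosmetic difference is the order of operations: the paper invokes \eqref{n213} immediately to rewrite $\abs{u}_{4N+1/2}$ in terms of $\norm{U^{4N}}_1$ before the time-weighting, whereas you carry $\abs{u}_{4N+1/2}$ through the weighted Young splitting and apply \eqref{n213} to the resulting integral $\int_0^t(1+r)^{-\vartheta}\as{u}_{4N+1/2}\,dr$; the two are equivalent.
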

\begin{proof}
Taking the dot product of \eqref{feqe} with $\mathcal{J}^{4N+1/2}\eta$, and then integrate  by parts; using the commutator estimate of Lemma \ref{commutator} with $s=4N+1/2$, by \eqref{n213}, we find that
 \begin{align}\label{i_te_0'1}
  &\frac{1}{2}\frac{d}{dt}\f =\frac{1}{2}\frac{d}{dt}\int_\Sigma|\mathcal{J}^{4N+1/2}\eta|^2\nonumber \\&\quad=\int_\Sigma
  \mathcal{J}^{4N+1/2} u_d   \mathcal{J}^{4N+1/2}\eta
 -\frac{1}{2}\int_\Sigma u_h\cdott D |\mathcal{J}^{4N+1/2}\eta|^2-\int_\Sigma
     \left[\mathcal{J}^{4N+1/2},u_h\right]\cdott D \eta \mathcal{J}^{4N+1/2}\eta\nonumber
 \\&\quad=\int_\Sigma
  \mathcal{J}^{4N+1/2} u_d   \mathcal{J}^{4N+1/2}\eta
 +\frac{1}{2}\int_\Sigma D\cdott u_h |\mathcal{J}^{4N+1/2}\eta|^2-\int_\Sigma
     \left[\mathcal{J}^{4N+1/2},u_h\right]\cdott D \eta \mathcal{J}^{4N+1/2}\eta\nonumber
         \\&\quad\lesssim \abs{u_d }_{4N+1/2} \abs{  \eta}_{4N+1/2} +\abs{ D  u_h}_{L^\infty(\Sigma)}\abs{  \eta}_{4N+1/2}^2
\nonumber
 \\&\qquad+\left(\abs{ D  u_h}_{L^\infty(\Sigma)}\abs{  \eta}_{4N+1/2}+\abs{ 
u_h}_{4N+1/2}\abs{ D \eta}_{L^\infty(\Sigma)}\right)\abs{  \eta}_{4N+1/2} \nonumber
 \\&\quad\lesssim   \abs{u   }_{ 4N+1/2}  \sqrt{\f} +\abs{ D  u}_{L^\infty(\Sigma)}\f \nonumber
\\&\quad
\lesssim   \norm{U^{4N}}_{1} \sqrt{\f}+ \sqrt{\bar\k}\f
 \end{align}
Multiplying \eqref{i_te_0'1} by $(1+t)^{-1-\vartheta}$, by \eqref{decayk1} and the definition of $\g(t)$, we have
\begin{align}\label{i_te_0'2}
 \dt \(\frac{\f }{(1+t)^{1+\vartheta}}\)+ (1+ \vartheta)\frac{\f}{(1+t)^{2+\vartheta}}  \nonumber
   &   \ls \frac{   \norm{U^{4N}}_{1}  \sqrt{\f}+ \sqrt{\bar\k}\f  }{(1+t)^{ 1+\vartheta}}  \nonumber
  \\&   \ls \frac{   \norm{U^{4N}}_{1}  }{(1+t)^{ \vartheta/2}}\frac{ \sqrt{\f}  }{(1+t)^{ 1+\vartheta/2}} + \sqrt{\g} \frac{  \f  }{(1+t)^{ 2+\vartheta}}.
\end{align}
Integrating \eqref{i_te_0'2} directly in time, since $\delta>0$ is small and by Cauchy's inequality,  we deduce that, by \eqref{p_E_b_0},  
\begin{align}\label{i_te_0'3}
 \frac{\f (t)}{(1+t)^{1+\vartheta}} + \int_0^t\frac{\f(r)}{(1+r)^{2+\vartheta}}dr
  &  \ls \g(0)+\int_0^t \frac{   \norm{U^{4N}(r)}_{1}^2}{(1+r)^{ \vartheta }} dr \nonumber
  \\&\ls
\g(0) +   (\g(t))^{3/2}.
\end{align}
This yields \eqref{p_f_b_0}.
\end{proof}

\subsection{Decay estimate of
$\senn  $}
It remains to show the decay estimate of
$\senn  $.

\begin{prop} \label{decaylm}
There exists $\delta>0$ so that if
$\mathcal{G}_{2N} (T)\le\delta$, then
\begin{equation}\label{N+2,2}
(1+t)^{2} \senn  (t)\lesssim
\fe{2N} (0) +   (\g(T))^{2}\ \text{for all
}0\le t\le T.
\end{equation}
\end{prop}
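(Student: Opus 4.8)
The plan is to derive a closed differential inequality $\dtt \mathcal{H}(t) + \sdnn(t) \lesssim 0$ with $\mathcal{H}(t)$ equivalent to $\senn(t)$, and then to upgrade it to the claimed polynomial decay by interpolation against both a conserved $L^2$ quantity and $\fe{2N}$. First I would add the estimate \eqref{p_u_e_11} of Proposition \ref{p_upper_evolution N+2} to the estimate \eqref{tem en N+2} of Proposition \ref{i_temporal_evolution N+2}; by \eqref{tem en N+200} the pressure correction $\int_\Omega J\dt^{N+1}p\,F^{2,N+2}$ is $\lesssim\sqrt{\fe{2N}}\,\senn\le\delta^{1/2}\senn$, so it may be absorbed into an energy functional $\mathcal{H}$ comparable to $\feb{N+2,2}$, while the left-hand sides combine to control $\fdb{N+2,2}$. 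For $\delta$ small, Theorem \ref{dth} gives $\sdnn\lesssim\fdb{N+2,2}$ and Theorem \ref{eth} gives $\senn\lesssim\feb{N+2,2}\lesssim\mathcal{H}\lesssim\senn$; since the right-hand side is $\lesssim\sqrt{\fe{2N}}\,\sdnn\le\delta^{1/2}\sdnn$ it too is absorbed, yielding $\dtt\mathcal{H}+\tfrac12\sdnn\le0$, i.e. $\dtt\senn+\sdnn\lesssim0$.

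Second, I would assemble the two ingredients needed to convert this into decay. For the conserved floor, \eqref{intro0} is an identity, so $\tfrac12\big(\int_\Omega J\abs{u}^2+\int_\Sigma\abs{\eta}^2\big)$ is nonincreasing and hence, by \eqref{apriori_2}, $\ns{u(t)}_0+\as{\eta(t)}_0\lesssim\fe{2N}(0)$ for all $t$. For the interpolation, comparing \eqref{i_energy_min_2} with \eqref{p_ldissipation_def}, every term of $\senn$ is dominated by $\sdnn$ — directly for the terms of lower order than their dissipation counterparts, and, for the few lowest-order or vertical-derivative velocity and pressure pieces and the $\dt^j\eta$ pieces, after using the equations in \eqref{linear_perturbed} together with Poincar\'e's inequality and the trace theorem exactly as in the proof of Theorem \ref{dth} — with the single exception of the top half-derivative of $\as{D^2\eta}_{2(N+2)-2}$, which exceeds the dissipation term $\as{D^3\eta}_{2(N+2)-7/2}$ by a half derivative. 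Its low-order part is handled against the floor, $\as{D^2\eta}_0\lesssim\abs{\eta}_0^{2/3}\abs{D^3\eta}_0^{4/3}\lesssim(\fe{2N}(0))^{1/3}\sdnn^{2/3}$, and its remaining top part is interpolated between $\as{D^3\eta}_{2(N+2)-7/2}$ (in $\sdnn$) and $\as{\eta}_{4N-1}$ (in $\fe{2N}$), using $\fe{2N}(t)\lesssim\fe{2N}(0)+(\g(t))^2=:M$ from Proposition \ref{Dgle}; since $2N+4<4N-1$ for $N\ge5$ this produces a power of $\sdnn$ strictly larger than $2/3$ and is harmless. Absorbing the nonlinear remainder $\fe{2N}\senn$ by smallness gives $\senn\lesssim M^{1/3}\sdnn^{2/3}+\sdnn$; since $\senn\lesssim\fe{2N}\lesssim M$, treating the regimes $\sdnn\le M$ and $\sdnn\ge M$ separately yields in either case $\sdnn\gtrsim M^{-1/2}(\senn)^{3/2}$.

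Finally, combining the two pieces gives the ordinary differential inequality $\dtt\senn+c\,M^{-1/2}(\senn)^{3/2}\le0$, hence $\dtt\big((\senn)^{-1/2}\big)\ge\tfrac{c}{2}M^{-1/2}$ and $\senn(t)\le\big((\senn(0))^{-1/2}+\tfrac{c}{2}M^{-1/2}t\big)^{-2}\lesssim\min\{\senn(0),\,M(1+t)^{-2}\}$, so that $(1+t)^2\senn(t)\lesssim\senn(0)+M\lesssim\fe{2N}(0)+(\g(T))^2$, using $\senn(0)\lesssim\fe{2N}(0)$ (compare definitions, cf. Lemma \ref{i_N_constraint}) and $\g(t)\le\g(T)$ on $[0,T]$; this is \eqref{N+2,2}. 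I expect the interpolation step to be the main obstacle: one must verify carefully that the full low-order energy $\senn$, apart from the single identified half-derivative of $D^2\eta$, is genuinely controlled by $\sdnn$, which forces one to exploit the complete elliptic and algebraic structure of \eqref{linear_perturbed} (and trace theory for the boundary terms) rather than a term-by-term comparison of the definitions, and one must confirm that the top-order interpolation against $\fe{2N}$ indeed beats the $(1+t)^{-2}$ rate, so that the rate is sharply governed, as claimed, by the low-order term $\as{D^2\eta}_0$.
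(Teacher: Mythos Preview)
Your overall architecture matches the paper's exactly: combine Propositions \ref{p_upper_evolution N+2} and \ref{i_temporal_evolution N+2} into $\dtt\mathcal{H}+\sdnn\lesssim 0$ with $\mathcal{H}\simeq\senn$, interpolate $\senn\lesssim M^{1/3}\sdnn^{2/3}$, and solve the resulting ODE. The ODE step and the use of Proposition \ref{Dgle} for $M$ are fine.

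The gap is in the interpolation step. Your claim that, apart from the $D^2\eta$ term, ``every term of $\senn$ is dominated by $\sdnn$'' after using the equations and Poincar\'e is too optimistic. Several of the lowest-order pieces of $\senn$ --- for instance $\ns{Du_h}_0$, $\ns{u_d}_0$, $\ns{\p_d p}_0$, and $\as{\dt\eta}_0$ --- are \emph{not} controlled by $\sdnn$; the $L^2$ table in \eqref{t2} assigns them interpolation power $2/3$, not $1$. If you chase $\as{\dt\eta}_0$ through the kinematic equation, the divergence equation, Poincar\'e, and the dynamic boundary condition as you propose, you end up with $\as{D^2\eta}_0$ again (via $p=\eta+2\p_d u_d+G^3_d$ on $\Sigma$), so the argument is circular rather than closing. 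The same happens with $\ns{Du_h}_0$.

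The paper sidesteps this entirely by interpolating the \emph{tangential} energy $\seb{N+2,2}$ (which is equivalent to $\senn$ by Theorem \ref{eth}) against $\sdnn$, rather than $\senn$ itself. Since $\seb{N+2,2}=\ns{\bar{D}_2^{2(N+2)}u}_0+\as{\bar{D}_2^{2(N+2)}\eta}_0$ and $\ns{\bar{D}_2^{2(N+2)}u}_0\le\sdb{N+2,2}\lesssim\sdnn$ trivially, only the $\eta$ part needs work, and there exactly three scalar terms escape: $\as{D^2\eta}_0$, $\as{D^{2(N+2)}\eta}_0$, and $\as{\dt\eta}_0$. All three obey the $\fe{2N}^{1/3}\sdnn^{2/3}$ bound (the last by the $\dt\eta$ row of \eqref{t2}), and since $N\ge3$ forces $(4N-10)/(4N-9)\ge2/3$ for the top-order interpolation, the inequality $\seb{N+2,2}\lesssim\fe{2N}^{1/3}\sdnn^{2/3}$ follows. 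Your argument is easily repaired by either switching to $\seb{N+2,2}$ in the interpolation, or by citing the tables of Lemma \ref{le inter} for all the low-order velocity, pressure, and $\dt\eta$ pieces rather than asserting they lie in $\sdnn$.
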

\begin{proof}
Since $\fe{2N} (t)\le
{\mathcal{G}}_{2N} (T)\le \delta$, by taking $\delta$ small, we
obtain from \eqref{en+2} of Theorem \ref{eth} and \eqref{dn+2} of
Theorem \ref{dth} that
\begin{equation}\label{eesf}
\fe{N+2,2} \lesssim\feb{N+2,2}
\lesssim\fe{N+2,2}  \text{ and }
\fd{N+2,2} \lesssim\fdb{N+2,2}
\lesssim\fd{N+2,2} .
\end{equation}

Now we combine the estimate \eqref{p_u_e_11} of
Proposition \ref{p_upper_evolution N+2} and the estimate \eqref{tem en N+2} of Proposition \ref{i_temporal_evolution N+2}  to conclude that, by  the definition \eqref{i_horizontal_energy1} of $\seb{N+2,2} $ and the definition \eqref{i_horizontal_dissipation1} of $\fdb{N+2,2}$,
\begin{equation}  
   \frac{d}{dt} \left(\feb{N+2,2}   +2\int_\Omega J   \dt^{N+1} p  F^{2,N+2}\right)
+   \fdb{N+2,2}
  \ls \sqrt{\fe{2N}   } \sdnn 
\end{equation}
By  \eqref{tem en 2N00}, \eqref{eesf} and the smallness of $\delta$,  we deduce that there exists an instantaneous
energy, which is equivalent to $\seb{N+2,2} $ and $\senn  $, but for
simplicity is still denoted by $ \seb{N+2,2} $, such
that
\begin{equation} \label{u1}
\frac{d}{dt} \feb{N+2,2}  +  \fd{N+2,2} \le
0.
\end{equation}

In order to get decay from \eqref{u1}, we shall now estimate $\seb{N+2,2} $ in terms of $\fd{N+2,2} $. Notice that $\fd{N+2,2} $ can control every term in $ \seb{N+2,2} $ except $\as{D^{2(N+2)}\eta}_{0}$, $\abs{D^2\eta}_0^2$ and $\abs{\dt\eta}_0^2$. The key point is to use the Sobolev interpolation as in \cite{GT_inf}. Indeed, we first have that
\begin{equation} \label{intep0}
\as{D^{2(N+2)}\eta}_{0}\le  \abs{D^3\eta}_{2(N+2)-7/2} ^{2\theta} \abs{\eta}_{4N-1}^{2(1-\theta)}\lesssim   \fd{N+2,2}  ^\theta  \fe{2N}   ^{1-\theta},\text{
where }\theta=\frac{4N-10}{4N-9}.
\end{equation}
and
\begin{equation} 
\begin{split}
 \as{D^2 \eta}_{0}
&\le  \abs{\eta}_{0}^{2/3}  \abs{D^3\eta}_{0}^{4/3} \lesssim \fe{2N}   ^{1/3}  \fd{N+2,2} ^ {2/3}.
\end{split}
\end{equation}
By the $L^2$ table in \eqref{t2},
\begin{equation}\label{intep1}
 \as{\dt \eta}_{0} \ls   \fe{2N}   ^{1/3}  \fd{N+2,2} ^ {2/3}.
\end{equation}
 Hence, in light of \eqref{intep0}--\eqref{intep1}, since $N\ge 3$ and hence $(4N-10)/(4N-9)\ge 2/3$, we deduce
\begin{equation} \label{intep}
{\mathcal{E}}_{N+2,2} \lesssim\fe{2N}   ^{1/3}  \fd{N+2,2} ^ {2/3}.
\end{equation}

Denote by $\m:= \sup_{[0,T]}\fe{2N} $.
Hence by \eqref{u1} and \eqref{intep}, there exists some constant $C>0$
such that
\begin{equation}
\frac{d}{dt} \senn  +\frac{C}{\m^{1/2}}
 \senn  ^{3/2}\le 0.
\end{equation}
Solving this differential inequality directly, we obtain
\begin{equation} \label{u3}
\senn  (t)\le \frac{\m }{(\m^{1/2}
+ \hal C  (\senn  (0) )^{1/2} t)^{2} }
\fe{N+2,2} (0).
\end{equation}
Using that $\fe{N+2,2} (0)\lesssim\m  $, we obtain from \eqref{u3} that
\begin{equation}
\fe{N+2,2} (t)\lesssim
\frac{\m }{ 1+t^2 }  .
\end{equation}
This together with \eqref{Dg} directly implies \eqref{N+2,2}.
\end{proof}

\section{Proof of main theorem}
Now we can arrive at our ultimate energy estimates for
$\mathcal{G}_{2N} $.
\begin{thm}\label{Ap}
There exists a universal $0 < \delta < 1$ so that if $
\mathcal{G}_{2N} (T) \le \delta$, then
\begin{equation}\label{Apriori}
 \mathcal{G}_{2N} (T) \ls\mathcal{G}_{2N} (0).
 \end{equation}
\end{thm}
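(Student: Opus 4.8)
The plan is to obtain Theorem~\ref{Ap} by simply adding up the five a~priori estimates already established — Propositions~\ref{Dgle}, \ref{p_E_bound}, \ref{Dgle2}, \ref{p_f_bound} and \ref{decaylm} — and then absorbing the superlinear terms using the smallness hypothesis. The key structural observation is that, term by term, the definition \eqref{ggdef} of $\g(t)$ is exactly the sum of the quantities appearing on the left-hand sides of these five propositions, once one takes the supremum over $[0,T]$ in the places where \eqref{ggdef} has a supremum. The extra nonnegative term $\int_0^t \ns{U^{4N}(r)}_1 (1+r)^{-\vartheta}\,dr$ generated in Proposition~\ref{p_E_bound} does not occur in $\g$, but being nonnegative it may be harmlessly dropped.

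First I would record the elementary comparison $\g(0)\ls\fe{2N}(0)+\f(0)$: evaluating \eqref{ggdef} at $t=0$ all time integrals vanish and $\g(0)=\fe{2N}(0)+\sem{2N}(0)+\f(0)+\senn(0)$, where $\sem{2N}(0)=\as{\eta}_{4N}\big|_{t=0}\le\as{\eta}_{4N+1/2}\big|_{t=0}=\f(0)$ and $\senn(0)\ls\fe{2N}(0)$ by direct comparison of \eqref{i_energy_min_2} with \eqref{p_energy_def1} (every Sobolev index occurring in $\senn$ is dominated by the corresponding one in $\fe{2N}$ since $N\ge 5$); conversely $\fe{2N}(0)+\f(0)\le\g(0)$, so in fact $\g(0)\simeq\fe{2N}(0)+\f(0)=\mathcal{G}_{2N}(0)$. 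Then, summing Propositions~\ref{Dgle}, \ref{p_E_bound}, \ref{Dgle2}, \ref{p_f_bound}, \ref{decaylm}, taking $\sup_{t\in[0,T]}$, and using this comparison together with the monotonicity of $r\mapsto\g(r)$ (so that $(\g(t))^{3/2}\le(\g(T))^{3/2}$ and $(\g(t))^2\le(\g(T))^2$ for $t\le T$), one arrives at
\begin{equation*}
\g(T)\le C\bigl(\fe{2N}(0)+\f(0)\bigr)+C(\g(T))^{3/2}+C(\g(T))^2
\end{equation*}
for a universal constant $C>0$. Since the hypothesis gives $\g(T)\le\delta<1$, we have $(\g(T))^{3/2}+(\g(T))^2\le(\sqrt{\delta}+\delta)\,\g(T)$; choosing $\delta$ so small that $C(\sqrt{\delta}+\delta)\le\tfrac12$ allows us to absorb the last two terms into the left-hand side, yielding $\g(T)\le 2C\bigl(\fe{2N}(0)+\f(0)\bigr)\ls\mathcal{G}_{2N}(0)$, which is \eqref{Apriori}.

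The points that genuinely require care are all discharged upstream: one needs every error term on the right-hand sides of Propositions~\ref{Dgle}--\ref{decaylm} to carry a power of $\g$ strictly greater than $1$ (so it can be absorbed under the smallness assumption), and one needs the time weights $\vartheta$, $1+\vartheta$, $\vartheta+\kappa_d$, $2+\vartheta$ fixed in \eqref{ggdef} to be mutually compatible with the inequalities invoked in those proofs, such as $2-\kappa_d\ge\vartheta+\kappa_d$, $1+\vartheta\ge\vartheta+\kappa_d$ and $2-\kappa_d\ge1$, all of which hold since $\vartheta$ and (for $d=3$) $\kappa_3$ are taken sufficiently small while $\kappa_2=\tfrac12$. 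Thus the proof of Theorem~\ref{Ap} itself is pure bookkeeping; the substantive difficulty — the cancelation of the $\mathcal{K}\f$ term in the bulk via the Alinhac good unknowns and the improved anisotropic decay $(1+t)^2\senn(t)\ls\fe{2N}(0)+\f(0)$ — lies entirely in the preceding sections, and Theorem~\ref{intro_inf_gwp} then follows from Theorem~\ref{Ap} via a standard continuity argument combined with the local existence theory of \cite{GT_lwp}.
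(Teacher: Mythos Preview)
Your proposal is correct and follows essentially the same approach as the paper: sum Propositions~\ref{Dgle}--\ref{decaylm}, take the supremum over $[0,T]$, and absorb the superlinear error terms using $\g(T)\le\delta$. The paper's own proof is a two-line version of exactly this argument, writing the summed bound as $\g(t)\ls\g(0)+(\g(T))^{3/2}$ (your $(\g(T))^2$ term being absorbed into $(\g(T))^{3/2}$ since $\g(T)<1$); your additional verification that $\g(0)\simeq\fe{2N}(0)+\f(0)$ is not needed for Theorem~\ref{Ap} itself but is implicitly used downstream in the proof of Theorem~\ref{intro_inf_gwp}.
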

\begin{proof}
It follows directly from the definition of
$\mathcal{G}_{2N} $ and Propositions
\ref{Dgle}--\ref{decaylm} that
\begin{equation}
 \mathcal{G}_{2N} (t) \ls\mathcal{G}_{2N} (0) + (\mathcal{G}_{2N}  (T))^{3/2}\text{ for all }0 \le t \le
 T.
\end{equation}
This proves \eqref{Apriori} since $\delta>0$ is small.
\end{proof}

Now we can present the
 \begin{proof}[Proof of Theorem \ref{intro_inf_gwp}]
By the a priori estimates \eqref{Apriori} of Theorem \ref{Ap}, we may employ a continuity argument as in Section 11 of \cite{GT_inf} to prove that there exists a universal $\varepsilon_0>0$ such that if $\fe{2N} (0)+\f(0)\le \varepsilon_0$, then the unique local solution constructed in \cite{GT_lwp} is indeed a  global solution to \eqref{geometric} on $[0,\infty)$. This completes the proof of Theorem \ref{intro_inf_gwp}.
\end{proof}

\appendix
\section{Analytic tools}

\subsection{Poisson integral }
For a function $f$, defined on $\Sigma = \Rn{d-1}$, the Poisson integral in $\Rn{d-1} \times (-\infty,0)$  is defined by
\begin{equation}\label{poisson_def_inf}
 \mathcal{P}f(x',x_d ) = \int_{\Rn{d-1}} \hat{f}(\xi) e^{2\pi \abs{\xi}x_d } e^{2\pi i x' \cdot \xi} d\xi.
\end{equation}
Although $\mathcal{P} f$ is defined in all of $\Rn{d-1} \times (-\infty,0)$, we will only need bounds on its norm in the restricted domain $\Omega = \Rn{d-1} \times (-b,0)$.  This yields a couple improvements of the usual estimates of $\mathcal{P} f$ on the set $\Rn{d-1} \times (-\infty,0)$.

\begin{lem}\label{p_poisson}
Let $\mathcal{P} f$ be the Poisson integral of a function $f$ that is either in $\dot{H}^{q}(\Sigma)$ or $\dot{H}^{q-1/2}(\Sigma)$ for $q \in \mathbb{N}$, where $\dot{H}^s$ is the usual homogeneous Sobolev space of order $s$.  Then
\begin{equation}
 \ns{\nab^q \mathcal{P}f }_{0} \ls \norm{f}_{\dot{H}^{q-1/2}(\Sigma)}^2 \text{ and }  \ns{\nab^q \mathcal{P}f }_{0} \ls \norm{f}_{\dot{H}^{q}(\Sigma)}^2.
\end{equation}
\end{lem}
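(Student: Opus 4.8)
The statement to prove is Lemma~\ref{p_poisson}, the bound on the Poisson extension $\mathcal{P}f$ into the slab $\Omega = \R^{d-1}\times(-b,0)$ in terms of homogeneous Sobolev norms of the boundary datum $f$ on $\Sigma = \R^{d-1}$.

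\textbf{Approach.} The plan is to work entirely on the Fourier side in the horizontal variable $\xi\in\R^{d-1}$ and reduce everything to a one-variable integral in $x_d$. Recall from \eqref{poisson_def_inf} that $\widehat{\mathcal{P}f}(\xi,x_d) = \hat f(\xi)\, e^{2\pi|\xi|x_d}$ for $x_d < 0$, so that applying a full derivative $\nabla^q$ and taking the Fourier transform in $x'$ produces, schematically, a factor that is a homogeneous polynomial of degree $q$ in $(\xi, |\xi|)$ times $e^{2\pi|\xi| x_d}$; in modulus this is bounded by $C|\xi|^q e^{2\pi|\xi|x_d}$. The key step is then Plancherel in $x'$ followed by Tonelli to exchange the order of integration:
\[
 \ns{\nabla^q \mathcal{P}f}_0 = \int_{-b}^0\!\!\int_{\R^{d-1}} |\widehat{\nabla^q\mathcal{P}f}(\xi,x_d)|^2\,d\xi\,dx_d \ls \int_{\R^{d-1}} |\hat f(\xi)|^2 |\xi|^{2q}\left(\int_{-b}^0 e^{4\pi|\xi|x_d}\,dx_d\right)d\xi.
\]
The inner $x_d$-integral is elementary: $\int_{-b}^0 e^{4\pi|\xi|x_d}\,dx_d = \frac{1 - e^{-4\pi b|\xi|}}{4\pi|\xi|} \le \min\left\{b,\ \frac{1}{4\pi|\xi|}\right\}$.

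\textbf{Key steps in order.} First I would record the Fourier-side identity for $\widehat{\nabla^q\mathcal{P}f}$ and the pointwise bound $|\widehat{\nabla^q\mathcal{P}f}(\xi,x_d)| \ls |\xi|^q e^{2\pi|\xi|x_d}$ for $x_d\in(-b,0)$. Second, apply Plancherel and Tonelli as above to arrive at $\int |\hat f(\xi)|^2 |\xi|^{2q} m(|\xi|)\,d\xi$ with $m(r) = \frac{1-e^{-4\pi b r}}{4\pi r}$. Third, split the region of integration: on $\{|\xi|\ge 1\}$ use $m(|\xi|)\le \frac{1}{4\pi|\xi|}$, so the integrand is $\ls |\hat f(\xi)|^2 |\xi|^{2q-1}$, which is controlled by $\|f\|_{\dot H^{q-1/2}(\Sigma)}^2$; on $\{|\xi|<1\}$ use $m(|\xi|)\le b$, so the integrand is $\ls |\hat f(\xi)|^2 |\xi|^{2q}$, and since $|\xi|^{2q}\le |\xi|^{2q-1}$ there, this is again controlled by $\|f\|_{\dot H^{q-1/2}(\Sigma)}^2$; combining the two regions gives the first inequality. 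Fourth, for the second inequality simply use the cruder bound $m(|\xi|)\le b$ on all of $\R^{d-1}$, yielding $\ns{\nabla^q\mathcal{P}f}_0 \ls b\int |\hat f(\xi)|^2|\xi|^{2q}\,d\xi = b\,\|f\|_{\dot H^q(\Sigma)}^2$.

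\textbf{Main obstacle.} There is no serious obstacle here; the only point requiring a little care is the low-frequency region $\{|\xi|<1\}$, where the naive estimate $m(|\xi|)\sim |\xi|^{-1}$ would introduce a non-integrable singularity if $f$ were only assumed in $\dot H^{q-1/2}$ — this is precisely why restricting to the finite slab (so that $m(|\xi|)\le b$ is available) is what makes the estimate hold, and it is the ``improvement over the usual estimates on $\R^{d-1}\times(-\infty,0)$'' alluded to in the text. I would make sure to state the split at $|\xi|=1$ explicitly and note that the bounded-slab bound $m\le b$ is what tames the low frequencies; the high-frequency part is identical to the standard half-space computation.
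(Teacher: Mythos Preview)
Your proof is correct and is precisely the standard Fourier-side computation that the paper has in mind; the paper itself gives no argument beyond citing Lemma~A.7 of \cite{GT_lwp}, whose proof is exactly the Plancherel-plus-$x_d$-integral calculation you wrote out, including the low/high frequency split to handle the $\dot H^{q-1/2}$ bound. One small typo: in your pointwise bound you dropped the factor $|\hat f(\xi)|$, but you use it correctly in the displayed integral, so this is harmless.
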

\begin{proof}
It follows from the definition \eqref{poisson_def_inf}. We refer to Lemma A.7 of \cite{GT_lwp}.
\end{proof}
\subsection{Interpolation estimates}

Assume that $\Sigma = \Rn{d-1}$ and $\Omega = \Sigma \times (-b,0)$. We begin with an interpolation result for Poisson integrals, as defined by \eqref{poisson_def_inf}.

\begin{lem}\label{i_poisson_interp}
Let $\mathcal{P} f$ be the Poisson integral of $f$, defined on $\Sigma$. Let   $q,s \in \mathbb{N}$, and $r\ge 0$.   Then the following estimates hold.

\begin{enumerate}
 \item Let
\begin{equation}\label{i_p_i_1}
 \theta = \frac{s}{q+ s } \text{ and } 1-\theta = \frac{q }{q+s }.
\end{equation}
Then
\begin{equation}\label{i_p_i_2}
 \ns{\nab^q \mathcal{P}f }_{0} \ls \left( \as{  f }_{0}  \right)^\theta  \left( \as{D^{q+s} f}_{0}  \right)^{1-\theta}.
\end{equation}

\item
Let $r+s >(d-1)/2$,
\begin{equation}\label{i_p_i_3}
\theta = \frac{r+s-1}{q+s+r }, \text{ and } 1-\theta = \frac{q +1}{q+s+r }.
\end{equation}
Then
\begin{equation}\label{i_p_i_4}
 \pns{\nab^q \mathcal{P}f }{\infty} \ls \left( \as{  f}_{0}  \right)^\theta  \left( \as{D^{q+s} f }_{r}  \right)^{1-\theta}.
\end{equation}

\item Let $s>(d-1)/2$.  Then
\begin{equation}\label{i_p_i_5}
 \pns{\nab^q \mathcal{P}f }{\infty} \ls \as{D^q f}_{s}.
\end{equation}

\end{enumerate}
\end{lem}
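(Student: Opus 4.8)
The plan is to pass to the Fourier side in the horizontal variable $x'$ and reduce all three estimates to weighted $L^1$ and $L^2$ bounds on $\hat f$. Differentiating \eqref{poisson_def_inf} under the integral sign, one has $\widehat{\nab^q\mathcal{P}f}(\xi,x_d)=m_q(\xi,x_d)\,\hat f(\xi)$ with the pointwise bound $|m_q(\xi,x_d)|\ls|\xi|^q e^{2\pi|\xi|x_d}$ for $x_d<0$ (each horizontal derivative contributes a factor $2\pi i\xi_j$, each vertical derivative a factor $2\pi|\xi|$). This single bound drives everything.

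For part (1), I would apply Plancherel in $x'$ and then integrate in $x_d$ over $(-b,0)$, using $\int_{-b}^0 e^{4\pi|\xi|x_d}\,dx_d\le b$, to get
$$\ns{\nab^q\mathcal{P}f}_0\ls\int_{-b}^0\!\!\int_{\Rn{d-1}}|\xi|^{2q}|\hat f(\xi)|^2 e^{4\pi|\xi|x_d}\,d\xi\,dx_d\ls\int_{\Rn{d-1}}|\xi|^{2q}|\hat f(\xi)|^2\,d\xi.$$
Then Hölder's inequality in $\xi$ with conjugate exponents $1/\theta$ and $1/(1-\theta)$, writing $|\xi|^{2q}|\hat f|^2=(|\hat f|^2)^\theta(|\xi|^{2(q+s)}|\hat f|^2)^{1-\theta}$, yields \eqref{i_p_i_2}; the exponent bookkeeping is exactly consistent because $2q=(1-\theta)\cdot 2(q+s)$, i.e. $1-\theta=q/(q+s)$, which is \eqref{i_p_i_1}.

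For parts (2) and (3), I would instead use $\pnorm{g}{\infty}\le\norm{\hat g}_{L^1(\Rn{d-1})}$, so that, taking the supremum over $x_d\in(-b,0)$ (where the exponential is $\le 1$), one reduces to estimating $\int_{\Rn{d-1}}|\xi|^q|\hat f(\xi)|\,d\xi$. Part (3) is then one Cauchy--Schwarz against the weight $\langle\xi\rangle^{-s}$: $\int|\xi|^q|\hat f|\,d\xi\le\bigl(\int\langle\xi\rangle^{-2s}\,d\xi\bigr)^{1/2}\as{D^q f}_s^{1/2}$, and $\int\langle\xi\rangle^{-2s}\,d\xi<\infty$ precisely when $2s>d-1$. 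For part (2) I would split the integral at a radius $R>0$, bounding the piece over $\{|\xi|\le R\}$ by $\bigl(\int_{|\xi|\le R}|\xi|^{2q}\,d\xi\bigr)^{1/2}\as{f}_0^{1/2}\ls R^{q+(d-1)/2}\as{f}_0^{1/2}$ (the weight $|\xi|^{2q}$ is locally integrable), and the tail over $\{|\xi|>R\}$ by $\bigl(\int_{|\xi|>R}|\xi|^{-2(s+r)}\,d\xi\bigr)^{1/2}\as{D^{q+s}f}_r^{1/2}\ls R^{-(s+r-(d-1)/2)}\as{D^{q+s}f}_r^{1/2}$, which forces the hypothesis $r+s>(d-1)/2$ for convergence (and uses $\langle\xi\rangle\sim|\xi|$ on the high-frequency region). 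Optimizing over $R$ — choosing $R$ so the two terms balance — and then squaring produces \eqref{i_p_i_4} with $\theta,1-\theta$ as in \eqref{i_p_i_3}.

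The one step I would be most careful with is the low/high frequency dichotomy in part (2): since the right-hand norms mix a homogeneous weight ($|\xi|$) with an inhomogeneous one ($\langle\xi\rangle$), all low-frequency mass must be routed into $\as{f}_0$ and the tail into $\as{D^{q+s}f}_r$, after which one must check that the optimal split radius reproduces exactly the exponents in \eqref{i_p_i_1} and \eqref{i_p_i_3}. This matching of constants is routine but must be done carefully; the computations parallel those of \cite{GT_lwp,GT_inf}, to which the remaining details may be referred.
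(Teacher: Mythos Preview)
Your Fourier-side argument is correct and is the standard proof; the paper itself gives no independent argument here and simply cites Lemma~A.6 of \cite{GT_inf}, whose proof proceeds exactly along the lines you describe (Plancherel and H\"older for part~(1), the Hausdorff--Young bound $\pnorm{g}{\infty}\le\norm{\hat g}_{L^1}$ followed by Cauchy--Schwarz or a dyadic split for parts~(2)--(3)).

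One small caveat on part~(2): when you balance the low- and high-frequency contributions $R^{q+(d-1)/2}\abs{f}_0$ and $R^{-(s+r-(d-1)/2)}\abs{D^{q+s}f}_r$, the optimization actually yields
\[
\theta=\frac{r+s-(d-1)/2}{q+s+r},
\]
which coincides with \eqref{i_p_i_3} for $d=3$ (the setting of \cite{GT_inf}) but is strictly larger for $d=2$. So your claim that the optimal split ``reproduces exactly the exponents in \eqref{i_p_i_3}'' holds only in the three-dimensional case; for $d=2$ your argument gives a sharper interpolation than the one recorded in the statement, and the stated $\theta$ should be read with this dimensional dependence in mind.
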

\begin{proof}
We refer to Lemma A.6 of \cite{GT_inf}.
\end{proof}

The next result is a similar interpolation result for functions defined only on $\Sigma$.

\begin{lem}\label{i_sigma_interp}
Let $f$ be  defined on $\Sigma$.  Then the following estimates hold.

\begin{enumerate}
 \item Let $q,s \in (0,\infty)$ and
\begin{equation}
 \theta = \frac{s}{q+ s } \text{ and } 1-\theta = \frac{q }{q+s }.
\end{equation}
Then
\begin{equation}\label{i_sig_i_1}
 \as{D^q  f }_{0} \ls \left( \as{  f }_{0}  \right)^\theta  \left( \as{D^{q+s} f}_{0}  \right)^{1-\theta}.
\end{equation}

\item
Let $q,s \in \mathbb{N}$,  $r\ge 0$,  $r+s >(d-1)/2$,
\begin{equation}
\theta = \frac{r+s-1}{q+s+r }, \text{ and } 1-\theta = \frac{q +1}{q+s+r }.
\end{equation}
Then
\begin{equation}\label{i_sig_i_2}
 \as{D^q  f }_{L^\infty} \ls \left( \as{  f}_{0}  \right)^\theta  \left( \as{D^{q+s} f }_{r}  \right)^{1-\theta}.
\end{equation}

\end{enumerate}
\end{lem}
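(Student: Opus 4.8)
The plan is to prove the interpolation estimates of Lemma~\ref{i_sigma_interp} by passing through the Fourier transform on $\Sigma=\Rn{d-1}$ and applying H\"older's inequality to appropriately split powers of $\abs{\xi}$. First I would set up the Fourier-side formulation: for $f$ defined on $\Sigma$, write $\as{D^q f}_0 \sim \int_{\Rn{d-1}} \abs{\xi}^{2q}\abs{\hat f(\xi)}^2\,d\xi$ (up to the lower-order part already carried by $\as{\cdot}_0$), and similarly $\as{f}_0 \sim \int \abs{\hat f}^2$ and $\as{D^{q+s}f}_0 \sim \int \abs{\xi}^{2(q+s)}\abs{\hat f}^2$. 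For part (1), the pointwise bound $\abs{\xi}^{2q} = \bigl(1\bigr)^{q/(q+s)}\bigl(\abs{\xi}^{2(q+s)}\bigr)^{s/(q+s)}$ is wrong in the exponents, so I would instead write $\abs{\xi}^{2q}=\bigl(\abs{\xi}^{2(q+s)}\bigr)^{\theta'}$ with $\theta' = q/(q+s)$; then H\"older's inequality with conjugate exponents $1/\theta'$ and $1/(1-\theta')$ applied to $\int \abs{\xi}^{2q}\abs{\hat f}^2 = \int \bigl(\abs{\xi}^{2(q+s)}\abs{\hat f}^2\bigr)^{\theta'}\bigl(\abs{\hat f}^2\bigr)^{1-\theta'}$ yields exactly \eqref{i_sig_i_1} with $\theta = s/(q+s)$. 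This is the standard Gagliardo--Nirenberg argument on the Fourier side.

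For part (2), the $L^\infty$ estimate, I would combine the $L^2$ interpolation with a Sobolev embedding $\dot H^\rho(\Sigma)\hookrightarrow L^\infty(\Sigma)$ valid for $\rho>(d-1)/2$. Concretely, I would bound $\as{D^q f}_{L^\infty}\ls \as{D^q f}_{H^\rho}$ for some $\rho$ with $(d-1)/2<\rho\le s+r$ chosen so the derivative counts and the weight $r$ match, then apply the frequency-splitting H\"older argument to the integral $\int \br{\xi}^{2\rho}\abs{\xi}^{2q}\abs{\hat f}^2\,d\xi$ (splitting it against $\abs{\hat f}^2$ and $\br{\xi}^{2r}\abs{\xi}^{2(q+s)}\abs{\hat f}^2$), extracting the exponent $\theta=(r+s-1)/(q+s+r)$. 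The arithmetic of the exponents is where I must be careful: one needs $q+\rho = \theta\cdot 0 + (1-\theta)(q+s+r)$, and since $1-\theta = (q+1)/(q+s+r)$ this forces $q+\rho = q+1$, i.e.\ $\rho=1$ only if $d=2$; for $d=3$ one uses the limiting/endpoint Sobolev embedding $\dot H^{1}(\Rn{2})\hookrightarrow L^\infty$ up to an arbitrarily small loss absorbed into the $r$ parameter, which is precisely why $r+s>(d-1)/2$ is imposed rather than a sharp condition. I would record this as an application of the Fourier-analytic Sobolev inequality together with part~(1)'s interpolation.

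The cleanest route, and the one I would actually write up, is to cite and mildly adapt the Poisson-integral version already established as Lemma~\ref{i_poisson_interp}: since the trace of $\mathcal{P}f$ on $\Sigma=\{x_d=0\}$ is $f$ itself and the estimates of Lemma~\ref{i_poisson_interp} control $\nab^q\mathcal{P}f$ in $L^2(\Omega)$ and $L^\infty(\Omega)$, the analogous bounds for $D^q f$ on $\Sigma$ follow either by restriction/trace or by running the identical Fourier computation without the $e^{2\pi\abs{\xi}x_d}$ factor (which only helps). So the proof is: ``The estimates follow by the same Fourier-transform argument as in Lemma~\ref{i_poisson_interp} (see Lemma~A.6 of \cite{GT_inf}), omitting the harmless exponential weight arising from the Poisson kernel; alternatively apply Lemma~\ref{i_poisson_interp} to $\mathcal{P}f$ and use that $\mathcal{P}f|_\Sigma=f$ together with the trace inequality.'' The main obstacle, if one insists on a self-contained argument, is the bookkeeping of the embedding exponents in the $L^\infty$ case in dimension $d=3$ (the endpoint $\dot H^1(\Rn2)\not\hookrightarrow L^\infty$), which is exactly what the hypothesis $r+s>(d-1)/2$ is designed to circumvent; everything else is a routine H\"older splitting on the frequency side.
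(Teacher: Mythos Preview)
Your proposal is correct and in the same spirit as the paper: the paper's proof is simply the one-line citation ``See Lemma~A.7 of \cite{GT_inf}'', so your Fourier-side H\"older argument for part~(1) and Sobolev-embedding-plus-interpolation for part~(2) are precisely the content behind that citation. Note only that the paper cites Lemma~A.7 (the surface version itself) rather than adapting the Poisson-integral Lemma~A.6 as you suggest at the end, but these amount to the same underlying computation.
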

\begin{proof}
See Lemma A.7 of \cite{GT_inf}.
\end{proof}

Now we record a similar result for functions defined on $\Omega$ that are not Poisson integrals.  The result follows from estimates on fixed horizontal slices.

\begin{lem}\label{i_slice_interp}
Let $f$ be a function on $\Omega$.   Let $q,s \in \mathbb{N}$, and $r\ge 0$.   Then the following estimates hold.
\begin{enumerate}
 \item Let
\begin{equation}\label{i_sl_i_1}
 \theta = \frac{s}{q+ s } \text{ and } 1-\theta = \frac{q }{q+s }.
\end{equation}
Then
\begin{equation}\label{i_sl_i_2}
 \ns{D^q  f }_{0} \ls \left( \ns{  f }_{0}  \right)^\theta  \left( \ns{D^{q+s} f}_{0}  \right)^{1-\theta}.
\end{equation}

\item
Let $r+s >(d-1)/2$,
\begin{equation}\label{i_sl_i_3}
\theta = \frac{r+s-1}{q+s+r }, \text{ and } 1-\theta = \frac{q +1}{q+s+r }.
\end{equation}
Then
\begin{equation}\label{i_sl_i_4}
 \pns{D^q  f }{\infty} \ls \left( \ns{  f}_{1}  \right)^\theta  \left( \ns{D^{q+s} f }_{r+1}  \right)^{1-\theta}
\end{equation}
and
\begin{equation}\label{i_sl_i_05}
 \ns{D^q  f }_{L^\infty(\Sigma)} \ls \left( \ns{  f}_{1}  \right)^\theta  \left( \ns{D^{q+s} f }_{r+1}  \right)^{1-\theta}
\end{equation}

\end{enumerate}
\end{lem}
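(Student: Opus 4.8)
The plan is to reduce both parts to the slice-wise interpolation inequalities of Lemma~\ref{i_sigma_interp} on $\Sigma$, and then to integrate in the vertical variable $x_d\in(-b,0)$ (or, for the supremum on the top boundary $\Sigma$, to evaluate at $x_d=0$). The only ingredient beyond Lemma~\ref{i_sigma_interp} is the one-dimensional Sobolev embedding $H^1(-b,0)\hookrightarrow C([-b,0])$, which costs exactly one vertical derivative; this is precisely what produces $\ns{f}_1$ and $\ns{D^{q+s}f}_{r+1}$ on the right-hand sides of \eqref{i_sl_i_4}--\eqref{i_sl_i_05}, in contrast with the $\as{f}_0$ and $\as{D^{q+s}f}_r$ of the horizontal-only estimates in Lemmas~\ref{i_poisson_interp}--\ref{i_sigma_interp}.

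For Part~(1) one may assume $q,s\ge1$, since otherwise $\theta\in\{0,1\}$ and \eqref{i_sl_i_2} is immediate. For a.e.\ $x_d\in(-b,0)$, applying \eqref{i_sig_i_1} to the horizontal slice $f(\cdot,x_d)$ gives
\begin{equation*}
\as{D^q f(\cdot,x_d)}_0\ls\big(\as{f(\cdot,x_d)}_0\big)^\theta\big(\as{D^{q+s}f(\cdot,x_d)}_0\big)^{1-\theta}.
\end{equation*}
Integrating over $x_d\in(-b,0)$ and using H\"older's inequality with exponents $1/\theta$ and $1/(1-\theta)$,
\begin{align*}
\ns{D^q f}_0=\int_{-b}^0\as{D^q f(\cdot,x_d)}_0\,dx_d
&\ls\Big(\int_{-b}^0\as{f(\cdot,x_d)}_0\,dx_d\Big)^\theta\Big(\int_{-b}^0\as{D^{q+s}f(\cdot,x_d)}_0\,dx_d\Big)^{1-\theta}\\
&=\big(\ns{f}_0\big)^\theta\big(\ns{D^{q+s}f}_0\big)^{1-\theta},
\end{align*}
which is \eqref{i_sl_i_2}.

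For Part~(2) the hypothesis $r+s>(d-1)/2$ is exactly what is needed to invoke \eqref{i_sig_i_2}. The first step is a vertical reduction: by $H^1(-b,0)\hookrightarrow C([-b,0])$, for each fixed $x_h$ the quantities $|D^q f(x_h,0)|^2$ and $\sup_{x_d}|D^q f(x_h,x_d)|^2$ are both dominated by $\norm{D^q f(x_h,\cdot)}_{H^1(-b,0)}^2\ls\sum_{j=0}^1\int_{-b}^0|\p_d^j D^q f(x_h,t)|^2\,dt$; taking the supremum over $x_h$ and exchanging $L^\infty_{x_h}L^2_{x_d}$ into $L^2_{x_d}L^\infty_{x_h}$ by Minkowski's integral inequality (legitimate since $2\le\infty$) gives
\begin{equation*}
\pns{D^q f}{\infty}+\ns{D^q f}_{L^\infty(\Sigma)}\ls\sum_{j=0}^1\int_{-b}^0\ns{D^q(\p_d^j f)(\cdot,x_d)}_{L^\infty(\Sigma)}\,dx_d.
\end{equation*}
Since $\p_d$ commutes with horizontal derivatives, \eqref{i_sig_i_2} applied to $\p_d^j f(\cdot,x_d)$ on $\Sigma$ gives, for $j=0,1$,
\begin{equation*}
\ns{D^q(\p_d^j f)(\cdot,x_d)}_{L^\infty(\Sigma)}\ls\big(\as{\p_d^j f(\cdot,x_d)}_0\big)^\theta\big(\as{D^{q+s}(\p_d^j f)(\cdot,x_d)}_r\big)^{1-\theta}.
\end{equation*}
Integrating in $x_d$ and applying H\"older with exponents $1/\theta$ and $1/(1-\theta)$ exactly as in Part~(1) bounds $\pns{D^q f}{\infty}+\ns{D^q f}_{L^\infty(\Sigma)}$ by
\begin{equation*}
\Big(\sum_{j=0}^1\int_{-b}^0\as{\p_d^j f(\cdot,x_d)}_0\,dx_d\Big)^\theta\Big(\sum_{j=0}^1\int_{-b}^0\as{D^{q+s}(\p_d^j f)(\cdot,x_d)}_r\,dx_d\Big)^{1-\theta}.
\end{equation*}
The first factor equals $\ns{f}_0+\ns{\p_d f}_0\le\ns{f}_1$; in the second factor each summand is the squared $L^2(\Omega)$-norm of a derivative of $D^{q+s}f$ of order at most $r+1$, so it is $\ls\ns{D^{q+s}f}_{r+1}$. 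This proves \eqref{i_sl_i_4} and \eqref{i_sl_i_05} simultaneously.

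The computations above are routine; the points that deserve care are the exchange of the mixed-norm spaces $L^\infty_{x_h}L^2_{x_d}$ and $L^2_{x_d}L^\infty_{x_h}$ via Minkowski's integral inequality, and the bookkeeping confirming that the interpolation exponent $\theta$ supplied by Lemma~\ref{i_sigma_interp} coincides with the one asserted in \eqref{i_sl_i_1} and \eqref{i_sl_i_3} — which it does, since Lemma~\ref{i_sigma_interp} uses the same formulas for $\theta$ — and that the single vertical derivative spent in $H^1(-b,0)\hookrightarrow C([-b,0])$ is exactly absorbed by upgrading $\as{f}_0,\as{D^{q+s}f}_r$ to $\ns{f}_1,\ns{D^{q+s}f}_{r+1}$.
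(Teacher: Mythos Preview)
Your argument is correct and is exactly the standard slice-wise reduction one expects here: apply Lemma~\ref{i_sigma_interp} on each horizontal slice, integrate in $x_d$ with H\"older for Part~(1), and for Part~(2) spend one vertical derivative via $H^1(-b,0)\hookrightarrow C([-b,0])$ together with the elementary bound $\sup_{x_h}\int_{x_d}\le\int_{x_d}\sup_{x_h}$ before invoking the slice estimate. The paper does not give its own proof but simply cites Lemma~A.8 of \cite{GT_inf}, whose argument is precisely this one, so your write-up effectively supplies the omitted details.
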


\begin{proof}
We refer to Lemma A.8 of \cite{GT_inf}.
\end{proof}
\subsection{Poincar\'{e}-type  inequalities }
We have the following Poincar\'{e}-type  inequalities.

\begin{lem}\label{poincare_b}
 It holds that
\beq
 \norm{f}_{0}  \ls  \abs{f}_{0}+ \norm{\p_d f}_{0}
\text{
and
}
 \norm{f}_{L^\infty}  \ls  \abs{f}_{L^\infty} + \norm{\p_d f}_{L^\infty} .
\eeq
\end{lem}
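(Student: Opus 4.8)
The plan is to integrate in the vertical variable $x_d$ starting from the top boundary $\Sigma=\{x_d=0\}$. First I would record, for $f\in C^1(\bar\Omega)$ decaying suitably in $x_h$ (the general case following by a routine density argument, since smooth compactly supported functions are dense and each inequality passes to the limit), the identity
$$f(x_h,x_d)=f(x_h,0)-\int_{x_d}^{0}\p_d f(x_h,s)\,ds\qquad\text{for }(x_h,x_d)\in\Omega .$$
The content of this is simply that, for a.e.\ $x_h\in\Sigma$, the slice $x_d\mapsto f(x_h,x_d)$ is absolutely continuous on $[-b,0]$ whenever $\p_d f\in L^2(\Omega)$ (resp.\ $L^\infty(\Omega)$), which is all that will be used.

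For the $L^2$ estimate I would square the identity, use $(a+b)^2\le 2a^2+2b^2$, and apply the Cauchy--Schwarz inequality in the $s$ variable,
$$\Big(\int_{x_d}^{0}|\p_d f(x_h,s)|\,ds\Big)^2\le b\int_{-b}^{0}|\p_d f(x_h,s)|^2\,ds ,$$
to obtain the pointwise bound $|f(x_h,x_d)|^2\ls |f(x_h,0)|^2+b\int_{-b}^{0}|\p_d f(x_h,s)|^2\,ds$. Integrating this inequality over $x_h\in\Sigma$ and $x_d\in(-b,0)$, and using that $b>0$ is a fixed constant, gives $\norm{f}_0^2\ls b\,\abs{f}_0^2+b^2\norm{\p_d f}_0^2\ls \abs{f}_0^2+\norm{\p_d f}_0^2$, which is the first claim.

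For the $L^\infty$ estimate, the same identity yields directly
$$|f(x_h,x_d)|\le |f(x_h,0)|+\int_{-b}^{0}|\p_d f(x_h,s)|\,ds\le \abs{f}_{L^\infty}+b\,\norm{\p_d f}_{L^\infty} ,$$
and taking the supremum over $(x_h,x_d)\in\Omega$ completes the proof. There is no real obstacle here: the only step deserving a word of care is the rigorous interpretation of the trace $f|_\Sigma$ and of the pointwise identity when $f$ lies merely in the natural Sobolev/Lebesgue space rather than in $C^1(\bar\Omega)$, which is dispatched by the standard approximation argument together with absolute continuity of the vertical slices; everything else is an elementary one-dimensional computation.
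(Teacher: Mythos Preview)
Your proof is correct. The paper states this lemma in the appendix without proof, treating it as a standard Poincar\'e-type inequality; your argument via the fundamental theorem of calculus in the vertical variable, followed by Cauchy--Schwarz and integration (for the $L^2$ case) or a direct supremum (for the $L^\infty$ case), is precisely the standard elementary derivation one would supply.
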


\begin{lem}\label{poincare_trace}
For $f=0$ on $\Sigma_b$, it holds that 
\beq
 \norm{f}_{0}  \ls   \norm{\p_d f}_{0}
\text{
and
}
 \norm{f}_{L^\infty}  \ls   \norm{\p_d f}_{L^\infty} .
\eeq
\end{lem}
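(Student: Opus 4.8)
The plan is to prove both inequalities by the fundamental theorem of calculus in the vertical variable, integrating from the flat bottom $\Sigma_b=\{x_d=-b\}$ where $f$ vanishes, and using crucially that the slab $\Om=\Rn{d-1}\times(-b,0)$ has finite vertical width $b$. This is an elementary Poincaré-type estimate; the only genuine issue is the function-class justification, addressed at the end.

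First I would fix $x_h\in\Rn{d-1}$ and, using $f(x_h,-b)=0$, write the pointwise representation
\beq\label{ptrace_ftc}
f(x_h,x_d)=\int_{-b}^{x_d}\p_d f(x_h,s)\,ds\quad\text{for }-b\le x_d\le 0 .
\eeq
For the $L^\infty$ bound this gives directly $\abs{f(x_h,x_d)}\le\int_{-b}^{0}\abs{\p_d f(x_h,s)}\,ds\le b\,\norm{\p_d f}_{L^\infty}$; taking the supremum over $(x_h,x_d)\in\Om$ yields $\norm{f}_{L^\infty}\ls\norm{\p_d f}_{L^\infty}$ with implicit constant $b$.

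For the $L^2$ bound I would apply the Cauchy--Schwarz inequality in $s$ to \eqref{ptrace_ftc}, obtaining $\abs{f(x_h,x_d)}^2\le b\int_{-b}^{0}\abs{\p_d f(x_h,s)}^2\,ds$, then integrate this inequality in $x_d$ over $(-b,0)$ (which contributes a further factor $b$) and finally in $x_h$ over $\Rn{d-1}$; by Fubini this gives $\norm{f}_{0}^2\le b^2\,\norm{\p_d f}_{0}^2$, hence $\norm{f}_{0}\ls\norm{\p_d f}_{0}$. Since $b>0$ is a fixed constant of the problem, the constants $b$ and $b^2$ are universal.

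The closest thing to an obstacle is making \eqref{ptrace_ftc} and the boundary condition $f|_{\Sigma_b}=0$ rigorous for the function classes actually appearing in the paper (elements of $H^k(\Om)$ with $k\ge 1$, or their horizontal/temporal derivatives), since for merely Sobolev functions the bottom value is a trace rather than a pointwise evaluation. I would handle this in the standard way: establish \eqref{ptrace_ftc} and both displayed estimates for $f\in C^1(\bar\Om)$ with $f=0$ on $\Sigma_b$, then pass to the limit using the density of such functions and the continuity of the trace operator $H^1(\Om)\to L^2(\Sigma_b)$ (respectively $W^{1,\infty}\to L^\infty$); the inequalities are preserved under this limiting procedure. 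All constants remain $b$ and $b^2$, so the estimates are uniform, as required for their use in Lemma~\ref{le inter} and Theorem~\ref{eth}.
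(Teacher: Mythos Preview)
Your argument is correct and is the standard proof of this elementary Poincar\'e inequality on a slab of finite vertical width. The paper itself states Lemma~\ref{poincare_trace} without proof (it is listed among the analytic tools in the appendix as a known fact), so there is nothing to compare against; your write-up supplies exactly the expected details.
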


\begin{lem}\label{poincare_usual}
For $f=0$ on $\Sigma_b$, it holds that 
\beq
\norm{f}_{0} \ls \norm{f}_{1} \ls \norm{\nab f}_{0}
\text{
and
}
\norm{f}_{L^\infty} \ls\norm{f}_{W^{1,\infty}}\ls \norm{\nabla f}_{L^\infty}.
\eeq
\end{lem}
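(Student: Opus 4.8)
\textbf{Proof proposal for Lemma \ref{poincare_usual}.}

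The plan is to reduce everything to the one–dimensional Poincaré inequality in the vertical variable $x_d\in(-b,0)$, exploiting the boundary condition $f=0$ on $\Sigma_b=\{x_d=-b\}$ together with the fact that $\Omega=\Sigma\times(-b,0)$ is a slab. First I would treat the $L^2$ statement. For fixed $x_h\in\Sigma$ and a.e.\ $x_d$, write the fundamental theorem of calculus
\[
f(x_h,x_d)=\int_{-b}^{x_d}\p_d f(x_h,s)\,ds,
\]
which is legitimate since $f$ vanishes at $s=-b$; apply Cauchy–Schwarz in $s$ to get $\abs{f(x_h,x_d)}^2\le b\int_{-b}^0\abs{\p_d f(x_h,s)}^2\,ds$, and integrate in $x_d$ over $(-b,0)$ and then in $x_h$ over $\Sigma$. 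This yields $\norm{f}_0\ls\norm{\p_d f}_0\le\norm{\nab f}_0$. The middle inequality $\norm{f}_0\ls\norm{f}_1$ is trivial from the definition of the $H^1$ norm, and combining gives $\norm{f}_0\ls\norm{f}_1\ls\norm{f}_0+\norm{\nab f}_0\ls\norm{\nab f}_0$, so all three quantities are comparable, as claimed.

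For the $L^\infty$ statement I would argue analogously. From the same identity $f(x_h,x_d)=\int_{-b}^{x_d}\p_d f(x_h,s)\,ds$ one has the pointwise bound $\abs{f(x_h,x_d)}\le b\,\sup_{s\in(-b,0)}\abs{\p_d f(x_h,s)}\le b\norm{\p_d f}_{L^\infty}\le b\norm{\nab f}_{L^\infty}$, hence $\norm{f}_{L^\infty}\ls\norm{\nab f}_{L^\infty}$. The chain $\norm{f}_{L^\infty}\ls\norm{f}_{W^{1,\infty}}\ls\norm{\nab f}_{L^\infty}$ then follows: the first inequality is definitional, and the second uses $\norm{f}_{W^{1,\infty}}=\norm{f}_{L^\infty}+\norm{\nab f}_{L^\infty}\ls\norm{\nab f}_{L^\infty}$ by the bound just proved.

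Since the argument is a direct one–dimensional integration, there is no serious obstacle; the only point requiring a word of care is the justification of the fundamental theorem of calculus in the $x_d$ variable, i.e.\ that $f$ restricted to vertical segments is absolutely continuous with the trace $f=0$ at $x_d=-b$. This is standard for $f\in H^1(\Omega)$ (resp.\ $W^{1,\infty}(\Omega)$) by Fubini together with the one–dimensional Sobolev embedding $H^1((-b,0))\hookrightarrow C([-b,0])$ (resp.\ Rademacher), and since this lemma is invoked only for smooth solutions of \eqref{linear_perturbed} it may simply be taken as given. I would therefore state the proof in two short paragraphs, one for each norm, and refer to the slab geometry and the vanishing boundary condition as the only inputs.
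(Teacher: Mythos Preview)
Your argument is correct and is exactly the standard one-dimensional Poincar\'e argument one would expect for a slab domain. Note, however, that the paper does not actually supply a proof of Lemma~\ref{poincare_usual}: it is stated in the appendix as a standard analytic tool alongside Lemmas~\ref{poincare_b} and~\ref{poincare_trace}, all without proof. So there is nothing to compare against; your write-up simply fills in the omitted details, and it does so correctly.
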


We will need a version of Korn's inequality.
\begin{lem}\label{i_korn}
It holds that $\norm{u}_{1} \ls \norm{\sg u}_{0}$ for all $u \in H^1(\Omega;\Rn{d})$ so that $u=0$ on $\Sigma_b$.
\end{lem}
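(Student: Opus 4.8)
The plan is to prove the inequality in two steps: first a version of Korn's \emph{second} inequality, $\ns{\nab u}_0 \ls \ns{\sg u}_0 + \ns{u}_0$ for all $u\in H^1(\Omega;\R^d)$ vanishing on $\Sigma_b$, and then an absorption of the lower-order term $\ns{u}_0$ that uses the Dirichlet condition on $\Sigma_b$ in an essential way. For the first step I would start from the pointwise identity $\abs{\sg u}^2 = 2\abs{\nab u}^2 + 2\sum_{i,j}\p_i u_j\,\p_j u_i$ and integrate the last term by parts twice over $\Omega$. Since $u=0$ on $\Sigma_b$ the boundary contribution there vanishes, and on $\Sigma=\{x_d=0\}$ one is left with a surface term of the form $\int_\Sigma\bigl((u\cdot\nab)u_d-(\diverge u)u_d\bigr)$, which after one more integration by parts in the (boundaryless) surface $\Sigma$ becomes $\pm 2\int_\Sigma (D\cdot u_h)\,u_d$. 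This term is handled by $H^{1/2}$--$H^{-1/2}$ duality on $\Sigma$, the trace interpolation $\abs{w}_{H^{1/2}(\Sigma)}\ls\norm{w}_0^{1/2}\norm{w}_1^{1/2}$, and Young's inequality, after which it is absorbed into $\ns{\sg u}_0+\ns{u}_0$ (plus a small multiple of $\ns{\nab u}_0$ that is moved to the left). This gives the second inequality.

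For the second step I would argue by contradiction: if the claimed estimate fails there is a sequence $u_k$ with $u_k=0$ on $\Sigma_b$, $\norm{u_k}_0=1$ and $\norm{\sg u_k}_0\to0$. By the second inequality $(u_k)$ is bounded in $H^1(\Omega)$, so along a subsequence $u_k\rightharpoonup u$ in $H^1$, and $\sg u=0$ by weak lower semicontinuity, i.e. $u(x)=a+Bx$ with $B$ antisymmetric; the condition $u=0$ on $\Sigma_b=\{x_d=-b\}$ forces $a=0$, $B=0$, hence $u=0$. The only subtlety — and the main obstacle — is that $\Omega$ is an unbounded slab, so Rellich does not give $u_k\to u$ strongly in $L^2$ globally and mass could escape to horizontal infinity. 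I would remedy this by horizontal localization: multiply $u_k$ by cutoffs $\chi(x_h/R)$, apply Rellich on the bounded sets $\{\abs{x_h}<R\}\times(-b,0)$ with a diagonal argument, and use the Poincaré inequality of Lemma~\ref{poincare_trace} (which gives $\norm{u_k}_0\ls\norm{\p_d u_k}_0\ls\norm{\nab u_k}_0$, with $\norm{\nab u_k}_0$ bounded below from $\norm{\sg u_k}_0\to0$ plus the second inequality) to rule out escaping mass; this forces $\norm{u}_0\geq c>0$, contradicting $u=0$.

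An alternative, compactness-free route is to use translation invariance in $x_h$ via the horizontal Fourier transform: for each $\xi\in\R^{d-1}$ one gets a one-dimensional inequality for $v(\cdot)=\widehat u(\xi,\cdot)$ on $(-b,0)$ with $v(-b)=0$, and it suffices to prove it with a constant uniform in $\xi$. For $\xi=0$ the symmetric gradient reduces to $\p_d v_h$ and $2\p_d v_d$, so $\norm{v'}_{L^2(-b,0)}\ls\norm{\widehat{\sg u}}$, and one-dimensional Poincaré (using $v(-b)=0$) controls $\abs{\xi}^2\abs{v}^2+\abs{v'}^2=\abs{v'}^2$; for $\xi\neq0$, rotating so that $\xi=\abs{\xi}e_1$, the components $(\widehat{\sg u})_{1k}$ give control of $\abs{\xi}\abs{v}$, the components $(\widehat{\sg u})_{kd}$ (for $k>1$) and $(\widehat{\sg u})_{dd}$ give $v_k'$ and $v_d'$, and finally $v_1'$ and $\abs{\xi}v_d$ are recovered from $(\widehat{\sg u})_{1d}=2\pi i\abs{\xi}v_d+v_1'$ together with $\abs{v_d}\ls\abs{v_d'}$. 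Either way, the heart of the matter — and what forces the use of the $\Sigma_b$ boundary condition — is controlling the near-null directions of $\sg u$, namely the infinitesimal rigid motions, and doing so uniformly (across frequencies, or without a compact domain).
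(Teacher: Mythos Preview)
The paper's ``proof'' is a one-line citation to Lemma~2.7 of Beale~\cite{beale_1}; there is no argument to compare against. Your proposal goes far beyond this and sketches two standard routes, both of which are in the right spirit, but each has a genuine gap as written.

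In the compactness route, the difficulty you flag (no Rellich on the unbounded slab) is real, but your proposed fix does not close it. Poincar\'e in $x_d$ (Lemma~\ref{poincare_trace}) gives $\norm{u_k}_0\ls\norm{\p_d u_k}_0$, so from $\norm{u_k}_0=1$ you get $\norm{\p_d u_k}_0\gtrsim 1$; this, however, is translation-invariant in $x_h$ and does \emph{not} prevent the mass of $u_k$ from sliding off to horizontal infinity. Local Rellich plus a diagonal argument gives $u_k\to u$ in $L^2_{\mathrm{loc}}$, but nothing in your outline upgrades this to global $L^2$ convergence, so the contradiction $\norm{u}_0\ge c>0$ with $u=0$ is not established. (A normalizing sequence consisting of horizontal translates of a fixed nonzero $H^1$ field with $u=0$ on $\Sigma_b$ shows that local-to-global passage really needs an extra ingredient; translation invariance of the slab means the natural remedy is exactly the Fourier reduction you propose next.)

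In the Fourier route, your argument is correct and complete for bounded $\abs{\xi}$: the components $(\widehat{\sg u})_{11},(\widehat{\sg u})_{dd}$ control $\abs{\xi}\,\norm{v_1}$ and $\norm{v_d'}$, Poincar\'e controls $\norm{v_d}$ and then $\abs{\xi}\,\norm{v_d}$, and you recover $v_1'$ by subtraction from $(\widehat{\sg u})_{1d}$. But as $\abs{\xi}\to\infty$ this gives a constant $\sim\abs{\xi}$: Poincar\'e yields only $\norm{v_d}\ls\norm{v_d'}$, so $\abs{\xi}\,\norm{v_d}\ls\abs{\xi}\,\norm{v_d'}\ls\abs{\xi}\,\norm{\widehat{\sg u}}$, and the subsequent bound on $\norm{v_1'}$ inherits the same growth. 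The fix (and this is essentially what Beale does) is to treat large $\abs{\xi}$ by the exact integration-by-parts identity you used in step one, but now at fixed $\xi$: writing $\alpha=2\pi\abs{\xi}$,
\[
\norm{\widehat{\sg u}(\xi,\cdot)}_{L^2(-b,0)}^2-2\norm{\widehat{\nab u}(\xi,\cdot)}_{L^2(-b,0)}^2
\]
equals a nonnegative bulk term plus a cross term $4\mathrm{Re}\,(i\alpha\langle v_d,v_1'\rangle)$; after one integration by parts in $x_d$ the only obstruction is the boundary contribution $\sim\alpha\, v_d(0)\overline{v_1(0)}$, and the trace bound $\abs{v_j(0)}^2\ls\norm{v_j}\norm{v_j'}$ combined with Young's inequality absorbs this into $\alpha^2\norm{v}^2+\norm{v'}^2$ with a constant \emph{independent of} $\xi$ once $\abs{\xi}$ is large. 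Splitting into small and large $\abs{\xi}$ then gives the uniform constant. Your write-up should make this large-frequency step explicit.
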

\begin{proof}
We refer to Lemma 2.7 of \cite{beale_1}.
\end{proof}

\subsection{Product estimates}
We will need some estimates of the product of functions in Sobolev spaces.

\begin{lem}\label{i_sobolev_product_1}
Let the domain of function spaces be either $\Sigma$ or $\Omega$.
 Let $0\le r \le s_1 \le s_2$ be such that  $s_2 >r+ n/2$, $n=d-1$ or $d$, then 
 \begin{equation}\label{i_s_p_02}
 \norm{fg}_{ H^r} \lesssim \norm{f}_{ H^{s_1}} \norm{g}_{ H^{s_2}}.
\end{equation}
\end{lem}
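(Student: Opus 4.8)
The plan is to reduce everything to the flat case on $\R^{n}$ and then run a short Fourier-side computation. When the underlying domain is $\Sigma=\R^{d-1}$ (so $n=d-1$) there is nothing to reduce; when it is the slab $\Omega=\R^{d-1}\times(-b,0)$ (so $n=d$) I would first apply a bounded extension operator $E\colon H^{s}(\Omega)\to H^{s}(\R^{d})$ that is simultaneously bounded on every Sobolev scale $s\ge 0$ in play — an explicit higher-order reflection across the two flat faces, or Stein's extension operator, does the job since the slab is a uniformly Lipschitz domain. Then, once the estimate on $\R^{d}$ is available,
$$\norm{fg}_{H^{r}(\Omega)}\le\norm{(Ef)(Eg)}_{H^{r}(\R^{d})}\lesssim\norm{Ef}_{H^{s_1}(\R^{d})}\norm{Eg}_{H^{s_2}(\R^{d})}\lesssim\norm{f}_{H^{s_1}(\Omega)}\norm{g}_{H^{s_2}(\Omega)},$$
so it suffices to prove the inequality on $\R^{n}$.

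On $\R^{n}$ I would use $\widehat{fg}=\hat f\ast\hat g$ together with $\norm{u}_{H^{r}}^{2}=\int_{\R^{n}}\langle\xi\rangle^{2r}|\hat u(\xi)|^{2}\,d\xi$, where $\langle\xi\rangle=(1+|\xi|^{2})^{1/2}$. Since $r\ge 0$, the Peetre-type inequality $\langle\xi\rangle^{r}\lesssim\langle\xi-\eta\rangle^{r}+\langle\eta\rangle^{r}$ gives
$$\langle\xi\rangle^{r}|\widehat{fg}(\xi)|\lesssim\big(A\ast|\hat g|\big)(\xi)+\big(|\hat f|\ast B\big)(\xi),\qquad A(\zeta):=\langle\zeta\rangle^{r}|\hat f(\zeta)|,\quad B(\zeta):=\langle\zeta\rangle^{r}|\hat g(\zeta)|.$$
For the first term I would place $A\in L^{2}$ and $|\hat g|\in L^{1}$: because $r\le s_{1}$ one has $\norm{A}_{L^{2}}\le\norm{f}_{H^{s_1}}$, and because $s_{2}>r+n/2\ge n/2$ one has $\norm{|\hat g|}_{L^{1}}\le\big(\int_{\R^{n}}\langle\eta\rangle^{-2s_2}\,d\eta\big)^{1/2}\norm{g}_{H^{s_2}}\lesssim\norm{g}_{H^{s_2}}$, so Young's convolution inequality bounds $\norm{A\ast|\hat g|}_{L^{2}}$ by $\norm{f}_{H^{s_1}}\norm{g}_{H^{s_2}}$. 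For the second term I would instead place $|\hat f|\in L^{2}$ and $B\in L^{1}$: $\norm{|\hat f|}_{L^{2}}=\norm{f}_{L^{2}}\le\norm{f}_{H^{s_1}}$, while, writing $\langle\eta\rangle^{r}=\langle\eta\rangle^{r-s_2}\langle\eta\rangle^{s_2}$ and using Cauchy–Schwarz, $\norm{B}_{L^{1}}\le\big(\int_{\R^{n}}\langle\eta\rangle^{2(r-s_2)}\,d\eta\big)^{1/2}\norm{g}_{H^{s_2}}\lesssim\norm{g}_{H^{s_2}}$, the integral converging precisely because $2(s_{2}-r)>n$. Young's inequality again closes this piece, and summing the two contributions and integrating $\langle\xi\rangle^{2r}|\widehat{fg}(\xi)|^{2}$ in $\xi$ yields $\norm{fg}_{H^{r}}\lesssim\norm{f}_{H^{s_1}}\norm{g}_{H^{s_2}}$.

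The argument is essentially routine; the single point that must be handled with care is that the hypotheses are \emph{not} used symmetrically — the factor of regularity $s_{2}$ must be assigned to $g$ throughout, and it is exactly in the second term that the sharp condition $s_{2}>r+n/2$ (rather than merely $s_{2}>n/2$) is consumed, to make $\langle\cdot\rangle^{r-s_2}$ lie in $L^{2}(\R^{n})$. The only other technical input is the existence of a Sobolev extension operator for the slab bounded on all the relevant scales, which is standard and can either be cited (Stein's extension theorem) or, since the two faces are hyperplanes, built directly by finitely many reflections.
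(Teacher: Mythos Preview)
Your proof is correct. The paper does not actually give a proof of this lemma: its ``proof'' is a one-line citation to Lemma~A.1 of Guo--Tice \cite{GT_inf}. You have supplied a complete and standard self-contained argument --- extend to $\R^n$ via a universal Sobolev extension operator, then split $\langle\xi\rangle^r$ via the Peetre-type inequality and apply Young's convolution inequality twice. Your bookkeeping is right: $r\le s_1$ is used to control $\norm{A}_{L^2}$, $s_2>n/2$ gives $\hat g\in L^1$ for the first piece, and the sharp hypothesis $s_2>r+n/2$ is consumed exactly where you say, to put $\langle\cdot\rangle^{r-s_2}\in L^2(\R^n)$ for the second piece. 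The reduction step $\norm{fg}_{H^r(\Omega)}\le\norm{(Ef)(Eg)}_{H^r(\R^d)}$ is valid because $(Ef)(Eg)$ restricts to $fg$ on $\Omega$ and restriction is bounded $H^r(\R^d)\to H^r(\Omega)$.
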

\begin{proof}
We refer to Lemma A.1 of \cite{GT_inf}.
\end{proof}

We will also need the following variant.
\begin{lem}\label{i_sobolev_product_2}
It holds that
\begin{equation}
 \abs{ fg}_{1/2} \ls \abs{f}_{C^1(\Sigma)} \abs{g}_{1/2}
\text{
and 
 }
 \abs{ fg}_{-1/2} \ls \abs{f}_{C^1(\Sigma)} \abs{g}_{-1/2}.
\end{equation}
\end{lem}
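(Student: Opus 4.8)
The plan is to establish the $H^{1/2}$ bound first and then deduce the $H^{-1/2}$ bound by duality. For the $H^{1/2}$ estimate I would use the equivalence of $\abs{\cdot}_{1/2}$ on $\Sigma=\Rn{d-1}$ with the $L^2$ norm plus the Gagliardo seminorm,
\[
\abs{h}_{1/2}^2 \simeq \ns{h}_0 + \int_\Sigma\int_\Sigma \frac{\abs{h(x)-h(y)}^2}{\abs{x-y}^{d}}\,dx\,dy ,
\]
and split $f(x)g(x)-f(y)g(y)=f(x)\bigl(g(x)-g(y)\bigr)+\bigl(f(x)-f(y)\bigr)g(y)$. The first term contributes at most $\pns{f}{\infty}$ (on $\Sigma$) times the Gagliardo seminorm of $g$. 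For the second term I split the domain of integration into $\abs{x-y}\le 1$ and $\abs{x-y}>1$: on the near region I use $\abs{f(x)-f(y)}\le \pnorm{\nabla f}{\infty}\abs{x-y}$, and on the far region $\abs{f(x)-f(y)}\le 2\pnorm{f}{\infty}$. Since $d-1\in\{1,2\}$, the kernels $\abs{z}^{2-d}$ restricted to $\abs z\le 1$ and $\abs z^{-d}$ restricted to $\abs z>1$ are both integrable over $\Rn{d-1}$, so after integrating in $x$ the second term is bounded by $\bigl(\pns{f}{\infty}+\pns{\nabla f}{\infty}\bigr)\ns{g}_0$. Combined with $\norm{fg}_0\le \pnorm{f}{\infty}\norm{g}_0$, this yields $\abs{fg}_{1/2}\ls \abs{f}_{C^1(\Sigma)}\abs{g}_{1/2}$. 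Equivalently, one may argue by bilinear interpolation: the multiplication operator $M_f\colon h\mapsto fh$ is bounded on $L^2(\Sigma)$ with norm $\le\pnorm{f}{\infty}$ and on $H^1(\Sigma)$ with norm $\ls\norm{f}_{W^{1,\infty}(\Sigma)}$ by the Leibniz rule, and since $H^{1/2}=[L^2,H^1]_{1/2}$ we get $\norm{M_f}_{\mathcal L(H^{1/2})}\ls\pnorm{f}{\infty}^{1/2}\norm{f}_{W^{1,\infty}}^{1/2}\ls\abs{f}_{C^1(\Sigma)}$.

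For the $H^{-1/2}$ bound I would invoke $H^{-1/2}(\Sigma)=\bigl(H^{1/2}(\Sigma)\bigr)^\ast$ together with the fact that multiplication by $f$ is formally self-adjoint. For any $\phi\in H^{1/2}(\Sigma)$,
\[
\Bigl|\int_\Sigma fg\,\phi\Bigr| = \Bigl|\int_\Sigma g\,(f\phi)\Bigr| \le \abs{g}_{-1/2}\,\abs{f\phi}_{1/2} \ls \abs{g}_{-1/2}\,\abs{f}_{C^1(\Sigma)}\,\abs{\phi}_{1/2},
\]
where the last inequality is the first part of the lemma. Taking the supremum over $\abs{\phi}_{1/2}\le 1$ gives $\abs{fg}_{-1/2}\ls\abs{f}_{C^1(\Sigma)}\abs{g}_{-1/2}$.

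The only genuinely delicate point is the far region $\abs{x-y}>1$ in the Gagliardo estimate: one must notice that the term $(f(x)-f(y))g(y)$ needs only the $L^\infty$ control of $f$ there, so the kernel degrades merely to $\abs{z}^{-d}$, which is integrable away from the origin in $\Rn{d-1}$; using the gradient bound globally would fail. Everything else is routine, and the interpolation route sidesteps the explicit computation altogether, relying only on the standard identification of $H^{1/2}$ as an interpolation space and the Leibniz estimate on $H^1$.
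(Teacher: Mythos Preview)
Your argument is correct. The paper itself does not give a self-contained proof of this lemma; it simply refers to Lemma~A.2 of \cite{GT_inf}. Both of your approaches---the direct Gagliardo seminorm computation with the near/far splitting, and the cleaner interpolation argument via $H^{1/2}=[L^2,H^1]_{1/2}$---are standard and valid, and the duality step for $H^{-1/2}$ is the natural way to finish. Your remark about needing only the $L^\infty$ bound on $f$ in the far region is exactly the point that makes the Gagliardo route go through.
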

\begin{proof}
We refer to Lemma A.2 of \cite{GT_inf}.
\end{proof}

\subsection{Commutator estimate}

Let $\mathcal{J}= {(1-\Delta)}^{1/2}$ with $\Delta$  the Laplace operator on $\Rn{n}$.  

We recall the following commutator estimate:
 \begin{lem}\label{commutator}
\begin{equation}
\|[\mathcal{J}^{s},f]g\|_{L^2}\le \|\nabla f\|_{L^\infty}\|\mathcal{J}^{s-1}g\|_{L^2}
+ \|\mathcal{J}^s f\|_{L^2}\| g\|_{L^\infty}.
\end{equation}
\end{lem}
\begin{proof}
We refer to Lemma X1 of \cite{KP}.
\end{proof}

\subsection{Stokes elliptic estimates}

We recall the elliptic estimates for two classical Stokes problems with different boundary conditions.
\begin{lem}\label{i_linear_elliptic}
Let $r \ge 2$.  Suppose that $f\in H^{r-2}(\Omega), h\in H^{r-1}(\Omega)$ and $\psi\in H^{r-3/2}(\Sigma)$. Then there exists unique $u\in H^r(\Omega), p\in H^{r-1}(\Omega)$  solving the problem
\begin{equation}\label{stokes1}
 \begin{cases}
  -\Delta u + \nab p =f  &\text{in }\Omega \\
  \diverge{u} = h &\text{in }\Omega \\
  (pI_d- \sg u  )e_d  = \psi &\text{on }\Sigma \\
  u =0 &\text{on }\Sigma_b.
 \end{cases}
\end{equation}
 Moreover,
\begin{equation}
 \ns{u}_{r} + \ns{p}_{r-1}  \ls \ns{f}_{r-2}  + \ns{h}_{r-1} + \abs{\psi}_{r-3/2}^2 .
\end{equation}
\end{lem}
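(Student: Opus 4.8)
The final statement in the excerpt is Lemma~\ref{i_linear_elliptic}, the elliptic regularity theorem for the Stokes system \eqref{stokes1} with the ``free-surface'' boundary condition $(pI_d-\sg u)e_d=\psi$ on $\Sigma$ and $u=0$ on $\Sigma_b$. The plan is to establish existence, uniqueness, and the a priori estimate by a combination of a weak formulation, a translation of the estimate to a half-space/strip model problem solved by Fourier analysis, and a bootstrap in the regularity index $r$.

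\textbf{Step 1: reduction to homogeneous divergence.} First I would remove the inhomogeneity $\diverge u=h$ by subtracting a corrector. Choosing $w\in H^r(\Omega)$ with $\diverge w=h$, $w=0$ on $\Sigma_b$, and $\norm{w}_r\ls \norm{h}_{r-1}$ (a Bogovskii-type operator on the strip, respecting the bottom boundary condition), the pair $(u-w,p)$ solves the same system with $h$ replaced by $0$, a modified forcing $f-\Delta w$ still in $H^{r-2}$, and a modified Neumann datum $\psi + (\sg w)e_d\in H^{r-3/2}(\Sigma)$. So it suffices to treat $h=0$.

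\textbf{Step 2: existence and uniqueness at base regularity.} For $r=2$ (equivalently the weak level), I would use the variational formulation: on the space $\{u\in H^1(\Omega;\R^d): u=0 \text{ on }\Sigma_b,\ \diverge u = 0\}$, the bilinear form $\tfrac12\int_\Omega \sg u:\sg v$ is coercive by Korn's inequality (Lemma~\ref{i_korn}) and Poincar\'e (Lemma~\ref{poincare_usual}), and Lax--Milgram produces a unique $u$; the pressure $p$ is recovered via de Rham's lemma / the inf-sup (LBB) condition on the strip. Uniqueness of $(u,p)$ (with $p$ determined up to the usual normalization, fixed here because the boundary term $p e_d$ appears) follows from coercivity: a homogeneous solution has $\sg u=0$, hence $u=0$, and then $\nab p=0$ with $pe_d=0$ on $\Sigma$ forces $p=0$.

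\textbf{Step 3: the a priori $H^r$ estimate.} This is the technical heart. I would proceed by the standard localization: away from the boundaries use interior Stokes regularity; near $\Sigma_b$ use the flat Dirichlet Stokes estimates; near $\Sigma=\{x_d=0\}$, which is flat, take horizontal difference quotients / apply $\p^\alpha$ for $\alpha\in\mathbb N^{d-1}$ to gain tangential regularity (the boundary being flat, the coefficients are constant, so $\p^\alpha$ commutes with everything and only produces the obvious data terms), and then recover the normal derivatives algebraically from the equations: $\p_d^2 u_h$ and $\p_d p$ come from the horizontal and vertical components of $-\Delta u+\nab p=f$ together with $\diverge u=0$, exactly as in the bootstrap arguments already used in the proof of Lemma~\ref{le inter}. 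Iterating on $r$ (each step gaining one order once the data lies in the next space) gives $\norm{u}_r^2+\norm{p}_{r-1}^2\ls \norm{f}_{r-2}^2+\norm{h}_{r-1}^2+\abs{\psi}_{r-3/2}^2$. The trace theorem handles the $\psi$ term; the factor-of-two discrepancy in Sobolev indices for $\psi$ (i.e.\ $r-3/2$) is exactly what the Stokes Neumann problem on a half-space produces, as one checks by Fourier transform in the tangential variables on the model problem $-\Delta u+\nabla p=0$, $\diverge u=0$ in $\R^{d-1}\times(-\infty,0)$ with $(pI_d-\sg u)e_d=\psi$.

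\textbf{Main obstacle.} The delicate point is the sharp boundary estimate near $\Sigma$ with the \emph{free-surface} (Neumann-type) condition rather than a Dirichlet condition: one must verify the Agmon--Douglis--Nirenberg / Lopatinski\u{\i}--Shapiro complementing condition for this boundary operator so that the elliptic estimate genuinely holds, and one must track that the natural datum space is $H^{r-3/2}(\Sigma)$ (one half-derivative better than a generic Neumann datum would give, because of the structure of $\sg u$ paired with the divergence-free constraint). Since this is a classical Stokes result, I would simply invoke the corresponding statement from \cite{beale_1} or \cite{GT_lwp}, where these estimates for precisely this geometry and these boundary conditions are proved in detail, and indicate the reduction steps above; indeed the statement of the lemma here is a verbatim packaging of that known result.
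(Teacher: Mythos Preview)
Your proposal is correct and, in its final paragraph, lands on exactly what the paper does: the paper's entire proof is a one-line reference to Lemma~3.3 of \cite{beale_1}. The detailed sketch you give (Bogovskii reduction, Lax--Milgram for existence, tangential difference quotients plus normal recovery for the a~priori estimate) is a faithful outline of how that classical result is actually proved, but it goes well beyond what the paper records; here the lemma is treated as a black-box citation.
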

\begin{proof}
We refer to the proof in Lemma 3.3 of \cite{beale_1}.
\end{proof}

\begin{lem}\label{i_linear_elliptic2}
Let $r \ge 2$.  Suppose that $f\in H^{r-2}(\Omega), h\in H^{r-1}(\Omega), \varphi\in H^{r-1/2}(\Sigma)$ and that  $u\in H^r(\Omega), p\in H^{r-1}(\Omega)$ (up to constants) solving the problem
\begin{equation}\label{stokes2}
 \begin{cases}
  -\Delta u + \nab p =f  &\text{in }\Omega \\
  \diverge{u} = h &\text{in }\Omega \\
u= \varphi &\text{on }\Sigma \\
  u =0 &\text{on }\Sigma_b.
 \end{cases}
\end{equation}
Then
\begin{equation}
 \norm{u}_{r}^2 + \norm{\nab p}_{r-2}^2 \ls \norm{u}_{0}^2 + \norm{f}_{r-2}^2 + \norm{h}_{r-1}^2 + \abs{\varphi}_{r-1/2}^2.
\end{equation}
\end{lem}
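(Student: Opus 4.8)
The plan is to treat \eqref{stokes2} as a standard elliptic boundary value problem for the Stokes operator on the flat slab $\Om=\Rn{d-1}\times(-b,0)$, the only nonstandard feature being that, since $\Om$ is unbounded in the horizontal directions and the pressure is determined only up to an additive constant, one never controls $p$ itself but only $\nab p$; accordingly every estimate below is phrased through $\nab p$. First I would remove the inhomogeneous boundary data. Choose a cut-off Poisson extension $\bar\varphi\in H^r(\Om)$ of $\varphi$ with $\bar\varphi=\varphi$ on $\Sigma$, $\bar\varphi=0$ on $\Sigma_b$, and $\ns{\bar\varphi}_r\ls\as{\varphi}_{r-1/2}$ (obtained from $\mathcal{P}\varphi$ of Lemma \ref{p_poisson} multiplied by a cut-off in $x_d$ that is $1$ near $\Sigma$ and $0$ near $\Sigma_b$), and set $v=u-\bar\varphi$. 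Then $(v,p)$ solves the homogeneous-Dirichlet Stokes system
\begin{equation*}
 -\Delta v+\nab p=\tilde f,\quad \diverge v=\tilde h\ \text{ in }\Om,\quad v=0\ \text{ on }\Sigma\cup\Sigma_b,
\end{equation*}
with $\tilde f:=f+\Delta\bar\varphi\in H^{r-2}(\Om)$ and $\tilde h:=h-\diverge\bar\varphi\in H^{r-1}(\Om)$, and it suffices to prove $\ns{v}_r+\ns{\nab p}_{r-2}\ls\ns{v}_0+\ns{\tilde f}_{r-2}+\ns{\tilde h}_{r-1}$, since undoing the substitution at the end costs only $\as{\varphi}_{r-1/2}$.

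Second, I would exploit that both $\Om$ and the Stokes operator are invariant under horizontal translations: for every purely horizontal multi-index $\beta\in\mathbb{N}^{d-1}$ the pair $(\p^\beta v,\p^\beta p)$ solves the same homogeneous-Dirichlet Stokes system with forcing $(\p^\beta\tilde f,\p^\beta\tilde h)$. For the base case one tests the system with $v$, integrates by parts, and uses Korn's inequality (Lemma \ref{i_korn}) and Poincar\'e's inequality (Lemma \ref{poincare_usual}) to get $\ns{v}_1\ls\ns{v}_0+\ns{\tilde f}_0+\ns{\tilde h}_1$, and then the classical $H^2$ regularity for the Stokes system near the \emph{flat} boundary $\p\Om$ (via the usual tangential difference-quotient and reflection arguments, or cited from the standard elliptic theory for Stokes) gives $\ns{v}_2+\ns{\nab p}_0\ls\ns{v}_0+\ns{\tilde f}_0+\ns{\tilde h}_1$. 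Applying this to $(\p^\beta v,\p^\beta p)$ for $|\beta|\le r-2$ and summing controls all quantities of the form $\ns{D^{r-2}\nab^2 v}_0$ and $\ns{D^{r-2}\nab p}_0$, together with the lower horizontal-order norms, by $\ns{v}_0+\ns{\tilde f}_{r-2}+\ns{\tilde h}_{r-1}$.

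Third, I would recover the derivatives involving vertical differentiation directly from the equations: $\diverge v=\tilde h$ gives $\p_d v_d=\tilde h-\sum_{j<d}\p_j v_j$; the momentum equation gives $\p_d^2 v_i=\p_i p-\tilde f_i-\sum_{j<d}\p_j^2 v_i$; and taking the divergence of the momentum equation and using $\diverge v=\tilde h$ gives $\Delta p=\diverge\tilde f+\Delta\tilde h$, hence $\p_d^2 p=\diverge\tilde f+\Delta\tilde h-\sum_{j<d}\p_j^2 p$. Each identity expresses a quantity carrying $k$ vertical derivatives in terms of quantities carrying at most $k-2$ vertical derivatives (plus horizontal derivatives, already controlled in the second step, plus data), so an induction on the number of vertical derivatives upgrades the tangential estimate to $\ns{v}_r+\ns{\nab p}_{r-2}\ls\ns{v}_0+\ns{\tilde f}_{r-2}+\ns{\tilde h}_{r-1}$. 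Finally, writing $u=v+\bar\varphi$ and using $\ns{\bar\varphi}_r\ls\as{\varphi}_{r-1/2}$, $\ns{\tilde f}_{r-2}\ls\ns{f}_{r-2}+\as{\varphi}_{r-1/2}$, $\ns{\tilde h}_{r-1}\ls\ns{h}_{r-1}+\as{\varphi}_{r-1/2}$, and $\ns{v}_0\ls\ns{u}_0+\as{\varphi}_{r-1/2}$ yields the claimed estimate.

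The step I expect to require the most care is the pressure: because $\Om$ is horizontally unbounded, $p$ (or $p$ minus a constant) need not lie in $L^2(\Om)$, so one must never test against $p$ directly and must instead pair only with functions annihilating the constants, and one must check that the reflection/difference-quotient reduction used for the flat-boundary $H^2$ base case respects this — which is why the conclusion is stated for $\nab p$ rather than $p$, in contrast with Lemma \ref{i_linear_elliptic} where the stress boundary condition pins down the pressure. Everything else is routine bookkeeping on a classical elliptic estimate.
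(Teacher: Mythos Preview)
The paper does not give a proof at all; it simply cites Kagei \cite{K}, where the analogous estimate is obtained by horizontal Fourier transform on the slab (reducing to an ODE system in $x_d$). Your outline is therefore more detailed than what the paper provides, and the strategy---lift the Dirichlet datum, exploit horizontal translation invariance to get tangential regularity, then trade vertical for horizontal derivatives via the equations---is the standard real-variable route and is correct.

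One small point worth tightening: in your $H^1$ base case you test the system against $v$, which after integration by parts produces $\int_\Omega p\,\tilde h$; this is exactly the pressure pairing you warn about in the last paragraph, and it is not obviously controlled since $p\notin L^2(\Omega)$ in general. The usual fix is to first remove the divergence: construct $W\in H^r(\Omega)$ with $W=0$ on $\partial\Omega$, $\diverge W=\tilde h$, and $\ns{W}_r\ls\ns{\tilde h}_{r-1}$ (via a Bogovski\u{\i}-type operator on the slab, or by horizontal Fourier transform), and set $\tilde v=v-W$. Then $\diverge\tilde v=0$, the pressure term $\int_\Omega\nabla p\cdot\tilde v$ vanishes cleanly, and the rest of your argument goes through unchanged. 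Since you also allow yourself to cite the base $H^2$ Stokes estimate as a black box, this is a minor gap in presentation rather than in substance.
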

\begin{proof}
We refer to the proof of (3.7) in \cite{K}.
\end{proof}

\end{document}